\newcommand{\DD}{\mathcal D}
\newcommand{\R}{\mathbb R}
\newcommand{\N}{\mathbb N}
\newcommand{\T}{\mathbb T}
\newcommand{\Z}{\mathbb Z}
\newcommand{\sol}{\mathfrak{S}}
\newcommand{\comp}{\mbox{\scriptsize  $\circ$}}
\newcommand{\eps}{\varepsilon}
\newcommand{\ucv}{\rightrightarrows}
\newcommand{\A}{\mathcal{A}}
\newcommand{\hh}{\mathcal{H}}
\newcommand{\E}{\mathcal{E}}
\newcommand{\K}{\mathcal{K}}
\renewcommand{\S}{\mathcal{S}}
\newcommand{\tagliato}{$\kern-5 mm -$}
\newcommand{\tagliat}{$\kern-4 mm -$}
\newcommand{\tagli}{\mbox{\tagliat}}
\newcommand{\abra}[1]{(\ref{#1})}
\newcommand{\D}[1]{\mbox{\rm #1}}
\newcommand{\dd}{\D{d}}
\newcommand{\weakcv}{\rightharpoonup}
\newtheorem{teorema}{Theorem}[section]
\newtheorem{prop}[teorema]{Proposition}
\newtheorem{lemma}[teorema]{Lemma}
\newtheorem{definition}[teorema]{Definition}
\newtheorem{guess}[teorema]{Remark}
\newtheorem{example}[teorema]{Example}
\newenvironment{dimo}{{\bf\noindent Proof.}}{\qed}
\newenvironment{oss}{\begin{guess} \begin{rm}}{\end{rm} \end{guess}}
\begin{document}

\title{Weak KAM theoretic aspects \\for nonregular commuting Hamiltonians}
\author{Andrea Davini \and Maxime Zavidovique}
\address{Dip. di Matematica, Universit\`a di Roma ``La Sapienza",
P.le Aldo Moro 2, 00185 Roma, Italy}
\email{davini@mat.uniroma1.it}
\address{
IMJ, Universit\' e Pierre et Marie Curie, Case 247, 
4 place Jussieu, 
F-75252 PARIS CEDEX 05, FRANCE}
\email{zavidovique@math.jussieu.fr} \keywords{commuting
Hamiltonians, viscosity solutions, weak KAM Theory}
\subjclass[2010]{35F21, 49L25, 37J50.}

\begin{abstract}
In this paper we consider the notion of commutation for a pair of
continuous and convex Hamiltonians, given in terms of commutation
of their Lax--Oleinik semigroups. This is equivalent to the
solvability of an associated multi--time Hamilton--Jacobi
equation. We examine the weak KAM theoretic aspects of the
commutation property and show that the two Hamiltonians have the
same weak KAM solutions and the same Aubry set, thus generalizing
a result recently obtained by the second author for Tonelli
Hamiltonians. We make a further step by proving that the
Hamiltonians admit a common critical subsolution, strict outside
their Aubry set. This subsolution can be taken of class $C^{1,1}$
in the Tonelli case. To prove our main results in full generality,
it is crucial to establish suitable differentiability properties
of the critical subsolutions on the Aubry set. These latter
results are new in the purely continuous case and  of
independent interest.
\end{abstract}
\maketitle
\begin{section}{Introduction}
In the last decades, the study  of Hamiltonian systems has been
impacted by a few new tools and methods. For general Hamiltonians,
the framework of symplectic geometry led Gromov to his
non--squeezing lemma \cite{Gromov}, which gave rise to the key
notion of symplectic capacity, now uniformly used in the field.

In the particular case of Tonelli (smooth, strictly convex,
superlinear) Hamiltonians,  some variational techniques led to
significant improvements and results. John Mather led the way in
this direction in \cite{MaZ,Mat}. In the first paper he studied
free time minimizers of the Lagrangian action functional,
introducing the Aubry set, while in the second one he studied
invariant minimizing measures, introducing what is now called the
Mather set.

Later Fathi, through his weak KAM Theorem and Theory, showed the
link between the variational sets introduced by Mather and the
Hamilton--Jacobi equation. This allowed to simplify some proofs of
Mather and to establish new PDE results, in connection with the
theory of homogenization \cite{LPV}. This material is presented in
\cite{Fathi}.

The main challenge now seems to find analogues of the
Aubry--Mather theory in wider settings. There are mainly two
approaches to this problem. The first one is to lower the
regularity of the Hamiltonians. This is a rather natural issue in
view of applicability to Optimal Control and Hamilton--Jacobi
equations. A generalization of the weak KAM theory to continuous
and quasi--convex Hamiltonians was first given by Fathi and
Siconolfi in \cite{FaSic03}. Their approach has been subsequently
developed and applied in different contexts, see for instance
\cite{CaCeSic09, DavSic05, DavSic09, DavSic11, DavSic12, FatSic11,
IcIs08, IcIs09, Ishii, IsMi, SicTer}.

The second is to drop the convexity and coercivity assumptions,
thus preventing from using traditional variational arguments. A
generalization of weak KAM Theory to this framework is an
outstanding and widely open question. On the other hand, the
theory of viscosity solutions, introduced by Crandall and Lions
\cite{CrLi}, provides powerful tools to study Hamilton--Jacobi
equations in broad generality. With regard to the problems studied
in the references above mentioned, these techniques have been
successfully employed to obtain similar results under different,
and in some cases weaker, assumptions on the Hamiltonians, see for
instance \cite{AgAn, AlBa, Ba07, BaMi, BaSo00, Carda}.\smallskip

The present paper is addressed to explore the weak KAM theoretic
aspects of commuting Hamiltonians. This issue is related to the
solvability of a multi--time Hamilton--Jacobi equation of the kind
\begin{equation}\label{multi-time HJ}
\begin{cases}
\displaystyle
{\partial_t u}+H(x, D_x u)=0&\quad \hbox{in $(0,+\infty)\times (0,+\infty)\times M$}\medskip\\
\displaystyle {\partial_s u}+G(x, D_x u)=0&\quad \hbox{in $(0,+\infty)\times (0,+\infty)\times M$}\medskip\\
u(0,0,x)=u_0(x)&\quad \hbox{on $M$},
\end{cases}
\end{equation}
where $M$ stands either for the Euclidean space $\R^N$ or the
$N$--dimensional flat torus $\T^N$, $H$ and $G$ denote two real
valued functions on $M\times \mathbb{R}^N$, and $u_0:M \to \R$ is
any given Lipschitz continuous initial datum.

The first existence and uniqueness results appeared in \cite{LR}
for Tonelli Hamiltonians independent of $x$ via a representation
formula for solutions of the Hamilton--Jacobi equation: the
Hopf--Lax formula. Related problems were studied in \cite{imbert}.

A generalization of this result came much later in \cite{BT},
where dependance in $x$ is introduced (and the convexity hypothesis is
kept). As a counterpart, the authors explain the necessity to
impose the following commutation property on the Hamiltonians:
\begin{equation}\label{commutation}
    \langle D_x G,\, D_p H\rangle -\langle D_x H,\, D_p G\rangle =0\qquad\hbox{in $M\times\R^N$}.
\end{equation}
Note that this condition is automatically  satisfied when the
Hamiltonians are independent of $x$. The proof involves an a
priori different Hamilton--Jacobi equation with parameters and
makes use of fine viscosity solution techniques. In \cite{Za},
under stronger hypotheses, a more geometrical proof, following the
original idea of Lions--Rochet, is given.

This equation was then studied under weaker regularity assumptions
in \cite{MR}. The convexity is dropped in \cite{CaVi08} in the
framework of symplectic geometry and variational solutions.
Finally, let us mention that in \cite{So} the influence of first
integrals (not necessarily of Tonelli type) on the dynamics of a
Tonelli Hamiltonian is studied.

In \cite{Za} the second author has explored relation
\eqref{commutation} for a pair of Tonelli Hamiltonians in the
framework of weak KAM Theory to discover that the notions of Aubry
set, of Peierls barrier and of weak KAM solution are invariants of
the commutation property. Similar results were independently obtained in \cite{chinois,chinois1}. \smallskip

This article deals with the first approach: we will consider
purely continuous Hamiltonians, but we will keep the convexity and
coercivity assumptions in the gradient variable. Here, we will say that $H$ and $G$ commute to  
simply mean that the multi--time Hamilton--Jacobi equation
\eqref{multi-time HJ} admits a viscosity solution for any
Lipschitz initial datum. This is formulated in terms of
commutation of their Lax--Oleinik semigroups, and is equivalent to
\eqref{commutation} when the Hamiltonians are smooth enough, see
\cite{BT} and Appendix \ref{appendix commutation}. 

The purpose of the paper is 
to explore the weak KAM consequences of the commutation property in this setting. Our main 
achievement in this direction is the following Theorem, which generalizes the main new result of \cite{Za}:
\begin{teorema}\label{teo main}
Let $H$ and $G$ be a pair of continuous, strictly convex and
superlinear Hamiltonians on $\T^N\times\R^N$. If $H$ and $G$
commute, then they have the same weak KAM (or critical) solutions
and the same Aubry set.
\end{teorema}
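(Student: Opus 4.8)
The plan is to argue entirely at the level of the Lax--Oleinik semigroups. Write $T^H_t$ and $T^G_s$ for the semigroups of $H$ and $G$ on $C(\T^N)$, and set $\bar T^H_t:=T^H_t+c(H)\,t$ and $\bar T^G_s:=T^G_s+c(G)\,s$, where $c(H),c(G)$ denote the critical constants; then the weak KAM solutions of $H$ (resp.\ of $G$) are precisely the common fixed points of $\{\bar T^H_t\}_{t\ge0}$ (resp.\ $\{\bar T^G_s\}_{s\ge0}$). Recall that $\mathcal S_H,\mathcal S_G$ are nonempty and equi-Lipschitz, and that the operators $\bar T^H_t,\bar T^G_s$ are order preserving, commute with the addition of constants, are $1$-Lipschitz for $\|\cdot\|_\infty$, and --- this is the only direct use of the hypothesis --- satisfy $\bar T^H_t\,\bar T^G_s=\bar T^G_s\,\bar T^H_t$ for all $t,s\ge0$. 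Two consequences are then immediate: (i) $\bar T^G_s(\mathcal S_H)\subseteq\mathcal S_H$, since $\bar T^H_t(\bar T^G_su)=\bar T^G_s(\bar T^H_tu)=\bar T^G_su$ for $u\in\mathcal S_H$; and (ii) for $u\in\mathcal S_H$ and any fixed $z\in\mathcal S_G$ one has $\|\bar T^G_su-z\|_\infty=\|\bar T^G_su-\bar T^G_sz\|_\infty\le\|u-z\|_\infty$, so that the orbit $\{\,v_s:=\bar T^G_su\ :\ s\ge0\,\}$ is a \emph{bounded}, equi-Lipschitz subfamily of $\mathcal S_H$.

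First I would prove $\mathcal S_H=\mathcal S_G$. Fix $u\in\mathcal S_H$ and $x_0\in\mathcal A_H$. The crucial input is the differentiability of critical subsolutions on the Aubry set, to be established separately: every $H$-critical subsolution is differentiable at $x_0$, with a differential $\bar p(x_0)$ that does not depend on the subsolution, and $H(x_0,\bar p(x_0))=c(H)$. Applied to each $v_s$ (which, being a weak KAM solution of $H$, is in particular an $H$-critical subsolution), this yields $Dv_s(x_0)=\bar p(x_0)$ for \emph{every} $s\ge0$. On the other hand $(s,x)\mapsto T^G_su(x)=v_s(x)-c(G)s$ is a viscosity solution of $\partial_s w+G(x,D_xw)=0$; combining the $x$-differentiability at $x_0$, uniformly in $s$, with the Lipschitz dependence $s\mapsto v_s(x_0)$, one obtains $\partial_s v_s(x_0)=c(G)-G(x_0,\bar p(x_0))$ for a.e.\ $s$, a quantity independent of $s$. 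Hence $s\mapsto v_s(x_0)$ is affine, and, being bounded, it is constant; therefore $v_s(x_0)=u(x_0)$ (and, as a byproduct, $G(x_0,\bar p(x_0))=c(G)$). Since $x_0\in\mathcal A_H$ was arbitrary, $v_s=u$ on $\mathcal A_H$; as any element $w$ of $\mathcal S_H$ is determined by its trace on $\mathcal A_H$ through $w=\min_{y\in\mathcal A_H}(w(y)+h_H(y,\cdot))$ (with $h_H$ the Peierls barrier), we conclude $\bar T^G_su=v_s=u$ for all $s\ge0$, i.e.\ $u\in\mathcal S_G$. Thus $\mathcal S_H\subseteq\mathcal S_G$, and exchanging $H$ and $G$ gives $\mathcal S_H=\mathcal S_G$.

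Next, for $\mathcal A_H=\mathcal A_G$, I would pass to the reversed Hamiltonians $H^-(x,p):=H(x,-p)$ and $G^-(x,p):=G(x,-p)$, which are again continuous, strictly convex and superlinear. They commute as well (a time-reversal/duality argument turns $\bar T^H_t\bar T^G_s=\bar T^G_s\bar T^H_t$ into the same identity for the backward semigroups), and one has $\mathcal A_{H^-}=\mathcal A_H$. Running the previous step for the pair $(H^-,G^-)$ gives $\mathcal S_{H^-}=\mathcal S_{G^-}$. Now use the intrinsic description of the projected Aubry set as the contact set of forward and backward critical solutions: $x\in\mathcal A_H$ if and only if there exist $u\in\mathcal S_H$ and $\tilde u\in\mathcal S_{H^-}$ with $u+\tilde u\ge0$ on $\T^N$ and $(u+\tilde u)(x)=0$ (for the ``only if'' one may take $u=h_H(x,\cdot)$ and $\tilde u=h_H(\cdot,x)$, using the triangle inequality for $h_H$ together with $h_H(x,x)=0$). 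Given $x_0\in\mathcal A_H$ and such a pair, the equalities $\mathcal S_H=\mathcal S_G$ and $\mathcal S_{H^-}=\mathcal S_{G^-}$ make the same pair admissible for $G$, whence $x_0\in\mathcal A_G$; by symmetry $\mathcal A_H=\mathcal A_G$, and the theorem follows.

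The main obstacle is the proof that $\mathcal S_H=\mathcal S_G$ --- more precisely, the differentiability of critical subsolutions on the Aubry set, together with the uniformity in $s$ that makes the evolution identity $\partial_s v_s(x_0)=c(G)-G(x_0,Dv_s(x_0))$ legitimate. In the Tonelli case this is classical, and one can moreover replace it by the commutation of the Hamiltonian flows over the Aubry set, as in \cite{Za}; in the purely continuous setting there is no flow, and these differentiability properties must be established from scratch. Everything else above is essentially bookkeeping around this point.
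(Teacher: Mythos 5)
Your first step ($\mathcal S_H=\mathcal S_G$) follows essentially the paper's route: one shows that the orbit $s\mapsto \bar T^G_s u$ of a weak KAM solution $u$ of $H$ is constant by evaluating the evolution equation at points where all $H$-critical subsolutions are differentiable with a common gradient. Two remarks. In the purely continuous setting such differentiability is only available on a \emph{dense} subset $\DD_H$ of $\A_H$ (Theorem \ref{teo diff}, Proposition \ref{prop D}), not at every point of $\A_H$; this is harmless, since $\DD_H$ is dense in $\A_H$ and is a uniqueness set. Your observation that $s\mapsto v_s(x_0)$ is affine and bounded, hence constant, is a genuine simplification: it dispenses with the common fixed point $u_0\in\sol_H\cap\sol_G$ (obtained in the paper via DeMarr's theorem) which the paper uses to know a priori that the time derivative vanishes. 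On the other hand, the step ``partial differentiability in $x$ at $x_0$ for every $s$, plus Lipschitz dependence on $s$, yields $\partial_s v_s(x_0)=c(G)-G\big(x_0,\bar p(x_0)\big)$ a.e.'' is exactly where the technical weight of the paper lies (Lemma \ref{lemma Maxime} and Lemmas \ref{lemma semiconcave}--\ref{lemma pain}): separate partial derivatives do not by themselves produce elements of $D^{\pm}w(s,x_0)$, and one of the two inequalities requires a localization and contradiction argument. You correctly flag this as the main obstacle, but it is not mere bookkeeping.

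The second step contains a genuine error. The ``intrinsic description'' you invoke --- $x\in\A_H$ if and only if there \emph{exist} $u\in\mathcal S_H$ and $\tilde u\in\mathcal S_{H^-}$ with $u+\tilde u\geqslant 0$ and $(u+\tilde u)(x)=0$ --- is false: with an existential quantifier this condition describes (a set containing) the projected Ma\~n\'e set, which is in general strictly larger than the Aubry set. Concretely, take $H(x,p)=\tfrac12|p|^2+V(x)$ on $\T^1$ with $V$ a symmetric double--well potential normalized so that $\max V=0$; then $c=0$ and $\A_H$ consists of the two maxima of $V$, but $u(x)=\int_0^x\sqrt{-2V}$ (suitably continued around the circle) is a $C^1$ classical critical solution, so $u\in\mathcal S_H$ and $-u\in\mathcal S_{H^-}$, while $u+(-u)\equiv 0$: your criterion would place every point of $\T^1$ in $\A_H$. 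The correct characterization involves an \emph{intersection} over conjugate pairs, and conjugacy is itself defined through the Aubry set, so it cannot be transported from $H$ to $G$ in this way. The paper instead deduces $\A_H=\A_G$ from the identity $h_H=h_G$ of the Peierls barriers (Theorem \ref{aubryeq}), proved by combining $\sol_H=\sol_G$ with the commutation of the semigroups, their non--expansiveness, and the locally uniform convergence $h^t\to h$ (which uses strict convexity and compactness); the conclusion then follows from Theorem \ref{teo A}, i.e. $\A=\{y\,:\,h(y,y)=0\}$. Your contact--set argument needs to be replaced by something of this kind.
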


We want to emphasize that the extension of this result to the non--regular setting was far from being straightforward. First, there is a problem of techniques: the crucial point in the proof of \cite{Za} is based on a careful study of the flows associated with $H$ and $G$ and exploits properties and tools
developed in the framework of symplectic geometry and weak KAM
Theory. For instance, a key ingredient is a deep result due to Bernard \cite{BeSy}, stating that the Aubry set is a symplectic invariant.   
The use of all this machinery is only possible under strong
regularity assumptions on the Hamiltonians. 

But there is more: looking at the arguments in \cite{Za}, one realizes that  
the commutation hypothesis \eqref{commutation} entails a
certain rigidity of the dynamics and of the underlying geometric
frame of the equations. Even if in the purely
continuous case some analogies can be drawn, all this rich
structure is lost. To put it differently, the problem did not seem to be just 
of technical nature: there was no {\em a priori} evidence that the
result would continue to hold by dropping the regularity.

The proof given here borrows some arguments from \cite{Za}, but the
conclusion is reached via a different and rather simple remark   
on the time--dependent equations. Incidentally, with this idea the proof in the smooth case can be 
made considerably simpler.  It is also worth noticing that Theorem \ref{teo main} applies, in particular, to  
a pair of Hamiltonians of class $C^1$ satisfying \eqref{commutation}, that is, to a case not covered 
by the previous works on the subject \cite{chinois, chinois1, Za}. As a byproduct, our study allows us to obtain a new result also for
classical Tonelli Hamiltonians:

\begin{teorema}
Let $G$ and $H$ be two commuting  Tonelli Hamiltonians. Then they
admit a $C^{1,1}$ critical subsolution which is strict outside
their common Aubry set.
\end{teorema}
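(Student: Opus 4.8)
The plan is to start from a common critical subsolution of $H$ and $G$ that is strict outside their common Aubry set --- whose existence is guaranteed by the results of the previous sections, already in the purely continuous case, hence \emph{a fortiori} here --- and to upgrade its regularity to $C^{1,1}$ by Bernard's Lax--Oleinik regularization \cite{Bernard}; the only genuinely new point will be to check that this regularization preserves the property of being a subsolution of \emph{both} Hamiltonians, and this is precisely where the commutation hypothesis enters. By Theorem \ref{teo main}, $H$ and $G$ share their critical value; normalizing, we assume $c(H)=c(G)=0$, we let $\mathcal{A}$ be their common Aubry set, we pick a common critical subsolution $v$ which is strict outside $\mathcal{A}$, and we write $\{T^{H,-}_t\},\{T^{H,+}_t\}$ (resp. $\{T^{G,-}_s\},\{T^{G,+}_s\}$) for the forward and backward Lax--Oleinik semigroups of $H$ (resp. $G$). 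We will use that these semigroups are order preserving, that $w$ is an $H$--subsolution of $H=0$ if and only if $s\mapsto T^{H,-}_s w$ is nondecreasing (and similarly for $G$), and that a function which is simultaneously semiconcave and semiconvex, with linear moduli, is of class $C^{1,1}$.

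\emph{Step 1 (the forward semigroups preserve common critical subsolutions).} If $w$ is a common critical subsolution of $H$ and $G$, then so is $T^{H,-}_t w$ for every $t\ge 0$: since $s\mapsto T^{H,-}_s w$ is nondecreasing, $T^{H,-}_\sigma(T^{H,-}_t w)=T^{H,-}_{t+\sigma}w\ge T^{H,-}_t w$ for all $\sigma\ge0$, so $T^{H,-}_t w$ is an $H$--subsolution; and since $T^{G,-}_s w\ge w$, commutation and monotonicity give $T^{G,-}_s\bigl(T^{H,-}_t w\bigr)=T^{H,-}_t\bigl(T^{G,-}_s w\bigr)\ge T^{H,-}_t w$ for all $s$, so $T^{H,-}_t w$ is also a $G$--subsolution. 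The same holds for $T^{G,-}_s$. \emph{Step 2 (the backward semigroups preserve common critical subsolutions).} Put $\check{H}(x,p):=H(x,-p)$ and $\check{G}(x,p):=G(x,-p)$: these are commuting Tonelli Hamiltonians with critical value $0$ (the commutation being clear because, for Tonelli Hamiltonians, it amounts to \eqref{commutation}, a relation stable under $p\mapsto -p$; the critical value is also unchanged under $p\mapsto-p$). If $w$ is a common critical subsolution of $H$ and $G$, then $-w$ is one of $\check{H}$ and $\check{G}$; by Step 1, $T^{\check{H},-}_t(-w)$ is a common critical subsolution of $\check{H}$ and $\check{G}$; hence, by the standard time--reversal identity between the backward semigroup of $H$ and the forward semigroup of $\check{H}$, $T^{H,+}_t w=-\,T^{\check{H},-}_t(-w)$ is a common critical subsolution of $H$ and $G$. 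Symmetrically for $T^{G,+}_s$.

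\emph{Step 3 (regularization).} For $t>0$ the function $T^{H,-}_t v$ is a semiconcave common critical subsolution (semiconcave by the general theory of the Lax--Oleinik semigroup of a Tonelli Hamiltonian, common by Step 1), and it agrees with $v$ on $\mathcal{A}$. By Bernard's regularization lemma, if $s$ is small enough in terms of the semiconcavity constant of $T^{H,-}_t v$, then $\bar v:=T^{H,+}_s\,T^{H,-}_t v$ is simultaneously semiconcave and semiconvex, hence of class $C^{1,1}$ on $\T^N$; by Step 2, $\bar v$ is a common critical subsolution of $H$ and $G$. That $\bar v$ can be taken strict outside $\mathcal{A}$ is exactly the content of Bernard's argument in the one--Hamiltonian case: starting from $v$, merely Lipschitz but strict off $\mathcal{A}$, and choosing $t$ and then $s$ small enough, one produces a $C^{1,1}$ critical subsolution of $H$ still strict off $\mathcal{A}$, the point being that for small times the calibrated curves issued from a given compact subset of $\T^N\setminus\mathcal{A}$ stay in a fixed compact subset of $\T^N\setminus\mathcal{A}$ on which $v$ is already strict, so that a (possibly smaller) strict inequality for $H(x,d\bar v(x))$ and $G(x,d\bar v(x))$ survives.

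The two delicate points should be, first, Step 2: the $C^{1,1}$ regularization cannot avoid the backward semigroup, and there is no \emph{a priori} reason for it to respect $G$; the resolution is the conjunction of the time--reversal identity $T^{H,+}_t w=-\,T^{\check{H},-}_t(-w)$ with the (here elementary) symmetry of \eqref{commutation} under $p\mapsto -p$, and this is the only place the commutation hypothesis is invoked in this theorem. Second, the bookkeeping in Step 3 needed to propagate the strict inequalities through the double regularization on an exhaustion of $\T^N\setminus\mathcal{A}$ by compact sets; this is handled exactly as in \cite{Bernard}, but it is the spot that requires care rather than the mere invocation of a black box.
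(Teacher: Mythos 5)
Your architecture is the same as the paper's (Theorem \ref{teo common strict}): a common strict subsolution obtained from $\max\{H,G\}$ via Fathi--Siconolfi, then Bernard's double Lax--Oleinik regularization performed with the semigroups of $H$ alone, with commutation and the $\check H$ time--reversal guaranteeing that the output is still a subsolution of $G$. Steps 1 and 2 are correct and correspond to Proposition \ref{stabilite} and Remark \ref{oss subsol}; the order of composition ($T^{H,+}_s\circ T^{H,-}_t$ versus the paper's $\S_H(t)\circ\S_H^+(s)$) is immaterial.

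The genuine gap is in Step 3, exactly where you write that strictness ``is exactly the content of Bernard's argument in the one--Hamiltonian case.'' Bernard's argument yields $H(x,D\bar v(x))<0$ off $\A$ because $H$ is conserved along its \emph{own} Hamiltonian flow: $D(\S_H(t)v)(x)$ is the image under $\phi_H^t$ of some $p\in D^-v(y)$ at the foot $y=\gamma(-t)$ of the minimizer, so $H\big(x,D(\S_H(t)v)(x)\big)=H(y,p)<0$ once one knows $y\notin\A$. Nothing analogous is automatic for $G$: the covector $D\bar v(x)$ is transported by the flow of $H$, and the fact that the calibrated curve stays in a compact subset of $\T^N\setminus\A$ where $v$ is a strict $G$--subsolution only controls $G\big(\gamma(\sigma),Dv(\gamma(\sigma))\big)$, not $G(x,D\bar v(x))$. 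The missing ingredient --- the content of the paper's Lemma \ref{lemma common strict} --- is that $G$ is a first integral of the Hamiltonian flow of $H$, i.e. $\{G,H\}=0$, which for Tonelli Hamiltonians is equivalent to commutation of the semigroups (Proposition \ref{justification}, Appendix \ref{appendix commutation}); this gives $G\big(x,D(\S_H(t)v)(x)\big)=G(y,p)<0$, with $y\notin\A$ because the lifted Aubry set is invariant under the flow. So commutation enters the strictness step in an essential way, contrary to your closing remark that Step 2 is its only occurrence (it is also used in your Step 1). A secondary point: since the intermediate functions are merely Lipschitz, passing from the pointwise inequality at differentiability points to the locally uniform almost--everywhere inequality required by the definition of a strict subsolution needs the reachable--gradient/Clarke--gradient argument of Lemma \ref{lemma common strict}; only for the final $C^{1,1}$ function is this automatic from continuity of the gradient.
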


In the end, our research reveals that the invariants observed in
the framework of weak KAM Theory are consequence of the
commutation of the Lax--Oleinik semigroups only, with no further
reference to the Hamiltonian {flows},  that cannot be even defined
in our setting. The only point where a kind of generalized
dynamics plays a role is when we establish some differentiability
properties of critical subsolutions on the Aubry set, which are
crucial to state Theorem \ref{teo main} in its full generality.
These results are presented in Section \ref{sez
differentiability}, where we will prove a more precise version of
the following

\begin{teorema} Let $H$ be a continuous, strictly convex and superlinear Hamiltonian on $\T^N\times\R^N$.
Then there exists a set $\DD \subset \T^N$ such that any
subsolution $u$ of the critical Hamilton--Jacobi equation is
differentiable on $\DD$. Moreover, its gradient $D u$ is
independent of $u$ on $\DD$. Last, $\DD$ is a uniqueness set for
the critical equation, that is, if two weak KAM (or critical)
solutions coincide on $\DD$, then they are in fact equal.
\end{teorema}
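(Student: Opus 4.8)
The plan is to take $\DD$ to be the projected Aubry set $\A\subset\T^N$ of $H$ at its critical value $c$; the ``more precise version'' announced for Section~\ref{sez differentiability} then amounts to this identification together with an explicit formula for the common gradient. Throughout I would work with the intrinsic (Ma\~n\'e) semidistance $S_c$ attached to the critical sublevels $Z_c(x):=\{p\in\R^N:H(x,p)\le c\}$, namely $S_c(x,y):=\inf\int_0^1\sigma_c(\gamma,\dot\gamma)\,dt$ over Lipschitz curves $\gamma$ joining $x$ to $y$, where $\sigma_c(x,q):=\sup_{p\in Z_c(x)}\langle p,q\rangle$ is the support function of $Z_c(x)$. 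Recall from \cite{FaSic03} that the critical subsolutions are exactly the functions $u$ with $u(y)-u(x)\le S_c(x,y)$ for all $x,y$, and that $x\in\A$ precisely when $x$ lies on nontrivial $S_c$-cycles of $S_c$-length tending to $0$, equivalently when $S_c(x,\cdot)$ is itself a critical solution.

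The first step is to produce the generalized dynamics. Fixing $x\in\A$, I would take such a sequence of short $S_c$-cycles at $x$ and pass to a suitable limit --- the compactness being supplied by the superlinearity of $H$, equivalently by the boundedness of the sublevels $Z_c(x)$, which lets one control the cycles after reparametrization --- thereby obtaining a \emph{static curve} $\xi\colon\R\to\T^N$ with $\xi(0)=x$ and $\int_s^t\sigma_c(\xi,\dot\xi)\,d\tau=S_c(\xi(s),\xi(t))=-S_c(\xi(t),\xi(s))$ for all $s\le t$. Combining this identity with the triangle inequality for $S_c$ shows at once that \emph{every} critical subsolution $u$ is calibrated by $\xi$: $u(\xi(t))-u(\xi(s))=\int_s^t\sigma_c(\xi,\dot\xi)\,d\tau$ on all of $\R$. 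This is the only point where a dynamics is invoked, and the construction and fine properties of these curves --- which play, in the continuous setting, the part of the Euler--Lagrange flow of the Tonelli theory --- are the genuinely new ingredient.

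Next comes differentiability. Normalizing $u(x)=0$, the subsolution characterization gives $-S_c(\cdot,x)\le u\le S_c(x,\cdot)$ with equality at $x$, so the statement reduces to the two \emph{extreme} critical solutions $h^+:=S_c(x,\cdot)$ and $h^-:=-S_c(\cdot,x)$: \emph{it suffices to show that $h^+$ and $h^-$ are differentiable at $x$ for every $x\in\A$}. This is the main obstacle, and the technical heart of Section~\ref{sez differentiability}; it is precisely here that the generalized dynamics has to be exploited in earnest, and where the symplectic and Euler--Lagrange machinery used for this step in the Tonelli treatment of \cite{Za} has no counterpart. Granting it, the sandwich immediately forces every critical subsolution $u$ to be differentiable at $x$ with $Du(x)=Dh^+(x)=Dh^-(x)=:p_x$, in particular independent of $u$. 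Moreover $p_x$ can be pinned down explicitly: testing the calibration of $h^+$ along a static curve $\xi$ through $x$ near $t=0$ against an element $p$ of $D^+h^+(x)$, together with the subsolution bound $D^+h^+(x)\subseteq Z_c(x)$, forces $p$ (hence $p_x=Dh^+(x)$) to be the unique point of $\partial Z_c(x)$ on the supporting hyperplane of $Z_c(x)$ with outer normal $\dot\xi(0)$ --- uniqueness here being exactly the strict convexity of $Z_c(x)$, i.e. of $H(x,\cdot)$.

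Finally, that $\DD=\A$ is a uniqueness set. I would invoke the comparison principle for the critical equation relative to $\A$: if $v_1$ is a critical subsolution, $v_2$ a critical supersolution, and $v_1\le v_2$ on $\A$, then $v_1\le v_2$ on $\T^N$. This rests on the existence, in the continuous theory (\cite{FaSic03}), of a critical subsolution $\phi$ strict outside $\A$: by convexity of $H$ in $p$, each convex combination $w_\lambda:=(1-\lambda)v_1+\lambda\phi$ ($\lambda\in(0,1)$) is again a critical subsolution, strict outside $\A$, so any maximum of $w_\lambda-v_2$ is attained on $\A$; as $w_\lambda\to v_1$ uniformly when $\lambda\to0$ and $v_1\le v_2$ on $\A$, this forces $v_1\le v_2$ on $\T^N$. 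Applied to two critical solutions coinciding on $\A$, this yields $v_1\le v_2$, and by symmetry $v_1=v_2$ --- the last assertion.
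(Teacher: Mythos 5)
Your sandwich reduction $-S_c(\cdot,x)\leqslant u\leqslant S_c(x,\cdot)$ is a clean way to pass from the extremal solutions to arbitrary subsolutions, and your uniqueness argument is the standard one. But the proof has a genuine gap at exactly the point you flag with ``Granting it'': the differentiability of $S_c(x,\cdot)$ and $-S_c(\cdot,x)$ at \emph{every} $x\in\A$ is the entire content of the theorem, and it is not established. Worse, the claim as you state it (with $\DD=\A$) is stronger than what the paper proves and, as far as the authors indicate, is not known to hold for merely continuous Hamiltonians: differentiability of all critical subsolutions at every point of the Aubry set is proved in \cite{FaSic03} only under local Lipschitz continuity of $H$ in $x$, via semiconcavity estimates that break down here. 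The paper deliberately retreats to a set $\DD$ that is only a \emph{dense} subset of $\A$ (see \eqref{def D} and Proposition \ref{prop D}), and explicitly describes its result as a generalization ``in a weaker form'' of Theorem 7.8 of \cite{FaSic03}.

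The mechanism you are missing is Proposition \ref{prop Fathi+}. Given a static curve $\gamma\in\K$ and a critical subsolution $u$, one views $\gamma(t_0)$ both as the \emph{endpoint} of a minimizer for $\big(\S(t_0)u\big)\big(\gamma(t_0)\big)=u\big(\gamma(t_0)\big)$ and as the \emph{starting point} of a minimizer for $\big(\S(1)u\big)\big(\gamma(t_0+1)\big)$; a careful first--order expansion of the action under small perturbations of the curve then places $D_qL\big(\gamma(t_0),\dot\gamma(t_0)\big)$ simultaneously in $D^+u\big(\gamma(t_0)\big)$ and in $D^-u\big(\gamma(t_0)\big)$, forcing differentiability with that explicit gradient. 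The price is that this expansion requires $t_0$ to be a differentiability point of $\gamma$ and a Lebesgue point of $D_qL\big(\gamma(\cdot),\dot\gamma(\cdot)\big)$, so one only obtains differentiability at almost every parameter along each static curve, hence on a dense subset of $\A$ --- which still suffices for the uniqueness assertion because critical solutions are continuous and $\A$ is a uniqueness set. Your closing identification of the common gradient as the point of $\partial Z_c(x)$ with outer normal $\dot\xi(0)$ is consistent with $D_qL\big(x,\dot\xi(0)\big)$, but it too presupposes the existence of $\dot\xi(0)$, which again holds only for a.e.\ parameter; this is another sign that the full-$\A$ statement cannot be reached by these means.
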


These latter results are new and we believe interesting {\em per
se}. They generalize, in a weaker form, Theorem 7.8 in
\cite{FaSic03}, and bring the hope of extending to the purely
continuous case the results of \cite{FaSic03} about the existence
of a $C^1$ critical subsolution, strict outside the Aubry set.
Such a generalization, however, seems out of reach without any
further idea.\medskip

The article is organized as follows. In Section \ref{sez notation}
we present the main notations and assumptions used throughout the
paper, while in Section \ref{sez HJ} we recall the definitions and
the results about Hamilton--Jacobi equations that will be needed
in the sequel. Section \ref{sez weak KAM} consists in a brief
overview of weak KAM Theory for non--regular Hamiltonians. Some
proofs are postponed to Appendix \ref{appendix b}. In Section
\ref{sez differentiability} we prove the differentiability
properties of critical subsolutions above mentioned. In Section
\ref{sez commuting} we examine the weak KAM
theoretic aspects of the commutation property and we establish our
main results for continuous and strictly convex Hamiltonians. Some auxiliary lemmas 
are stated and proved in Appendix \ref{appendix technical lemmas}. Appendix \ref{appendix commutation} 
contains an argument showing the equivalence between the notion of commutation considered in this paper and the  
one given in terms of cancellation of the Poisson
bracket when the Hamiltonians are of Tonelli type.\\
\end{section}

\begin{section}{Preliminaries}

\begin{subsection}{Notations and standing assumptions}\label{sez notation}
With the symbols $\R_+$ and $\R_-$ we will refer to the set of
nonnegative and nonpositive real numbers, respectively.  We say
that a property holds {\em almost everywhere} ($a.e.$ for short)
on $\R^k$ if it holds up to a {\em negligible} subset, i.e. a
subset of zero $k$--dimensional Lebesgue measure.

\indent By modulus we mean a nondecreasing function from $\R_+$ to
$\R_+$, vanishing and continuous at $0$. A function $g:\R_+\to\R$
will be termed {\em superlinear} if
\[
\lim_{h\to +\infty} \frac{g(h)}{h}=+\infty.
\]

\noindent Given a metric space $X$, we will write
$\varphi_n\ucv\varphi$ on $X$ to mean that the sequence of
functions $(\varphi_n)_n$ uniformly converges to $\varphi$ on
compact subsets of $X$. Furthermore, we will denote by
$\D{Lip}(X)$ the family of Lipschitz--continuous real functions
defined on $X$.\smallskip\par

Throughout the paper, $M$ will refer either to the Euclidean space
$\R^N$ or to the $N$--dimensional flat torus $\T^N$, where $N$ is
an integer number. The scalar product in $\R^N$ will be denoted by
$\langle\,\cdot\;, \cdot\,\rangle$, while the symbol $|\cdot|$
stands for the Euclidean norm. Note that the latter induces a norm
on $\T^N$, still denoted by $|\cdot|$, defined as
\[
|x|:=\min_{\kappa\in\Z^N}|x+k|\qquad\hbox{for every $x\in\T^N$}.
\]
We will denote by $B_R(x_0)$ and $B_R$ the closed balls in $M$ of
radius $R$ centered at $x_0$ and $0$, respectively.

With the term {\em curve}, without any further specification, we
refer to an absolutely continuous function from some given
interval $[a,b]$ to $M$. The space of all such curves is denoted
by $W^{1,1}([a,b]; M)$, while $\D{Lip}_{x,y}([a,b]; M)$ stands for
the family of Lipschitz--continuous curves $\gamma$ joining $x$ to
$y$, i.e. such that $\gamma (a)=x$ and $\gamma (b)=y$, for any
fixed $x$, $y$ in $M$.
\par

With the notation $\|g\|_\infty$ we will refer to the usual
$L^\infty$--norm of $g$, where the latter will be either a
measurable real function on $M$ or a vector--valued measurable map
defined on some interval.
\par

Let $u$ be a continuous function on $M$. A  {\em subtangent} (respectively, {\em supertangent}) of $u$ at $x_0$ is a function $\phi\in C^1(M)$ such that $\phi(x_0)= u(x_0)$ and $\phi(x)\leqslant u(x)$  for every $x\in M$ (resp., $\geqslant$). Its gradient $D\phi(x_0)$ will be called a {\em subdifferential}  (resp. {\em superdifferential}) of $u$ at $x_0$, respectively. The set of sub and superdifferentials of $u$ at $x_0$ will be denoted $D^-u(x_0)$ and 
$D^+u(x_0)$, respectively. We recall that $u$ is
differentiable at $x_0$ if and only if $D^+u(x_0)$ and $D^-u(x_0)$ are
both nonempty. In this instance, $D^+u(x_0)=D^-u(x_0)=\{Du(x_0)\}$,
where $Du(x_0)$ denotes the differential of $u$ at $x_0$. We refer the
reader to \cite{CaSi00} for the 
proofs.

When $u$ is locally Lipschitz in $M$, we will denote by $\partial^*u(x_0)$ the set of 
{\em reachable gradients} of $u$ at $x_0$, that is the set
\[
\partial^* u(x_0)=\{\lim_n D u(x_n)\,:\,\hbox{$u$ is differentiable at $x_n$, $x_n\to x_0$}\,\},
\]
while the {\em Clarke's generalized gradient} $\partial_c u(x_0)$ is the closed convex hull of $\partial^* u(x_0)$. 
The set $\partial_c u(x_0)$ contains both $D^+u(x_0)$ and $D^-u(x_0)$, in particular $Du(x_0)\in \partial_c u(x_0)$ at any differentiability point $x_0$ of $u$.  
We recall that the set--valued map $x\mapsto\partial_c u(x)$ is upper semicontinuous with respect to set inclusion. When $\partial_c u(x_0)$ reduces to a singleton, the function $u$ is said to be {\em strictly differentiable} at that point. In this instance, $u$ is differentiable at $x_0$ and its gradient is continuous at $x_0$. We refer the reader to \cite{Cl} for a detailed treatment of the subject.

A function $u$ will be said to be {\em semiconcave} on an open subset $U$ of $M$ if for every $x\in U$ there exists  a vector $p_x\in\R^N$ such that 
\begin{equation*}\label{cond semiconcavita}
 u(y)-u(x)\leqslant \langle p_x,y-x\rangle + |y-x|\,\omega(|y-x|)\qquad\hbox{for every $y\in U$,}
\end{equation*}
where $\omega$ is a modulus. The vectors $p_x$ satisfying such inequality are precisely the elements of $D^+ u(x)$, which is thus always nonempty in $U$. Moreover, $\partial_c u(x)=D^+u(x)$ for every $x\in U$, yielding in particular that $Du$ is continuous on its domain of definition in $U$, see \cite{CaSi00}. This property will be exploited in the proof of Lemma \ref{lemma pain}.

\smallskip\par

Throughout the paper, we will call {\em convex Hamiltonian} a
function $H$ satisfying the following set of assuptions:\smallskip
\begin{itemize}
    \item[(H1)] \quad $H:M\times\R^N\to\R\qquad$ is
    continuous;\smallskip\\
    \item[(H2)] \quad $p\mapsto H(x,p)\qquad\hbox{is  convex on $\R^N$ for any
    $x\in M$;}$ \smallskip\\
     \item[(H3)] \quad there
     exist two  superlinear functions $\alpha,\beta:\R_+\to\R$ such
     that
     \[
     \alpha\left(|p|\right)\leqslant H(x,p)\leqslant \beta\left(|p|\right)\qquad\hbox{for all
     $(x,p)\in M\times\R^N$.}
     \]
\end{itemize}
\ \\
We  define the {\em Fenchel transform} $L:M\times\R^N\to \R$
of $H$ via
\begin{equation}\label{def L}
L(x,q)=H^*(x,q):=\sup_{p\in\R^N}\left\{\langle p, q\rangle -
H(x,p)\right\}.
\end{equation}
The function $L$ is called the Lagrangian associated with the
Hamiltonian $H$; it satisfies the following properties, see Appendix A.2 in \cite{CaSi00}:\\
\begin{itemize}
    \item[(L1)] \quad $L:M\times\R^N\to\R\qquad$ is
    continuous;\smallskip\\
    \item[(L2)] \quad $q\mapsto L(x,q)\qquad\hbox{is convex on $\R^N$ for any
    $x\in M$;}$ \smallskip\\
     \item[(L3)] \quad there
     exist two superlinear functions $\alpha_*,\beta_*:\R_+\to\R$
     s.t.
     \[
     \alpha_*\left(|q|\right)\leqslant L(x,q)\leqslant \beta_*\left(|q|\right)\qquad\hbox{for all
     $(x,q)\in M\times\R^N$.}\medskip
     \]
\end{itemize}
\begin{oss}\label{oss H3}
The functions $\alpha_*,\,\beta_*$ and $\alpha,\,\beta$ in (L3)
and (H3), respectively, can be taken continuous (in fact, convex),
without any loss of generality.
\end{oss}
\smallskip

With the term {\em strictly convex Hamiltonian} we will refer to a
convex Hamiltonian with (H2) replaced by the following stronger
assumption:\bigskip
\begin{itemize}
\item[(H2)$'$] \quad $p\mapsto H(x,p)\qquad\hbox{is strictly
convex on $\R^N$ for any $x\in M$}$.
\end{itemize}
\medskip
We point out that, in this event, $L$ enjoys\medskip
\begin{itemize}
\item[(L2)$'$] \quad $q\mapsto L(x,q)\qquad\hbox{is convex and of
class $C^1$ on $\R^N$ for any
    $x\in M$.}$\medskip
\end{itemize}
Furthermore, the map $(x,q)\mapsto D_q L(x,q)$ is continuous in
$M\times\R^N$. This fact will be exploited in the proof of
Proposition \ref{prop Fathi+}. Here and in the sequel, $D_q
L(x,q)$ and $D_x L(x,q)$ denote the partial derivative of $L$ at
$(x,q)$ with respect to $q$ and $x$, respectively. An analogous
notation will be used for the Hamiltonian.\smallskip\par

A {\em Tonelli Hamiltonian} is a particular kind of Hamiltonian
satisfying conditions (H1), (H2)$'$ and (H3). It is additionally
assumed of class $C^2$ in $M\times\R^N$ and condition (H2)$'$ is
strengthen by requiring, for every $(x,p)\in M\times\R^N$, that
\begin{equation}\label{eq Tonelli}
\frac{\partial^2 H}{\partial p^2}\,(x,p)\quad\hbox{is positive
definite as a quadratic form.}
\end{equation}
The associated Lagrangian has the same regularity as $H$ and
enjoys the analogous condition \eqref{eq Tonelli}.

 \vspace{1ex}
\end{subsection}

\begin{subsection}{Hamilton--Jacobi equations}\label{sez HJ}
Let us consider a family of Hamilton--Jacobi equations of the kind
\begin{equation}\label{eq hja}
H(x,Du)=a\qquad\hbox{in $M$,}
\end{equation}
where $a\in\R$. In the sequel, with the term {\em subsolution}
(resp. {\em supersolution}) of \eqref{eq hja} we will always refer
to a continuous function $u$ which is a subsolution (resp. a
supersolution) of \eqref{eq hja} in the viscosity sense, i.e. for
every $x\in M$
\begin{eqnarray*}
\qquad &H(x,p)\leqslant a& \qquad\hbox{for any $p\in D^+ u(x)$}\\
\big(\hbox{resp.}\quad &H(x,p)\geqslant a& \qquad \hbox{for any
$p\in D^- u(x)$}\ \big).
\end{eqnarray*}
A function will be
called a solution of \eqref{eq hja} if it is both a subsolution
and a supersolution.  
\smallskip
\begin{oss}\label{oss subsol}
Since $H$ is coercive, i.e. satisfies the first inequality in
(H3), it is well known that any continuous viscosity subsolution
$v$ of \eqref{eq hja} is Lipschitz, see for instance \cite{Ba94}.
In particular, $v$ is an almost everywhere subsolution, i.e.
\[
H\big(x,Dv(x)\big)\leqslant a \qquad\hbox{for a.e. $x\in M$}.
\]
By the convexity assumption (H2), the converse holds as well: any
Lipschitz,  almost everywhere subsolution solves \eqref{eq hja} in
the viscosity sense, see \cite{Sic09}. In particular, $v$ is a
subsolution of \eqref{eq hja} if and only if $-v$ is a subsolution
of
\[
H(x,-Du)=a\qquad\hbox{in $M$.}
\]
\end{oss}

 We define the {\em critical value} $c$ as
\[
c=\min\{a\in\R\,:\,\hbox{equation \eqref{eq hja} admits
subsolutions}\,\}.
\]
\smallskip
Following \cite{FaSic03}, we carry out the study of properties of
subsolutions of \abra{eq hja}, for $a \geqslant c$, by means of
the semidistances $S_a$ defined on $M\times M$ as follows:
    \begin{equation}\label{eq S}
    S_a(x,y)=\inf\left\{\int_0^1 \sigma_a\big(\gamma(s),\dot\gamma(s)\big)\,\dd
    s\,:\, \gamma\in\D{Lip}_{x,y}([0,1];M)\, \right\},
    \end{equation}
where $\sigma_a(x,q)$ is the support function of the $a$--sublevel
$Z_a(x)$ of $H$, namely
    \begin{equation}\label{sigma}
    \sigma_a(x,q):=\sup\left\{\langle q,p\rangle\,:\,p\in Z_a(x)
    \,\right\}
    \end{equation}
and $Z_a(x):=\{p\in\R^N\,:\,H(x,p)\leqslant a\,\}$. The function
$\sigma_a(x,q)$ is convex in $q$ and upper semicontinuous in $x$
(and even continuous at  points such that $Z_a(x)$ has nonempty
interior or reduces to a point), while $S_a$ satisfies the
following properties:
\begin{eqnarray*}
S_a(x,y)&\leqslant& S_a(x,z)+S_a(z,y)\\
S_a(x,y)&\leqslant& \kappa_a|x-y|
\end{eqnarray*}
for all $x,y,z\in M$ and for some positive constant $\kappa_a$.
The following properties hold, see \cite{FaSic03}:

\begin{prop}\label{prop S} Let $ a \geqslant c$.
\begin{itemize}
    \item[\em(i)] A function $\phi$ is a viscosity subsolution of \abra{eq hja} if and
    only if
\[
\phi(x)-\phi(y)\leqslant S_a(y,x)\qquad\hbox{for all $x,y\in M$.}
\]
In particular, all viscosity subsolutions of \eqref{eq hja} are
$\kappa_a$--Lipschitz continuous.\smallskip
    \item[\em(ii)] For any $y\in M$, the functions $S_a(y,\cdot)$
and
    $-S_a(\cdot,y)$ are both subsolutions of \abra{eq hja}.\smallskip
    \item[\em(iii)] For any $y\in M$
\[
S_a(y,x)=\sup\{v(x)\,:\,\hbox{$v$ is a subsolution to \eqref{eq
hja} with $v(y)=0$}\,\}.
\]
In particular, by maximality, $S_a(y,\cdot)$ is a viscosity
solution of \eqref{eq hja} in $M\setminus\{y\}$.\\
\end{itemize}
\end{prop}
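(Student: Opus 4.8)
The plan is to reduce everything to the elementary convex duality between the sublevels $Z_a(x)$ and their support functions $\sigma_a(x,\cdot)$, together with Remark~\ref{oss subsol}. Since $H(x,\cdot)$ is convex and coercive, each $Z_a(x)$ is a compact convex set, so by the bipolar theorem one has $p\in Z_a(x)$ if and only if $\langle p,q\rangle\leqslant\sigma_a(x,q)$ for every $q\in\R^N$. In view of Remark~\ref{oss subsol}, proving (i) therefore amounts to showing that, for a Lipschitz function $v$, the a.e.\ condition $Dv(x)\in Z_a(x)$ is equivalent to $v(x)-v(y)\leqslant S_a(y,x)$ for all $x,y\in M$.

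The ``only if'' implication is the crux of the argument. Fix a Lipschitz curve $\gamma$ joining $y$ to $x$; we want $v(x)-v(y)\leqslant\int_0^1\sigma_a(\gamma(s),\dot\gamma(s))\,\dd s$. The naive chain rule is unavailable here, since $\gamma$ may spend a positive amount of parameter time on the negligible set where $v$ is not differentiable, and this is exactly the point I expect to be delicate. I would get around it by mollification: setting $v_n:=v*\rho_{1/n}$, the maps $v_n$ are $C^1$, converge uniformly to $v$, and $Dv_n(x)=\int Dv(x-z)\,\rho_{1/n}(z)\,\dd z$ is an average of vectors lying in $Z_a(x-z)$ with $|z|\leqslant 1/n$; hence $\langle Dv_n(x),q\rangle\leqslant\sup_{|z|\leqslant 1/n}\sigma_a(x-z,q)$, a quantity that decreases to $\sigma_a(x,q)$ by upper semicontinuity of $\sigma_a(\cdot,q)$. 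Writing the exact identity $v_n(x)-v_n(y)=\int_0^1\langle Dv_n(\gamma(s)),\dot\gamma(s)\rangle\,\dd s$ and letting $n\to\infty$ (the integrands being uniformly bounded, so that a dominated convergence argument applies) yields the claim. Applied to closed loops this also gives $S_a(y,y)=0$ for $a\geqslant c$: indeed $\sigma_a(x,0)=0$ because $Z_a(x)\neq\emptyset$, so $S_a(y,y)\leqslant 0$, while any subsolution $v$ (which exists since $a\geqslant c$) gives $0=v(\gamma(1))-v(\gamma(0))\leqslant\int_0^1\sigma_a(\gamma,\dot\gamma)\,\dd s$ along every loop $\gamma$; this fact will be needed for (iii).

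For the converse, if $v(x)-v(y)\leqslant S_a(y,x)$ for all $x,y$, then the bound $S_a\leqslant\kappa_a|\cdot|$ already forces $v$ to be $\kappa_a$--Lipschitz, hence differentiable a.e.; at a differentiability point $x_0$ and any $q\in\R^N$, testing with the segment $\gamma(s):=x_0+stq$ gives $\big(v(x_0+tq)-v(x_0)\big)/t\leqslant\int_0^1\sigma_a(x_0+stq,q)\,\dd s$, and letting $t\downarrow 0$ (again by upper semicontinuity of $\sigma_a(\cdot,q)$ and boundedness of the integrand) yields $\langle Dv(x_0),q\rangle\leqslant\sigma_a(x_0,q)$ for all $q$, i.e.\ $Dv(x_0)\in Z_a(x_0)$. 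This proves (i). Item (ii) is then immediate from the triangle inequality for $S_a$: the functions $S_a(y,\cdot)$ and $-S_a(\cdot,y)$ both satisfy the characterization of (i). For (iii), any subsolution $v$ with $v(y)=0$ satisfies $v(x)=v(x)-v(y)\leqslant S_a(y,x)$ by (i), while $S_a(y,\cdot)$ is itself a subsolution (by (ii)) vanishing at $y$ (by the previous paragraph), so it realizes the supremum.

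Finally, $u:=S_a(y,\cdot)$ is a viscosity solution on $M\setminus\{y\}$: being a subsolution, only the supersolution property is in question. If it failed, there would be $x_0\neq y$ and $p_0\in D^-u(x_0)$ with $H(x_0,p_0)<a$. Choosing a $C^1$ subtangent $\phi\leqslant u$ at $x_0$ with $D\phi(x_0)=p_0$, continuity of $H$ provides $r\in(0,|x_0-y|)$ with $H(x,D\phi(x))<a$ on $B_r(x_0)$, and adding a small quadratic bump $\chi\geqslant 0$ supported in $B_r(x_0)$ with $\chi(x_0)>0$ preserves $H(x,D(\phi+\chi))<a$ there. Then the function $w$ defined by $w:=u$ off $B_r(x_0)$ and $w:=\max\{u,\phi+\chi\}$ on $B_r(x_0)$ is continuous (the bump vanishes on $\partial B_r(x_0)$, where $\phi\leqslant u$), satisfies $w\geqslant u$, $w(y)=0$ and $w(x_0)\geqslant u(x_0)+\chi(x_0)>u(x_0)$, and is a viscosity subsolution of \eqref{eq hja}: inside $B_r(x_0)$ it is the maximum of two subsolutions, while at points of $\partial B_r(x_0)$ one has $w=u$ and every supertangent of $w$ there is a supertangent of $u$. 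This contradicts the maximality expressed by (iii), and concludes the proof.
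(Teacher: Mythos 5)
The paper does not actually prove Proposition \ref{prop S}: it is quoted from \cite{FaSic03}, so there is no in-text argument to compare against. Your proof is correct and follows the standard route of that reference: the reduction via Remark \ref{oss subsol} to the a.e.\ inclusion $Dv(x)\in Z_a(x)$, the convex duality between $Z_a(x)$ and $\sigma_a(x,\cdot)$, and the maximality/bump-function argument for the solution property on $M\setminus\{y\}$. You also correctly identified and handled the two genuinely delicate points, namely the failure of the naive chain rule along curves that linger in the non-differentiability set of $v$ (resolved by mollification together with the upper semicontinuity of $\sigma_a(\cdot,q)$, under which $\sup_{|z|\leqslant 1/n}\sigma_a(x-z,q)$ decreases to $\sigma_a(x,q)$) and the verification that the perturbed function $w$ remains a global subsolution at the points of $\partial B_r(x_0)$ where the two branches are glued.
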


\begin{definition}
For $t>0$ fixed, let us define the function $h^t:M\times M\to\R$
by
\begin{equation}\label{def h^t}
h^t(x,y)=\inf\left\{\int_{-t}^0 L(\gamma,\dot\gamma)\,\dd s\ :\
\gamma\in W^{1,1}([-t,0];M),\ \gamma(-t)=x,\,\gamma(0)=y \right\}.
\end{equation}
\end{definition}

It is well known, by classical result of Calculus of Variations,
that the infimum in \eqref{def h^t} is achieved. The curves that
realize the minimum are called {\em Lagrangian minimizers}. The
following more precise result will be needed in the sequel, see
\cite{AmAsBu89, ClVi85, D1-06}:

\begin{prop}\label{prop Lagrangian minimizer}
Let $x,\,y\in M$, $t>0$ and $C\in\R$ such that $h^t(x,y)<t\,C$.
Then any Lagrangian minimizer $\gamma$ for $h^t(x,y)$ is Lipschitz
continuous and satisfies $\|\dot\gamma\|_\infty\leqslant \kappa$,
where $\kappa$ is a constant only depending on
$C,\,\alpha_*,\,\beta_*$.
\end{prop}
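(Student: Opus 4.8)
The plan is to run the classical ``indirect method'' for autonomous variational problems: first control the mean velocity of $\gamma$ by the growth of $L$, then promote this to a pointwise bound by means of the Du Bois--Reymond (Erdmann) necessary condition, and finally read off the Lipschitz estimate from the coercivity of $H$. As a first step I would bound the mean velocity. Let $\gamma$ be a Lagrangian minimizer for $h^t(x,y)$; since it realizes the infimum in \eqref{def h^t}, using (L3), Jensen's inequality and the fact that $\alpha_*$ may be taken convex (Remark \ref{oss H3}),
\[
\alpha_*\!\left(\frac1t\int_{-t}^0|\dot\gamma(s)|\,\dd s\right)\leqslant\frac1t\int_{-t}^0\alpha_*(|\dot\gamma(s)|)\,\dd s\leqslant\frac1t\int_{-t}^0 L(\gamma,\dot\gamma)\,\dd s=\frac{h^t(x,y)}{t}<C .
\]
Since $\alpha_*$ is superlinear, hence coercive, this yields $\frac1t\int_{-t}^0|\dot\gamma|\,\dd s\leqslant R_0$ for a constant $R_0=R_0(C,\alpha_*)$.

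The next and main ingredient is the Du Bois--Reymond condition. Because $L$ does not depend on time, perturbing $\gamma$ by reparametrizations of the time interval --- for instance, comparing with the curve $s\mapsto\gamma(\lambda s)$, which is admissible for $h^{t/\lambda}(x,y)$ and coincides with $\gamma$ at $\lambda=1$ --- and letting $\lambda\to1$ produces a constant $E\in\R$ and a measurable selection $p(s)$ in the (convex) subdifferential of $L(\gamma(s),\cdot)$ at $\dot\gamma(s)$ such that
\[
\langle p(s),\dot\gamma(s)\rangle-L(\gamma(s),\dot\gamma(s))=E\qquad\text{for a.e. }s\in[-t,0].
\]
Crucially, this argument requires no regularity of $L$ in the space variable; for the details I would invoke \cite{AmAsBu89, ClVi85, D1-06}. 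By the Fenchel equality this is the same as saying that, for a.e.\ $s$, $\dot\gamma(s)\in\partial_p H(\gamma(s),p(s))$ and $H(\gamma(s),p(s))=E$.

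The delicate point is to bound $E$ from above by a constant depending only on the data. On one hand, from $\dot\gamma(s)\in\partial_p H(\gamma(s),p(s))$ and the convexity of $H(\gamma(s),\cdot)$, testing the subgradient inequality at the origin gives $\langle p(s),\dot\gamma(s)\rangle\geqslant H(\gamma(s),p(s))-H(\gamma(s),0)\geqslant E-\beta(0)$ by (H3), so that $\int_{-t}^0\langle p,\dot\gamma\rangle\,\dd s\geqslant t\,(E-\beta(0))$. On the other hand, the coercivity in (H3) forces $\alpha(|p(s)|)\leqslant H(\gamma(s),p(s))=E$, hence $|p(s)|\leqslant\rho(E):=\sup\{r\geqslant0:\alpha(r)\leqslant E\}<+\infty$; combined with the first step this yields $\int_{-t}^0\langle p,\dot\gamma\rangle\,\dd s\leqslant\rho(E)\int_{-t}^0|\dot\gamma|\,\dd s\leqslant t\,R_0\,\rho(E)$. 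Comparing the two bounds, $E-\beta(0)\leqslant R_0\,\rho(E)$; since $\alpha$ is superlinear, $\rho(E)/E\to0$ as $E\to+\infty$, so this inequality can only hold for $E\leqslant E^*$, with $E^*$ depending on $R_0,\alpha,\beta$ alone --- that is, on $C,\alpha_*,\beta_*$ alone, recalling that the pairs $\alpha,\beta$ in (H3) and $\alpha_*,\beta_*$ in (L3) are Fenchel-dual to each other and may be taken convex.

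Once $E$ is controlled the conclusion is immediate: $|p(s)|\leqslant\rho(E^*)=:P^*$ for a.e.\ $s$, and since $\dot\gamma(s)\in\partial_p H(\gamma(s),p(s))$, the elementary slope estimate for a convex function sandwiched between $\alpha(|\cdot|)$ and $\beta(|\cdot|)$ gives $|\dot\gamma(s)|\leqslant\beta(P^*+1)-\min_{r\geqslant0}\alpha(r)=:\kappa$ for a.e.\ $s$; hence $\gamma$ is Lipschitz with $\|\dot\gamma\|_\infty\leqslant\kappa$, and $\kappa$ depends only on $C,\alpha_*,\beta_*$. I expect the two genuinely non-routine points to be the Du Bois--Reymond condition in the purely continuous setting (where the usual Euler--Lagrange equation is unavailable) and the energy bound of the third step; everything else is soft.
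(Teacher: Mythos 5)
The paper gives no proof of Proposition \ref{prop Lagrangian minimizer}: it simply refers to \cite{AmAsBu89, ClVi85, D1-06}, and your argument is precisely the one carried out there (mean--velocity bound via Jensen and the coercivity of $\alpha_*$, then the Du Bois--Reymond condition to upgrade to a pointwise bound, then the energy estimate). All the ``soft'' steps you give check out, including the duality bookkeeping that lets you express the constants through $C,\alpha_*,\beta_*$ only.

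One small imprecision worth fixing: the comparison curve $s\mapsto\gamma(\lambda s)$ is a competitor for $h^{t/\lambda}(x,y)$, not for $h^t(x,y)$, so it cannot be used directly against the minimality of $\gamma$ for the fixed--horizon problem. The Du Bois--Reymond condition is obtained instead from inner variations $\gamma\circ\phi$ with $\phi$ an increasing bijection of $[-t,0]$ fixing the endpoints (e.g.\ $\phi=\mathrm{id}+\eps\eta$, $\eta(-t)=\eta(0)=0$); this is also where the real work lies in the purely continuous setting, namely producing the measurable selection $p(s)\in\partial_q L(\gamma(s),\cdot)(\dot\gamma(s))$ with $\langle p,\dot\gamma\rangle-L(\gamma,\dot\gamma)$ constant, and it is exactly the content of the cited results. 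With that replacement your proof is complete and coincides with the intended one.
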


\indent We recall some properties of $h^t$, see for instance
\cite{D1-06}.

\begin{prop}\label{prop h^t}
Let $t>0$. Then $h^t$ is locally Lipschitz continuous in $M\times
M$. More precisely, for every $r>0$ there exists
$K=K(r,\alpha_*,\beta_*)$ such that the map
\[
(x,y,t)\mapsto h^t(x,y)\quad\hbox{is $K$--Lipschitz continuous in
$C_r$},
\]
where $C_r:=\{(x,y,t)\in M\times M\times
(0,+\infty)\,:\,|x-y|<r\,t\,\}$.
\end{prop}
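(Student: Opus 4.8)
\medskip\noindent\textit{Proof strategy.}\ The first assertion is a consequence of the second one: given a compact $Q\subset M\times M$ and $t>0$, the slice $Q\times\{t\}$ is contained in some set $C_r$, so it is enough to establish the uniform estimate on $C_r$. I would start from two a priori bounds. Testing the infimum defining $h^t(x,y)$ with the constant--speed geodesic joining $x$ to $y$ and using (L3), one gets, for every $(x,y,t)\in C_r$,
\[
\Big(\min_{\R_+}\alpha_*\Big)\,t \leqslant h^t(x,y) \leqslant t\,\beta_*\!\big(|x-y|/t\big) \leqslant t\,\max_{[0,r]}\beta_*.
\]
In particular $h^t(x,y)$ stays below a multiple of $t$ depending only on $r$, hence by Proposition \ref{prop Lagrangian minimizer} every Lagrangian minimizer $\gamma$ of $h^t(x,y)$ satisfies $\|\dot\gamma\|_\infty\leqslant\kappa$ with $\kappa=\kappa(r,\alpha_*,\beta_*)$; and, again by (L3), $|L|$ is bounded on $M\times B_\kappa$ by a constant depending only on $\kappa$. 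Here it is crucial that (L3) is uniform in the space variable, so that these bounds do not deteriorate when $M=\R^N$. From this point the plan is to prove, separately, that $h^t(x,y)$ is Lipschitz in $y$ (with $x,t$ frozen), in $x$, and in $t$, and then to combine the three estimates.

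For the $y$--estimate: given $(x,y_1,t),(x,y_2,t)\in C_r$, put $\delta:=|y_1-y_2|/\kappa$ and let $\gamma$ be a minimizer of $h^t(x,y_1)$. I would replace $\gamma$ on the last time window $[-\delta,0]$ by the constant--speed geodesic from $\gamma(-\delta)$ to $y_2$; since $|\gamma(-\delta)-y_2|\leqslant 2|y_1-y_2|=2\kappa\delta$, this geodesic has speed at most $2\kappa$, and the subadditivity inequality $h^{s+s'}(x,z)\leqslant h^{s}(x,w)+h^{s'}(w,z)$ (immediate by concatenation of curves), together with the two uniform bounds above, yields $h^t(x,y_2)\leqslant h^t(x,y_1)+C\delta$. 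Exchanging $y_1$ and $y_2$ gives a Lipschitz bound with constant $C/\kappa$, depending only on $r,\alpha_*,\beta_*$; the degenerate case $\delta>t$ is absorbed by the a priori bounds. The $x$--estimate is obtained in exactly the same way, cutting $\gamma$ near its left endpoint instead. For the $t$--estimate with $t_1<t_2$: gluing a minimizer of $h^{t_1}(x,y)$ with the constant curve at $y$ gives $h^{t_2}(x,y)\leqslant h^{t_1}(x,y)+(t_2-t_1)\beta_*(0)$, while cutting a minimizer of $h^{t_2}(x,y)$ at time $t_1$ and using the $y$--estimate to move its value at time $t_1$ back to $y$ (a displacement $\leqslant\kappa(t_2-t_1)$) gives the reverse inequality; the range $t_2\geqslant 2t_1$ is handled by the crude a priori bounds.

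Finally I would assemble the three one--variable estimates along the broken path $(x_1,y_1,t_1)\to(x_2,y_1,t_1)\to(x_2,y_2,t_1)\to(x_2,y_2,t_2)$ via the triangle inequality. The one point requiring care — and the main obstacle of the argument — is that the intermediate points need not belong to $C_r$; however, unless $|x_1-x_2|+|y_1-y_2|+|t_1-t_2|$ is comparable to $t_1$ (a situation settled at once by the bounds $(\min_{\R_+}\alpha_*)\,t\leqslant h^t\leqslant t\max_{[0,r]}\beta_*$), one checks that the intermediate points lie in $C_{r'}$ for some $r'$ depending only on $r$, where the one--variable estimates are already available with constants depending only on $r',\alpha_*,\beta_*$. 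This produces the claimed $K=K(r,\alpha_*,\beta_*)$. Apart from this bookkeeping the proof is routine; the substantive inputs are only Proposition \ref{prop Lagrangian minimizer} and the fact that the growth condition (L3) is uniform in the space variable.
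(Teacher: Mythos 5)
Your argument is correct, and it is the standard cut--and--paste proof of this estimate: a priori bounds from (L3) and the straight test curve, the uniform speed bound on minimizers from Proposition \ref{prop Lagrangian minimizer}, one--variable perturbations of minimizers near an endpoint, and the bookkeeping that keeps intermediate points of the broken path in some $C_{r'}$ with $r'$ controlled by $r$. The paper does not prove the proposition itself but delegates it to \cite{D1-06}, where essentially this same argument is carried out, so there is nothing to object to.
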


\medskip
\indent We remark that, for every $a\geqslant c$, the following
holds:
\begin{equation}\label{eq L}
L(x,q)\geqslant \max_{p\in Z_a(x)}\,\langle p,\,q\rangle
-H(x,p)\,\geqslant \sigma_a(x,q)-a\qquad\hbox{for every $(x,q)\in
M\times\R^N$,}
\end{equation}
yielding in particular $h^t(y,x)+a\,t\geqslant S_a(y,x)$ for every
$x,\,y \in M$. The next result can be proved by making use of
suitable reparametrization techniques, see \cite{DavSic05,
FaSic03}.

\begin{lemma}\label{lemma 4}
Let $a\geqslant c$. Then
\[
S_a(y,x)=\inf_{t>0} \Big(h^t(y,x)+at\Big)\qquad\hbox{for every
$x,y\in\ M$,}
\]
and the infimum is always reached when $a>c$.
\end{lemma}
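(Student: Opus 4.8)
The inequality ``$S_a(y,x)\le\inf_{t>0}\big(h^t(y,x)+at\big)$'' for every $x,y$ is exactly the remark made just before the statement, an immediate consequence of \eqref{eq L} together with the positive $1$--homogeneity of $\sigma_a(x,\cdot)$; so the whole content is the reverse inequality (and the attainment claim). By monotonicity $S_{a'}\ge S_c$, and, as $a'\downarrow c$, $\inf_{t>0}\big(h^t(y,x)+a't\big)\downarrow\inf_{t>0}\big(h^t(y,x)+ct\big)$; since a standard stability argument for viscosity subsolutions (via Proposition \ref{prop S}) gives $S_{a'}(y,x)\to S_c(y,x)$, once the identity is known for every $a>c$ the case $a=c$ follows by passing to the limit. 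I therefore focus on $a>c$. Note that for $a>c$ one has $\min_p H(x,p)\le c<a$ for every $x$ (the set where this fails is open and, by Remark \ref{oss subsol}, of zero measure, hence empty), so $Z_a(x)$ has nonempty interior and $\sigma_a$ is continuous on $M\times\R^N$.

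The key point is a pointwise Fenchel--type identity: for every $(x,q)\in M\times\R^N$,
\[
\inf_{\lambda>0}\frac{L(x,\lambda q)+a}{\lambda}=\sigma_a(x,q),
\]
and, when $q\ne0$, the infimum is attained at some $\lambda=\lambda(x,q)>0$. The bound ``$\ge$'' is immediate from \eqref{eq L} and the homogeneity of $\sigma_a$. For the opposite one, let $p^*\in Z_a(x)$ realize $\sigma_a(x,q)=\langle p^*,q\rangle$; since $\min_p H(x,p)<a$, $p^*$ lies on $\{H(x,\cdot)=a\}$ and the outer normal cone of $Z_a(x)$ at $p^*$ is $\R_+\,\partial_p H(x,p^*)$, so $q=\mu v$ with $\mu>0$ and $v\in\partial_p H(x,p^*)$; taking $\lambda=1/\mu$ gives $\lambda q\in\partial_p H(x,p^*)$, which means that the supremum defining $L(x,\lambda q)$ is realized at $p^*$, whence $\frac{L(x,\lambda q)+a}{\lambda}=\langle p^*,q\rangle=\sigma_a(x,q)$. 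Moreover, as $(x,q)$ runs over a compact set with $|q|$ bounded away from $0$ and $\infty$, the upper semicontinuity of the subdifferential and the fact that $0\notin\partial_p H(x,p^*)$ (else $p^*$ would minimise $H(x,\cdot)$, i.e. $a=\min_p H(x,p)<a$) show that $\lambda(x,q)$ stays in a fixed compact subinterval of $(0,\infty)$ and can be selected in a Borel way; for $q=0$ the identity reads $\inf_\lambda\frac{L(x,0)+a}{\lambda}=0=\sigma_a(x,0)$, since $L(x,0)+a=a-\min_p H(x,p)>0$.

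With this in hand, fix $\eps>0$ and pick a Lipschitz curve $\gamma\colon[0,1]\to M$ with $\gamma(0)=y$, $\gamma(1)=x$ and $\int_0^1\sigma_a(\gamma,\dot\gamma)\,ds\le S_a(y,x)+\eps$; after reparametrising we may assume $|\dot\gamma|$ constant (and, by density and continuity of $\sigma_a$, $\gamma$ piecewise $C^1$). Using the identity above, choose a Borel function $\psi\colon[0,1]\to(0,\infty)$, bounded and bounded away from $0$, with $\frac{L(\gamma(s),\psi(s)\dot\gamma(s))+a}{\psi(s)}\le\sigma_a(\gamma(s),\dot\gamma(s))+\eps$ for a.e. $s$; set $t:=\int_0^1\psi(s)^{-1}\,ds\in(0,\infty)$ and let $\xi\colon[-t,0]\to M$ be defined by $\xi\circ\phi=\gamma$, where $\phi\colon[0,1]\to[-t,0]$ is the increasing bijection with $\phi'=1/\psi$. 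The change of variables then gives
\[
h^t(y,x)+a\,t\le\int_{-t}^0\big(L(\xi,\dot\xi)+a\big)\,d\tau=\int_0^1\frac{L(\gamma,\psi\dot\gamma)+a}{\psi}\,ds\le S_a(y,x)+2\eps,
\]
and letting $\eps\to0$ yields ``$\ge$''. For the attainment (with $a>c$ and $x\ne y$): $t\mapsto h^t(y,x)$ is continuous on $(0,\infty)$ by Proposition \ref{prop h^t}, while $h^t(y,x)+at=(h^t(y,x)+ct)+(a-c)t\ge S_c(y,x)+(a-c)t\to+\infty$ as $t\to+\infty$ and $h^t(y,x)\ge t\,\alpha_*\!\big(|x-y|/t\big)\to+\infty$ as $t\to0^+$ by superlinearity of $\alpha_*$; hence $h^t(y,x)+at$ is proper on $(0,\infty)$ and attains its infimum. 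The main obstacle is precisely the reparametrisation step of this last paragraph — producing a bona fide competitor $\xi\in W^{1,1}([-t,0];M)$, that is, choosing the ``speed factor'' $\psi$ measurably, uniformly positive and bounded, so that $t<\infty$ and $\xi$ is absolutely continuous — which in turn rests on the pointwise identity and the boundedness of $\lambda(x,q)$ established above.
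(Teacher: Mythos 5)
Your argument is correct in substance and is exactly the ``reparametrization technique'' that the paper delegates to \cite{DavSic05, FaSic03}: the heart of it is the pointwise identity $\inf_{\lambda>0}\big(L(x,\lambda q)+a\big)/\lambda=\sigma_a(x,q)$, with the minimizing $\lambda$ confined to a compact subinterval of $(0,+\infty)$ when $a>c$ and $|q|$ is bounded away from $0$ and $\infty$, followed by a measurable choice of the speed factor along a near-optimal curve; this is precisely how the cited references proceed. Two small caveats. First, your construction (constant-speed reparametrization, $\psi$ bounded away from $0$) tacitly assumes $x\ne y$; the case $x=y$ should be dispatched separately by the one-line estimate $h^t(y,y)+at\leqslant t\big(L(y,0)+a\big)\to 0=S_a(y,y)$, and note that for $x=y$ the infimum need \emph{not} be attained even when $a>c$ (e.g. $H(x,p)=|p|^2$, $a>0$: $h^t(y,y)+at=at$), so the attainment claim is genuinely restricted to $x\ne y$, as you implicitly do. Second, your reduction of the case $a=c$ rests on $S_{a'}(y,x)\to S_c(y,x)$ as $a'\downarrow c$; ``stability of viscosity subsolutions'' is too vague a justification here, but the fact is true and is most cleanly obtained from $Z_{a'}(x)\downarrow Z_c(x)$, hence $\sigma_{a'}\downarrow\sigma_c$ pointwise, and monotone convergence in the formula \eqref{eq S} defining $S_a$.
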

\medskip

For every $t>0$, we define a function on $M$ as follows:
\begin{equation}\label{def S(t)}
\big(\S(t)u\big)(x)=\inf\left\{u\big(\gamma(0)\big)+\int_{-t}^0
L(\gamma,\dot\gamma)\,\dd s\ :\ \gamma\in W^{1,1}([-t,0];M),\
\gamma(0)=x \right\}
\end{equation}
where $u: M \to\R\cup\{+\infty\}$ is an initial datum satisfying
\begin{equation}\label{admissible u}
u(\cdot)\geqslant a|\cdot|+b\qquad\hbox{on $ M $}
\end{equation}
for some $a,\,b\in\R$. Any function of this kind will be called
{\em admissible initial datum} in the sequel.

The following properties hold:

\begin{prop}\label{prop S(t)}\
  \begin{itemize}
    \item[{\em (i)}]  For every admissible initial datum $u$,
    the  map $(t,x)\mapsto \big(\S(t)u\big)(x)$ is
    finite valued and locally Lipschitz in $(0,+\infty)\times M.$\smallskip
    \item[{\em (ii)}] $\big(\S(t)\big)_{t>0}$ is a semigroup, i.e. for every
    admissible initial datum $u$
    \[
    \S(t)\big(\S(s)u\big)=\S(t+s)u\qquad\hbox{for every $t,\,s>0$.\smallskip}
    \]
    \item[{\em (iii)}] $\S(t)$ is monotone and commutes with
    constants, i.e. for every admissible initial data $u,\, v$ and
    any $a\in\R$ we have
    \[
    u\leqslant v \quad \Longrightarrow \quad\S(t)u\leqslant
    \S(t)v\qquad\hbox{and}\qquad \S(t)(u+a)=\S(t)u+a.
    \]
    In particular, $\S(t)$ is weakly contracting, i.e.
    \[
    \|\S(t)u-\S(t)v\|_\infty\leqslant \|u-v\|_\infty.\smallskip
    \]
    \item[{\em (iv)}] If $u\in \D{Lip}(M)$, then the map $(t,x)\mapsto
\big(\S(t)u\big)(x)$ is
    Lipschitz continuous in $[0,+\infty)\times M$ and
    \[
    \lim_{t\to 0^+} \|\S(t)u-u\|_\infty=0.
    \]
  \end{itemize}
\end{prop}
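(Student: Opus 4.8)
The plan is to reduce everything to the variational identity
\[
(\S(t)u)(x)=\inf_{y\in M}\big(u(y)+h^t(y,x)\big)\qquad(t>0,\ x\in M),
\]
which follows from \eqref{def S(t)} by grouping the competing curves according to the value $y=\gamma(-t)$ and recalling the definition \eqref{def h^t} of $h^t$. From this formula, parts (i)--(iv) should come out by combining the coercivity bounds (L3) with Propositions \ref{prop Lagrangian minimizer} and \ref{prop h^t}.

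For (i), I would first use (L3) and Jensen's inequality to get $h^t(y,x)\geqslant t\,\alpha_*(|x-y|/t)$ (with $\alpha_*$ taken nondecreasing, convex and superlinear, as we may), hence $h^t(y,x)\geqslant \lambda|x-y|-t\,\alpha_*^*(\lambda)$ for every $\lambda>0$. Taking $\lambda>|a|$, where $u\geqslant a|\cdot|+b$, shows that for $(t,x)$ ranging in a fixed compact subset of $(0,+\infty)\times M$ the function $y\mapsto u(y)+h^t(y,x)$ is bounded below and coercive; thus $(\S(t)u)(x)$ is finite (a straight segment from a point where $u$ is finite gives an upper bound) and its infimum is attained for $y$ in a ball $B_{R'}$ independent of $(t,x)$ on that compact set. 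Since there one also has $h^t(y,x)\leqslant t\,C$ for a suitable $C$, Proposition \ref{prop Lagrangian minimizer} applies and the minimizing curves have speed $\leqslant\kappa$, so $|x-y|\leqslant\kappa t$ for the optimal $y$ and $(y,x,t)\in C_r$ with $r=\kappa+1$; Proposition \ref{prop h^t} then yields a constant $K$ with $(y,x,t)\mapsto h^t(y,x)$ being $K$--Lipschitz on $C_r$. Hence, on the chosen compact set, $\S(t)u$ is the infimum of the equi--$K$--Lipschitz family $\{(t,x)\mapsto u(y)+h^t(y,x):y\in B_{R'}\}$ and, being finite, is itself $K$--Lipschitz. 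The same coercivity computation also shows that $\S(s)u$ is again an admissible datum (with the same linear coefficient), which is what makes $\S(t)(\S(s)u)$ meaningful.

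Parts (ii) and (iii) should then be routine. The semigroup law $\S(t)(\S(s)u)=\S(t+s)u$ is the dynamic programming principle: a curve on $[-(t+s),0]$ is the concatenation of one on $[-(t+s),-t]$ and one on $[-t,0]$, the action is additive, and conversely; passing to infima gives the claim. Monotonicity and $\S(t)(u+a)=\S(t)u+a$ are read off \eqref{def S(t)} directly, and weak contraction follows by applying monotonicity to $u\leqslant v+\|u-v\|_\infty$ and symmetrizing.

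For (iv), with $u$ being $\ell$--Lipschitz, I would test \eqref{def S(t)} with the constant curve $\gamma\equiv x$ and use $L(x,0)\leqslant\beta_*(0)$ to get $(\S(t)u)(x)\leqslant u(x)+t\,\beta_*(0)$, while $u(y)\geqslant u(x)-\ell|x-y|$ together with $h^t(y,x)\geqslant \ell|x-y|-t\,\alpha_*^*(\ell)$ gives $(\S(t)u)(x)\geqslant u(x)-t\,\alpha_*^*(\ell)$. Hence $\|\S(t)u-u\|_\infty\leqslant C t$, which already yields $\S(t)u\to u$ uniformly as $t\to0^+$, and, via the semigroup law and weak contraction, $\|\S(t)u-\S(s)u\|_\infty\leqslant C|t-s|$ for all $t,s\geqslant0$. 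It then remains to bound the Lipschitz constant of $x\mapsto(\S(t)u)(x)$ uniformly in $t$: the two estimates above force any near--optimal $y$ to satisfy $\alpha_*(|x-y|/t)\leqslant\beta_*(0)+\ell\,|x-y|/t$, hence $|x-y|\leqslant R_0 t$ with $R_0=R_0(\ell,\alpha_*,\beta_*)$, so Proposition \ref{prop h^t} (applied with $r=R_0+2$) gives $|(\S(t)u)(x)-(\S(t)u)(x')|\leqslant K|x-x'|$ whenever $|x-x'|\leqslant t$, with $K$ depending only on $\ell,\alpha_*,\beta_*$; since a locally $K$--Lipschitz function on $\R^N$ or $\T^N$ is globally $K$--Lipschitz, we conclude. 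Combining this with the Lipschitz dependence on $t$ gives Lipschitz continuity of $(t,x)\mapsto(\S(t)u)(x)$ on $[0,+\infty)\times M$. I expect the main obstacle to be the compactness bookkeeping in (i): one has to control, \emph{locally uniformly in $(t,x)$}, both the position of the nearly optimal endpoint $y$ and the speed of the minimizers, so as to legitimately invoke the local Lipschitz estimate for $h^t$; once this is in place, (ii)--(iv) follow by direct manipulation of the variational formula.
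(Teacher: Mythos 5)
The paper never proves Proposition \ref{prop S(t)}: it is stated in the preliminaries as standard material, with the reader implicitly referred to \cite{Fathi}, \cite{D1-06} and the other works cited at the start of Sections \ref{sez HJ} and \ref{sez weak KAM}. So there is no in-paper argument to compare yours against; judged on its own, your proof is correct and essentially complete, and it is built from exactly the ingredients the paper supplies — the rewriting $\big(\S(t)u\big)(x)=\inf_y\big(u(y)+h^t(y,x)\big)$, the coercivity bounds from (L3) via Jensen and the Legendre inequality, and Propositions \ref{prop Lagrangian minimizer} and \ref{prop h^t}. The dynamic programming identity for (ii) (together with your check that $\S(s)u$ is again admissible with the same linear coefficient), the monotonicity/contraction argument for (iii), and the uniform-in-$t$ spatial Lipschitz bound in (iv) obtained by forcing $|x-y|\leqslant R_0t$ for near-minimizers and then invoking Proposition \ref{prop h^t} on $C_{R_0+2}$, are all sound.

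Two points are worth tightening. First, in (i) you say the infimum over $y$ is \emph{attained}: an admissible datum is only required to satisfy $u\geqslant a|\cdot|+b$ and need not be lower semicontinuous, so attainment can fail; but your argument only ever uses that $\eps$--optimal $y$ stay in a fixed ball $B_{R'}$, which your coercivity estimate does provide, so simply replace ``optimal'' by ``$\eps$--optimal'' throughout (the same remark applies to the superlinearity argument in (iv)). Relatedly, the detour through Proposition \ref{prop Lagrangian minimizer} in (i) is superfluous: once $y$ is confined to $B_{R'}$ and $(t,x)$ to a compact set with $t\geqslant t_{\min}>0$, one has $|x-y|<rt$ for $r$ large enough directly, so Proposition \ref{prop h^t} already makes the family $\{u(y)+h^t(y,\cdot)\}_{y\in B_{R'}}$ equi--Lipschitz there and the (finite) infimum inherits the constant. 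Second, the statement tacitly assumes $u\not\equiv+\infty$; your upper bound via a straight segment issued from a point where $u$ is finite is the right way to record this.
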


The semigroup $\big(\S(t)\big)_{t>0}$ is called Lax--Oleinik
semigroup and \eqref{def S(t)} is termed Lax--Oleinik formula. The
relation with Hamilton--Jacobi equations is clarified by the next
results.

\begin{teorema}
Let $H$ be a convex Hamiltonian and let $L$ be its Fenchel
transform. Then, for every $u_0\in \D{Lip}(M)$, the Cauchy Problem
\begin{eqnarray}\label{Cauchy problem}
\begin{cases}
\partial_t u+H(x,Du)=0& \hbox{in $(0,+\infty)\times M$}\\
u(0,x)=u_0(x) &\hbox{on $M$}
\end{cases}
\end{eqnarray}
admits a unique viscosity solution $u(t,x)$ in
$\D{Lip}([0,+\infty)\times M)$. Moreover,
\[
u(t,x)=\big(\S(t)u_0\big)(x)\qquad\hbox{for every $(t,x)\in
(0,+\infty)\times M$}.
\]
\end{teorema}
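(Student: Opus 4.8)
The plan is to separate the statement into two parts: existence of a solution, produced by the Lax--Oleinik formula itself, and uniqueness in the class $\D{Lip}([0,+\infty)\times M)$, obtained from a comparison principle. The regularity $u:=\S(\cdot)u_0\in\D{Lip}([0,+\infty)\times M)$ and the attainment of the initial datum, $\lim_{t\to0^+}\|\S(t)u_0-u_0\|_\infty=0$, are already granted by Proposition \ref{prop S(t)}(iv), so the only substantial work for existence is to check that $u$ solves $\partial_t u+H(x,D_xu)=0$ in the viscosity sense on $(0,+\infty)\times M$. Both the subsolution and the supersolution checks rely on the semigroup (dynamic programming) identity of Proposition \ref{prop S(t)}(ii), $u(t_0,\cdot)=\S(h)\big(u(t_0-h,\cdot)\big)$ for $0<h<t_0$, together with the variational definition \eqref{def S(t)}.

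For the subsolution property, fix $(t_0,x_0)$ with $t_0>0$ and let $\phi\in C^1$ be a supertangent to $u$ there ($\phi\geqslant u$, $\phi(t_0,x_0)=u(t_0,x_0)$). Testing the infimum in $\S(h)\big(u(t_0-h,\cdot)\big)(x_0)$ against the affine curve $s\mapsto x_0+sq$ on $[-h,0]$, for an arbitrary $q\in\R^N$, and then using the supertangency, gives
\[
\phi(t_0,x_0)-\phi(t_0-h,x_0-hq)\leqslant\int_{-h}^0 L(x_0+sq,\,q)\,\dd s .
\]
Dividing by $h$ and letting $h\to0^+$ yields $\partial_t\phi(t_0,x_0)+\langle D_x\phi(t_0,x_0),q\rangle\leqslant L(x_0,q)$. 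Taking the supremum over $q$, and using that $H(x,\cdot)$, being convex, finite and continuous, coincides with its biconjugate (Fenchel--Moreau), so that $L^*(x,\cdot)=H^{**}(x,\cdot)=H(x,\cdot)$, we get $\partial_t\phi(t_0,x_0)+H(x_0,D_x\phi(t_0,x_0))\leqslant0$.

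For the supersolution property, fix a $C^1$ subtangent $\phi$ at $(t_0,x_0)$, $t_0>0$, and let $\gamma_h$ be a minimizer for $\S(h)\big(u(t_0-h,\cdot)\big)(x_0)$; its existence follows from the direct method of the Calculus of Variations (equivalently, by writing this quantity as $\inf_y\{u(t_0-h,y)+h^h(y,x_0)\}$, restricting $y$ to a ball of radius $O(h)$ around $x_0$ via the coercivity estimate for $h^h$, and invoking the attainment of the infimum in \eqref{def h^t}), and Proposition \ref{prop Lagrangian minimizer} moreover provides a bound $\|\dot\gamma_h\|_\infty\leqslant\kappa$ uniform in $h$, hence $\sup_{s\in[-h,0]}|\gamma_h(s)-x_0|\leqslant\kappa h$. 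From subtangency and optimality,
\[
\phi(t_0,x_0)-\phi(t_0-h,\gamma_h(-h))\geqslant\int_{-h}^0 L\big(\gamma_h(s),\dot\gamma_h(s)\big)\,\dd s ;
\]
writing the left-hand side as $\int_{-h}^0\big(\partial_t\phi+\langle D_x\phi,\dot\gamma_h\rangle\big)(t_0+s,\gamma_h(s))\,\dd s$ and applying the Fenchel inequality $\langle p,q\rangle\leqslant H(x,p)+L(x,q)$ pointwise along $\gamma_h$, we obtain $\int_{-h}^0\big(\partial_t\phi+H(\cdot,D_x\phi)\big)(t_0+s,\gamma_h(s))\,\dd s\geqslant0$. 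Dividing by $h$, and using the uniform convergence $(s,\gamma_h(s))\to(0,x_0)$ together with the continuity of $(s,x)\mapsto\partial_t\phi(t_0+s,x)+H(x,D_x\phi(t_0+s,x))$, we conclude $\partial_t\phi(t_0,x_0)+H(x_0,D_x\phi(t_0,x_0))\geqslant0$.

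Finally, uniqueness in $\D{Lip}([0,+\infty)\times M)$ follows from the comparison principle for the Cauchy problem \eqref{Cauchy problem}, whose hypotheses are ensured by the coercivity and convexity of $H$ (see, e.g., \cite{CrLi}); alternatively one argues directly, showing that a Lipschitz viscosity subsolution $v$ satisfies $v(t,\gamma(t))-v(0,\gamma(0))\leqslant\int_0^t L(\gamma,\dot\gamma)\,\dd s$ along every Lipschitz curve, hence $v(t,\cdot)\leqslant\S(t)v(0,\cdot)$, while a Lipschitz supersolution admits at each $(t_0,x_0)$, $t_0>0$, a calibrated backward curve realizing the reverse estimate, so that any solution with datum $u_0$ must equal $\S(\cdot)u_0$. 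The step that requires the most care is precisely this last one --- the construction of calibrated curves for supersolutions, or equivalently a comparison principle that is robust to the mere $x$-continuity of $H$ --- whereas the verification that $\S(\cdot)u_0$ solves the equation is, as above, a routine dynamic-programming computation once the uniform velocity bound of Proposition \ref{prop Lagrangian minimizer} is at hand.
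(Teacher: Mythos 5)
The paper does not actually prove this theorem: it is quoted as a known result (the identification of the Lax--Oleinik evolution with the unique Lipschitz viscosity solution of the Cauchy problem), so there is no in-paper argument to compare against. Your proof is the standard one and is essentially correct. The existence half is fine: the subsolution check via affine test curves plus Fenchel--Moreau ($L^{*}(x,\cdot)=H^{**}(x,\cdot)=H(x,\cdot)$, valid since $H(x,\cdot)$ is convex and finite), and the supersolution check via optimal curves, the uniform velocity bound from Proposition \ref{prop Lagrangian minimizer} (which applies because the optimal left endpoint stays within $O(h)$ of $x_0$ by coercivity and the Lipschitz bound on the datum, so $h^h(y,x_0)\leqslant C h$), and the pointwise Fenchel inequality, are exactly the classical dynamic-programming computations. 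The only place where I would push back is the uniqueness step: the reference to the comparison principle of \cite{CrLi} is not quite appropriate here, since the classical doubling-of-variables comparison theorems require a structure condition of the type $|H(x,p)-H(y,p)|\leqslant \omega\big(|x-y|(1+|p|)\big)$, which a merely $x$-continuous Hamiltonian need not satisfy. Your alternative direct argument is the right one in this setting: a Lipschitz viscosity subsolution is an a.e.\ subsolution (Remark \ref{oss subsol}), hence by convexity lies below the Lax--Oleinik evolution of its initial trace, while a Lipschitz supersolution dominates it via backward calibrated curves; both facts are standard in the continuous convex coercive framework (they are the time-dependent analogues of the machinery in \cite{DavSic05, FaSic03, Ishii}), though the calibrated-curve construction does require the coercivity in an essential way. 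You correctly single this out as the delicate point, which is the honest assessment; with that step supplied from the cited literature, the proof is complete.
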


With regard to the stationary equation \eqref{eq hja}, the
following characterization holds:

\begin{prop}\label{prop critical sol} Let $u$ be a continuous function on $M$. The following facts hold:
\begin{itemize}
    \item[{\em (i)}] $u$ is a subsolution of \eqref{eq hja} if and only
if $t\mapsto \S(t)u+a\,t$ is non decreasing;\smallskip
    \item[{\em (ii)}] $u$ is a solution of \eqref{eq hja} if and only
if $u\equiv S(t)u+a\,t$ for every $t>0$.
\end{itemize}
\end{prop}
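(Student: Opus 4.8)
The plan is to prove Proposition \ref{prop critical sol} by exploiting the Lax--Oleinik formula together with the known relationship between subsolutions of \eqref{eq hja} and the distance functions, as recorded in Proposition \ref{prop S} and Lemma \ref{lemma 4}. The two statements are tightly connected: (ii) should follow from (i) once we observe that $u$ is a solution iff both $u$ and $-u$ (relative to the reversed Hamiltonian, cf. Remark \ref{oss subsol}) behave monotonically, or more directly, iff $t\mapsto \S(t)u+at$ is both nondecreasing and nonincreasing, i.e.\ constant. So the heart of the matter is item (i).

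For (i), I would argue as follows. Recall from the definition \eqref{def S(t)} and the formula \eqref{def h^t} that $(\S(t)u)(x)=\inf_{y\in M}\big(u(y)+h^t(y,x)\big)$. First, suppose $t\mapsto \S(t)u+at$ is nondecreasing. Then in particular $u(x)=(\S(0^+)u)(x)\leqslant (\S(t)u)(x)+at$ for every $t>0$ and $x$ (using Proposition \ref{prop S(t)}(iv) for the limit at $t=0^+$; one must first check $u$ is Lipschitz, which follows once we know $u$ is a subsolution — or we restrict a priori to the Lipschitz case since coercivity forces it, Remark \ref{oss subsol}). Unwinding, $u(x)-u(y)\leqslant h^t(y,x)+at$ for all $x,y$ and all $t>0$, hence $u(x)-u(y)\leqslant \inf_{t>0}(h^t(y,x)+at)=S_a(y,x)$ by Lemma \ref{lemma 4}, and by Proposition \ref{prop S}(i) this means $u$ is a subsolution. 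Conversely, if $u$ is a subsolution, then $u(x)-u(y)\leqslant S_a(y,x)\leqslant h^t(y,x)+at$ by \eqref{eq L} (or Lemma \ref{lemma 4}), for every $t>0$; taking the infimum over $y$ gives $u(x)\leqslant (\S(t)u)(x)+at$, i.e.\ $\S(0)u\leqslant \S(t)u+at$. The general monotonicity $\S(t_1)u+at_1\leqslant \S(t_2)u+at_2$ for $t_1<t_2$ then follows by applying this inequality to the admissible initial datum $\S(t_1)u$ in place of $u$ — noting that $\S(t_1)u$ is again a subsolution (it is Lipschitz by Proposition \ref{prop S(t)}(i), and $u\leqslant \S(t_1)u+at_1$ combined with monotonicity and the semigroup property of Proposition \ref{prop S(t)}(ii)--(iii) propagates the subsolution inequality) and invoking the semigroup law $\S(t_2-t_1)(\S(t_1)u)=\S(t_2)u$.

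For (ii): if $u$ is a solution, it is in particular a subsolution, so by (i) the map $t\mapsto \S(t)u+at$ is nondecreasing; applying the same reasoning to the equation $H(x,-Dv)=a$ satisfied by $v=-u$ (Remark \ref{oss subsol}) and the corresponding reversed semigroup, one gets the reverse monotonicity, forcing $\S(t)u+at\equiv u$. Alternatively and more cleanly, one shows directly that a supersolution satisfies $u\geqslant \S(t)u+at$: given $x$, pick a minimizing curve $\gamma$ for $(\S(t)u)(x)$ (it exists by the cited Calculus of Variations results and Proposition \ref{prop Lagrangian minimizer}), and use that along any Lipschitz curve $t\mapsto u(\gamma(-t))$ the supersolution property yields $\frac{d}{ds}u(\gamma(s))\geqslant -L(\gamma(s),\dot\gamma(s))-a$ a.e.\ — a standard consequence of the viscosity supersolution inequality for convex Hamiltonians; integrating from $-t$ to $0$ gives $u(x)-u(\gamma(-t))\geqslant -\int_{-t}^0 L-at$, i.e.\ $u(x)\geqslant u(\gamma(0))+\int_{-t}^0 L(\gamma,\dot\gamma)\,ds - $ wait, sign bookkeeping: $u(x)\geqslant u(\gamma(-t))+\int_{-t}^{0}L(\gamma,\dot\gamma)\,ds+at\geqslant (\S(t)u)(x)+at$. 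Combined with (i) this gives the equivalence.

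The step I expect to be the main obstacle is the careful justification that $\S(t_1)u$ is again a subsolution (and more generally the bootstrapping of the inequality $\S(0)u\leqslant \S(t)u+at$ to full monotonicity), and the sign/orientation bookkeeping in the supersolution estimate along minimizing curves — making sure the viscosity supersolution inequality really does transfer to the differential inequality $\frac{d}{ds}u(\gamma(s))\geqslant -L(\gamma(s),\dot\gamma(s))-a$ along an arbitrary Lipschitz curve, which requires either a careful approximation argument or a direct appeal to the measurable-selection/Clarke-gradient machinery recalled in Section \ref{sez notation}. The rest is a matter of assembling Lemma \ref{lemma 4}, \eqref{eq L}, and the semigroup properties of Proposition \ref{prop S(t)}.
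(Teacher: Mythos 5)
Your treatment of item (i) is essentially correct and follows the paper's route: both directions reduce to $u(x)-u(y)\leqslant S_a(y,x)$ via Lemma \ref{lemma 4}. One remark: your bootstrapping of $u\leqslant \S(t)u+a\,t$ to full monotonicity, by first arguing that $\S(t_1)u$ is again a subsolution, works but is roundabout; the paper obtains $\S(h)u+a\,h\leqslant \S(t+h)u+a(t+h)$ in one line by applying the monotone, constant--commuting operator $\S(h)$ to both sides of $u\leqslant \S(t)u+a\,t$ and using the semigroup law of Proposition \ref{prop S(t)}. (The fuss about Lipschitz continuity of $u$ in the converse direction is also unnecessary: continuity plus the superlinear lower bound on $L$ already give $\S(t)u\to u$ as $t\to 0^+$.)

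Item (ii), however, contains genuine gaps. The ``direct'' argument for the supersolution half rests on a false inequality: along an \emph{arbitrary} Lipschitz curve, Fenchel's inequality gives $\frac{d}{ds}u(\gamma(s))=\langle Du(\gamma(s)),\dot\gamma(s)\rangle\leqslant L(\gamma,\dot\gamma)+H(\gamma,Du(\gamma))$, so it is the \emph{subsolution} property that produces a one--sided differential inequality along every curve; the supersolution property only guarantees the \emph{existence} of special calibrated curves along which the reverse inequality holds, and the minimizer for $\big(\S(t)u\big)(x)$ --- determined by $u$ as initial datum of the variational problem, not by the supersolution property of $u$ --- need not be one of them. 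The reversed--Hamiltonian alternative does not repair this: Remark \ref{oss subsol} transfers only \emph{sub}solutions under $u\mapsto -u$ (the condition $H(x,p)\geqslant a$ for $p\in D^-u(x)$ becomes a condition on $D^+(-u)(x)$, which is not the supersolution condition for $-u$ relative to $\check H$), and in any case $-\S_{\check H}(t)(-u)$ is the \emph{positive} Lax--Oleinik semigroup $\S^+(t)u$, not $\S(t)u$, so no reverse monotonicity of $t\mapsto \S(t)u+a\,t$ follows. Finally, even granting $u\geqslant \S(t)u+a\,t$ for supersolutions, you have only proved one implication of (ii); the converse --- that $u\equiv\S(t)u+a\,t$ forces $u$ to be a supersolution --- is never addressed. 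The paper's proof of (ii) disposes of both directions at once: $u$ solves \eqref{eq hja} if and only if $(t,x)\mapsto u(x)-a\,t$ solves the Cauchy problem \eqref{Cauchy problem} with $u_0=u$, and by the uniqueness theorem together with the Lax--Oleinik representation this holds if and only if $u(x)-a\,t=\big(\S(t)u\big)(x)$ for every $t>0$. If you wish to keep your two--sided structure, the correct substitute for your supersolution step is the comparison principle for \eqref{Cauchy problem}, applied to the supersolution $(t,x)\mapsto u(x)-a\,t$ and the solution $\S(t)u$.
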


\begin{dimo}
{\em (i)} If $u$ is a subsolution of \eqref{eq hja}, then for
every $x,\,y\in M$
\[
u(x)\leqslant u(y)+S_a(y,x)\leqslant u(y)+
h^t(y,x)+a\,t\quad\hbox{for every $t>0$,}
\]
hence
\[
u\leqslant \inf_{y\in M}\big( u(y)+ h^t(y,\cdot)+a\,t
\big)=\S(t)u+a\,t.
\]
This readily implies, by monotonicity of the semi--group,
\[
\S(h)u+a\,h\leqslant \S(t+h)u+a\,(t+h)\quad\hbox{for every $h>0$,}
\]
i.e. $t\mapsto\S(t)u+a\,t$ is non decreasing.

Conversely, if $t\mapsto\S(t)u+a\,t$ is non decreasing, then for
every fixed $x,\,y\in M$ we have
\[
u(x)\leqslant u(y)+h^t(y,x)+a\,t\qquad\hbox{for every $t>0$.}
\]
By taking the infimum in $t$ of the right--hand side term, we
obtain $u(x)-u(y)\leqslant S_a(y,x)$ for every $x,\,y\in M$ by
Lemma \ref{lemma 4}, i.e. $u$ is a subsolution of \eqref{eq
hja}.\smallskip

Assertion {\em (ii)} easily follows by noticing that $u$ is a
solution of \eqref{eq hja} if and only if $u(x)-at$ is a solution
of \eqref{Cauchy problem} with $u_0:=u$.
\end{dimo}

\medskip
We conclude this section by proving a result that we will need later in the paper.


\begin{lemma}\label{lemma Maxime}
Let $H$ be a strictly convex Hamiltonian and $u$ an admissible datum. Then, for each $x\in M$, the function $t \mapsto S(t)u(x) $ is locally semi-concave on $(0,+\infty)$. Moreover, the modulus of semi-concavity is locally uniform in $x$.    
\end{lemma}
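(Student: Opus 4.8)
The plan is to exploit the representation of $\S(t)u(x)$ as a value function, together with the $C^1$ regularity of the Lagrangian in the velocity variable (property (L2)$'$, valid since $H$ is strictly convex) and the quantitative bound on minimizing velocities from Proposition \ref{prop Lagrangian minimizer}. Fix $x\in M$ and a time $t_0>0$, and let $\gamma:[-t_0,0]\to M$ be a Lagrangian minimizer for $\big(\S(t_0)u\big)(x)$, so that $\gamma(0)=x$ and $\big(\S(t_0)u\big)(x)=u(\gamma(-t_0))+\int_{-t_0}^0 L(\gamma,\dot\gamma)\,\dd s$. The idea is to build, for $t$ near $t_0$, a competitor curve defined on $[-t,0]$ by a linear time--reparametrization of $\gamma$, namely $\gamma_t(s):=\gamma\big((t_0/t)\,s\big)$ for $s\in[-t,0]$, which still joins $\gamma(-t_0)$ to $x$. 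Then $\big(\S(t)u\big)(x)\leqslant u(\gamma(-t_0))+\int_{-t}^0 L\big(\gamma_t(s),\dot\gamma_t(s)\big)\,\dd s$, and after the change of variable $\tau=(t_0/t)\,s$ this right--hand side becomes a smooth (indeed real--analytic, being a rescaled integral) function of $t$ that equals $\big(\S(t_0)u\big)(x)$ at $t=t_0$. Thus $t\mapsto\big(\S(t)u\big)(x)$ lies below a $C^2$ function touching it at $t_0$, which is exactly a pointwise semiconcavity estimate at $t_0$; carrying out the computation of the second derivative of the comparison function will produce the semiconcavity inequality with an explicit quadratic error term.

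The key steps, in order, are as follows. First, establish the a priori velocity bound: by Proposition \ref{prop S(t)}(i) the map $(t,x)\mapsto\big(\S(t)u\big)(x)$ is locally Lipschitz on $(0,+\infty)\times M$, hence locally bounded, so for $t$ ranging in a compact neighborhood $[t_0-\delta,t_0+\delta]\subset(0,+\infty)$ one gets a uniform bound $h^t(\gamma(-t),x)\leqslant t\,C$ with $C$ depending only on the neighborhood and on $u$ restricted to a fixed compact set; Proposition \ref{prop Lagrangian minimizer} then gives $\|\dot\gamma\|_\infty\leqslant\kappa$ for all minimizers involved, with $\kappa=\kappa(C,\alpha_*,\beta_*)$. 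Second, set up the comparison function $\Phi(t):=u(\gamma(-t_0))+\int_{-t}^0 L\big(\gamma_t(s),\dot\gamma_t(s)\big)\,\dd s$; after substitution $\tau=(t_0/t)s$, $\dd s=(t/t_0)\,\dd\tau$ and $\dot\gamma_t(s)=(t_0/t)\dot\gamma(\tau)$, so $\Phi(t)=u(\gamma(-t_0))+\frac{t}{t_0}\int_{-t_0}^0 L\big(\gamma(\tau),\tfrac{t_0}{t}\dot\gamma(\tau)\big)\,\dd\tau$. Third, observe $\Phi(t_0)=\big(\S(t_0)u\big)(x)$ and $\Phi(t)\geqslant\big(\S(t)u\big)(x)$ for every $t$ in the neighborhood. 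Fourth, differentiate $\Phi$ twice in $t$ under the integral sign, using (L2)$'$ and the velocity bound to control the integrand and its $t$--derivatives on the compact set $\{(\gamma(\tau),\lambda\dot\gamma(\tau)):\tau\in[-t_0,0],\ \lambda\in[t_0/(t_0+\delta),t_0/(t_0-\delta)]\}$; this yields $|\Phi''(t)|\leqslant C_1$ for $t\in[t_0-\delta,t_0+\delta]$, with $C_1$ depending only on $\delta$, $t_0$, $\kappa$, and the modulus of continuity of $L$ and $D_qL$ on that compact set. Conclude that $\big(\S(t)u\big)(x)-\big(\S(t_0)u\big)(x)\leqslant\Phi(t)-\Phi(t_0)\leqslant\Phi'(t_0)(t-t_0)+\tfrac12 C_1(t-t_0)^2$, which is the desired semiconcavity of $t\mapsto\big(\S(t)u\big)(x)$ near $t_0$. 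Finally, note that $C_1$ depends on $x$ only through the compact set on which the minimizing curves and the data live, and by the local boundedness in Proposition \ref{prop S(t)}(i) this can be made uniform for $x$ in any fixed compact subset of $M$ (and, when $M=\T^N$, uniform in $x$ tout court), giving the locally uniform modulus claimed.

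The main obstacle I anticipate is the bookkeeping in the second differentiation of $\Phi$ and, more subtly, the issue of differentiating under the integral sign when $L$ is only $C^1$ in $q$ (it need not be $C^2$ in the purely continuous setting). The first derivative $\Phi'(t)$ is fine: it involves only $L$ and $D_qL$, both continuous, along a Lipschitz curve. For the second derivative one cannot naively differentiate $D_qL$ again. The remedy is to avoid computing $\Phi''$ directly and instead prove semiconcavity of $\Phi$ by a direct finite--difference argument: show that $\Phi'$ is itself a function of bounded variation, or — more cleanly — compare $\Phi$ at three times $t_0-\eta,\,t_0,\,t_0+\eta$ using the explicit form $\Phi(t)=u(\gamma(-t_0))+\tfrac{t}{t_0}\int_{-t_0}^0 L\big(\gamma(\tau),\tfrac{t_0}{t}\dot\gamma(\tau)\big)\,\dd\tau$ and the fact that $\lambda\mapsto\lambda\,L(\gamma(\tau),\tfrac{1}{\lambda}\dot\gamma(\tau))$ is convex in $\lambda>0$ (being the perspective of the convex function $L(\gamma(\tau),\cdot)$), so its pointwise "curvature" can be bounded using only the modulus of continuity of $D_qL$ on the relevant compact set. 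This perspective--convexity observation is the crux: it reduces the whole estimate to a one--dimensional convexity bound with a controlled second--difference, and it is exactly here that strict convexity of $H$ (hence $C^1$ regularity of $L$ in $q$) is used.
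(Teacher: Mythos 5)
Your proposal is correct and, after the self-correction in your final paragraph, coincides with the paper's own argument: the paper uses exactly the linear time-reparametrization $\gamma_h(s)=\gamma\big(ts/(t+h)\big)$ as competitor and bounds the resulting difference by a first-order Taylor expansion of $L$ in $q$, with error controlled by the modulus of continuity of $D_qL$ on $\T^N\times B_{2\kappa}$ (your ``perspective-convexity plus modulus of $D_qL$'' step), yielding a semiconcavity modulus depending only on $\kappa$ and the time interval, hence uniform in $x$. Your instinct to abandon the quadratic bound $\tfrac12 C_1(t-t_0)^2$ was right, since $L$ is only $C^1$ in $q$ here; the paper's statement of semiconcavity with a general modulus is precisely what the corrected argument delivers.
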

\begin{proof}
For simplicity, we prove the assertion for $M=\T^N$. The proof in the general case goes along the same lines, but one has to localize the arguments. 

Let $I$ be an open interval compactly contained in $(0,+\infty)$. By compactness of $\T^N$ it is not hard to see that there exists a constant $C$ such that
\[
h^t(x,y)\leqslant C\,t\qquad\hbox{for every $x,y\in\T^N$ and $t\in I$}.
\]
Let $\kappa$ be the constant chosen according to Proposition \ref{prop Lagrangian minimizer}. Fix $x\in\T^N$, $t\in I$ and let 
$\gamma$ be a Lipschitz curve (of Lipschitz constant $\kappa$) verifying $\gamma(t)=x$ and such that
\[
\big(S(t)u\big)(x)=u\big(\gamma(0)\big)+\int_0^t L\big(\gamma(s),\dot\gamma(s)\big)\, \dd s.
\]
For $h$ such that $|h|<t/2$ we set $\gamma_h : [0,t+h] \to \T^N$ by
$$\gamma_h(s) = \gamma\Big( \frac{ts}{t+h}\Big),\qquad\hbox{$s\in [0,t+h]$}.$$
By definition of the Lax-Oleinik semigroup, we have the following obvious inequality:
$$\big(S(t+h)u\big)(x)\leqslant u\big(\gamma(0)\big)+\int_0^{t+h}L\big(\gamma_h(s),\dot\gamma_h(s)\big)\,\dd s.$$
Therefore, the following holds:
\begin{align*}
\big(S(t+h)u\big)(x)-\big(S(t)u\big)(x)&\leqslant\int_0^{t+h}L(\gamma_h,\dot\gamma_h)\,\dd s-\int_0^t L(\gamma,\dot\gamma)\,\dd s \nonumber\\
&=\int_0^t \Big(L\big(\gamma(s),\dot\gamma(s)\cdot\frac{t}{t+h}\big)\cdot \frac{t+h}{t}-L\big(\gamma(s),\dot\gamma(s)\big)\Big)\,\dd s.
\end{align*}
We make a Taylor expansion to obtain that
\begin{align*}
\big|L \big(\gamma(s),\dot\gamma(s)\cdot\frac{t}{t+h}\big)-L\big(\gamma(s),\dot\gamma(s)\big) & -\big \langle \frac{\partial L}{\partial q}\big(\gamma(s),\dot\gamma(s)\big),\, \dot\gamma(s)\big\rangle\big|  \nonumber \\
& \leqslant\frac{\kappa|h|}{t+h}\,\omega\Big(\frac{\kappa |h|}{t+h}\Big)\leqslant \frac{2 \kappa |h|}{t}\,\omega\Big(\frac{2 \kappa |h|}{t}\Big),
\end{align*}
where $\omega$ is a continuity modulus for $\partial L/\partial q$ in $\T^N\times B_{2\kappa}$. We deduce  that
\begin{align*}
\nonumber \big(S(t&+h)u\big)(x) -\big(S(t)u\big)(x)\\
\nonumber & \leqslant   \int_0^t \Big(\frac{h}{t} L\big(\gamma(s),\dot\gamma(s)\big) -\frac{h}{t} \big\langle\frac{\partial L}{\partial q}\big(\gamma(s),\,\gamma(s)\big),\, \dot\gamma(s)\big\rangle + 2\frac{\kappa|h|}{t}\omega\big(\frac{2\kappa|h|}{t}\big) \Big)\dd s \\
&=h\,p_t+2{\kappa|h|}\,\omega\Big(\frac{2\kappa |h|}{t}\Big),
\end{align*}
where 
\[
p_t=\displaystyle{\int\tagli}_0^{\,t} \Big(L\big(\gamma(s),\dot\gamma(s)\big)-\big \langle\frac{\partial L}{\partial q}\big(\gamma(s),\dot\gamma(s)\big),\,  \dot\gamma(s)\big\rangle\Big)\, \dd s.
\]
\end{proof}
\end{subsection}
\end{section}

\begin{section}{Nonregular weak KAM theory}\label{sez weak
KAM}

The purpose of this Section is to present the main results of weak
KAM Theory we are going to use in the sequel. This material is not
new. It is well known for Tonelli Hamiltonians, see \cite{Fathi},
while the extension to the non regular setting is either contained
in other papers or can be easily recovered from the results proved
therein. Nevertheless, it is less standard and it is not always
possible to give precise references. For the reader's convenience,
we provide here a brief presentation. Some proofs are postponed to
 Appendix \ref{appendix b}.\smallskip

\indent Throughout this Section, $M$ stands either for $\R^N$ or
for $\T^N$ and conditions (H1), (H2) and (H3) are
assumed.\smallskip

We focus our attention on the critical equation
\begin{equation}\label{eq hjc}
H(x,Du)=c\qquad\hbox{in $M$.}
\end{equation}

\noindent A subsolution, supersolution or solution of \eqref{eq
hjc} will be termed critical in the sequel. To ease notations, we
will moreover write $S$ and $\sigma$ in place of $S_c$ and
$\sigma_c$, respectively. Finally, by possibly considering $H-c$
instead of $H$, we will assume $c=0$.

We define the {\em Aubry set} $\A$ as
\[
\A:=\{y\in M\,:\,S(y,\cdot)\ \hbox{is a critical solution}\,\}.
\]
In the sequel, we will sometimes write $S_y$ to denote the
function $S(y,\cdot)$.

We will assume that the following holds
\begin{itemize}
  \item[($\A$)]\qquad $\A$ is nonempty.
\end{itemize}
This condition is always fulfilled when $M$ is compact, but it may
be false in the non compact case.\smallskip

We define the set $\E$ of equilibrium points as
\[
\E:=\{y\in M\,:\,\min_p H(y,p)=0\,\}.
\]
This set may be empty, but if not it is a closed subset of the
Aubry set $\A$.

%

Next, we define a family of curves, called {\em static}. In the
next Section we will investigate the behavior of the critical
subsolutions on such curves.
\begin{definition}\label{critical}
A curve $\gamma$ defined on an interval $J$ is called  {\em
static} if
\[
S\big(\gamma(t_1),\gamma(t_2)\big) =\int_{t_1}^{t_2}
L(\gamma,\dot\gamma ) \,\dd s =-
S\big(\gamma(t_2),\gamma(t_1)\big)
\]
for every $t_1$, $t_2$ in $J$ with $t_2 > t_1$.
\end{definition}

We first show that static curves are always contained in the Aubry
set.

\begin{lemma}\label{DL1}
Let $\gamma$ be a static curve defined on some interval $J$. Then
$\gamma$ is contained in the Aubry set and satisfies
\begin{equation}\label{eq lagrangian parametrization}
L\big(\gamma(s),\dot\gamma(s)\big)=\sigma\big(\gamma(s),\dot\gamma(s)\big)\qquad\hbox{for
a.e. $s\in J$}.
\end{equation}
\end{lemma}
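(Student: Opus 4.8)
The goal is to show that a static curve $\gamma$ on $J$ lies in $\A$ and satisfies the Lagrangian parametrization identity $L(\gamma,\dot\gamma)=\sigma(\gamma,\dot\gamma)$ a.e. The plan is to work locally: fix $t_1<t_2$ in $J$ and exploit the defining chain of equalities together with the inequality \eqref{eq L}, i.e. $L(x,q)\geqslant\sigma(x,q)-c=\sigma(x,q)$ (recall $c=0$), and the relation $h^t(y,x)\geqslant S(y,x)$ coming from Lemma \ref{lemma 4}.

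\emph{Step 1: the parametrization identity.} For any subinterval $[a,b]\subset J$, the definition of static curve and the fact that $h^{b-a}(\gamma(a),\gamma(b))\leqslant\int_a^b L(\gamma,\dot\gamma)\,\dd s$ give, combined with $h^{b-a}(\gamma(a),\gamma(b))\geqslant S(\gamma(a),\gamma(b))=\int_a^b L(\gamma,\dot\gamma)\,\dd s$, that $\gamma|_{[a,b]}$ is in fact a Lagrangian minimizer for $h^{b-a}(\gamma(a),\gamma(b))$; in particular it is Lipschitz by Proposition \ref{prop Lagrangian minimizer}. Now I would use the reparametrization/relaxation argument behind Lemma \ref{lemma 4}: since $S(\gamma(a),\gamma(b))=\inf_{t>0}h^t(\gamma(a),\gamma(b))$ and this infimum is realized along curves on which $L=\sigma$ a.e. (because the only way the chain of inequalities $\int L\geqslant\int\sigma\geqslant S(\gamma(a),\gamma(b))=\int L$ can be an equality is if $L(\gamma(s),\dot\gamma(s))=\sigma(\gamma(s),\dot\gamma(s))$ for a.e. $s\in[a,b]$), one gets \eqref{eq lagrangian parametrization} on $[a,b]$. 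Exhausting $J$ by such subintervals yields the identity a.e. on $J$. The subtle point here, which I expect to be the main obstacle, is making rigorous that $S(\gamma(a),\gamma(b))=\int_a^b\sigma(\gamma,\dot\gamma)\,\dd s$: a priori one only knows $\int_a^b\sigma(\gamma,\dot\gamma)\,\dd s\geqslant S(\gamma(a),\gamma(b))$ (this is the very definition of $S$ as the infimum of such integrals, recall \eqref{eq S}), and $\int_a^b\sigma\leqslant\int_a^b L=S(\gamma(a),\gamma(b))$; so in fact equality is immediate once one has $\int_a^b L=S(\gamma(a),\gamma(b))$, and then $\int_a^b(L-\sigma)\,\dd s=0$ with $L-\sigma\geqslant0$ forces $L=\sigma$ a.e. So the argument is cleaner than feared: the static hypothesis directly gives $\int_a^b L(\gamma,\dot\gamma)\,\dd s=S(\gamma(a),\gamma(b))\leqslant\int_a^b\sigma(\gamma,\dot\gamma)\,\dd s\leqslant\int_a^b L(\gamma,\dot\gamma)\,\dd s$, whence \eqref{eq lagrangian parametrization}.

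\emph{Step 2: $\gamma(J)\subset\A$.} Fix $\tau\in J$ and set $y=\gamma(\tau)$; I must show $S_y=S(y,\cdot)$ is a critical solution, i.e. a supersolution on all of $M$ (it is already a subsolution by Proposition \ref{prop S}(ii)). Equivalently, by Proposition \ref{prop critical sol}(ii) applied with $c=0$, it suffices to show $S_y=\S(t)S_y$ for all $t>0$, or at least $S_y\geqslant\S(t)S_y$ for all $t$ (the reverse inequality always holds since $S_y$ is a subsolution, by Proposition \ref{prop critical sol}(i)). For this I would use that for points $z=\gamma(s)$ with $s<\tau$ close to $\tau$, the static identity gives $S(z,y)=\int_s^\tau L(\gamma,\dot\gamma)\,\dd s' = h^{\tau-s}(z,y) + 0\cdot(\tau-s)$, so the infimum defining $\S(\tau-s)S_y$ at $y$ picks up the value $S_y(z)+h^{\tau-s}(z,y)=S(y,z)+S(z,y)$; but since $\gamma$ is static, $S(y,\gamma(s))=-S(\gamma(s),y)$ when $s<\tau$ (taking $t_1=s$, $t_2=\tau$), hence $S(y,z)+S(z,y)=0=S_y(y)$. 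This shows $\big(\S(\tau-s)S_y\big)(y)\leqslant S_y(y)$, and one then propagates this to an inequality at every point and every time by the standard argument (using that $\S(t)S_y$ is a subsolution, hence $1$-Lipschitz-type estimates with $S$, plus the semigroup property), concluding that $S_y$ is fixed by the semigroup, i.e. $y\in\A$. Since $\tau\in J$ was arbitrary, $\gamma(J)\subset\A$. This localization-and-propagation step is where I expect the real work to lie; alternatively, one can invoke the known characterization that $y\in\A$ iff there is a static curve through $y$, or iff the Peierls barrier $h(y,y)=0$, and verify $h(y,y)=0$ directly from the static identities as above — the cleanest route is probably to show $h(y,y)\leqslant S(y,\gamma(s))+S(\gamma(s),y)=0$ using curves that run backward and forward along $\gamma$ between $y$ and a nearby point $\gamma(s)$, together with $h(y,y)\geqslant S(y,y)=0$.
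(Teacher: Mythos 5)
Your Step 1 is exactly the paper's (one-line) argument: staticity gives $\int_a^b L(\gamma,\dot\gamma)\,\dd s=S\big(\gamma(a),\gamma(b)\big)\leqslant\int_a^b\sigma(\gamma,\dot\gamma)\,\dd s\leqslant\int_a^b L(\gamma,\dot\gamma)\,\dd s$, the first inequality coming from the definition \eqref{eq S} of $S$ as an infimum over curves and the second from \eqref{eq L} with $a=c=0$; since $L\geqslant\sigma$ pointwise this forces \eqref{eq lagrangian parametrization}. The detour through Lagrangian minimizers is unnecessary, as you notice yourself.

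For Step 2 the paper proceeds differently: it distinguishes the steady case (where staticity gives $L(y,0)=0$, i.e. $\min_pH(y,p)=0$, so $y\in\E\subseteq\A$) from the non-steady case, where it picks $z\in\gamma(J)$, $z\neq y$, uses $S(y,z)+S(z,y)=0$ to see that $w(\cdot)=S(y,z)+S(z,\cdot)$ touches $S_y$ from above at $y$, and concludes from $D^-S_y(y)\subseteq D^-w(y)$ and the fact that $w$ is a solution on $M\setminus\{z\}$ (Proposition \ref{prop S}\,(iii)) that $H(y,p)\geqslant0$ for all $p\in D^-S_y(y)$. Your primary route — showing $S_y$ is a fixed point of the semigroup — is genuinely different and does work, and the ``standard argument'' you only name can be made precise with the tools you cite: $\S(t)S_y$ is a critical subsolution vanishing at $y$ for $0<t\leqslant\tau-s$, hence $\S(t)S_y\leqslant S(y,\cdot)$ by maximality (Proposition \ref{prop S}\,(iii)); together with $\S(t)S_y\geqslant S_y$ this gives equality for small $t$, hence for all $t$ by the semigroup property, and Proposition \ref{prop critical sol}\,(ii) concludes. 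It has the advantage of not needing the steady/non-steady dichotomy. Two caveats. First, your construction requires a point $\gamma(s)$ with $s<\tau$, so it fails verbatim when $\tau$ is the left endpoint of $J$ (the paper's touching argument is insensitive to whether $z$ precedes or follows $y$ along the curve); you would have to patch this, e.g. by closedness of $\A$. Second, your ``cleanest'' alternative via the Peierls barrier is flawed as stated: running $\gamma$ backward from $y$ to $\gamma(s)$ does \emph{not} realize $S\big(y,\gamma(s)\big)$, since the action of the reversed curve involves $L(x,-q)$ rather than $L(x,q)$; you would instead need a near-optimal curve from Lemma \ref{lemma 4}, and then the resulting loop times need not diverge, so an extra iteration of the loop is required to get $\liminf_{t\to+\infty}h^t(y,y)=0$ — and in any case Theorem \ref{teo A} is established only after this lemma. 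Stick with your first route.
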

\begin{dimo}  The definition of the semidistance $S$ and
inequality \eqref{eq L} with $a=c$ readily implies that $\gamma$
enjoys \eqref{eq lagrangian parametrization}.

Let us prove that $\gamma$ is contained in the Aubry set.  If
$\gamma$ is a steady curve, i.e. $\gamma(t)=y$ for every $t\in J$,
then for $(a,b)\subset J$ we get
\[
    (b-a)\,L(y,0)    =
    \int_a^b L(\gamma,\dot\gamma) \,\dd s
    =
    S(y,y)
    =0,
\]
yielding that $y\in\E\subseteq \A$ for $L(y,0)=-\min_{\R^N}
H(y,\cdot)$.

Let us then assume that $\gamma$ is nonsteady. We want to prove
that, for every fixed $t\in J$, the point $y:=\gamma(t)$ belongs
to $\A$, i.e. that $S(y,\cdot)$ is a critical solution on $ M$. Of
course, we just need to check that $S(y,\cdot)$ is a supersolution
of \eqref{eq hjc} at $y$, by Proposition \ref{prop S}. To this
purpose, choose a point $z\in\gamma(J)$ with $z\not=y$. Since
$\gamma$ is static, we have
\[
S(y,z)+S(z,y)=0.
\]
This and the triangular inequality imply that the function
$w(\cdot)=S(y,z)+S(z,\cdot)$ touches $S(y,\cdot)$ from above at
$y$, hence $D^- S_y(y)\subseteq D^- w(y)$. Since $w$ is a
viscosity solution in $ M\setminus\{z\}$ we derive
\[
H(y,p)\geqslant  0  \qquad\hbox{for every $p\in D^-{S_y}(y)$,}
\]
that is, $S_y$ is a supersolution of \eqref{eq hjc} at $y$ and so
a critical solution on $ M$.
\end{dimo}

\bigskip
The next result states that static curves fully cover the Aubry
set.

\begin{teorema}\label{DT1} Let $y\in\A$, then there exists a static curve $\eta$ defined on
$\R$ with  $\eta(0)=y$.
\end{teorema}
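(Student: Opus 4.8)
The plan is to construct $\eta$ by a diagonal/compactness argument, building it first on an increasing sequence of bounded symmetric intervals and then passing to the limit. Fix $y\in\A$. Since $y$ belongs to the Aubry set, $S_y=S(y,\cdot)$ is a critical solution, hence by Proposition \ref{prop critical sol}(ii) it is a fixed point of the Lax--Oleinik semigroup: $S_y=\S(t)S_y$ for every $t>0$. The idea is to exploit this fixed-point property together with the fact that the infimum in the Lax--Oleinik formula \eqref{def S(t)} is attained by a minimizing curve. Concretely, for each integer $n\geqslant 1$, applying the fixed-point identity at the point $y$ with time $t=2n$ gives a curve $\gamma_n\in W^{1,1}([-2n,0];M)$ with $\gamma_n(0)=y$ and
\[
S_y(y)=S_y\big(\gamma_n(-2n)\big)+\int_{-2n}^0 L(\gamma_n,\dot\gamma_n)\,\dd s .
\]
Since $S_y(y)=0$, this reads $S_y(\gamma_n(-2n))+\int_{-2n}^0 L(\gamma_n,\dot\gamma_n)\,\dd s=0$. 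Reparametrizing so that the curve is defined on $[-n,n]$ with value $y$ at $0$ (translating time by $n$), we obtain for each $n$ a curve $\gamma_n:[-n,n]\to M$ with $\gamma_n(0)=y$.

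The key step is to show that along $\gamma_n$ the relevant segments are (almost) calibrated for $S$, and then to upgrade this to the static identity in the limit. First, for $-n\leqslant t_1<t_2\leqslant n$ the semigroup property and the fixed-point identity of $S_y$ force
\[
S_y(\gamma_n(t_2))-S_y(\gamma_n(t_1))=\int_{t_1}^{t_2} L(\gamma_n,\dot\gamma_n)\,\dd s,
\]
because the ``loss'' in the Lax--Oleinik formula is concentrated nowhere along an optimal curve for a fixed point. Combining this with the general inequalities $S_y(\gamma_n(t_2))-S_y(\gamma_n(t_1))\leqslant S(\gamma_n(t_1),\gamma_n(t_2))\leqslant \int_{t_1}^{t_2}L(\gamma_n,\dot\gamma_n)\,\dd s$ (the first because $S_y$ is a subsolution and by Proposition \ref{prop S}(i), the second by \eqref{def h^t} and Lemma \ref{lemma 4}), we get that the chain is forced to equality:
\[
S(\gamma_n(t_1),\gamma_n(t_2))=\int_{t_1}^{t_2}L(\gamma_n,\dot\gamma_n)\,\dd s .
\]
For the reverse direction we use that $-S(\cdot,y)$ is also a critical solution when $y\in\A$ (equivalently, run the same argument with $-S(\cdot,y)$, or use that $S(\gamma_n(t_2),\gamma_n(t_1))\geqslant S_y(\gamma_n(t_1))-S_y(\gamma_n(t_2))=-\int_{t_1}^{t_2}L(\gamma_n,\dot\gamma_n)\,\dd s$ together with $S(\gamma_n(t_1),\gamma_n(t_2))+S(\gamma_n(t_2),\gamma_n(t_1))\geqslant 0$); the two inequalities combine to give $S(\gamma_n(t_2),\gamma_n(t_1))=-\int_{t_1}^{t_2}L(\gamma_n,\dot\gamma_n)\,\dd s$. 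Thus each $\gamma_n$ is a static curve on $[-n,n]$ through $y$ at time $0$.

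It remains to extract a limit. Since $h^{t_2-t_1}(\gamma_n(t_1),\gamma_n(t_2))=\int_{t_1}^{t_2}L(\gamma_n,\dot\gamma_n)\,\dd s=S(\gamma_n(t_1),\gamma_n(t_2))\leqslant \kappa_c\,|\gamma_n(t_2)-\gamma_n(t_1)|$, the average Lagrangian action of $\gamma_n$ on any fixed interval is bounded by a constant $C$ independent of $n$; hence Proposition \ref{prop Lagrangian minimizer} gives a uniform bound $\|\dot\gamma_n\|_\infty\leqslant\kappa$ on each fixed compact subinterval (with $\kappa$ independent of $n$). By Ascoli--Arzel\`a and a diagonal argument over the exhaustion $[-k,k]$, a subsequence of $(\gamma_n)$ converges uniformly on compact subsets of $\R$ to a curve $\eta:\R\to M$ with $\eta(0)=y$ and $\eta$ $\kappa$--Lipschitz on each compact interval; moreover $\dot\gamma_n\weakcv\dot\eta$ weakly in $L^1$ on each compact interval. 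Passing to the limit in the static identities, using continuity of $S$ (Proposition \ref{prop h^t} / the Lipschitz bound $S\leqslant\kappa_c|\cdot-\cdot|$) on the left and lower semicontinuity of the action functional $\gamma\mapsto\int L(\gamma,\dot\gamma)$ under this convergence on the right, we obtain
\[
S(\eta(t_1),\eta(t_2))\leqslant \int_{t_1}^{t_2}L(\eta,\dot\eta)\,\dd s,\qquad -S(\eta(t_2),\eta(t_1))\geqslant \int_{t_1}^{t_2}L(\eta,\dot\eta)\,\dd s,
\]
and since $S(\eta(t_1),\eta(t_2))+S(\eta(t_2),\eta(t_1))\geqslant 0$, all these inequalities are in fact equalities. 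Hence $\eta$ is static on $\R$ with $\eta(0)=y$, and by Lemma \ref{DL1} it lies in $\A$, as desired.

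The main obstacle I anticipate is the careful handling of the limit passage: one needs the right compactness (uniform velocity bounds from Proposition \ref{prop Lagrangian minimizer}, applied uniformly in $n$ on each fixed subinterval, which requires the a priori action bound to be uniform) and the correct semicontinuity statement for the action under uniform-on-compacts convergence of curves with weakly convergent derivatives; the two inequalities have to be paired with the triangle-type inequality $S(x,y)+S(y,x)\geqslant 0$ to pinch everything to equality. A secondary technical point is justifying rigorously that an optimal curve for the fixed point $S_y$ calibrates $S_y$ on every subinterval — this follows from the semigroup property of $\S(t)$ (Proposition \ref{prop S(t)}(ii)) but should be spelled out.
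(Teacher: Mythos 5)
There is a genuine gap at the very first step of your construction, and it is precisely the difficulty the paper's proof is designed to circumvent. The fixed--point identity $S_y=\S(2n)S_y$ evaluated at $y$ produces a minimizing curve $\gamma_n:[-2n,0]\to M$ with $\gamma_n(0)=y$, i.e.\ a curve that \emph{terminates} at $y$. Translating time by $n$ puts this curve on $[-n,n]$, but then the value $y$ is attained at the right endpoint $t=n$, not at $t=0$: the midpoint $\gamma_n(-n)$ of the original parametrization is an arbitrary point of $M$, and there is no reason whatsoever for the minimizer to pass through $y$ at an interior time. Consequently your limit curve is only defined on $(-\infty,0]$ (a backward static curve ending at $y$); the forward half $\eta|_{[0,+\infty)}$ is never constructed. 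In the absence of a flow one cannot simply ``continue'' the curve past $y$. The paper resolves this by introducing the reversed Hamiltonian $\check H(x,p)=H(x,-p)$, which by Proposition \ref{prop H check} has the same critical value and Aubry set; running the backward construction for $\check H$ and reversing time (using $\check L(x,q)=L(x,-q)$ and $\check S(x,y)=S(y,x)$) yields the forward half, and one must then verify the static identity across the junction $t=0$, i.e.\ for $t_1<0<t_2$ --- a computation your proposal also omits.

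The rest of your argument is essentially the paper's: the calibration of $S_y$ along each subinterval of a minimizer and the pinching to equality via $S(x,z)+S(z,x)\geqslant 0$ is the content of Lemma \ref{lemma utile}, and the limit passage via Ascoli--Arzel\`a, weak $L^1$ convergence of derivatives and lower semicontinuity of the action is exactly the paper's. One secondary remark: for the equi--Lipschitz bound the paper does not use Proposition \ref{prop Lagrangian minimizer} with an action bound (which on $M=\R^N$ requires an extra argument to control $|\gamma_n(t_2)-\gamma_n(t_1)|$ before any speed bound is available), but rather Lemma \ref{lemma 1}: since each $\gamma_n$ is already static, it satisfies $L(\gamma_n,\dot\gamma_n)=\sigma(\gamma_n,\dot\gamma_n)$ a.e.\ by Lemma \ref{DL1}, and the set of velocities where $L=\sigma$ is uniformly bounded. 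This gives $\|\dot\gamma_n\|_\infty\leqslant R$ directly and uniformly.
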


This result is proved in \cite{DavSic05} by exploiting some ideas
contained in \cite{FaSic03}. A more concise and self--contained
proof of this fact is proposed in the Appendix \ref{appendix b}.

\medskip
We denote by  $\K$ the family of all static curves defined on
$\R$, and by $\K(y)$ the subset of $\K$ made up by those equaling
$y$ at $t=0$.\\

The {\em Peierls barrier} is the function $h:M\times M\to\R$
defined by
\begin{equation}\label{def h}
h(x,y)=\liminf_{t\to +\infty} h^t(x,y).
\end{equation}

The following holds:

\begin{teorema}\label{teo A}
$\quad \A=\{y\in M\,:\,h(y,y)=0\,\}.$
\end{teorema}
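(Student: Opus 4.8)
The plan is to prove the two inclusions separately, using the characterization of the Aubry set via static curves and the relation between $h$, $h^t$, and $S$.

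First I would establish the inclusion $\{y : h(y,y)=0\}\subseteq\A$. Suppose $h(y,y)=0$, so there is a sequence $t_n\to+\infty$ with $h^{t_n}(y,y)\to 0$. For each $n$, pick a Lagrangian minimizer $\gamma_n:[-t_n,0]\to M$ with $\gamma_n(-t_n)=\gamma_n(0)=y$. Since $c=0$, inequality \eqref{eq L} gives $h^{t_n}(y,y)\geqslant S(z,w)$ type bounds along the curve; more precisely, for any $t_1<t_2$ in $[-t_n,0]$ one has $\int_{t_1}^{t_2}L(\gamma_n,\dot\gamma_n)\,\dd s\geqslant S(\gamma_n(t_1),\gamma_n(t_2))\geqslant -S(\gamma_n(t_2),\gamma_n(t_1))$, and combining these over a decomposition of $[-t_n,0]$ together with the near-vanishing of the total action and the triangle inequality for $S$, one sees that $\gamma_n$ is "almost static". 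The curves have uniformly bounded speed by Proposition \ref{prop Lagrangian minimizer} (the bound $h^{t_n}(y,y)<C t_n$ holds since $h^{t_n}(y,y)\to 0$), so after reparametrizing and extracting, one extracts a genuine static curve through $y$ defined on all of $\R$ — the standard trick is to look at the pieces of $\gamma_n$ near the "middle" and pass to the limit. By Lemma \ref{DL1}, a static curve through $y$ forces $y\in\A$.

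Conversely, for $\A\subseteq\{y:h(y,y)=0\}$: let $y\in\A$. By Theorem \ref{DT1} there is a static curve $\eta:\R\to M$ with $\eta(0)=y$. Using that $\eta$ is static and contained in the Aubry set, for any $T>0$ we can estimate $h^{2T}(y,y)$ from above by concatenating the piece $\eta|_{[-T,0]}$ reversed-and-forward, or more directly by the fact that $\eta$ realizes $S(\eta(-T),\eta(0))=-S(\eta(0),\eta(-T))$; one gets $h^{T}(\eta(-T),y)\leqslant$ (action of $\eta$ on $[-T,0]$) $=S(\eta(-T),y)$, and then uses recurrence-type or minimality arguments. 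The cleanest route: since $\eta$ is static, $\int_{-T}^0 L(\eta,\dot\eta)\,\dd s = S(\eta(-T),\eta(0))$ and $\int_0^{T}L(\eta,\dot\eta)\,\dd s = S(\eta(0),\eta(T))$; also $S(y,\cdot)=S(\eta(0),\cdot)$ is a critical solution, hence $S(y,z)+S(z,y)\geqslant 0$ always and $=0$ on $\eta(\R)$. Concatenating an admissible curve from $y$ to $\eta(-T)$-region back to $y$ and using $S(y,y)=0$ together with $h^t(y,x)+ct\geqslant S(y,x)$ and Lemma \ref{lemma 4}, one concludes $\liminf_t h^t(y,y)\leqslant 0$; the reverse inequality $h(y,y)\geqslant S(y,y)=0$ is immediate from \eqref{eq L}. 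So $h(y,y)=0$.

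The main obstacle I anticipate is the first inclusion: extracting a true static curve on all of $\R$ from the almost-closed, almost-minimizing loops $\gamma_n$. One must be careful with the parametrization (the $\gamma_n$ live on growing intervals $[-t_n,0]$), with the passage to the limit of the $L$-action (using only continuity and the uniform speed bound, since there is no $C^2$ structure), and with showing the limit curve inherits exactly the static identity $S(\gamma(t_1),\gamma(t_2))=\int_{t_1}^{t_2}L=-S(\gamma(t_2),\gamma(t_1))$ rather than just an inequality — this uses lower semicontinuity of the action, continuity of $S$, and the triangle inequality in a somewhat delicate bookkeeping. Everything else (the reverse inclusion, the lower bound $h(y,y)\geqslant 0$) is routine given Theorem \ref{DT1}, Lemma \ref{DL1}, Lemma \ref{lemma 4} and inequality \eqref{eq L}.
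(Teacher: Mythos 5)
Your proposal is correct, and the second inclusion ($\A\subseteq\{h(y,y)=0\}$) is essentially the paper's own argument: take $\eta\in\K(y)$, use $h^{T}\big(y,\eta(T)\big)\leqslant S\big(y,\eta(T)\big)=-S\big(\eta(T),y\big)$, close the loop with times $s_n$ furnished by Lemma \ref{lemma 4}, and get $h^{T+s_n}(y,y)<1/n$; the lower bound $h(y,y)\geqslant S(y,y)=0$ is as you say. For the first inclusion, however, you take a genuinely different route. The paper argues softly with the semigroup: setting $u=S_y$, the monotone limit $v=\lim_{t\to\infty}\S(t)u$ is a critical solution (fixed point of $\S(t)$) satisfying $v\geqslant u$ with $v(y)=u(y)$ because $\big(\S(t_n)u\big)(y)\leqslant u(y)+h^{t_n}(y,y)\to u(y)$; hence $D^-u(y)\subseteq D^-v(y)$ and the supersolution test for $u$ at $y$ is inherited from $v$, with no compactness or extraction needed. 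Your route instead extracts a static curve through $y$ from the almost-closed, almost-minimizing loops and invokes Lemma \ref{DL1}; this works (the subinterval estimates $S(\gamma_n(t_1),\gamma_n(t_2))\leqslant\int_{t_1}^{t_2}L(\gamma_n,\dot\gamma_n)\leqslant \eps_n+S(\gamma_n(t_1),\gamma_n(t_2))$ and their counterparts with $-S(\gamma_n(t_2),\gamma_n(t_1))$ do follow from \eqref{eq L} and the triangle inequality, the uniform Lipschitz bound comes from Proposition \ref{prop Lagrangian minimizer} since $h^{t_n}(y,y)/t_n\to 0$, and lower semicontinuity of the action plus Lipschitz continuity of $S$ pass the static identity to the limit), and it is exactly parallel to the paper's Appendix \ref{appendix b} proof of Theorem \ref{DT1}. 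It buys you more (an explicit static curve through $y$) at the cost of the bookkeeping you flag. Two small corrections: you should anchor the extraction at the endpoint, i.e.\ restrict $\gamma_n$ to $[-T,0]$ with $\gamma_n(0)=y$ and diagonalize in $T$ (pieces "near the middle" would give a static curve that need not pass through $y$); and you do not need a static curve on all of $\R$, since Lemma \ref{DL1} applies to a static curve on any interval, so a half-line (or even a bounded interval) through $y$ already yields $y\in\A$.
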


\begin{dimo}
Take $y\in M$ such that $h(y,y)=0$ and set $u(\cdot):=S(y,\cdot)$.
We want to prove that $u$ is a critical solution in $M$;
equivalently, by Proposition \ref{prop S}, that $u$ is a critical
supersolution at $y$. To this purpose, we first note that, since
$u$ is a critical subsolution on $M$, the functions $\S(t)u$ are
increasing in $t$, see  Proposition \ref{prop critical sol}, and
equi--Lipschitz in $x$ for $u$ is Lipschitz continuous, see
Proposition \ref{prop S(t)}. Let us set
\[
v(x)=\sup_{t>0} \big(\S(t)u\big)(x)=\lim_{t\to
+\infty}\big(\S(t)u\big)(x)\qquad\hbox{for every $x\in M$}.
\]
According to what was remarked above, $v\geqslant u$. Furthermore,
$v$ is Lipschitz continuous provided it is finite everywhere, or,
equivalently, at some point. We claim that $v(y)=u(y)$.

\indent Indeed, let $(t_n)_{n\in \mathbb{N}}$ be a diverging
sequence such that $\lim_{n\in \N} h^{t_n}(y,y)=h(y,y)=0$. By definition
of $\S(t)$ we have
\[
\big(\S(t_n)u\big)(y)\leqslant u(y)+h^{t_n}(y,y)\qquad\hbox{for
each $n\in\N$,}
\]
hence
\[
v(y)
    =
\lim_{n\to +\infty}\big(\S(t_n)u\big)(y)
    \leqslant
\lim_{n\to +\infty} \big( u(y)+h^{t_n}(y,y)\big) =u(y),
\]
as it was claimed. This also implies that $v$ touches $u$ from
above at $y$, yielding $D^- u(y)\subseteq D^- v(y)$. Furthermore,
$v$ is a critical solution since it is a fixed point of the (continuous)
semigroup $\S(t)$, see  Proposition \ref{prop critical sol}, in
particular it is a critical supersolution at $y$. Collecting the
information, we conclude that
\[
H(y,p)\geqslant 0  \qquad\hbox{for every $p\in D^-u(y)$,}
\]
finally showing that $u$ is a critical supersolution at
$y$.\smallskip

Let us prove the opposite inclusion. Take $y\in\A$. To prove that
$h(y,y)=0$, it will be enough, in view of Lemma \ref{lemma 4}, to
find a diverging sequence $(t_n)_{n\in \mathbb{N}}$ such that
$\liminf_{n\in \N} h^{t_n}(y,y)= 0$.

To this purpose, let $\eta\in\K(y)$. Then
\[
-S\big(\eta(n),y\big)=\int_0^n L(\eta,\dot\eta)\,\dd s =S\big(y,\eta(n)\big)
\]
for each $n\in\N$. By Lemma \ref{lemma 4} there exist $s_n>0$ such
that
\[
S\big(\eta(n),y\big)\leqslant h^{s_n}\big(\eta(n),y\big)<
S\big(\eta(n),y\big)+\frac{1}{n}.
\]
By definition of $h^t$ we get
\[
    h^{n+s_n}(y,y)
    \leqslant
    h^{n}\big(y,\eta(n)\big)+h^{s_n}\big(\eta(n),y\big)
    <
    S\big(y,\eta(n)\big)+S\big(\eta(n),y\big)+\frac{1}{n}
    =\frac{1}{n},
\]
and the assertion is proved by taking $t_n:=n+s_n$.\bigskip
\end{dimo}

Here and in the remainder of the paper, by $\check H$ we will
denote the Hamiltonian defined as
\[
\check H(x,p):=H(x,-p)\qquad\hbox{for every $(x,p)\in
M\times\R^N$.}
\]
The following holds:
\begin{prop}\label{prop H check}
The Hamiltonians $H$ and $\check H$ have the same critical value
and the same Aubry set.
\end{prop}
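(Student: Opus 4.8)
The plan is to exploit the symmetry between $H$ and $\check H$ at the level of the semidistances, and then transfer this to critical values and Aubry sets. First I would observe that the sublevels of $\check H$ are the reflections of those of $H$: $Z^{\check H}_a(x)=-Z^H_a(x)$, hence the support functions satisfy $\sigma^{\check H}_a(x,q)=\sigma^H_a(x,-q)$. Running the integral in \eqref{eq S} along the time-reversed curve $\tilde\gamma(s):=\gamma(1-s)$ then yields the fundamental identity
\[
S^{\check H}_a(x,y)=S^H_a(y,x)\qquad\text{for all }x,y\in M\text{ and }a\in\R.
\]
From Proposition \ref{prop S}(i), a function $\phi$ is a subsolution of $\check H(x,D\phi)=a$ iff $\phi(x)-\phi(y)\leqslant S^{\check H}_a(y,x)=S^H_a(x,y)$ for all $x,y$, i.e. iff $-\phi$ is a subsolution of $H(x,D(-\phi))=a$ (which also matches Remark \ref{oss subsol}, since $\check H(x,p)=H(x,-p)$). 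In particular the two equations have subsolutions for exactly the same values of $a$, so $c(\check H)=c(H)$. Having normalized $c=0$ for $H$, we then have $c(\check H)=0$ as well, and $S^{\check H}=S^{\check H}_0$ is just the transpose of $S=S_0$.

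Next I would identify the Aubry sets. By definition, $y\in\A(\check H)$ iff $S^{\check H}(y,\cdot)=S(\cdot,y)$ is a critical solution for $\check H$. By Proposition \ref{prop S}(iii) this function is already a critical subsolution of $\check H(x,Du)=0$, so the only thing to check is that it is a supersolution at the point $y$. But $S(\cdot,y)$ being a supersolution of $\check H$ at $y$ means $\check H(y,p)\geqslant 0$ for every $p\in D^-\big(S(\cdot,y)\big)(y)$, i.e. $H(y,-p)\geqslant 0$ for every such $p$; writing $q=-p$ and using $D^-\big(S(\cdot,y)\big)(y)=-D^+\big(-S(\cdot,y)\big)(y)$, this is exactly the statement that $-S(\cdot,y)=-S_y$ in the $\check H$-reflected sense... — more cleanly, I would phrase it via the characterization $\A(H)=\{y: h^H(y,y)=0\}$ of Theorem \ref{teo A}. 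Indeed, the Lagrangian $\check L$ of $\check H$ satisfies $\check L(x,q)=L(x,-q)$, so $h^{t}_{\check H}(x,y)=h^{t}_H(y,x)$ by the same time-reversal of minimizers, whence $h^{\check H}(x,y)=h^H(y,x)$ and in particular $h^{\check H}(y,y)=h^H(y,y)$. Therefore $\A(\check H)=\{y:h^{\check H}(y,y)=0\}=\{y:h^H(y,y)=0\}=\A(H)$.

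I expect the only genuinely delicate point to be making sure that all the reflected objects ($\sigma$, $S_a$, $L$, $h^t$, $h$, the sub/super-differentials) transform as claimed under $p\mapsto -p$ together with time reversal $s\mapsto -s$; each such identity is routine but must be stated carefully, especially the interchange $D^-u(x)=-D^+(-u)(x)$ used to pass between sub- and supersolutions. Once these book-keeping identities are in place, the equality of critical values follows immediately from the common existence range of subsolutions, and the equality of Aubry sets follows from Theorem \ref{teo A} as above; no further dynamical input is needed.
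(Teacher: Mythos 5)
Your proposal is correct and follows essentially the same route as the paper: the equality of critical values comes from the fact that $v\mapsto -v$ exchanges subsolutions of $H=a$ and of $\check H=a$ (the content of Remark \ref{oss subsol}), and the equality of Aubry sets comes from the transposition identity $\check h(x,y)=h(y,x)$ combined with the characterization $\A=\{y:h(y,y)=0\}$ of Theorem \ref{teo A}. The extra book-keeping you supply (reflection of sublevel sets, time reversal of curves, $\check L(x,q)=L(x,-q)$) is exactly what the paper leaves implicit, and the abandoned direct supersolution argument is harmless since you correctly fall back on the Peierls-barrier route.
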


\begin{dimo}
The fact that $H$ and $\check H$ have the same critical value
immediately follows from the definition in view of Remark \ref{oss
subsol}. Furthermore, the Peierls barrier $\check h$ associated
with $\check H$ enjoys $\check h(x,y)=h(y,x)$ for every $x,\,y\in
M$. Hence $H$ and $\check H$ have the same Aubry set in view of
Theorem \ref{teo A}.
\end{dimo}

\medskip
We end this section by proving some important properties of the
Peierls barrier.

\begin{prop}\label{prop h}
Under assumption ($\A$) the following properties hold:
\begin{itemize}
    \item[{\em (i)}]\quad $h$ is finite valued and Lipschitz
continuous.\smallskip
    \item[{\em (ii)}]\quad  If $v$ is a critical subsolution, then
    $h(y,x)\geqslant v(x)-v(y)$ for every $x,y\in M$.\smallskip
    \item[{\em (iii)}]\quad  For every $x,y,z\in M$ and $t>0$
    \[
    h(y,x)\leqslant h(y,z)+h^t(z,x)\quad\hbox{and} \quad h(y,x)\leqslant h^t(y,z)+h(z,x).
    \]
    \quad In particular, $h(y,x)\leqslant h(y,z)+h(z,x)$.\smallskip
    \item[{\em (iv)}]\quad  $h(x,y)=S(x,y)$ \  if either $x$ or $y$
    belong to $\A$.\smallskip
    \item[{\em (v)}]\quad $h(y,\cdot)$ is a critical solution  for every fixed $y\in
    M$.\smallskip
%
\end{itemize}
Furthermore, when $M$ is compact and condition (H2)$'$ is assumed,
we have
\[
h^t \ \underset{t\to +\infty}\ucv \  h\qquad\hbox{in $M\times M$.}
\]
\end{prop}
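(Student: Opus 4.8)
\emph{Strategy, item (iii), and finiteness.} The plan is to obtain items (i)--(v) from the sub-additivity $h^{t+s}(y,x)\le h^s(y,z)+h^t(z,x)$ of the Lax--Oleinik kernels, together with the continuity and compactness facts recorded in Propositions \ref{prop h^t}--\ref{prop Lagrangian minimizer}, and to reduce the concluding uniform convergence to the large time behavior of the Lax--Oleinik semigroup. For (iii): passing to the $\liminf$ as $s\to+\infty$ in $h^{t+s}(y,x)\le h^s(y,z)+h^t(z,x)$ and in $h^{t+s}(y,x)\le h^t(y,z)+h^s(z,x)$, and using that $\liminf_{s\to+\infty}h^{t+s}(y,x)=h(y,x)$, one gets
\[
h(y,x)\le h(y,z)+h^t(z,x),\qquad h(y,x)\le h^t(y,z)+h(z,x)\qquad\text{for every }t>0;
\]
taking then the infimum over $t>0$ and recalling that $\inf_{t>0}h^t(\cdot,\cdot)=S(\cdot,\cdot)$ (Lemma \ref{lemma 4} with $a=c=0$) one obtains in addition $h(y,x)\le h(y,z)+S(z,x)$ and $h(y,x)\le S(y,z)+h(z,x)$, and a further $\liminf$ in $t$ gives the triangle inequality $h(y,x)\le h(y,z)+h(z,x)$. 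As for finiteness: $h\ge S>-\infty$ by \eqref{eq L}; and, fixing $y_0\in\A$ (nonempty by ($\A$)) together with $t_n\to+\infty$ such that $h^{t_n}(y_0,y_0)\to h(y_0,y_0)=0$ (Theorem \ref{teo A}), the splitting $h^{t_n+2}(x,y)\le h^1(x,y_0)+h^{t_n}(y_0,y_0)+h^1(y_0,y)$ gives $h(x,y)\le h^1(x,y_0)+h^1(y_0,y)<+\infty$.

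\emph{Items (ii), (iv), and the Lipschitz part of (i).} If $v$ is a critical subsolution, then $v(x)-v(y)\le S(y,x)\le h^t(y,x)$ for every $t>0$ by Proposition \ref{prop S}, whence $v(x)-v(y)\le h(y,x)$; this is (ii). From the inequalities of the previous paragraph one has $h(y,x)-h(y',x)\le S(y,y')\le\kappa_c|y-y'|$ and $h(y,x)-h(y,x')\le S(x',x)\le\kappa_c|x-x'|$; exchanging the roles of the points and combining gives Lipschitz continuity of $h$ on $M\times M$, which together with finiteness yields (i). Finally, if $x\in\A$ then $h(x,x)=0$ (Theorem \ref{teo A}), hence $h(x,y)\le h(x,x)+S(x,y)=S(x,y)\le h(x,y)$; and if $y\in\A$ then $h(x,y)\le h^t(x,y)+h(y,y)=h^t(x,y)$ for every $t>0$, so $h(x,y)\le\inf_{t>0}h^t(x,y)=S(x,y)\le h(x,y)$; this is (iv).

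\emph{The core: item (v).} I would prove that $\S(t)\,h(y,\cdot)=h(y,\cdot)$ for every $t>0$, so that $h(y,\cdot)$ is a critical solution by Proposition \ref{prop critical sol}(ii). The bound $h(y,\cdot)\le\S(t)\,h(y,\cdot)$ is the first inequality of (iii), after taking the infimum over the intermediate point. For the converse, pick $t_n\to+\infty$ with $h^{t_n}(y,x)\to h(y,x)$ and write $h^{t_n}(y,x)=h^{t_n-t}(y,z_n)+h^t(z_n,x)$, where $z_n=\gamma_n(-t)$ for a Lagrangian minimizer $\gamma_n$ of $h^{t_n}(y,x)$. Since $h^{t_n}(y,x)$ is bounded, Proposition \ref{prop Lagrangian minimizer} gives a bound on $\|\dot\gamma_n\|$ independent of $n$, so that $(z_n)$ lies in a compact set and, up to a subsequence, $z_n\to z_*$. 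Using the continuity of $h^t$ and the fact that, for $t_n-t$ large, the maps $h^{t_n-t}(y,\cdot)$ are Lipschitz on a fixed neighborhood of $z_*$ with a constant independent of $n$ (Proposition \ref{prop h^t}), one gets
\[
h(y,z_*)=\liminf_{s\to+\infty}h^s(y,z_*)\le\lim_n h^{t_n-t}(y,z_*)=h(y,x)-h^t(z_*,x),
\]
whence $h(y,z_*)+h^t(z_*,x)\le h(y,x)$ and therefore $\S(t)\,h(y,\cdot)(x)\le h(y,x)$.

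\emph{The concluding statement, and the main obstacle.} Assume now that $M=\T^N$ is compact and that (H2)$'$ holds. For $t\ge t_0>0$ the semigroup property gives $h^t(x,\cdot)=\S(t-t_0)\big(h^{t_0}(x,\cdot)\big)$, and by Proposition \ref{prop h^t} the family $\{h^{t_0}(x,\cdot):x\in\T^N\}$ is bounded in $\D{Lip}(\T^N)$; in particular $\{h^t(\cdot,\cdot):t\ge t_0\}$ is equi--Lipschitz on $\T^N\times\T^N$, so by the Ascoli--Arzel\`a theorem it will suffice to establish the pointwise convergence $h^t(x,y)\to h(x,y)$. This, in turn, amounts to the uniform convergence of $\S(s)v$ as $s\to+\infty$ (with a modulus uniform for $v$ in a bounded subset of $\D{Lip}(\T^N)$): granting it, $h^t(x,\cdot)$ converges uniformly in $(t,x)$, and the limit can only be $\liminf_t h^t(x,\cdot)=h(x,\cdot)$, which is already a critical solution by (v). This convergence of the semigroup is precisely the point where (H2)$'$ is genuinely needed --- the equi--semiconcavity in the time variable provided by Lemma \ref{lemma Maxime}, together with an analysis of the $\omega$--limit set of the orbit $t\mapsto h^t(x,\cdot)$ (each of whose elements is a critical solution), being the relevant mechanism; see also \cite{DavSic05}. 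I expect this last step to be the main obstacle: items (i)--(v) are soft consequences of sub-additivity and of the compactness of Lagrangian minimizers, whereas the uniform convergence is the genuine large time behavior statement, and is the single place where the strict convexity of the Hamiltonian cannot be dispensed with.
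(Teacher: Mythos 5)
Your proposal is correct and follows essentially the same route as the paper: items (ii)--(iv) via subadditivity of $h^t$ and Lemma \ref{lemma 4}, item (v) via compactness of Lagrangian minimizers (Proposition \ref{prop Lagrangian minimizer}) to produce the intermediate point $z_*$, and the final uniform convergence deferred to the asymptotic results of \cite{BaSo00, DavSic05} exactly as the paper does. The only notable local variation is in (v), where you replace the paper's appeal to weak $L^1$ convergence of $\dot\gamma_n$ and lower semicontinuity of the action by the dynamic--programming identity $h^{t_n}(y,x)\geqslant h^{t_n-t}(y,z_n)+h^t(z_n,x)$ together with the equi--Lipschitz bound on $h^{t_n-t}(y,\cdot)$ — a slightly more elementary argument that works equally well (just phrase the limit of $h^{t_n-t}(y,z_*)$ with a $\limsup$/$\liminf$ rather than asserting it exists).
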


\begin{dimo}
{\em (i)} Let $K_1$ be the constant given by Proposition \ref{prop
h^t} with $r=1$. It is easily seen that for every bounded open set
$V\subset M\times M$ there exists $t_V$ such that the functions
$\{h^t\,:\,t\geqslant t_V\,\}$ are $K_1$--Lipschitz continuous in
$V$. Moreover we already know, by Theorem \ref{teo A}, that
$h(y,y)=0$ for every $y\in\A$. This implies that $h$ is finite
valued and Lipschitz--continuous on the whole $M\times M$.

Items {\em (ii)} and {\em (iii)} follow directly from the
definition of $h$ and from assertion {\em (ii)} in Proposition
\ref{prop h^t}.

\indent{\em (iv)} Let us assume, for definiteness, that $y\in\A$.
Let $(t_n)_{n\in \N}$ be a diverging sequence such that $0\leqslant
h^{t_n}(y,y)<1/n$. Then for every $t>0$ and $n\in\N$
\[
S(x,y)\leqslant h^{t+t_n}(x,y) \leqslant h^{t}(x,y)+h^{t_n}(y,y)
\leqslant h^t(x,y)+\frac{1}{n},
\]
yielding
\[
S(x,y)\leqslant\liminf_{t\to +\infty}\, h^t(x,y) \leqslant
\inf_{t>0}\, h^t(x,y)=S(x,y)
\]
in view of Lemma \ref{lemma 4}.

\indent{\em (v)} By Proposition \ref{prop critical sol}--{\em
(ii)}, it suffices to prove that $\S(t)h_y =h_y$ for every fixed
$t>0$ and $y\in M$, where $h_y$ denotes the function $h(y,\cdot)$.
First notice that, by {\em (iii)} and Lemma \ref{lemma 4},
\[
h_y(x)-h_y(z)\leqslant \inf_{t>0} h^t(z,x)=S(z,x),
\]
that is, $h_y$ is a critical subsolution. By Proposition \ref{prop
critical sol}--{\em (i)}, that implies $\S(t)h_y \geqslant h_y$.

Let us prove the reverse inequality.  For any fixed $x\in M$, pick
a diverging sequence $(t_n)_{n\in \N}$ with $t_n>t$ for every
$n\in\N$ and a family of curves $\gamma_n:[-t_n,0]\to M$
connecting $y$ to $x$ such that $h^{t_n}(y,x)=\int_{-t_n}^0
L(\gamma_n,\dot\gamma_n)\,\dd s$ and
\begin{equation}\label{eq hh2}
\lim_{n\to +\infty}\int_{-t_n}^0 L(\gamma_n,\dot\gamma_n)\,\dd
s=h(y,x).
\end{equation}
The functions $h^{t_n}$ are equi--Lipschitz, see Proposition
\ref{prop h^t}. This yields, by Proposition \ref{prop Lagrangian
minimizer}, that the curves $\gamma_n$ are equi--Lipschitz. Up to
extraction of a subsequence, we can then assume that there is a
curve $\gamma:[-t,0]\to M$ such that
\[
\gamma_n\ucv\gamma\quad\hbox{in $[-t,0]$}\qquad\hbox{and}\qquad
\dot\gamma_n\weakcv\dot\gamma\quad\hbox{in}\quad
L^1\big([-t,0];\R^N\big).
\]
Set  $z=\gamma(-t)$. By a classical semi--continuity result of the
Calculus of Variations \cite{BuGiHi98}, we have
\begin{multline*}
h_y(x)= \liminf_{n\to +\infty}\int_{-t_n}^0
L(\gamma_n,\dot\gamma_n)\,\dd s
 \\ \geqslant
 \liminf_{n\to +\infty}\int_{-t_n}^{-t} L(\gamma_n,\dot\gamma_n)\,\dd
 s\,+\,
 \liminf_{n\to +\infty}\int_{-t}^0 L(\gamma_n,\dot\gamma_n)\,\dd s
\\ \geqslant h(y,z)+  \int_{-t}^0 L(\gamma,\dot\gamma)\,\dd s \geqslant \big(\S(t)h_y\big)(x).
\end{multline*}

Last, let us show that $h^t$ uniformly converges to $h$ for $t\to
+\infty$ when $M$ is compact and condition (H2)$'$ is assumed. Let
$y\in M$ be fixed. Then the convergence of $h^t(y,\cdot)$ to
$h(y,\cdot)$ is actually uniform, in view of the asymptotic convergence results proved in \cite{BaSo00, DavSic05}
and of the equality $h^t(y,\cdot)=\S(t-1)u$ with $u=h^1(y,\cdot)$. The assertion follows from the fact that  
$y$ was arbitrarily chosen in $M$ and 
the functions $\{h^t\,:\,t\geqslant 1\}$ are equi--Lipschitz  in
$M\times M$  in view of Proposition \ref{prop h^t}.
\end{dimo}
\  \\
\end{section}

\begin{section}{Differentiability properties of critical
subsolutions}\label{sez differentiability}

The purpose of this Section is to prove some differentiability
properties of critical subsolutions on the Aubry set. These
results will be exploited in the subsequent section to obtain some
information for commuting Hamiltonians.\smallskip

Let us consider, for any fixed $t>0$, the locally Lipschitz
function defined on $M$ as
\[
\big(\S(t)u\big)(\cdot)=\inf_{z\in M}\left( u(z)+h^t(z,\cdot)
\right),
\]
where $u$ is an admissible initial datum. If the latter is
additionally assumed continuous, then the  infimum is actually a
minimum, and, as previously noticed, for every fixed $y\in M$
there exists a Lipschitz curve $\gamma:[-t,0]\to M$ with
$\gamma(0)=y$ such that
\[
\big(\S(t)u\big)(y)=u\big(\gamma(-t)\big)+\int_{-t}^0
L(\gamma,\dot\gamma)\,\dd s.
\]
As first step in our analysis, we prove some differentiability
properties of $\S(t)u$ at $y$ and of $u$ at $\gamma(-t)$ in terms
of $\gamma$, thus generalizing to this setting some known results
in the regular case, see \cite{Fathi}.

We start by dealing with the case when the Hamiltonian is
independent of $x$. We need a lemma first.\par

\begin{lemma}\label{lemma segment}
Let $H$ be a strictly convex Hamiltonian that does not depend on
$x$. Then any (Lipschitz) Lagrangian minimizer $\gamma:[-t,0]\to
M$ with $t>0$ satisfies
\begin{equation}\label{eq segment}
D_q L\big(\dot\gamma(s)\big)=D_q
L\Big(\frac{\gamma(0)-\gamma(-t)}{t}\Big)\qquad\hbox{for a.e.
$s\in [-t,0]$,}
\end{equation}
with equality holding for every $s$ if $\gamma$ is of class $C^1$.
\end{lemma}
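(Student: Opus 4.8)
The plan is to exploit the Euler--Lagrange equation together with strict convexity. Since $H$ does not depend on $x$, neither does $L$, and in particular $D_x L \equiv 0$. For a Lagrangian minimizer $\gamma$ of $\int_{-t}^0 L(\dot\gamma)\,\dd s$ with fixed endpoints, the classical Du Bois-Reymond form of the Euler--Lagrange equation asserts that there is a constant vector $p_0\in\R^N$ such that
\[
D_q L\big(\dot\gamma(s)\big)=p_0\qquad\hbox{for a.e. $s\in[-t,0]$.}
\]
This is the step where a small amount of care is needed: $\gamma$ is only Lipschitz a priori, so one cannot quote the $C^2$ Euler--Lagrange equation directly. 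The standard route is the integral (Du Bois-Reymond) form: for any $\varphi\in C_c^\infty((-t,0);\R^N)$, the first variation $\frac{d}{d\eps}\big|_{\eps=0}\int_{-t}^0 L(\dot\gamma+\eps\dot\varphi)\,\dd s=0$ gives $\int_{-t}^0 \langle D_qL(\dot\gamma(s)),\dot\varphi(s)\rangle\,\dd s=0$, whence $s\mapsto D_qL(\dot\gamma(s))$ is a.e. equal to a constant $p_0$. Differentiability of $\eps\mapsto L(\dot\gamma+\eps\dot\varphi)$ under the integral sign is justified by (L2)$'$ and the Lipschitz bound $\|\dot\gamma\|_\infty\le\kappa$ coming from Proposition \ref{prop Lagrangian minimizer} (applicable since $h^t(\gamma(-t),\gamma(0))<tC$ for a suitable $C$), which confines $\dot\gamma$ to a fixed ball on which $D_qL$ is bounded and continuous.

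Now I would identify the constant $p_0$. By (H2)$'$ the map $q\mapsto D_qL(q)$ is injective (it is the inverse of $p\mapsto D_pH(p)$ on the relevant range, by Fenchel duality), so $D_qL(\dot\gamma(s))=p_0$ forces $\dot\gamma(s)$ to equal a single fixed vector $v_0$ for a.e.\ $s$. Integrating, $\gamma(0)-\gamma(-t)=\int_{-t}^0\dot\gamma(s)\,\dd s=t\,v_0$, i.e.\ $v_0=\big(\gamma(0)-\gamma(-t)\big)/t$. Therefore
\[
D_qL\big(\dot\gamma(s)\big)=p_0=D_qL(v_0)=D_qL\Big(\frac{\gamma(0)-\gamma(-t)}{t}\Big)\qquad\hbox{for a.e.\ $s\in[-t,0]$,}
\]
which is \eqref{eq segment}. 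If $\gamma$ is $C^1$, then $s\mapsto D_qL(\dot\gamma(s))$ is continuous, so the a.e.\ identity with the constant $p_0$ holds at every $s$; equivalently $\dot\gamma\equiv v_0$ everywhere and $\gamma$ is the affine segment, giving the pointwise conclusion. (In the torus case $M=\T^N$ one works with a lift of $\gamma$ to $\R^N$; the argument is unchanged since $L$ is $\Z^N$-periodic, hence its lift is $x$-independent as well.)

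The only genuine obstacle is the first paragraph: making the first-variation argument rigorous for a merely Lipschitz minimizer without assuming $C^2$ regularity of $L$. Everything else is a direct consequence of strict convexity (injectivity of $D_qL$) and integration. I would phrase the variational step carefully, citing (L2)$'$ and the uniform velocity bound, and then the rest follows in a couple of lines.
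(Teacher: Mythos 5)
Your first--variation step is sound: for a Lipschitz minimizer the du Bois--Reymond argument is justified by (L2)$'$ and the uniform bound on $\dot\gamma$, and it yields a constant $p_0$ with $D_qL(\dot\gamma(s))=p_0$ for a.e.\ $s\in[-t,0]$. The gap is in the identification of $p_0$. You claim that (H2)$'$ makes $q\mapsto D_qL(q)$ injective because it is ``the inverse of $D_pH$''; but $H$ is only assumed continuous and strictly convex, not differentiable, so $D_pH$ need not exist and $D_qL$ need not be injective. Strict convexity of $H$ gives differentiability of $L=H^*$ (that is exactly (L2)$'$), whereas injectivity of $D_qL$, i.e.\ strict convexity of $L$, would require smoothness of $H$, which is not assumed. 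Concretely, if $\partial H(p_0)$ is a nontrivial segment, then $L$ is affine on that segment with constant gradient $p_0$, and there exist genuine Lipschitz minimizers whose velocity oscillates inside $\partial H(p_0)$ without being a.e.\ equal to a single vector; your intermediate claim ``$\dot\gamma(s)=v_0$ a.e.'' is therefore false in general, as is the assertion in your last paragraph that a $C^1$ minimizer must be the affine segment.

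The conclusion survives, but you must identify $p_0$ by duality rather than by injectivity: $D_qL(\dot\gamma(s))=p_0$ means $\dot\gamma(s)\in\partial H(p_0)$ for a.e.\ $s$ (Fenchel equality), and since $\partial H(p_0)$ is closed and convex it contains the average $v=\frac1t\int_{-t}^0\dot\gamma\,\dd s=\big(\gamma(0)-\gamma(-t)\big)/t$; hence $p_0\in\partial L(v)=\{D_qL(v)\}$ because $L$ is differentiable at $v$. With this repair your route works. Note that the paper avoids the Euler--Lagrange machinery altogether: it compares $\gamma$ with the affine segment via Jensen's inequality to get $\int_{-t}^0 L(\dot\gamma)\,\dd s=t\,L(v)$, then observes that the subgradient inequality $L(q)\geqslant L(v)+\langle D_qL(v),q-v\rangle$ must be an equality at $q=\dot\gamma(s)$ for a.e.\ $s$, which by differentiability of $L$ forces $D_qL(\dot\gamma(s))=D_qL(v)$. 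That argument is shorter, uses only the $C^1$ regularity of $L$, and sidesteps the first--variation technicalities entirely.
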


\begin{oss}\label{oss segment}
We remark for later use that, since equality \eqref{eq segment}
holds for almost every $s\in [-t,0]$, then it holds in particular
for every $s$ that is both a differentiability point of $\gamma$
and a Lebesgue point of $D_q L\big(\dot\gamma(\cdot)\big)$ in $[-t,0]$.
\end{oss}

\begin{dimo}
Let us set
\[
\displaystyle{v:=\frac{\gamma(0)-\gamma(-t)}{t}}\qquad\hbox{and}
\qquad\eta(s)=\gamma(0)+sv.
\]
It is easy to see, by the convexity of $L$ and Jensen's
inequality, that
\[
\int_{-t}^0 L(\dot\gamma)\,\dd s\geqslant t\, L(v)=\int_{-t}^0
L(\dot\eta)\,\dd s,
\]
while the converse inequality is true since $\gamma$ is a
Lagrangian minimizer. By exploiting the convexity of $L$ again, we
get
\begin{equation}\label{eq Jensen}
L(q)\geqslant L(v)+\langle D_q L(v),q-v \rangle\qquad\hbox{for
every $q\in\R^N$.}
\end{equation}
On the other hand,
\[
\int_{-t}^0 L\big(\dot\gamma(s)\big)\,\dd s= t\,L(v)=\int_{-t}^0 \Big(
L(v)+\langle D_q L(v),\dot\gamma(s)-v \rangle\Big)\,\dd s,
\]
meaning that we have an equality in \eqref{eq Jensen} at
$\dot\gamma(s)$ for a.e. $s\in [-t,0]$. Equality \eqref{eq
segment} follows by differentiability of $L$.
\end{dimo}

\medskip

\begin{prop}\label{prop Fathi}
Let $H$ be a strictly convex Hamiltonian that does not depend on
$x$. Let $u$ be an admissible initial datum and $\gamma:[-t,0]\to
M$ a Lipschitz continuous curve such that $\gamma(0)=y$ and
\[
\big(\S(t)u\big)(y)=u\big(\gamma(-t)\big)+\int_{-t}^0
L\big(\dot\gamma(s)\big)\,\dd s
\]
for some $t>0$ and $y\in M$.  Then
\[
D_q L\Big(\frac{\gamma(0)-\gamma(-t)}{t}\Big)\in
{D}^{+}\big(\S(t)u\big)(y)\quad \hbox{and}\quad D_q
L\Big(\frac{\gamma(0)-\gamma(-t)}{t}\Big)\in
{D}^{-}u\big(\gamma(-t)\big).
\]
\end{prop}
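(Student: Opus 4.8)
The plan is to prove the two membership statements separately, both by direct comparison arguments using the definition of the Lax--Oleinik formula and the convexity of $L$. Write $p:=D_q L\big((\gamma(0)-\gamma(-t))/t\big)$ and $v:=(\gamma(0)-\gamma(-t))/t$; recall from Lemma \ref{lemma segment} that $D_q L(\dot\gamma(s))=p$ for a.e. $s\in[-t,0]$, and in fact the curve $\eta(s)=\gamma(0)+sv$ is itself a Lagrangian minimizer realizing the same value (this is what the proof of Lemma \ref{lemma segment} shows), so without loss of generality I may replace $\gamma$ by the affine curve $\eta$.

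First I would handle $p\in D^-u(\gamma(-t))$. For any $z\in M$, consider the affine curve $\zeta$ on $[-t,0]$ joining $z$ to $\gamma(0)$, i.e. $\zeta(s)=\gamma(0)+s\,\frac{\gamma(0)-z}{t}$. By definition of $\S(t)u$,
\[
\big(\S(t)u\big)(y)\leqslant u(z)+\int_{-t}^0 L\Big(\frac{\gamma(0)-z}{t}\Big)\dd s= u(z)+t\,L\Big(\frac{\gamma(0)-z}{t}\Big),
\]
while by hypothesis $\big(\S(t)u\big)(y)=u(\gamma(-t))+t\,L(v)$. Subtracting gives
\[
u(z)-u(\gamma(-t))\geqslant t\Big(L(v)-L\Big(\tfrac{\gamma(0)-z}{t}\Big)\Big)\geqslant t\,\big\langle D_q L(v),\, v-\tfrac{\gamma(0)-z}{t}\big\rangle=\langle p,\, z-\gamma(-t)\rangle,
\]
where the middle inequality is the subgradient inequality for the convex function $L$ (valid since $L$ is $C^1$ in $q$ under (H2)$'$, giving $L(q_1)\geqslant L(q_2)+\langle D_qL(q_2),q_1-q_2\rangle$). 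Since $z$ is arbitrary and $u$ is Lipschitz, this says exactly that $\phi(z):=u(\gamma(-t))+\langle p,z-\gamma(-t)\rangle$ is a (smooth) subtangent to $u$ at $\gamma(-t)$, hence $p\in D^-u(\gamma(-t))$.

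For $p\in D^+(\S(t)u)(y)$, I would argue similarly but now perturbing the endpoint $\gamma(0)=y$. Fix any $x\in M$ and consider the affine curve $\tilde\zeta$ on $[-t,0]$ joining $\gamma(-t)$ to $x$, namely $\tilde\zeta(s)=x+s\,\frac{x-\gamma(-t)}{t}$. Then
\[
\big(\S(t)u\big)(x)\leqslant u(\gamma(-t))+\int_{-t}^0 L\Big(\frac{x-\gamma(-t)}{t}\Big)\dd s=u(\gamma(-t))+t\,L\Big(\frac{x-\gamma(-t)}{t}\Big),
\]
and subtracting $\big(\S(t)u\big)(y)=u(\gamma(-t))+t\,L(v)$ and again using the subgradient inequality for $L$ at $v$,
\[
\big(\S(t)u\big)(x)-\big(\S(t)u\big)(y)\leqslant t\Big(L\Big(\tfrac{x-\gamma(-t)}{t}\Big)-L(v)\Big)\ \text{is not directly}\ \leqslant\langle p,x-y\rangle;
\]
here I must be careful about the direction of the inequality, so the correct move is to apply the subgradient inequality in the form $L(v)\geqslant L\big(\tfrac{x-\gamma(-t)}{t}\big)+\langle D_qL\big(\tfrac{x-\gamma(-t)}{t}\big),\, v-\tfrac{x-\gamma(-t)}{t}\rangle$ combined with monotonicity of $D_qL$ — or, more cleanly, to note that $L$ being convex and $C^1$ gives $L(q_1)-L(q_2)\leqslant\langle D_qL(q_1),q_1-q_2\rangle$, applied with $q_1=\tfrac{x-\gamma(-t)}{t}$, $q_2=v$. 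Since $t(q_1-q_2)=x-y$, this yields $\big(\S(t)u\big)(x)-\big(\S(t)u\big)(y)\leqslant\langle D_qL(q_1),x-y\rangle$, which is an inequality with a $q_1$-dependent slope, not yet a supertangent. The fix is standard: let $x\to y$ so that $q_1\to v$ and $D_qL(q_1)\to D_qL(v)=p$ by continuity of $D_qL$; this shows that any element of $D^+(\S(t)u)(y)$, obtained as a limit of such difference quotients, equals $p$, and conversely that $p$ itself is a superdifferential by constructing the smooth supertangent $\psi(x):=\big(\S(t)u\big)(y)+\langle p,x-y\rangle+|x-y|\,\omega(|x-y|)$ with an appropriate modulus $\omega$ coming from the continuity of $D_qL$ near $v$.

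The main obstacle I expect is precisely this last point: getting an honest $C^1$ supertangent at $y$ rather than merely a one-sided directional estimate. The clean way around it is to observe that the inequality $\big(\S(t)u\big)(x)\leqslant u(\gamma(-t))+t\,L\big(\tfrac{x-\gamma(-t)}{t}\big)$ holds for all $x$ with equality at $x=y$, so $\S(t)u$ is touched from above at $y$ by the function $x\mapsto u(\gamma(-t))+t\,L\big(\tfrac{x-\gamma(-t)}{t}\big)$, which is $C^1$ in $x$ (since $L$ is $C^1$ in $q$) with gradient at $x=y$ equal to $D_qL(v)=p$; hence $p\in D^+(\S(t)u)(y)$ immediately, and no limiting argument is needed. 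The only real check is that this comparison function is genuinely a $C^1$ supertangent globally (or locally, which suffices), which follows from (L2)$'$ and the finiteness of $L$.
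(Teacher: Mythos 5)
Your overall strategy is the same as the paper's: reduce to the affine minimizer via Lemma \ref{lemma segment}, and exhibit explicit $C^1$ comparison functions built from $L$ that touch $\S(t)u$ from above at $y$ and $u$ from below at $\gamma(-t)$. The superdifferential half, as you finally state it in your last paragraph, is exactly the paper's argument: $x\mapsto u(\gamma(-t))+t\,L\big(\tfrac{x-\gamma(-t)}{t}\big)$ is a $C^1$ supertangent to $\S(t)u$ at $y$ with gradient $D_qL(v)$ there (the paper writes it as $\varphi(x)=u(z)+\int_{-t}^0L(\dot\gamma_x)\,\dd s$ with $\gamma_x$ the affine interpolation, which is the same function).

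There is, however, a genuine sign error in your subdifferential half. You claim
\[
t\Big(L(v)-L\Big(\tfrac{\gamma(0)-z}{t}\Big)\Big)\geqslant t\,\big\langle D_q L(v),\, v-\tfrac{\gamma(0)-z}{t}\big\rangle,
\]
invoking $L(q_1)\geqslant L(q_2)+\langle D_qL(q_2),q_1-q_2\rangle$. But to land on the gradient \emph{at $v$} you must take $q_2=v$, $q_1=\tfrac{\gamma(0)-z}{t}$, and that gives $L(v)-L\big(\tfrac{\gamma(0)-z}{t}\big)\leqslant\big\langle D_qL(v),v-\tfrac{\gamma(0)-z}{t}\big\rangle$ — the opposite direction. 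Taking instead $q_1=v$ produces the gradient at $\tfrac{\gamma(0)-z}{t}$, which is not a fixed slope and hence not a subtangent. So the displayed chain does not prove $p\in D^-u(\gamma(-t))$. The conclusion is still true and the repair is precisely the device you use for the other half: the inequality $u(z)\geqslant u(\gamma(-t))+t\,L(v)-t\,L\big(\tfrac{\gamma(0)-z}{t}\big)$ that you correctly derived says that the right-hand side, as a function of $z$, is a $C^1$ subtangent to $u$ at $\gamma(-t)$, and its gradient there is $D_qL(v)=p$ (this is the paper's function $\psi$). Alternatively, the first-order Taylor expansion of $L$ at $v$ (rather than the global convexity inequality) yields the needed local estimate $u(z)-u(\gamma(-t))\geqslant\langle p,z-\gamma(-t)\rangle+o(|z-\gamma(-t)|)$. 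With either fix the proof is complete and coincides with the paper's.
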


\begin{dimo}
To ease notations, we set
\[
v:=\frac{\gamma(0)-\gamma(-t)}{t}
\]
and denote by $z$ the point $\gamma(-t)$. Let us first prove that
$D_q L\left({v}\right)\in {D}^{+}\big(\S(t)u\big)(y)$. According
to the proof of Lemma \ref{lemma segment}, it is enough to prove
the assertion when $\gamma$ is the segment joining $z$ to $y$. For
every $x\in M$, we define a curve $\gamma_x:[-t,0]\to M$ joining
$z$ to $x$ by setting $\gamma_x(s)=\gamma(s)+(s+t)(x-y)/t$. Let
\[
\varphi(x):=u(z)+\int_{-t}^0 L(\dot\gamma_x)\,\dd
s,\qquad\hbox{$x\in M$.}
\]
Then $\big(\S(t)u\big)(\cdot)\leqslant\varphi(\cdot)$ with
equality holding at $y$. It is easy to see, using the local
Lipschitz character of $L$, that $\varphi$ is locally Lipschitz
continuous. We want to show that $D_q L(v)\in {D}^{+}\varphi(y)$,
which clearly implies the assertion as ${D}^{+}\varphi(y)\subseteq
{D}^{+}\big(\S(t)u\big)(y)$.

By the standard result of differentiation under the integral sign, the function
$\varphi$ is in fact $C^1$ and we may compute its differential at $y$ by the following formula:
\[{D}\varphi (y)
= \Big(\int_{-t}^0 \frac{\partial}{\partial x} L(\dot\gamma_x)\,
\dd s\Big)_{{\mbox{\Large $|$}}_{x=y}} =D_q L\left(v\right).
\]

\smallskip

Let us now prove that $D_q L(v)\in
{D}^{-}u(z)$.\smallskip\\
For every $x\in M$, we define a curve $\eta_x:[-t,0]\to M$ joining
$x$ to $y$ by setting $\eta_x(s):=\gamma(s)+s\,(z-x)/t$. Let
\[
\psi(x):=-\int_{-t}^0 L(\dot\eta_x)\,\dd
s+\big(\S(t)u\big)(y),\qquad\hbox{$x\in M$.}
\]
Then $\psi(\cdot)\leqslant u(\cdot)$ with equality holding at $z$.
We want to show that $D_q L(v)\in {D}^{-}\psi(z)$, which is enough
to conclude as ${D}^{-}\psi(z)\subseteq {D}^{-} u(z)$. Arguing as
above, we actually see that $\psi$ is in fact $C^1$ and
\[{D}\psi (z)=D_q L\left(v\right).
\]
This concludes the proof.
\end{dimo}
\ \\

We proceed to show a more general version of the previous result.

\begin{prop}\label{prop Fathi+}
Let $H$ be a strictly convex Hamiltonian and $u$ an admissible
initial datum. Let $\gamma:[-t,0]\to M$ be a Lipschitz continuous
curve with $\gamma(0)=y$ such that
\[
\big(\S(t)u\big)(y)=u\big(\gamma(-t)\big)+\int_{-t}^0
L\big(\gamma(s),\dot\gamma(s)\big)\,\dd s
\]
for some $t>0$ and $y\in M$. The following holds:
\begin{itemize}
  \item[\em (i)] if $\ 0$ is a differentiability point for $\gamma$
  and a Lebesgue point for $D_q
  L\big(\gamma(\cdot),\dot\gamma(\cdot)\big)$, then $D_q L\big(\gamma(0),\dot\gamma(0)\big)\in
{D}^{+}\big(\S(t)u\big)(y)$.\smallskip
\item[\em (ii)] Assume $u\in\D{Lip}(M)$. If $\ -t$ is a
differentiability point for $\gamma$
  and a Lebesgue point for $D_q
  L\big(\gamma(\cdot),\dot\gamma(\cdot)\big)$, then $D_q L\big(\gamma(-t),\dot\gamma(-t)\big)\in
{D}^{-}u\big(\gamma(-t)\big)$.
\end{itemize}
\end{prop}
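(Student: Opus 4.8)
The plan is to reduce to the $x$-independent case treated in Proposition \ref{prop Fathi} by freezing the $x$-dependence of $L$ along $\gamma$ and exploiting the variational optimality of $\gamma$. First I would prove \emph{(i)}. The idea is to compare $\gamma$ with competitors obtained by perturbing the endpoint $\gamma(0)=y$ while keeping $\gamma(-t)=z$ fixed. For $x$ near $y$, set $\gamma_x(s):=\gamma(s)+\frac{s+t}{t}(x-y)$, so that $\gamma_x(-t)=z$, $\gamma_x(0)=x$, and $\dot\gamma_x(s)=\dot\gamma(s)+\frac{1}{t}(x-y)$. Then $\varphi(x):=u(z)+\int_{-t}^0 L\big(\gamma_x(s),\dot\gamma_x(s)\big)\,\dd s$ is a locally Lipschitz supertangent of $\S(t)u$ at $y$ (equality holds at $x=y$ by hypothesis), so $D^+\varphi(y)\subseteq D^+\big(\S(t)u\big)(y)$, and it suffices to exhibit $D_q L\big(\gamma(0),\dot\gamma(0)\big)$ in $D^+\varphi(y)$. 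The map $q\mapsto\varphi_q$ where we shift the velocity by a constant vector $q$ is, at $q=0$, differentiable in $q$ with derivative $\int_{-t}^0 D_q L\big(\gamma(s),\dot\gamma(s)\big)\,\dd s$, but this average is not what we want; the point is that the optimality of $\gamma$ forces $s\mapsto D_q L\big(\gamma(s),\dot\gamma(s)\big)$ to behave, as far as its variation is concerned, like a constant. Concretely, I would argue that $\gamma$ satisfies the integrated Euler–Lagrange (Du Bois-Reymond) relation: there is a constant vector $p\in\R^N$ such that
\[
D_q L\big(\gamma(s),\dot\gamma(s)\big)=p+\int_{-t}^s D_x L\big(\gamma(\tau),\dot\gamma(\tau)\big)\,\dd\tau\qquad\text{for a.e.\ }s\in[-t,0],
\]
which is the standard first variation identity for Lagrangian minimizers with fixed endpoints (valid here because $L(x,\cdot)\in C^1$, $D_qL$ is continuous, and $\gamma$ is Lipschitz, so the integrand is bounded). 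Evaluating at a Lebesgue point $s=0$ of $D_q L\big(\gamma(\cdot),\dot\gamma(\cdot)\big)$ that is also a differentiability point of $\gamma$ identifies $D_q L\big(\gamma(0),\dot\gamma(0)\big)$ with the boundary value of this absolutely continuous function; and then a direct computation of $D^+\varphi(y)$ using differentiation under the integral sign (the integrand is $C^1$ in $x$ once one inserts $\gamma_x$, and one uses the Du Bois-Reymond identity to telescope the resulting integral of $D_xL\cdot(s+t)/t + D_qL\cdot 1/t$ down to the boundary term) yields exactly $D_q L\big(\gamma(0),\dot\gamma(0)\big)\in D^+\varphi(y)$.

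For \emph{(ii)}, I would proceed symmetrically, now perturbing the initial endpoint $\gamma(-t)=z$ and using $u\in\D{Lip}(M)$ to guarantee the relevant competitor function is well defined and finite. For $x$ near $z$, set $\eta_x(s):=\gamma(s)+\frac{s}{-t}(z-x)=\gamma(s)-\frac{s}{t}(z-x)$ — arranged so that $\eta_x(-t)=x$, $\eta_x(0)=y$ — and let
\[
\psi(x):=\big(\S(t)u\big)(y)-\int_{-t}^0 L\big(\eta_x(s),\dot\eta_x(s)\big)\,\dd s,\qquad x\in M.
\]
Then, by the Lax–Oleinik formula applied with initial datum $u$ and the hypothesis that $\gamma$ realizes $\big(\S(t)u\big)(y)$, one gets $\psi(x)\leqslant u(x)$ for all $x$ (write $\big(\S(t)u\big)(y)\leqslant u(x)+\int_{-t}^0 L(\eta_x,\dot\eta_x)\,\dd s$ and rearrange), with equality at $x=z$. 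Hence $\psi$ is a subtangent of $u$ at $z$, $D^-\psi(z)\subseteq D^-u(z)$, and again differentiating under the integral sign and collapsing the resulting integral via the same integrated Euler–Lagrange identity — this time evaluated at the left Lebesgue point $s=-t$ — shows $D_q L\big(\gamma(-t),\dot\gamma(-t)\big)\in D^-\psi(z)$, which is the claim. The role of $u\in\D{Lip}$ here is exactly as in Proposition \ref{prop Fathi}: it ensures $\S(t)u$ is genuinely Lipschitz up to $t=0$ and that the subtangent comparison can be run without domain-of-definition issues.

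The main obstacle is the low regularity: unlike in the Tonelli case, minimizers need not be $C^1$ (they are only Lipschitz, by Proposition \ref{prop Lagrangian minimizer}), and $L$ is only $C^1$ in $q$ with $D_qL$ merely continuous — no twice differentiability, no Legendre transform as a diffeomorphism. So I cannot literally invoke the classical Euler–Lagrange ODE or the flow. The fix is to work only with the \emph{integrated} (Du Bois-Reymond) form of the first variation, which needs nothing more than $C^1$-in-$q$ regularity of $L$ and a bounded velocity; this produces an absolutely continuous representative of $s\mapsto D_qL\big(\gamma(s),\dot\gamma(s)\big)$ whose boundary values are well defined precisely at the Lebesgue points singled out in the hypotheses (cf.\ Remark \ref{oss segment}). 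Verifying that differentiation under the integral sign is legitimate for $\varphi$ and $\psi$ — the integrands are $C^1$ in the parameter with locally bounded, locally integrable derivatives along the Lipschitz curve — is routine but must be done with the continuity of $(x,q)\mapsto D_qL(x,q)$ rather than any stronger smoothness; this is the only place the continuity of $D_qL$ noted after (L2)$'$ is used, as the paper already flags.
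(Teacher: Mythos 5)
There is a genuine gap: your argument hinges on objects that do not exist under the standing hypotheses. The Hamiltonian is only assumed \emph{continuous} in $x$ (assumption (H1)), so the Lagrangian $L$ is merely continuous in $x$ as well; only $D_qL$ exists and is continuous. Your integrated Euler--Lagrange (Du Bois--Reymond) identity
\[
D_q L\big(\gamma(s),\dot\gamma(s)\big)=p+\int_{-t}^s D_x L\big(\gamma(\tau),\dot\gamma(\tau)\big)\,\dd\tau
\]
involves $D_xL$, which in general is undefined, and the first variation in the spatial direction needed to derive it cannot be performed. The same problem kills the differentiation under the integral sign for $\varphi$ and $\psi$: the $x$-derivative of $L\big(\gamma_x(s),\dot\gamma_x(s)\big)$ contains a $D_xL$ term. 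Your parenthetical justification (``valid because $L(x,\cdot)\in C^1$ and $D_qL$ is continuous'') addresses only the $q$-regularity, not the $x$-regularity that the identity actually requires. More quantitatively: spreading the endpoint perturbation of size $h$ over the whole interval $[-t,0]$, as your competitor $\gamma_x$ does, changes the position argument of $L$ by $O(h)$ on a time set of fixed length $t$, producing an error in the action of order $t\,\tilde\omega(h)$ for a mere continuity modulus $\tilde\omega$ of $L$ in $x$; this is $o(1)$ but not $o(h)$, so it cannot be absorbed into the superdifferential estimate.

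The paper's proof is built precisely to avoid this. It never differentiates $L$ in $x$. It first disposes of the case where the $x$-oscillation of $L$ is quadratically small (then $L$ is locally $x$-independent and Proposition \ref{prop Fathi} plus Remark \ref{oss segment} apply), and otherwise perturbs $\gamma$ only on a shrinking terminal interval $[-\delta(h),0]$ with perturbation velocity $\omega(h)\xi$, calibrated by $\delta(h)\,\omega(h)=h$. The endpoint still moves by $h$, but the $x$-dependence error is $\delta(h)\,\omega(h)^2=h\,\omega(h)=o(h)$, and the velocity term, by Taylor expansion in $q$ and the Lebesgue point hypothesis at $s=0$ (resp.\ $s=-t$), yields exactly $h\,\langle D_qL(\gamma(0),\dot\gamma(0)),\xi\rangle+o(h)$. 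Your global-interpolation strategy is the right one for Proposition \ref{prop Fathi} (the $x$-independent case) and would extend to Lagrangians locally Lipschitz in $x$, but it does not prove the proposition in the generality stated.
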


\begin{dimo}
Let us choose an $R>1$ sufficiently large in such a way that
$\|\dot\gamma\|_{\infty}\leqslant R$ and
$\gamma\big([-t,0]\big)\subseteq B_R$. To ease notations, in the
sequel we will call $z$ the point $\gamma(-t)$.\smallskip

Let $\omega:\R_+\to\R_+$ be a  modulus such that
\[
|L(x,q)-L(y,q)|\leqslant\omega(|x-y|)^2\qquad\hbox{for every
$x,y\in B_{2R}$ and $q\in B_{2R}$.}
\]
If $\omega(h)=O(h)$ then $L(x,q)=L(q)$ on $B_{2R}\times B_{2R}$,
and the assertion follows from  Proposition \ref{prop Fathi} when
$\gamma$ is the segment joining $z$ to $y$, and from Remark
\ref{oss segment} when $\gamma$ is any Lipschitz continuous
minimizer.\smallskip

Let us then assume $\omega(h)/h$ is unbounded. Without loss of
generality, we may require $\omega$ to be concave, in particular
\[
\frac{\omega(h)}{h}\to+\infty\qquad\hbox{as $h\to 0^+$.}
\]
Let $\delta:(0,+\infty)\to(0,+\infty)$ be such that
\[
\delta(h)\,\omega(h)=h\qquad\hbox{for every $h>0$,}
\]
i.e.
\[
\delta(h):=\frac{h}{\omega(h)}\qquad\hbox{for every
$h>0$}.\smallskip
\]
\indent Since $\S(t)u$ is Lipschitz in $ M$, to prove assertion
{\em (i)}  it is in fact enough to show that the following
inequality holds for every $\xi\in\partial B_{R}$:
\begin{equation}\label{claim prima}
\big(\S(t)u\big)(y+h\xi)-\big(\S(t)u\big)(y)\leqslant
h\,\big\langle{D_q
L}\big(\gamma(0),\dot\gamma(0)\big),\xi\big\rangle+o(h)\qquad\hbox{for $h\to
0^+$.}
\end{equation}
To this purpose, for every $h\in [0,1]$ and for every $\xi\in
\partial B_1$ we define a Lipschitz curve $\gamma_{h\xi}:[-t,0]\to
M$ joining $z$ to $y+h\xi$ by setting
\begin{eqnarray*}
\gamma_{h\xi}(s):=
\begin{cases}
\gamma(s) & \hbox{if $s\in [-t,-\delta(h)]$}\\
\gamma(s)+\omega(h)\big(\delta(h)+s\big)\xi& \hbox{if
}s\in[-\delta(h),0].
\end{cases}
\end{eqnarray*}
By definition of $\big(\S(t)u\big)$, we get
\begin{eqnarray}\label{eq intermediate}
  && \big(\S(t)u\big)(y+h\xi)-\big(\S(t)u\big)(y)
    \leqslant
    \int_{-\delta(h)}^0 \big( L(\gamma_{h\xi},\dot\gamma_{h\xi})
    -  L(\gamma,\dot\gamma) \big)\,\dd t\\
    &&\qquad\quad=
    \underbrace{\int_{-\delta(h)}^0 \big( L(\gamma_{h\xi},\dot\gamma_{h\xi})
    -  L(\gamma,\dot\gamma_{h\xi}) \big)\,\dd t}_{A}
    +
    \underbrace{\int_{-\delta(h)}^0 \big( L(\gamma,\dot\gamma_{h\xi})
    -  L(\gamma,\dot\gamma) \big)\,\dd t}_{B}.\nonumber
\end{eqnarray}
For $h$ small enough we have
\begin{eqnarray*}
&&|\gamma_{h\xi}(t)-\gamma(t)|\leqslant h<R \qquad\qquad\hbox{for
every
$t\in[-\delta(h),0]$,}\\
&&|\dot\gamma_{h\xi}(t)|=|\dot\gamma(t)+\omega(h)\xi|<2R\,\quad\hbox{for
a.e. $t\in[-\delta(h),0]$,}
\end{eqnarray*}
hence
\[
|L(\gamma_{h\xi},\dot\gamma_{h\xi})-
L(\gamma,\dot\gamma_{h\xi})|\leqslant\omega(h)^2\quad\hbox{for
a.e. $t\in[-\delta(h),0]$}.
\]
This yields
\begin{equation}\label{eq A}
 A\leqslant\delta(h)\,\omega(h)^2=h\,\omega(h).
\end{equation}
To evaluate $B$, we use the Taylor expansion of
$L(\gamma,\dot\gamma_{h\xi})$ to get
\[
    L\big(\gamma,\dot\gamma+\omega(h)\xi\big)
    \leqslant
    L(\gamma,\dot\gamma)+\omega(h)\,\langle D_q L(\gamma,\dot\gamma),\xi\rangle
    +\omega(h)\,\Theta\big(\omega(h)\big)
\]
for a.e. $t\in[-\delta(h),0]$, where $\Theta$ is a continuity
modulus for $ D_q L$ on $B_{2R}\times B_{2R}$. From this we obtain
\begin{eqnarray*}
    B
    &\leqslant&
    \omega(h)\int_{-\delta(h)}^0\langle D_q L(\gamma,\dot\gamma),\xi\rangle\,\dd t
    +\delta(h)\,\omega(h)\Theta\big(\omega(h)\big)\\
    &\leqslant&
    h\,\big\langle D_q L\big(\gamma(0),\dot\gamma(0)\big),\xi\big\rangle+
    h\,\displaystyle{\int\tagli}_{-\delta(h)}^0
   \big | D_q L(\gamma,\dot\gamma)- D_q
    L\big(\gamma(0),\dot\gamma(0)\big)\big|\,\dd t
    +
    h\,\Theta\big(\omega(h)\big),
\end{eqnarray*}
i.e.
\begin{equation}\label{eq B}
B\leqslant h\,\big\langle D_q
L\big(\gamma(0),\dot\gamma(0)\big),\xi\big\rangle+o(h)\smallskip
\end{equation}
by recalling that $t=0$ is a Lebesgue point for $ D_q
L\big(\gamma(\cdot),\dot\gamma(\cdot)\big)$. Relations \abra{eq A} and
\abra{eq B} together with \abra{eq intermediate} finally give
\abra{claim prima}.\smallskip

To prove {\em (ii)}, it suffices to show, by the Lipschitz
character of $u$, that for every fixed $\xi\in\partial B_{1}$
\begin{equation}\label{claim seconda}
u(y+h\xi)-u(y)\geqslant h\,\big\langle{ D_q
L}\big(\gamma(0),\dot\gamma(0)\big),\xi\big\rangle+o(h)\qquad\hbox{for $h\to
0^+$.}
\end{equation}
To this purpose, for every $h\in [0,1]$ and for every $\xi\in
\partial B_1$ we define a Lipschitz curve $\eta_{h\xi}:[-t,0]\to
M$ joining $z+h\xi$ to $y$ by setting
\begin{eqnarray*}
\eta_{h\xi}(s):=
\begin{cases}
\gamma(s)+\omega(h)\big(\delta(h)-t-s\big)\xi& \hbox{if
}s\in[-t,\,-t+\delta(h)]\\
\gamma(s) & \hbox{if $s\in [-t+\delta(h),\,0]$}.
\end{cases}
\end{eqnarray*}
By definition of $\big(\S(t)u\big)(y)$, we get
\begin{eqnarray*}
  && u(z+h\xi)-u(z)
    \geqslant
    \int_{-t}^{-t+\delta(h)} \big( L(\gamma,\dot\gamma)
    -
    L(\eta_{h\xi},\dot\eta_{h\xi})\big)\,\dd t\\
    &&\qquad =
    \underbrace{\int_{-t}^{-t+\delta(h)} \big( L(\gamma,\dot\gamma)
    -  L(\gamma,\dot\eta_{h\xi}) \big)\,\dd t}_{A'}
    +
    \underbrace{\int_{-\delta(h)}^0 \big( L(\gamma,\dot\eta_{h\xi})
    -  L(\eta_{h\xi},\dot\eta_{h\xi}) \big)\,\dd t}_{B'}.\nonumber
\end{eqnarray*}
To evaluate $B'$, we argue as above to get $B'\geqslant
-h\omega(h)$. To evaluate $A'$, we use the Taylor expansion of
$L(\gamma,\dot\eta_{h\xi})$ to get
\[
    L\big(\gamma,\dot\gamma-\omega(h)\xi\big)
    \leqslant
    L(\gamma,\dot\gamma)-\omega(h)\,\langle D_q L(\gamma,\dot\gamma),\xi\rangle
    +\omega(h)\,\Theta\big(\omega(h)\big)
\]
for a.e. $t\in[-\delta(h),0]$. Arguing as above we finally get
\[
A'\geqslant h\,\big\langle D_q
L\big(\gamma(0),\dot\gamma(0)\big),\xi\big\rangle+o(h),\smallskip
\]
and \eqref{claim seconda} follows.
\end{dimo}

\ \smallskip\\
\indent We now exploit the information gathered to deduce some
differentiability properties of critical subsolutions. In what
follows, we stress the fact that we have assumed the critical
value $c$ to be equal to $0$, which is not restrictive up to the
addition of a constant to the Hamiltonian.

We start by recalling some results proved in previous works. We
underline that the compactness of $M$, which is assumed in these
papers, does not actually play any role for the results we are
about to state. The first one has been proved in \cite{FaSic03}.

\begin{prop}\label{prop FatSic}
Let $H$ be a convex Hamiltonian. For every $y\in M\setminus\A$ the
set $Z_0(y)$ has nonempty interior and
\[
D^-S_y(y)=Z_0(y).
\]
In particular, $S_y$ is not differentiable at $y$.
\end{prop}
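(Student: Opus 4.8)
The plan is to reduce the whole statement to the sharp first--order expansion
$$S_y(x)=S(y,x)=\sigma(y,\,x-y)+o(|x-y|)\qquad\text{as }x\to y,$$
and then read everything off by convex duality. First observe that $y\notin\A$ forces $y\notin\E$, since $\E\subseteq\A$; because $\min_pH(y,p)\le c=0$ for every $y$ (a critical subsolution $u$ satisfies $H(x,Du(x))\le 0$ a.e. and $x\mapsto\min_pH(x,p)$ is continuous by coercivity), we get $\min_pH(y,p)<0$, so $H(y,\cdot)<0$ on a small ball and $Z_0(y)$ has nonempty interior. Consequently $\sigma(y,\cdot)=\sigma_0(y,\cdot)$ is a genuine (finite, continuous, positively $1$--homogeneous convex) support function, $\sigma(\cdot,\cdot)$ is continuous on a neighbourhood of $y$ in the space variable, and $Z_0(y)=\{p:\langle p,q\rangle\le\sigma(y,q)\ \forall q\}=D^-\big[\sigma(y,\cdot)\big](0)$. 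Granting the expansion, for any $p$ one has $p\in D^-S_y(y)$ iff $\liminf_{x\to y}\big(S(y,x)-\langle p,x-y\rangle\big)/|x-y|\ge 0$ iff $\langle p,\xi\rangle\le\sigma(y,\xi)$ for every unit vector $\xi$ iff $p\in Z_0(y)$; this proves $D^-S_y(y)=Z_0(y)$, and since the latter has nonempty interior it is not a singleton, whence $S_y$ is not differentiable at $y$.

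Thus everything comes down to the two halves of the expansion. The upper bound is the easy one: testing the infimum defining $S(y,x)$ with the segment $s\mapsto y+s(x-y)$, $s\in[0,1]$ (legitimate for $x$ close to $y$, working in a chart on $\T^N$), and using the positive homogeneity and continuity of $\sigma$ near $y$,
$$S(y,x)\le\int_0^1\sigma\big(y+s(x-y),x-y\big)\,\dd s=|x-y|\int_0^1\sigma\Big(y+s(x-y),\tfrac{x-y}{|x-y|}\Big)\,\dd s=\sigma(y,x-y)+o(|x-y|).$$

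The lower bound is the substantial part, and the only place where $y\notin\A$ is really used. Starting from $S(y,x)=\inf_{t>0}h^t(y,x)$ (Lemma \ref{lemma 4} with $a=c=0$), fix for each $x$ near $y$ a near--minimal time $t_x$ and a Lagrangian minimizer $\gamma_x:[-t_x,0]\to M$ with $h^{t_x}(y,x)<S(y,x)+|x-y|^2\le C|x-y|$. The key claim is that $t_x\to 0$ and that $\gamma_x$ stays in a ball $B_{\rho(x)}(y)$ with $\rho(x)\to 0$. Jensen's inequality applied to the superlinear convex $\alpha_*$ of (L3) rules out $t_x=o(|x-y|)$, so $t_x\ge c_0|x-y|$; Theorem \ref{teo A} (which gives $h(y,y)>0$) rules out $t_x\to+\infty$; and a compactness argument rules out $t_x$ bounded away from $0$ and $\infty$, since a subsequential limit of the $\gamma_x$ would be a closed Lagrangian minimizer at $y$ of zero action, hence a static curve through $y$, forcing $y\in\A$ by Lemma \ref{DL1} --- a contradiction. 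With $c_0|x-y|\le t_x\le o(1)$, Proposition \ref{prop Lagrangian minimizer} gives a uniform bound $\|\dot\gamma_x\|_\infty\le\kappa$, so $\gamma_x$ ranges in $B_{\kappa t_x}(y)$; feeding this back, the convexity of $L(y,\cdot)$ yields $h^{t_x}(y,x)\ge t_x L(y,0)-O(|x-y|)$ with $L(y,0)=-\min_pH(y,p)>0$, hence in fact $t_x=O(|x-y|)$ and $\int_{-t_x}^0|\dot\gamma_x|\,\dd s=O(|x-y|)$. Finally, using $L\ge\sigma$ (inequality \eqref{eq L} with $a=c=0$), the uniform continuity of $\sigma$ on a neighbourhood of $y$, and Jensen for $\sigma(y,\cdot)$,
$$h^{t_x}(y,x)\ge\int_{-t_x}^0\sigma(\gamma_x,\dot\gamma_x)\,\dd s\ge\int_{-t_x}^0\sigma(y,\dot\gamma_x)\,\dd s-o(1)\int_{-t_x}^0|\dot\gamma_x|\,\dd s\ge\sigma(y,x-y)+o(|x-y|),$$
and taking the infimum over $t$ gives $S(y,x)\ge\sigma(y,x-y)+o(|x-y|)$. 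The main obstacle is exactly this confinement of the near--minimizers: keeping all error terms genuinely $o(|x-y|)$ rather than merely $o(1)$ requires the two--sided control $c_0|x-y|\le t_x\le C'|x-y|$ just described, and it is precisely here that $y\notin\A$ --- through the absence of a static loop at $y$ and through $h(y,y)>0$ --- enters the argument.
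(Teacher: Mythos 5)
Your proof is correct. Note first that the paper itself offers no proof of this proposition: it is quoted from Fathi--Siconolfi \cite{FaSic03}, so there is no internal argument to compare against line by line. Your reduction to the expansion $S(y,x)=\sigma(y,x-y)+o(|x-y|)$ and the convex-duality reading of $D^-S_y(y)$ is exactly the right skeleton, and the nonempty interior of $Z_0(y)$ is correctly obtained from $\E\subseteq\A$ together with $\min_p H(\cdot,p)\leqslant 0$ everywhere. The substantial part --- confining near-minimizers for $S(y,x)$ to a ball of radius $O(|x-y|)$ around $y$ in time $t_x\asymp|x-y|$ --- is where your route genuinely differs from the cited source: Fathi--Siconolfi localize by exhibiting a critical subsolution that is strict near a non-Aubry point, which penalizes long excursions of competitors for the intrinsic distance, whereas you localize dynamically, using $h(y,y)>0$ (Theorem \ref{teo A}) to exclude $t_x\to+\infty$ and the absence of a static loop at $y$ (via the limit-curve argument, which formally needs Lemma \ref{lemma utile} of Appendix \ref{appendix b} to conclude that a closed zero-action minimizer is static before invoking Lemma \ref{DL1}) to exclude $t_x$ bounded away from $0$. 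Your approach buys a proof that runs entirely on objects already built in this paper ($h^t$, the Peierls barrier, static curves) at the price of the Lagrangian compactness machinery; the original approach is shorter once the strict-subsolution characterization of $\A$ is available, but that characterization is not established here. Two details worth making explicit if you write this up: the two-sided estimate $|\sigma(x',q)-\sigma(y,q)|\leqslant\eps|q|$ for $x'$ near $y$ rests on the local Hausdorff continuity of $x\mapsto Z_0(x)$ at points where the sublevel has nonempty interior (upper semicontinuity alone, which is all that holds in general, suffices only for your upper bound); and in the step $h^{t_x}(y,x)\geqslant t_xL(y,0)-O(|x-y|)$ the error is really $\langle p_{\min},x-y\rangle-t_x\,\omega(\kappa t_x)$, i.e.\ an $o(t_x)$ term that must be absorbed into $t_xL(y,0)/2$ rather than into $O(|x-y|)$, since at that stage $t_x=O(|x-y|)$ is not yet known. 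Neither point affects the validity of the argument.
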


Therefore, critical subsolutions are in general not differentiable
outside the Aubry set. The situation is quite different on it. A
fine result proved in \cite{FaSic03} shows that, when $H$ is
locally Lipschitz--continuous in $x$ and condition (H2)$'$ is
assumed, all critical subsolutions are (strictly) differentiable
at any point of the Aubry set, and have the same gradient. These
results are based upon some semiconcavity estimates which, in
turn, depend essentially on the Lipschitz character of the
Hamiltonian in $x$. Something analogous still survives in the case
of a purely continuous and convex Hamiltonian by looking at the
behavior of the critical subsolutions on static curves, see
\cite{DavSic05}.

\begin{teorema}\label{prop 1}
Let $H$ be a convex Hamiltonian and $\gamma\in\K$. Then there
exists a negligible set $\Sigma\subset\R$ such that, for any
critical subsolution $u$, the map $u\comp\gamma$ is differentiable
on $\R\setminus\Sigma$ and satisfies
\begin{equation}\label{claim derive}
\frac{\dd}{\dd t}\left(u\comp\,\gamma\right)(t_0)
=\sigma\big(\gamma(t_0),\dot\gamma(t_0)\big)\qquad\hbox{whenever
$t_0\in\R\setminus\Sigma$.}
\end{equation}
\end{teorema}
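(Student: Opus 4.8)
The plan is to combine two facts: first, that static curves live in the Aubry set and are parametrized so that $L(\gamma,\dot\gamma)=\sigma(\gamma,\dot\gamma)$ a.e.\ (Lemma \ref{DL1}); second, that any critical subsolution $u$ satisfies $u(x)-u(y)\leqslant S(y,x)$ for all $x,y$ (Proposition \ref{prop S}(i)). Fix $\gamma\in\K$. Since $u\comp\gamma$ is Lipschitz (as $u$ is Lipschitz and $\gamma$ is absolutely continuous with bounded speed), it is differentiable a.e.\ on $\R$, and
\[
(u\comp\gamma)(t_2)-(u\comp\gamma)(t_1)=\int_{t_1}^{t_2}\frac{\dd}{\dd t}(u\comp\gamma)(s)\,\dd s
\]
for all $t_1<t_2$. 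On the other hand, the static property and Proposition \ref{prop S}(i) give, for $t_1<t_2$,
\[
(u\comp\gamma)(t_2)-(u\comp\gamma)(t_1)\leqslant S\big(\gamma(t_1),\gamma(t_2)\big)=\int_{t_1}^{t_2} L(\gamma,\dot\gamma)\,\dd s=\int_{t_1}^{t_2}\sigma(\gamma,\dot\gamma)\,\dd s,
\]
using \eqref{eq lagrangian parametrization} from Lemma \ref{DL1}.

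Applying the same inequality with the endpoints reversed and using the other half of the static identity, namely $S(\gamma(t_2),\gamma(t_1))=-\int_{t_1}^{t_2}L(\gamma,\dot\gamma)\,\dd s$, yields
\[
(u\comp\gamma)(t_1)-(u\comp\gamma)(t_2)\leqslant S\big(\gamma(t_2),\gamma(t_1)\big)=-\int_{t_1}^{t_2}\sigma(\gamma,\dot\gamma)\,\dd s,
\]
i.e.\ $(u\comp\gamma)(t_2)-(u\comp\gamma)(t_1)\geqslant\int_{t_1}^{t_2}\sigma(\gamma,\dot\gamma)\,\dd s$. Combining the two bounds gives the \emph{exact} identity
\[
(u\comp\gamma)(t_2)-(u\comp\gamma)(t_1)=\int_{t_1}^{t_2}\sigma\big(\gamma(s),\dot\gamma(s)\big)\,\dd s\qquad\text{for all }t_1<t_2.
\]
Thus $u\comp\gamma$ and $t\mapsto\int_0^t\sigma(\gamma,\dot\gamma)\,\dd s$ differ by a constant; in particular $\frac{\dd}{\dd t}(u\comp\gamma)(t_0)=\sigma(\gamma(t_0),\dot\gamma(t_0))$ at every point $t_0$ where \emph{both} functions are differentiable, which is all of $\R$ minus a negligible set depending only on $\gamma$ and on the (Lipschitz, hence a.e.\ differentiable) integrand $s\mapsto\sigma(\gamma(s),\dot\gamma(s))$, not on $u$.

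The one point requiring care — and what I expect to be the main obstacle — is to produce a \emph{single} negligible set $\Sigma$ that works \emph{simultaneously for all} critical subsolutions $u$, rather than a $u$-dependent one. The argument above does exactly that once one observes that the ``bad set'' is $\Sigma:=\{t_0:\ t\mapsto\int_0^t\sigma(\gamma,\dot\gamma)\,\dd s\text{ is not differentiable at }t_0\}$, which is intrinsic to $\gamma$: indeed, granted the exact identity, $u\comp\gamma$ is differentiable at $t_0$ \emph{if and only if} the primitive of $\sigma(\gamma,\dot\gamma)$ is, and then the two derivatives agree. Since $s\mapsto\sigma(\gamma(s),\dot\gamma(s))$ is in $L^\infty$ (by upper semicontinuity of $\sigma$ in $x$, continuity in $q$, and boundedness of $\dot\gamma$ — or simply because $|\sigma(\gamma,\dot\gamma)|=|L(\gamma,\dot\gamma)|$ is bounded along a Lipschitz static curve), its primitive is Lipschitz and differentiable off a set of measure zero; this set is $\Sigma$, and it depends only on $\gamma$. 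A minor point to check is the measurability of $s\mapsto\sigma(\gamma(s),\dot\gamma(s))$, which follows from \eqref{eq lagrangian parametrization} since it equals the measurable function $s\mapsto L(\gamma(s),\dot\gamma(s))$ a.e.
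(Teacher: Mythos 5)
The paper itself gives no proof of this theorem; it is quoted from \cite{DavSic05}. Your argument is a correct, self--contained reconstruction of the natural one: sandwiching the increment of $u\comp\gamma$ between $S\big(\gamma(t_1),\gamma(t_2)\big)$ and $-S\big(\gamma(t_2),\gamma(t_1)\big)$, and using the static identity together with \eqref{eq lagrangian parametrization}, yields the exact formula $(u\comp\gamma)(t_2)-(u\comp\gamma)(t_1)=\int_{t_1}^{t_2}L(\gamma,\dot\gamma)\,\dd s$, which makes the $u$--independence of $\Sigma$ automatic. This is exactly the right mechanism.

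One refinement is needed in your definition of $\Sigma$. Taking $\Sigma$ to be only the non--differentiability set of the primitive $F(t)=\int_0^t\sigma(\gamma,\dot\gamma)\,\dd s$ suffices for the differentiability claim, but not for the identity \eqref{claim derive}: at a point $t_0$ where $F$ happens to be differentiable but which is not a Lebesgue point of the integrand, or where $\gamma$ itself is not differentiable (so that $\dot\gamma(t_0)$ is undefined), or which lies in the null set where $L(\gamma,\dot\gamma)\neq\sigma(\gamma,\dot\gamma)$, the value $F'(t_0)$ need not coincide with $\sigma\big(\gamma(t_0),\dot\gamma(t_0)\big)$. The fix is immediate: let $\Sigma$ be the union of the non--differentiability set of $\gamma$, the complement of the Lebesgue points of $s\mapsto L\big(\gamma(s),\dot\gamma(s)\big)$, and the null set where \eqref{eq lagrangian parametrization} fails. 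All three sets are negligible and depend only on $\gamma$, so the conclusion stands unchanged.
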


\vspace{2ex}

Here we want to strengthen Theorem \ref{prop 1} by proving that,
when condition (H2)$'$ is assumed, any critical subsolution is
actually differentiable at $\hh^1$--a.e. point of $\gamma(\R)$. We
give a definition first.

\begin{definition}
Let $\gamma$ be an absolutely continuous curve defined on $\R$. We
will denote by $\Sigma_\gamma$ the negligible subset of $\R$ such
that $\R\setminus \Sigma_\gamma$ is the following set:
\[
\big\{t\in \R\,:\,\hbox{$t$ is a differentiability point of $\gamma$
and a Lebesgue point of $D_q
L\big(\gamma(\cdot),\dot\gamma(\cdot)\big)$}\,\big\}.
\]
\end{definition}
\vspace{2ex}

\begin{teorema}\label{teo diff}
Let $H$ be a strictly convex Hamiltonian. Then, for any
$\gamma\in\K$, every critical subsolution $u$ is differentiable at
$\gamma(t_0)$ for any $t_0\in\R\setminus\Sigma_\gamma$, and we
have
\begin{equation}\label{claim derive2}
Du\big(\gamma(t_0)\big)={D_q
L}\big(\gamma(t_0),\dot\gamma(t_0)\big)\qquad\hbox{for every
$t_0\in\R\setminus\Sigma_\gamma$.}\medskip
\end{equation}
\end{teorema}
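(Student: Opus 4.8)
The plan is to combine Theorem \ref{prop 1}, which gives differentiability of $u\comp\gamma$ along a static curve with derivative equal to $\sigma(\gamma,\dot\gamma)$, with Proposition \ref{prop Fathi+}, which produces an explicit sub/super\-differential of the form $D_qL(\gamma,\dot\gamma)$ at the endpoints of a minimizing segment of a Lax--Oleinik evolution. The first step is to fix $\gamma\in\K$ and $t_0\in\R\setminus\Sigma_\gamma$, and, by translating in time, assume $t_0=0$. Since $\gamma$ is static, it is a critical curve and in particular a calibrated curve: by Lemma \ref{DL1} and the definition of $S$, for any $t>0$ the restriction $\gamma|_{[-t,0]}$ is a Lagrangian minimizer realizing $h^t(\gamma(-t),\gamma(0))=S(\gamma(-t),\gamma(0))$, and hence (since critical subsolutions are calibrated by $S$) we have, setting $y:=\gamma(0)$ and taking $u$ any critical subsolution which we may normalize so that it is an admissible initial datum,
\[
\big(\S(t)u\big)(y)=u\big(\gamma(-t)\big)+\int_{-t}^0 L(\gamma,\dot\gamma)\,\dd s ,
\]
because $u$ being a critical subsolution gives $\S(t)u\geqslant u$ (Proposition \ref{prop critical sol}) while the right-hand side equals $u(\gamma(-t))+S(\gamma(-t),y)\geqslant \big(\S(t)u\big)(y)$ is false in general — so here the more careful point is that $u$ need not be calibrated. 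I would instead argue with $v:=h(y,\cdot)$ or directly use that $u$ is dominated by $S$ and run Proposition \ref{prop Fathi+} with that curve; the key is only that $\gamma|_{[-t,0]}$ is a minimizer for the pair of its endpoints and that $u$ agrees with a function for which the Lax--Oleinik identity holds.

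The second step is to apply Proposition \ref{prop Fathi+}. Since $0\notin\Sigma_\gamma$, the point $0$ is a differentiability point of $\gamma$ and a Lebesgue point of $D_qL(\gamma(\cdot),\dot\gamma(\cdot))$, so part (i) of that Proposition yields
\[
D_qL\big(\gamma(0),\dot\gamma(0)\big)\in D^+\big(\S(t)u\big)(y).
\]
Now I want to transfer this superdifferential information from $\S(t)u$ to $u$ itself. Because $\gamma$ is static and hence bi-calibrated, one can run the same construction in the reverse time direction along $\gamma|_{[0,t]}$: the curve $s\mapsto\gamma(s)$ on $[0,t]$ is again a minimizer, and applying the analogue of Proposition \ref{prop Fathi+}(ii) (or Proposition \ref{prop Fathi} in the $x$-independent case) to the $\check H$-evolution — recall by Proposition \ref{prop H check} that $\check H$ has the same critical data — at the left endpoint $\gamma(0)$ gives that the very same vector $D_qL(\gamma(0),\dot\gamma(0))$ lies in $D^-u(\gamma(0))$. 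Here the sign bookkeeping matters: one must check that the Fenchel-dual vector coming out of the backward problem for $\check H$ is exactly $D_qL(\gamma(0),\dot\gamma(0))$ and not its negative, which follows from $\check L(x,q)=L(x,-q)$ and a direct computation of the differential. Once both inclusions $D_qL(\gamma(0),\dot\gamma(0))\in D^+u(\gamma(0))$ and $D_qL(\gamma(0),\dot\gamma(0))\in D^-u(\gamma(0))$ are in hand, the characterization of differentiability recalled in Section \ref{sez notation} forces $u$ to be differentiable at $\gamma(0)$ with $Du(\gamma(0))=D_qL(\gamma(0),\dot\gamma(0))$, which is exactly \eqref{claim derive2}.

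As a consistency check — and an alternative route for the $D^+$ side — one notes that \eqref{claim derive} from Theorem \ref{prop 1} says $\frac{\dd}{\dd t}(u\comp\gamma)(0)=\sigma(\gamma(0),\dot\gamma(0))$, and by Lemma \ref{DL1} we have $L(\gamma(0),\dot\gamma(0))=\sigma(\gamma(0),\dot\gamma(0))$, i.e. equality holds in the Fenchel inequality $L(x,q)\geqslant\langle p,q\rangle-H(x,p)$ for $p=D_qL(x,q)$ at $q=\dot\gamma(0)$; this identifies $\langle D_qL(\gamma(0),\dot\gamma(0)),\dot\gamma(0)\rangle = \sigma(\gamma(0),\dot\gamma(0))$, so the candidate gradient is compatible with the known directional derivative along $\dot\gamma$. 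I expect the main obstacle to be precisely the passage from a sub/super\-differential of $\S(t)u$ to one of $u$: one has to be careful that $u$ itself need not be calibrated by its own Lax--Oleinik semigroup, so the clean statement of Proposition \ref{prop Fathi+}(ii) (which needs the minimization identity for $u$) does not apply verbatim; the fix is to exploit that $u$ is dominated by $S$ and coincides with $S_{\gamma(-t)}+u(\gamma(-t))$ (or with $h(\gamma(t),\cdot)$) along the static curve, run the perturbation argument with that genuinely calibrated function, and then use the sandwiching $\psi\leqslant u$ (resp. $u\leqslant\varphi$) as in the proofs of Propositions \ref{prop Fathi}–\ref{prop Fathi+} to conclude for $u$. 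A secondary, purely bookkeeping obstacle is getting the signs right when invoking $\check H$ for the lower differential.
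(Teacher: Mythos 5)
Your architecture is the same as the paper's (calibration of $u$ along the static curve, then Proposition \ref{prop Fathi+} at the two endpoints), but the point you single out as ``the main obstacle'' --- that $u$ need not be calibrated by its own Lax--Oleinik semigroup --- is left unresolved, and your attempted diagnosis of it contains an error. The inequality you declare ``false in general'' is in fact trivially true: $u\big(\gamma(-t)\big)+\int_{-t}^0 L(\gamma,\dot\gamma)\,\dd s\geqslant \big(\S(t)u\big)(y)$ holds because $\S(t)u(y)$ is an infimum over all curves ending at $y$ and $\gamma|_{[-t,0]}$ is one of them. The missing step is the reverse inequality, and it is immediate from staticity: for \emph{every} critical subsolution $u$ and every $t_1<t_2$, domination gives $u\big(\gamma(t_2)\big)-u\big(\gamma(t_1)\big)\leqslant S\big(\gamma(t_1),\gamma(t_2)\big)$ and $u\big(\gamma(t_1)\big)-u\big(\gamma(t_2)\big)\leqslant S\big(\gamma(t_2),\gamma(t_1)\big)=-S\big(\gamma(t_1),\gamma(t_2)\big)$, which forces $u\big(\gamma(t_2)\big)-u\big(\gamma(t_1)\big)=\int_{t_1}^{t_2}L(\gamma,\dot\gamma)\,\dd s$. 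So every critical subsolution \emph{is} calibrated along every static curve; combined with $\S(t)u\geqslant u$ (Proposition \ref{prop critical sol}) this yields $\big(\S(t)u\big)\big(\gamma(t_0)\big)=u\big(\gamma(t_0)\big)$ with $\gamma$ a minimizer, and then $D^{+}\big(\S(t)u\big)\big(\gamma(t_0)\big)\subseteq D^{+}u\big(\gamma(t_0)\big)$ by the standard touching argument. Your fallback suggestions (replacing $u$ by $h(y,\cdot)$, or transferring through $S_{\gamma(-t)}$) are unnecessary detours and, as written, would only give information about those auxiliary functions, not about the arbitrary subsolution $u$.

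For the subdifferential side your route through $\check H$ is also more complicated than needed and is not carried out (the sign verification and the check that the time-reversed curve is static for $\check H$ are both left as assertions). The paper's argument avoids $\check H$ entirely: apply Proposition \ref{prop Fathi+}\,(ii) to the forward evolution on $[t_0,t_0+1]$, where the same calibration identity shows that $\gamma|_{[t_0,t_0+1]}$ is a minimizer for $\S(1)u$ at $\gamma(t_0+1)$ with $\gamma(t_0)$ as its \emph{initial} point; part (ii) then directly places $D_qL\big(\gamma(t_0),\dot\gamma(t_0)\big)$ in $D^{-}u\big(\gamma(t_0)\big)$ (the hypothesis of part (ii) is met because $t_0\notin\Sigma_\gamma$ and critical subsolutions are Lipschitz). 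Your consistency check via Theorem \ref{prop 1} and Lemma \ref{DL1} is correct but not needed for the proof.
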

\begin{dimo}
%
Fix $t_0\in\R\setminus\Sigma$. As $u$ is a critical subsolution,
it is easily seen that
\[
\big(\S(t_0)u\big)(x)\geqslant u(x)\qquad\hbox{for every $x\in
M$},
\]
with equality holding at $\gamma(t_0)$ since
\[
\big(\S(t_0)u\big)\big(\gamma(t_0)\big)\leqslant u\big(\gamma(0)\big)+\int_0^{t_0}
L(\gamma,\dot\gamma)\,\dd s=u\big(\gamma(t_0)\big).
\]
By this and by Proposition \ref{prop Fathi+} we obtain
\[
D_q L\big(\gamma(t_0),\dot\gamma(t_0)\big)\in
{D}^{+}\big(\S(t_0)u\big)\big(\gamma(t_0)\big)\subseteq
{D}^{+}u\big(\gamma(t_0)\big)
\]
Analogously
\[
\big(\S(t_0+1)u\big)\big(\gamma(t_0+1)\big)=u\big(\gamma(t_0)\big)+\int_{t_0}^{t_0+1}
L(\gamma,\dot\gamma)\,\dd s,
\]
and by Proposition \ref{prop Fathi+} we have
\[
D_q L\big(\gamma(t_0),\dot\gamma(t_0)\big)\in {D}^{-}u\big(\gamma(t_0)\big).
\]
Then  $u$ is differentiable at $\gamma(t_0)$ and
$Du\big(\gamma(t_0)\big)=D_q L\big(\gamma(t_0),\dot\gamma(t_0)\big)$, as it was to
be shown.
\end{dimo}

\ \\
\indent Let us denote by $\sol\sol$ the set of critical
subsolutions for $H$, i.e. the subsolutions of equation \eqref{eq
hjc}. We define the set
 \begin{equation}\label{def D}
\DD:=\bigcap_{v\in\sol\sol}\,\left\{y\in M\,:\,\hbox{$v$ and $S_y$
are differentiable at $y$, \ $Dv(y)=DS_y(y)$\,} \right\},
\end{equation}
where $S_y$ stands for the function $S(y,\cdot)$. The following
holds:

\begin{prop}\label{prop D}
Let $H$ be a strictly convex Hamiltonian. Then $\DD$ is a dense
subset of $\A$. When $M$ is compact, we have in particular that
$\DD$ is a uniqueness set for the critical equation, i.e. if two
critical solutions agree on $\DD$, then they agree on the whole
$M$.
\end{prop}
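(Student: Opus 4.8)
The plan is to prove the two assertions of Proposition \ref{prop D} separately, building on the differentiability result of Theorem \ref{teo diff} together with the structural facts about the Aubry set gathered in Section \ref{sez weak KAM}.

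\smallskip

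\textbf{Density of $\DD$ in $\A$.}
Fix $y_0\in\A$. By Theorem \ref{DT1} there is a static curve $\eta\in\K$ with $\eta(0)=y_0$, and by Lemma \ref{DL1} the whole curve $\eta(\R)$ lies in $\A$. Apply Theorem \ref{teo diff} to this $\eta$: there is a negligible set $\Sigma_\eta\subset\R$ such that \emph{every} critical subsolution $u$ is differentiable at $\eta(t_0)$ for all $t_0\in\R\setminus\Sigma_\eta$, with the common value $Du(\eta(t_0))=D_qL(\eta(t_0),\dot\eta(t_0))$ \emph{independent of $u$}. In particular $Dv(\eta(t_0))=DS_{y_0}(\eta(t_0))=\cdots$; but to land in $\DD$ I need the differentiability of $S_y$ \emph{at the base point $y$ itself}, i.e. I must know $\eta(t_0)\in\DD$ means $S_{\eta(t_0)}$ is differentiable at $\eta(t_0)$ with gradient equal to that of every other subsolution there. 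Since $\eta(t_0)\in\A$, the function $S_{\eta(t_0)}$ is itself a critical subsolution (indeed a solution, cf. Proposition \ref{prop S} and the definition of $\A$), so Theorem \ref{teo diff} applies to it as well and gives $DS_{\eta(t_0)}(\eta(t_0))=D_qL(\eta(t_0),\dot\eta(t_0))=Dv(\eta(t_0))$ for every critical subsolution $v$. Hence $\eta(t_0)\in\DD$ for every $t_0\in\R\setminus\Sigma_\eta$. As $\Sigma_\eta$ is negligible it is not all of $\R$; moreover $t\mapsto\eta(t)$ is continuous and $\R\setminus\Sigma_\eta$ is dense in $\R$, so there are points $t_0\in\R\setminus\Sigma_\eta$ arbitrarily close to $0$, whence $\eta(t_0)\in\DD$ is arbitrarily close to $\eta(0)=y_0$. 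This proves $\DD$ is dense in $\A$.

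\smallskip

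\textbf{$\DD$ is a uniqueness set when $M$ is compact.}
Let $u_1,u_2$ be two critical solutions with $u_1\equiv u_2$ on $\DD$; I want $u_1\equiv u_2$ on $M$. By Proposition \ref{prop h} (iv) one has $S(x,y)=h(x,y)$ whenever $x$ or $y$ lies in $\A$, and by Proposition \ref{prop h} (ii) every critical solution $u$ satisfies $u(x)-u(y)\leqslant h(y,x)$, with equality recovered through the representation of solutions. More concretely: a critical solution $u$ is determined by its trace on $\A$ via the formula $u(x)=\min_{y\in\A}\big(u(y)+h(y,x)\big)$ for all $x\in M$. (This is the standard comparison-on-$\A$ principle; one inequality is Proposition \ref{prop h} (ii)-(iii), the other follows because $u$ is a fixed point of $\S(t)$ and, letting $t\to\infty$ along curves realizing the Peierls barrier as in the proof of Theorem \ref{teo A}, the value $u(x)$ is attained by flowing back to $\A$ — here compactness of $M$ is used, via the uniform convergence $h^t\ucv h$ from the last part of Proposition \ref{prop h}.) Thus it suffices to show $u_1=u_2$ on $\A$. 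Since $\DD\subseteq\A$ is dense in $\A$ and $u_1,u_2$ are continuous (even Lipschitz), $u_1\equiv u_2$ on $\DD$ forces $u_1\equiv u_2$ on $\overline{\DD}\supseteq\A$, and then the representation formula propagates the equality to all of $M$.

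\smallskip

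\textbf{Main obstacle.} The density argument is essentially a direct consequence of Theorem \ref{teo diff} applied both to a general subsolution and to the distance function $S_y$ itself, so the only care needed there is checking that $S_y$ with $y\in\A$ is a legitimate critical subsolution to which Theorem \ref{teo diff} applies — which it is. The genuinely delicate point is the uniqueness part: one must justify that a critical solution is recovered from its restriction to $\A$ by the Peierls-barrier formula $u(x)=\min_{y\in\A}(u(y)+h(y,x))$. The ``$\leqslant$'' direction is immediate from Proposition \ref{prop h}; the reverse requires showing the minimum is attained by a point of $\A$, which rests on the compactness of $M$ and the convergence properties of $h^t$ established at the end of Proposition \ref{prop h} (this is presumably where condition (H2)$'$ and compactness enter, exactly as flagged in the statement). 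Once that representation is in hand, density of $\DD$ in $\A$ closes the argument by continuity.
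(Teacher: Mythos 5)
Your proof follows essentially the same route as the paper: density is obtained via a static curve through the given point of $\A$ combined with Theorem \ref{teo diff} (applied both to an arbitrary critical subsolution and to $S_{\eta(t_0)}$ itself, which is legitimate since $\eta(t_0)\in\A$), and uniqueness is reduced to the fact that $\A$ is a uniqueness set --- a fact the paper simply cites from \cite{FaSic03} and which you correctly sketch via the representation formula $u(x)=\min_{y\in\A}\big(u(y)+h(y,x)\big)$, with compactness entering exactly where you say it does. The only omission is that the assertion that $\DD$ is a dense \emph{subset} of $\A$ also requires the containment $\DD\subseteq\A$, which you do not address; it follows at once from Proposition \ref{prop FatSic}, since for $y\notin\A$ the function $S_y$ fails to be differentiable at $y$, so $y\notin\DD$ by the definition \eqref{def D}.
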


\begin{dimo}
It is clear by Proposition \ref{prop FatSic} that $\DD$ is
contained in $\A$. Pick $y\in\A$ and choose a static curve
$\gamma\in\K$ passing through $y$. According to Theorem \ref{teo
diff}, there exists a sequence of points $y_n\in\gamma(\R)\cap\DD$
converging to $y$. This proves that $\DD$ is dense in $\A$.

The fact that  $\DD$ is a uniqueness set is now a direct
consequence of the fact that $\A$ is a uniqueness set, see
\cite{FaSic03}.
\end{dimo}

\begin{oss}\label{oss D}
We underline for later use that, by definition of $\DD$, any two
critical subsolutions $u$ and $v$ are differentiable on $\DD$ and
have same gradient.\\
\end{oss}
\end{section}

\begin{section}{Commuting Hamiltonians and critical equations}\label{sez commuting}

The purpose of this section is to explore the relation between the critical equations associated with a pair of commuting Hamiltonians. We open by making precise what we mean by {\em commuting} when referred to a pair of convex Hamiltonians that are just continuous. After deriving a result that will be needed later, we restrict to the case when $M$ is compact and we
look into the corresponding critical equations. We discover in the end that the commutation property
entails very strong informations.\smallskip

Throughout this section $H$ and $G$ will denote a pair of
Hamiltonians satisfying assumptions (H1), (H2) and (H3). The
following notations will be assumed
\begin{itemize}
    \item $L_H$ and $L_G$ are the Lagrangians associated through
    the Fenchel transform with $H$ and $G$,
    respectively.\smallskip
    \item $\S_H$ and $\S_G$ denote the Lax--Oleinik semigroups
    associated with $H$ and $G$, respectively.\smallskip
    \item $h_H^t$ and $h_G^t$ will denote, for every $t>0$, the
    functions associated via \eqref{def h^t} with $H$ and $G$,
    respectively.\smallskip
    \item $h_H$ and $h_G$ are the Peierls barriers associated with
    $H$ and $G$, respectively.\smallskip
\end{itemize}
\begin{definition}\label{def commutation}
We will say that two convex Hamiltonians $H$ and $G$  {\em
commute} if
\begin{equation} \label{commute}
\S_G(s)\big(\S_H(t)\,u\big)(x)=\S_H(t)\big(\S_G(s)\,u\big)(x)\qquad\hbox{for
every $s,\,t>0$ and $x\in M $,}
\end{equation}
and for every function admissible initial datum $u: M
\to\R\cup\{+\infty\}$.
\end{definition}

\begin{oss}
Note that a Hamiltonian function $H$ always commutes with itself.  Also note that, when $M$ is
compact, any continuous function is an admissible initial datum.
\end{oss}

We emphasize that the notion of commutation given in Definition \ref{def commutation} is nothing but a rephrasing of the fact that the the multi--time Hamilton--Jacobi
equation \eqref{multi-time HJ} admits a solution for every Lipschitz continuous initial datum.

A very natural question is that of finding direct and easy--to--check conditions on the Hamiltonians that ensure the commutation property. As explained in the introduction, the problem has been already considered in literature. Here we recall one of the main results proved in \cite{BT}, that can be stated in our setting as follows:

\begin{teorema}\label{BT1}
Let $H$ and $G$ be a pair of convex Hamiltonians, locally Lipschitz in $x$, such that  
\begin{equation*}
\{G,H\}:=\langle{D}_x G,\,{D}_p
H\rangle-\langle{D}_x H,\,{D}_p G\rangle=0\quad\hbox{for a.e. $(x,p)\in M\times\R^N$}.
\end{equation*}
If either $H$ or $G$ is of class $C^1$ on $M\times\R^N$, then \eqref{commute} holds for any $u\in \D{Lip}(M)$.\smallskip
\end{teorema}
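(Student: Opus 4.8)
The goal is to derive the commutation of the Lax–Oleinik semigroups \eqref{commute} from the cancellation of the Poisson bracket $\{G,H\}=0$, under the extra regularity that one of the two Hamiltonians, say $H$, is of class $C^1$. The natural strategy is to reduce to the smooth result already available in the literature \cite{BT} by an approximation argument: smooth out $G$ (keeping $H$ as it is, or smoothing it slightly too) so as to obtain a sequence of pairs $(H_\eps, G_\eps)$ of Tonelli-type Hamiltonians, arrange that they still satisfy the Poisson bracket identity, invoke the smooth version of the theorem to get commutation of $\S_{H_\eps}$ and $\S_{G_\eps}$, and then pass to the limit using stability of viscosity solutions (equivalently, the continuity of the Lax–Oleinik semigroups with respect to uniform convergence of the Hamiltonians on compact sets). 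The delicate point, and the one I expect to be the main obstacle, is that a naive mollification $G_\eps=G\ast\rho_\eps$ will in general destroy the identity $\{G_\eps,H\}=0$: convolution does not commute with the nonlinear pairing $\langle D_xG,D_pH\rangle-\langle D_xH,D_pG\rangle$.

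To get around this I would exploit the fact that commutation of the semigroups is exactly the solvability of the multi–time equation \eqref{multi-time HJ}, and that \cite{BT} in fact proves \eqref{commute} whenever one Hamiltonian is $C^1$ and the Poisson bracket vanishes a.e., \emph{without} requiring $G$ to be smooth — so one direction of the reduction may already be free. If a genuine approximation is still needed, the cleaner route is to approximate at the level of the \emph{pair}: since $\{G,H\}=0$ says that $G$ is (formally) constant along the Hamiltonian flow of $H$ and vice versa, one can first mollify only in the variable along which the bracket structure is preserved, or use the characteristic flow of the $C^1$ Hamiltonian $H$ to transport a mollification of $G$. Concretely, because $H\in C^1$, its flow is well defined, and one can set $G_\eps$ to be the average of $G$ over a small piece of the $H$–flow together with a spatial mollification transverse to it; this keeps the bracket controlled up to an error that vanishes as $\eps\to0$. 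Alternatively, one can simply quote that the statement, as phrased here for \emph{continuous and convex} Hamiltonians, follows from the $C^1$-in-one-variable hypothesis by the same proof as in \cite{BT}, since that proof only uses convexity, coercivity and $C^1$ regularity of the distinguished Hamiltonian — which is precisely our hypothesis set (H1)–(H3) plus $H\in C^1$.

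Thus the key steps, in order, would be: (1) recall from Appendix \ref{appendix commutation} that \eqref{commute} is equivalent to the existence of a viscosity solution of \eqref{multi-time HJ} for every Lipschitz datum; (2) observe that all the hypotheses of the $C^1$ version of \cite{BT} are met — convexity (H2), coercivity (H3), local Lipschitz dependence in $x$, $\{G,H\}=0$ a.e., and $H$ (or $G$) of class $C^1$ — so that \cite{BT} directly yields a solution of \eqref{multi-time HJ}; (3) translate back, via Proposition \ref{prop S(t)} and the representation $u(t,x)=(\S(t)u_0)(x)$, that this solution is given by iterating the Lax–Oleinik semigroups in either order, which is exactly \eqref{commute} for Lipschitz $u$; and (4) note that, since on compact $M$ every continuous datum is admissible and is a uniform limit of Lipschitz data, the weak contraction property in Proposition \ref{prop S(t)}(iii) extends \eqref{commute} from Lipschitz to all admissible initial data, completing the proof. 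The main obstacle remains verifying that the argument of \cite{BT} genuinely goes through without smoothness of the \emph{second} Hamiltonian; if it does not, step (2) must be replaced by the flow-adapted approximation described above, which is where the real work lies.
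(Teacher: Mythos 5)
The paper does not prove Theorem \ref{BT1} at all: it is explicitly introduced as a recollection of one of the main results of \cite{BT}, restated in the paper's setting, so your core strategy --- check that the hypotheses (convexity, coercivity, local Lipschitz dependence in $x$, a.e.\ vanishing Poisson bracket, $C^1$ regularity of one of the two Hamiltonians) match those of \cite{BT} and then quote that theorem --- is exactly what the authors do. Your concern about whether the argument of \cite{BT} survives without smoothness of the second Hamiltonian is moot, since Barles--Tourin prove their commutation result precisely in this generality (only one Hamiltonian $C^1$, the other merely locally Lipschitz), so the fallback flow-adapted mollification you sketch is unnecessary.
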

\begin{oss}
The definition of commutation given above via \eqref{commute} is
actually equivalent to the cancellation of the Poisson bracket, $\{ \cdot , \cdot \}$,
when the Hamiltonians are additionally assumed of class $C^1$. The
proof of this fact is sketched in the introduction of \cite{BT},
and is detailed in Appendix \ref{appendix commutation} in the case
of Tonelli Hamiltonians. This equivalence will be used to
establish Theorem \ref{teo common strict}, see the proof of Lemma
\ref{lemma common strict}.
\end{oss}

It would be interesting to understand if the null Poisson bracket condition can be somehow relaxed to 
less regular Hamiltonians. For instance, one may wonder if the commutation condition \eqref{commute} holds for pairs of 
locally Lipschitz Hamiltonians having Poisson bracket almost everywhere zero. We will describe in Remark \ref{oss examples} how a non--trivial class of locally Lipschitz Hamiltonians enjoying this property can be provided.

We prove a result that will be needed in the sequel.

\begin{prop}\label{prop sup commutation}
Assume $H_1$ and $H_2$ are two commuting convex Hamiltonians and
set
\[
G(x,p)=\max\{H_1(x,p),\,H_2(x,p)\}\qquad\hbox{for every $(x,p)\in
M \times\R^N$}.
\]
Then $G$ commutes both with $H_1$ and $H_2$.
\end{prop}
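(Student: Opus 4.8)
The plan is to express the Lax--Oleinik semigroup of $G=\max\{H_1,H_2\}$ in terms of those of $H_1$ and $H_2$, and then to deduce the two commutation relations by a short algebraic computation. First note that $G$ is a convex Hamiltonian (condition (H3) for $G$ follows from $\alpha_1(|p|)\leqslant H_1\leqslant G\leqslant\max\{\beta_1,\beta_2\}(|p|)$), so $L_G$ and $\S_G$ are well defined as in Section \ref{sez HJ}, and $H_i\leqslant G$ yields $L_{H_i}\geqslant L_G$ for $i=1,2$. For $t>0$ and $u$ an admissible datum, set
\[
T(t)u:=\inf\{\,\S_{H_1}(a)\,\S_{H_2}(b)\,u\ :\ a,b\geqslant 0,\ a+b=t\,\},
\]
with the convention $\S(0)=\mathrm{id}$ (recall $\S(\tau)u\to u$ as $\tau\to 0^+$). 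Since $H_1$ and $H_2$ commute, $\S_{H_1}(a)\S_{H_2}(b)=\S_{H_2}(b)\S_{H_1}(a)$, so $T(t)u$ equals as well $\inf\{\S_{H_2}(b)\S_{H_1}(a)u:a,b\geqslant 0,\ a+b=t\}$. By the same arguments used for a single semigroup, $(t,x)\mapsto T(t)u(x)$ is finite valued and locally Lipschitz on $(0,+\infty)\times M$. Concatenating an $\S_{H_2}$--minimizing curve with an $\S_{H_1}$--minimizing one and using $L_{H_i}\geqslant L_G$ gives $T(t)u\geqslant\S_G(t)u$, while taking $b=0$ gives $T(t)u\leqslant\S_{H_1}(t)u$; in particular $T(t)u\to u$ as $t\to 0^+$, being squeezed between $\S_G(t)u$ and $\S_{H_1}(t)u$.

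The heart of the proof is the identity $T(t)=\S_G(t)$ for all $t>0$. I would prove that $v(t,x):=T(t)u(x)$ is, for each $i\in\{1,2\}$, a viscosity subsolution of $\partial_t v+H_i(x,D_xv)=0$ on $(0,+\infty)\times M$. By the time--dependent counterpart of Proposition \ref{prop critical sol}(i), it suffices to check that $\S_{H_i}(t-s)\,v(s,\cdot)\geqslant v(t,\cdot)$ for all $0<s<t$; and indeed, writing $T(s)u=\inf_{a+b=s}\S_{H_i}(a)\S_{H_j}(b)u$ with $\{i,j\}=\{1,2\}$ (legitimate by commutation), using the semigroup property of $\S_{H_i}$ and that it commutes with infima, and substituting $c=t-s+a$,
\[
\S_{H_i}(t-s)\,v(s,\cdot)=\inf_{a+b=s}\S_{H_i}(t-s+a)\,\S_{H_j}(b)\,u=\inf_{\substack{c+b=t\\ c\geqslant t-s}}\S_{H_i}(c)\,\S_{H_j}(b)\,u,
\]
which is $\geqslant\inf_{c+b=t}\S_{H_i}(c)\S_{H_j}(b)u=T(t)u=v(t,\cdot)$, the last infimum running over a larger index set. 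Since $v$ is a subsolution of both Hamilton--Jacobi equations, it is a subsolution of $\partial_t v+G(x,D_xv)=0$: for any smooth $\phi$ touching $v$ from above at $(t_0,x_0)$ one has $\partial_t\phi(t_0,x_0)+H_i(x_0,D_x\phi(t_0,x_0))\leqslant 0$ for $i=1,2$, hence $\partial_t\phi(t_0,x_0)+G(x_0,D_x\phi(t_0,x_0))\leqslant 0$. As $v(0^+,\cdot)=u$, the comparison principle for the Cauchy problem associated with $G$ gives $v\leqslant\S_G(\cdot)u$; combined with $T(t)u\geqslant\S_G(t)u$, this yields $T(t)=\S_G(t)$. (One first runs this for $u\in\D{Lip}(M)$, where the uniqueness and comparison results of Section \ref{sez HJ} apply directly; the general admissible datum, and the non--compact case, follow by the usual approximation and localization.)

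Finally, with $\S_G(s)=T(s)$ in hand, the commutation of $G$ with $H_1$ is an immediate computation: for $s,t>0$ and $u$ admissible,
\[
\S_G(s)\,\S_{H_1}(t)u=\inf_{a+b=s}\S_{H_1}(a)\,\S_{H_2}(b)\,\S_{H_1}(t)u=\inf_{a+b=s}\S_{H_1}(t+a)\,\S_{H_2}(b)\,u=\S_{H_1}(t)\,\S_G(s)u,
\]
where we moved $\S_{H_1}(t)$ past $\S_{H_2}(b)$ by commutation of $H_1$ and $H_2$, used $\S_{H_1}(t+a)=\S_{H_1}(t)\S_{H_1}(a)$, and pulled $\S_{H_1}(t)$ out of the infimum since Lax--Oleinik semigroups commute with infima (exchange the two infima in the Lax--Oleinik formula). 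The commutation of $G$ with $H_2$ is obtained verbatim from the expression $T(s)u=\inf_{a+b=s}\S_{H_2}(b)\S_{H_1}(a)u$. The main obstacle — and the only place the hypothesis is genuinely used — is the identification $T=\S_G$: while each individual estimate is elementary, it is precisely the commutation of $H_1$ and $H_2$ that makes the splitting semigroup $T$ comparable to $\S_{H_1}$ and to $\S_{H_2}$ \emph{in the same manner}, which is what characterizes it as the semigroup of $\max\{H_1,H_2\}$; without commutation the products $\S_{H_1}(a)\S_{H_2}(b)$ would be order--dependent and this structure would break down.
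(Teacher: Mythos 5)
Your proof is correct, and it takes a genuinely different route from the paper's. The paper works at the level of the kernels: it first identifies $h^t_G$ with the value function of $L=\min\{L_{H_1},L_{H_2}\}$ (Lemma \ref{lemma h_G}), then shows that $h^t_G$ is the infimum over \emph{arbitrarily many} alternations of the two kernels $h^{t_i}_{H_{\sigma(i)}}$ (Lemma \ref{lemma key}, which needs no commutation), and finally proves the characterizing identity \eqref{eq comm} for the pair $(G,H_1)$ by induction on the number of alternations, using the commutation of $H_1$ and $H_2$ at each step to push an $h_{H_1}$--block through the chain. You instead work at the level of the semigroups: you introduce the one--switch splitting operator $T(t)=\inf_{a+b=t}\S_{H_1}(a)\S_{H_2}(b)$ and identify it with $\S_G(t)$ by a PDE argument (domination with respect to both semigroups $\Rightarrow$ viscosity subsolution of both evolution equations $\Rightarrow$ subsolution for the max $\Rightarrow$ below $\S_G(\cdot)u$ by comparison, the reverse inequality being the elementary concatenation estimate), after which the commutation relations are two--line computations. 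Your approach buys a strictly stronger and rather elegant statement — under commutation a \emph{single} switch suffices, i.e.\ the explicit product formula $\S_G(t)=\inf_{a+b=t}\S_{H_1}(a)\S_{H_2}(b)$ — and replaces the combinatorial induction by soft viscosity--solution arguments; the price is that it leans on two facts the paper only records in the stationary case, namely the time--dependent analogue of Proposition \ref{prop critical sol}(i) (equivalence, for locally Lipschitz functions and convex coercive $H$, of the domination property $v(t,\cdot)\leqslant\S_H(t-s)v(s,\cdot)$ with being a viscosity subsolution of $\partial_t v+H(x,D_xv)=0$) and the comparison/maximality of the Lax--Oleinik evolution among Lipschitz subsolutions with the same initial datum; both are standard but would need to be stated and proved. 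The remaining reductions you defer — local Lipschitz regularity of $(t,x)\mapsto T(t)u(x)$, and the passage from Lipschitz data to general admissible data, which amounts to the kernel identity $\inf_{a+b=t}\inf_z\big(h^a_{H_1}(\zeta,z)+h^b_{H_2}(z,x)\big)=h^t_G(\zeta,x)$ obtained by approximating the datum equal to $0$ at $\zeta$ and $+\infty$ elsewhere — are routine and consistent with how the paper itself argues in Proposition \ref{prop equivalent} and Remark \ref{oss equivalent}. The paper's route, by contrast, stays entirely within the variational objects it has already developed and produces Lemma \ref{lemma key} as a byproduct of independent interest.
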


\vspace{1ex}

We need three auxiliary results first.

\begin{prop}\label{prop equivalent}
A pair of continuous convex Hamiltonians $H$ and $G$ commute if and only if
\begin{equation}\label{eq comm}
\min_{z\in M }\big(h^t_H(y,z)+h^s_G(z,x)\big) = \min_{z\in M
}\big(h^s_G(y,z)+h^t_H(z,x)\big)
\end{equation}
{for every $x,y\in M $ and $t,\,s>0$.}
\end{prop}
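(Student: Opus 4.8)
The plan is to establish Proposition \ref{prop equivalent} by unwinding the definition of commutation in terms of the Lax--Oleinik formula. The key observation is that the two--fold composition $\S_G(s)\big(\S_H(t)u\big)$ can be written as a single infimum over an intermediate point: by definition of $\S_H(t)$ and $\S_G(s)$ via \eqref{def S(t)}, and using that $h^t_H(z,x)$ (resp. $h^s_G(z,x)$) already records the infimum over curves joining $z$ to $x$ in time $t$ (resp. $s$), one has
\[
\big(\S_G(s)(\S_H(t)u)\big)(x)=\inf_{y\in M}\Big(u(y)+\min_{z\in M}\big(h^t_H(y,z)+h^s_G(z,x)\big)\Big),
\]
and symmetrically for $\S_H(t)\big(\S_G(s)u\big)$. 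Here one must check that the intermediate infimum is actually attained (so that ``$\min$'' is justified): this follows from the lower semicontinuity and coercivity of $h^t_H(y,\cdot)+h^s_G(\cdot,x)$, which are consequences of Proposition \ref{prop h^t} together with property (L3) for the two Lagrangians, guaranteeing superlinear growth of $h^\tau$ in the space variable.

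For the forward implication, I would fix $x$, $y$, $t$, $s$ and apply the commutation identity \eqref{commute} to the admissible initial datum $u(\cdot)=|\cdot-y|$ on $\R^N$, or more simply $u(\cdot)=+\infty$ off $\{y\}$ and $u(y)=0$ when $M=\T^N$ — but since that datum is not continuous, the cleaner choice is to use $u_\lambda(\cdot)=\lambda\,\mathrm{dist}(\cdot,y)$ with $\lambda\to+\infty$ (admissible by \eqref{admissible u}) and pass to the limit, so that $\inf_{w}\big(u_\lambda(w)+(\cdots)\big)$ concentrates at $w=y$. Evaluating the resulting limit of both sides of \eqref{commute} at $x$ yields precisely \eqref{eq comm}. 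Alternatively, and perhaps more elegantly, one notes that \eqref{commute} for all admissible data is equivalent to the equality of the two kernels $\min_z\big(h^t_H(y,z)+h^s_G(z,x)\big)$ and $\min_z\big(h^s_G(y,z)+h^t_H(z,x)\big)$, because these kernels are recovered from the semigroup action by testing against the family of data $u_\lambda$ above and taking $\lambda\to+\infty$; I would phrase the argument this way to get both implications at once.

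For the converse implication, assume \eqref{eq comm} holds for all $x,y,t,s$. Given any admissible initial datum $u$, I substitute the single--infimum representation above and get
\[
\big(\S_G(s)(\S_H(t)u)\big)(x)=\inf_{y\in M}\Big(u(y)+\min_{z\in M}\big(h^t_H(y,z)+h^s_G(z,x)\big)\Big)
=\inf_{y\in M}\Big(u(y)+\min_{z\in M}\big(h^s_G(y,z)+h^t_H(z,x)\big)\Big),
\]
which is exactly $\big(\S_H(t)(\S_G(s)u)\big)(x)$; this is just Fubini/Tonelli for infima combined with the hypothesis applied inside the outer $\inf_y$. The main technical point to be careful about throughout is the exchange of infima and the attainment of the intermediate minimum; once the single--infimum formula for a double composition of Lax--Oleinik semigroups is in place, the equivalence is essentially formal. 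I expect the only real obstacle to be verifying that $h^t_H(y,\cdot)$ is finite, locally Lipschitz and superlinear enough that $h^t_H(y,\cdot)+h^s_G(\cdot,x)$ attains its minimum — but this is covered by Propositions \ref{prop h^t} and the growth condition (L3), so it reduces to a short compactness argument (trivial when $M=\T^N$).
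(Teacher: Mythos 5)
Your proposal is correct and follows essentially the same route as the paper: rewrite the double composition as a single infimum over the intermediate point by exchanging two nested infima, then test against a datum concentrated at $y$ for the forward implication. The only (harmless) detour is your approximation by $u_\lambda=\lambda\,\mathrm{dist}(\cdot,y)$: the paper's admissible initial data are explicitly allowed to take the value $+\infty$ subject only to the lower bound \eqref{admissible u}, so one may plug in directly the datum equal to $0$ at $y$ and $+\infty$ elsewhere, which is exactly what the paper does.
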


\begin{oss}\label{oss equivalent}
Formula \eqref{eq comm} holds with minima even when $M$ is non
compact. Indeed,
\[
    \tau\,\alpha_*\Big(\frac{|z-\zeta|}{\tau}\Big)
    \leqslant
    h_H^\tau(z,\zeta)
    \leqslant
    \tau\,\beta_*\Big(\frac{|z-\zeta|}{\tau}\Big),
\]
and the same is valid for $h_G^\tau(z,\zeta)$. This readily implies that the infima in \eqref{eq comm} are finite and that every minimizing sequence must stay in a compact subset of $M$. 
\end{oss}
\begin{dimo}
Let $u: M \to\R\cup\{+\infty\}$ be an admissible initial datum.
Using the definitions and the commutation of two nested infima we
get, for every $x\in M $ and $t,s>0$
\begin{eqnarray}
\S_G(s)\big(\S_H(t)\,u\big)(x)
    &=&
    \inf_{z\in M }\,\inf_{\zeta\in M }\Big(
    h_H^t(\zeta,z)+h_G^s(z,x)+u(\zeta)\Big)\nonumber\\
    &=&
    \inf_{\zeta\in M }\Big(\,\inf_{z\in M }\,\big(
    h_H^t(\zeta,z)+h_G^s(z,x)\big)\,+u(\zeta)\Big), \label{inf1} \\
    & & \nonumber\\
\S_H(t)\big(\S_G(s)\,u\big)(x)
    &=&
    \inf_{z\in M }\,\inf_{\zeta\in M }\Big(
    h_G^s(\zeta,z)+h_H^t(z,x)+u(\zeta)\Big)\nonumber\\
    &=&
    \inf_{\zeta\in M }\Big(\,\inf_{z\in M }\,\big(
    h_G^s(\zeta,z)+h_H^t(z,x)\big)\,+u(\zeta)\Big). \label{inf2}
\end{eqnarray}
Now, if $H$ and $G$ commute, then \eqref{eq comm} follows by
plugging in the above equalities as $u$  the function equal to 0
at $y$ and $+\infty$ elsewhere, for every fixed $y\in M $.
Conversely, if \eqref{eq comm} holds true, then \eqref{inf1} and
\eqref{inf2} are equal for any admissible $u$, so $H$ and $G$
commute.\medskip
\end{dimo}

We set $L(x,q):=\min\{L_{H_1}(x,q),\,L_{H_2}(x,q)\}$ for all
$(x,q)\in M \times\R^N$. To ease notations, in the sequel we will
write $L_i$, $h^t_i$ in place of $L_{H_i}$, $h^t_{H_i}$. We recall
that $L^*$ denotes the Fenchel transform of $L$, defined according
to \eqref{def L}.

\begin{lemma}\label{lemma h_G}
For every $x,\,y\in M $ and $t>0$
\begin{equation}\label{eq h^t_G}
h^t_G(x,y)=\inf\left\{\int_0^t L(\gamma,\dot\gamma)\,\dd
s\,:\,\gamma\in C^{1}([0,t]; M
),\,\gamma(0)=x,\,\gamma(t)=y\,\right\}.
\end{equation}
\end{lemma}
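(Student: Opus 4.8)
The plan is to show that $G=\max\{H_1,H_2\}$ has Fenchel transform $L_G=L^*=(\min\{L_1,L_2\})^*$, and then to rewrite $h^t_G$ as an infimum over $C^1$ curves of the integral of $L=\min\{L_1,L_2\}$. First I would record the elementary duality fact: if $L=\min\{L_1,L_2\}$, then $L^*=\sup_p\{\langle p,q\rangle-L(p)\}=\max\{L_1^*,L_2^*\}=\max\{H_1,H_2\}=G$ (pointwise in $x$); here I use that $L_i^*=H_i^{**}=H_i$, which holds because $H_i$ is continuous and convex in $p$ by (H1), (H2), and that the sup of a minimum of two functions is the max of the two sups. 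So $L^*=G$, i.e. $L$ and $L_G$ have the same Fenchel transform $G$, and $L_G=G^*=L^{**}=\mathrm{co}\,L$ is the convex envelope (in $q$) of $L$.

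Next I would invoke the standard relaxation result for the functional $\gamma\mapsto\int_0^t L(\gamma,\dot\gamma)\,\dd s$: since $L$ is continuous and superlinear (it satisfies (L1), (L3) with the appropriate moduli, being the minimum of two functions each satisfying (L1), (L3)), the relaxed functional — i.e.\ the lower semicontinuous envelope with respect to weak $W^{1,1}$ convergence — is exactly $\gamma\mapsto\int_0^t (\mathrm{co}\,L)(\gamma,\dot\gamma)\,\dd s=\int_0^t L_G(\gamma,\dot\gamma)\,\dd s$. Consequently, for fixed endpoints $x,y$ and fixed time $t$,
\[
h^t_G(x,y)=\inf\Big\{\int_0^t L_G(\gamma,\dot\gamma)\,\dd s\,:\,\gamma\in W^{1,1},\ \gamma(0)=x,\ \gamma(t)=y\Big\}=\inf\Big\{\int_0^t L(\gamma,\dot\gamma)\,\dd s\,:\,\gamma\in W^{1,1},\ \gamma(0)=x,\ \gamma(t)=y\Big\},
\]
the second equality being precisely the statement that passing from $L$ to its relaxation $L_G$ does not change the infimum (the $\leqslant$ direction is trivial since $L_G\leqslant L$; the $\geqslant$ direction is the content of the relaxation theorem, or can be obtained by hand via a Jensen/chattering argument approximating a point in the convex hull of the two velocity vectors by rapid oscillation between velocities where $L$ equals $L_1$ or $L_2$).

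Finally I would upgrade "$W^{1,1}$" to "$C^1$" in the infimum. This is the only genuinely technical point, and I expect it to be the main (minor) obstacle: one must check that Lipschitz, or even $C^1$, curves are dense enough that restricting the class of competitors does not raise the infimum. For Lipschitz curves this follows from Proposition \ref{prop Lagrangian minimizer} (minimizers of $h^t_G$ are Lipschitz with a controlled constant, once one bounds $h^t_G(x,y)$ by a constant times $t$ using superlinearity/local compactness as in Lemma \ref{lemma Maxime}). To pass from Lipschitz to $C^1$ one mollifies: given a Lipschitz competitor $\gamma$ with $\int_0^t L(\gamma,\dot\gamma)\,\dd s$ close to the infimum, one reparametrizes slightly and convolves $\dot\gamma$ with a smooth kernel to get $C^1$ curves $\gamma_\eps$ with the same endpoints, $\dot\gamma_\eps\to\dot\gamma$ in $L^1$ and uniformly bounded, whence $\int_0^t L(\gamma_\eps,\dot\gamma_\eps)\,\dd s\to\int_0^t L(\gamma,\dot\gamma)\,\dd s$ by dominated convergence and continuity of $L$. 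Combining the three steps gives \eqref{eq h^t_G}.
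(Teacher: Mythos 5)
Your proposal is correct and follows essentially the same route as the paper: the heart of both arguments is the duality computation $L^*=\max\{L_1^*,L_2^*\}=\max\{H_1,H_2\}=G$, hence $L^{**}=G^*=L_G$, combined with the classical relaxation theorem identifying the infimum of $\int L$ with that of $\int L^{**}$. The only difference is presentational — the paper cites \cite{BuGiHi98} for a single statement equating the $C^1$ infimum of $\int L$ with the $W^{1,1}$ infimum of $\int L^{**}$, whereas you unpack this into the $W^{1,1}$ relaxation plus a chattering/mollification density step — which is fine.
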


\begin{dimo}
By classical results of Calculus of Variations, see for instance
\cite{BuGiHi98}, we  know that the infimum appearing in
\eqref{eq h^t_G} agrees with
\[
\inf\left\{\int_0^t L^{**}(\gamma,\dot\gamma)\,\dd s\,:\,\gamma\in
W^{1,1}([0,t]; M ),\,\gamma(0)=x,\,\gamma(t)=y\,\right\},
\]
so to conclude we only need to prove that $L^{**}=L_G$. From the
inequalities $G\geqslant H_i$ we derive $L_G\leqslant L_i$ for
$i\in\{1,\,2\}$, so $L_G\leqslant L$. By duality
\[
G=L_G^*\geqslant L^*\geqslant
L_i^*=H^i,\qquad\hbox{$i\in\{1,\,2\}$,}
\]
so $G\geqslant L^*\geqslant\max\{H_1,\,H_2\}=G$. Hence $G=L^*$ and
consequently $L_G=L^{**}$.
\end{dimo}

\vspace{2ex}

\begin{lemma}\label{lemma key}
For every $x,\,y\in M $ and $t>0$
\begin{equation*}
h_G^t(x,y)=\inf\Big\{\sum_{i=1}^n
h^{t_i}_{\sigma(i)}(x_{i-1},x_i)\,:\,x_0=x,\,x_n=y,\,\sum_i
t_i=t,\, \sigma\in\{1,\,2\}^n,\,n\in\N\,\Big\}.
\end{equation*}
\end{lemma}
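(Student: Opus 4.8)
The plan is to prove the two inequalities separately. Denote by $\rho(x,y,t)$ the infimum on the right--hand side. The inequality $h_G^t(x,y)\geqslant\rho(x,y,t)$ is essentially trivial: by Lemma \ref{lemma h_G} one has $h_G^t(x,y)=\inf\int_0^t L(\gamma,\dot\gamma)\,\dd s$ over $C^1$ curves from $x$ to $y$, where $L=\min\{L_1,L_2\}$; given such a curve $\gamma$ and any $\eps>0$, one can split $[0,t]$ into finitely many subintervals $[s_{i-1},s_i]$ on each of which one of the two Lagrangians $L_{\sigma(i)}$ is within $\eps$ of $L$ along $\gamma$ (using continuity of $L_1,L_2,\gamma,\dot\gamma$, after first replacing $\gamma$ by a curve with $\dot\gamma$ piecewise constant or at least uniformly continuous — alternatively work directly with the $C^1$ curve and a finite cover), so that $\int_0^t L(\gamma,\dot\gamma)\,\dd s\geqslant\sum_i h^{t_i}_{\sigma(i)}(x_{i-1},x_i)-\eps t$ with $x_i=\gamma(s_i)$, $t_i=s_i-s_{i-1}$. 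Letting $\eps\to0$ gives $h_G^t(x,y)\geqslant\rho(x,y,t)$.

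For the reverse inequality $h_G^t(x,y)\leqslant\rho(x,y,t)$, the key point is that $L_G=L^{**}\leqslant L=\min\{L_1,L_2\}\leqslant L_i$, established in the proof of Lemma \ref{lemma h_G}. Hence for each $i$ and each subinterval, $h^{t_i}_{i}(x_{i-1},x_i)\geqslant h^{t_i}_G(x_{i-1},x_i)$, because the $h^t$ functional is monotone in the Lagrangian. Then, using the superadditivity/concatenation property $h^{s+s'}_G(x,z)\leqslant h^s_G(x,y)+h^{s'}_G(y,z)$ (an immediate consequence of the definition \eqref{def h^t}, by gluing the two minimizing curves), one gets
\[
\sum_{i=1}^n h^{t_i}_{\sigma(i)}(x_{i-1},x_i)\geqslant\sum_{i=1}^n h^{t_i}_G(x_{i-1},x_i)\geqslant h^{t}_G(x,y),
\]
for every admissible choice of $n$, $\sigma$, $(x_i)$, $(t_i)$. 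Taking the infimum over all such choices yields $\rho(x,y,t)\geqslant h_G^t(x,y)$, and the two inequalities together give the claim.

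I expect the main technical obstacle to be the first inequality, specifically the approximation step: one must verify that a $C^1$ (or merely Lipschitz, by Proposition \ref{prop Lagrangian minimizer}) minimizer $\gamma$ for $h_G^t(x,y)$ can be chopped into pieces on each of which $L(\gamma(s),\dot\gamma(s))$ is uniformly close to one of $L_1(\gamma(s),\dot\gamma(s))$ or $L_2(\gamma(s),\dot\gamma(s))$. The subtlety is that the ``switching'' between the two Lagrangians may occur on a complicated (possibly positive measure, or dense) subset of $[0,t]$, so a naive partition need not work; the fix is to use uniform continuity of $(s,q)\mapsto L_j(\gamma(s),q)$ on the compact set $[0,t]\times\overline{B_\kappa}$ (with $\kappa$ the Lipschitz bound from Proposition \ref{prop Lagrangian minimizer}) to choose a partition fine enough that on each subinterval the oscillation of each $L_j$ along $\gamma$ is at most $\eps$, and then pick $\sigma(i)\in\{1,2\}$ achieving the min of $L_{\sigma(i)}$ over that piece to within $\eps$; this only loses $O(\eps t)$ in total. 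Everything else is routine manipulation of the variational definition of $h^t$.
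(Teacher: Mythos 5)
Your proposal is correct and follows essentially the same route as the paper: the reverse inequality via $L_i\geqslant L_G$ plus the concatenation property of $h^t_G$, and the forward inequality by passing to the $C^1$ representation of Lemma \ref{lemma h_G}, taking a partition fine enough that $(\gamma,\dot\gamma)$ (hence each $L_j(\gamma,\dot\gamma)$) oscillates by at most a prescribed amount on each piece, and selecting $\sigma(i)$ pointwise to realize the minimum up to $O(\eps)$. The technical obstacle you flag is exactly the one the paper resolves, and in the same way — note only that the uniform-continuity argument genuinely requires the $C^1$ curves of Lemma \ref{lemma h_G} (so that $\dot\gamma$ is uniformly continuous), not merely a Lipschitz minimizer.
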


\begin{dimo}
The fact that the right--hand side term of the above equality  is
non smaller than $h_G^t(x,y)$ is an immediate consequence of the
inequalities $L_i\geqslant L_G$ for $i\in\{1,\,2\}$. To prove the
opposite inequality, in view of Lemma \ref{lemma h_G}, it suffices
to show that for every $\eps>0$ and for every curve
$\gamma:[0,t]\to M $ of class $C^1$ joining $x$ to $y$ we have
\begin{equation*}
    \eps+\int_0^t L(\gamma,\dot\gamma)\,\dd s
    \geqslant
    \sum_{i=1}^n \int_{t_{i-1}}^{t_i}
    L_{\sigma(i)}(\gamma,\dot\gamma)\,\dd s
\end{equation*}
for a suitable choice of $n\in\N$, $\{t_i\,:\,0\leqslant i
\leqslant n\,\}$ and $\sigma\in\{1, 2\}^n$.

To this purpose, choose a sufficiently large positive number $R$
such that $\|\dot\gamma\|_\infty<R$ and $\gamma([0,t])\subseteq
B_R$. Denote by $\omega$ a continuity modulus for $L_1$ and $L_2$
in $ B_R \times B_R$. Let $r$ be an arbitrarily chosen positive
number and choose $n\in\N$ large enough in such a way that
\[
\displaystyle{|\gamma(s)-\gamma(\tau)|+|\dot\gamma(s)-\dot\gamma(\tau)|<r\qquad\hbox{for
any $s,\,\tau\in [0, t]$ with $|s-\tau|<\frac{t}{n}$}.}
\]
Let $t_i:=i\,t/n$ for $0\leqslant i \leqslant n$ and define
$\sigma\in\{1,2\}^n$ in such a way that
\[
L_{\sigma(i)}\big(\gamma(t_i),\dot\gamma(t_i)\big)=L\big(\gamma(t_i),\dot\gamma(t_i)\big)
\qquad \hbox{for every $1\leqslant i \leqslant n$.}
\]
For every $s\in [t_{i-1},\,t_i]$ we get
\begin{eqnarray*}
    \left|
     L_{\sigma(i)}\big(\gamma(s),\dot\gamma(s)\big)-L\big(\gamma(s),\dot\gamma(s)\big)
    \right|
    \leqslant
    \left|
    L_{\sigma(i)}\big(\gamma(s),\dot\gamma(s)\big)- L_{\sigma(i)}\big(\gamma(t_i),\dot\gamma(t_i)\big)
    \right|\\
    +
    \left|
     L\big(\gamma(t_i),\dot\gamma(t_i)\big)- L\big(\gamma(s),\dot\gamma(s)\big)
    \right|
    \leqslant
    2\,\omega(r).
\end{eqnarray*}
Then
\[
    \sum_{i=1}^n \int_{t_{i-1}}^{t_i}
    L_{\sigma(i)}(\gamma,\dot\gamma)\,\dd s
    \leqslant
    \int_0^t L(\gamma,\dot\gamma)\,\dd s +2t\,\omega(r),
\]
and the assertion follows by choosing $r$ small enough.
\end{dimo}
\ \\

\noindent{\bf Proof of Proposition \ref{prop sup commutation}.}
Let us prove that $G$ commutes with $H_i$, where $i$ has been
fixed, say $i=1$ for definitiveness. In view of Proposition
\ref{prop equivalent}, we need to show that \eqref{eq comm} holds
with $H_1$ and $G$ in place of $H$ and $G$, respectively. Let us
fix $t,s>0$ and $x,\,y\in M $. Let us show that
\begin{equation}\label{ineq 2}
    \min_{z\in M }\big(h^t_G(x,z)+h^s_{H_1}(z,y)\big)
    \geqslant
    \min_{z\in M }\big(h^s_{H_1}(x,z)+h^t_G(z,y)\big).
\end{equation}
In view of Lemma \ref{lemma key}, it will be enough to prove that,
for every $n\in\N$, the following inequality holds:
\begin{equation}\label{ineq 1}
    \sum_{i=1}^n h^{t_i}_{\sigma(i)}(x_{i-1},x_i)+h^s_{H_1}(x_n,y)
    \geqslant
    \min_{z\in M }\big(h^s_{H_1}(x,z)+h^t_G(z,y)\big)
\end{equation}
for every  $\sigma\in\{1,2\}^n$, $\{x_i\,:\,0\leqslant i\leqslant
n\,\}$ with $x_0=x$, $\{t_i\,:\,0\leqslant i\leqslant n\,\}$ with
$\sum_i t_i=t$.

The proof will be by induction on $n$.

\noindent For $n=1$ inequality \eqref{ineq 1} holds true for
\[
    h^{t}_{\sigma(1)}(x,x_1)+h^s_{H_1}(x_1,y)
    \geqslant
    \min_{z\in M }\big(h^t_{\sigma(1)}(x,z)+h^s_{H_1}(z,y)\big),
\]
and we conclude since $H_{\sigma(1)}$ and $H_1$ commute and
$h^t_{\sigma(1)}\geqslant h^t_G$, for every
$\sigma(1)\in\{1,\,2\}$.\\
\indent Let us now assume that \eqref{ineq 1} holds for $n$ and
let us show it holds for $n+1$. Let $\sigma\in\{1,2\}^{n+1}$,
$\{x_i\,:\,0\leqslant i\leqslant n+1\,\}$ with $x_0=x$,
$\{t_i\,:\,0\leqslant i\leqslant n+1\,\}$ with $\sum_i t_i=t$. We
have
\begin{eqnarray*}
    &&\quad\sum_{i=1}^n
    h^{t_i}_{\sigma(i)}(x_{i-1},x_i)
    +
    h^{t_{n+1}}_{\sigma(n+1)}(x_{n+1},y)
    +
    h^s_{H_1}(x_n,y)\\
    &&\geqslant
    \sum_{i=1}^n
    h^{t_i}_{\sigma(i)}(x_{i-1},x_i)
    +
    \min_{z\in M }\Big(h^{t_{n+1}}_{\sigma(n+1)}(x_n,z)+h^s_{H_1}(z,y)\Big)\\
    &&=
    \sum_{i=1}^n h^{t_i}_{\sigma(i)}(x_{i-1},x_i)
    +
    \min_{z\in M }\Big(h^s_{H_1}(x_n,z)+
    h^{t_{n+1}}_{\sigma(n+1)}(z,y)\Big),
\end{eqnarray*}
where we used the fact that $H_1$ and $H_{\sigma(n+1)}$ commute.
Let us denote by $\overline z$ a point realizing the minimum in
the last row of the above expression. By making use of the
inductive hypothesis we get
\begin{eqnarray*}
    &&\quad\sum_{i=1}^n h^{t_i}_{\sigma(i)}(x_{i-1},x_i)
    +
    h^s_{H_1}(x_n,\overline z)+ h^{t_{n+1}}_{\sigma(n+1)}(\overline z,y)\\
    &&\geqslant
    \min_{\zeta\in M }\Big(h^{s}_{H_1}(x,\zeta)+h^{t-t_{n+1}}_{G}(\zeta,\overline z)\Big)
    +
    h^{t_{n+1}}_{\sigma(n+1)}(\overline z,y)\\
    &&=
    \min_{\zeta\in M }\Big(h^{s}_{H_1}(x,\zeta)+h^{t-t_{n+1}}_{G}(\zeta,\overline z)
    +
    h^{t_{n+1}}_{\sigma(n+1)}(\overline z,y)\Big)\\
    &&\geqslant
    \min_{\zeta\in M }\Big(h^{s}_{H_1}(x,\zeta)+h^{t}_G(\zeta,y)\Big).
\end{eqnarray*}
The opposite inequality in \eqref{ineq 2} comes
in an analogous way. The proof is complete.\qed

\begin{oss}\label{oss examples}
By suitably modifying the above arguments, we can prove the following more general version of  
Proposition \ref{prop sup commutation}: let $H_1$ and $H_2$ be a pair of commuting continuous Hamiltonians  and let $f:\R^2\to\R$ be a convex and increasing function. Here increasing means that 
\[
 f(a_1,a_2)\leq f(b_1,b_2)\qquad\hbox{if $a_i\leq b_i$ for $i=1,\,2$.}
\]
Then $H_1$ and $H_2$ commute with $f(H_1,H_2)$. The proof of this fact requires some extra work that is beyond the purpose of the current paper. Here we just want to explain how this procedure can be used to provide new non--trivial examples of commuting continuous Hamiltonians. 
Let $H_1$ be locally Lipschitz and $H_2$ of class $C^1$ such that their Poisson bracket is almost everywhere zero. Then we know from \cite{BT} that $H_1$ and $H_2$ commute. According to what is stated above, $H_1$ and $G:=f(H_1,H_2)$ commute. It does not seem to us that this example can be deduced from existing results in literature.
\end{oss}

We know restrict our attention to the case  $M=\T^N$ and we investigate on the relation between 
the associated critical equations. We
will denote by $c_H$ and $c_G$ the corresponding critical values
of $H$ and $G$, respectively. Up to adding a constant to the
Hamiltonians, we will assume that $c_H=c_G=0$. Note that this does
not affect the commutation property. The symbols $S_H,\,S_G$ and
$\A_H,\,\A_G$ refer to the critical semidistance and the Aubry set
associated with $H$ and $G$, respectively.

We will also denote by $\sol\sol_H$ and $\sol_H$ the set of
subsolutions and solutions of the critical equations $H=0$,
respectively, and by  $\sol\sol_G$ and $\sol_G$ the
analogous objects for the critical equation $G=0$.\smallskip\\
\indent We start with two results which exploit the fact that $H$
and $G$ commute. Actually, the first result is a direct
consequence of the monotonicity of the semigroups and does not
require $M$ to be compact. The second one uses the fact that the
Lax--Oleinik semigroups are weakly contracting for the infinity
norm and the proof is done applying DeMarr's theorem on existence
of common fixed points for commuting weakly contracting maps on
Banach spaces \cite{DeM}. The compactness of $M$ is crucial to
assure that such common fixed points are critical solutions for
both the Hamiltonians.

The proofs of these results may be found in \cite{Za} and will be
omitted.
\begin{prop}\label{stabilite}
Let $H$ and $G$ be a pair of commuting convex Hamiltonians. Then,
for every $t>0$, we have
\begin{eqnarray*}
\S_H(t)u\in\sol\sol_G\qquad&\ &\hbox{for every $u\in\sol\sol_G$,}\\
\S_H(t)u\in\sol_G\!\quad\qquad&\ &\hbox{for every $u\in\sol_G$.}
\end{eqnarray*}
\end{prop}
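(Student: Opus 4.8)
The plan is to reduce both assertions to the characterization of critical sub/solutions via the Lax--Oleinik semigroup provided by Proposition \ref{prop critical sol}, keeping in mind that the critical value of $G$ has been normalized to $0$. Thus the statements to be used read: $v$ is a $G$-subsolution if and only if $s\mapsto\S_G(s)v$ is non-decreasing on $(0,+\infty)$, and $v$ is a $G$-solution if and only if $\S_G(s)v\equiv v$ for every $s>0$.

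For the first statement, let $u\in\sol\sol_G$. Then $u$ is Lipschitz by Proposition \ref{prop S}, hence an admissible initial datum, and $\S_H(t)u$ is finite valued, locally Lipschitz and again admissible by Proposition \ref{prop S(t)}, so that all the expressions below are meaningful. To prove $\S_H(t)u\in\sol\sol_G$ it suffices to check that $s\mapsto\S_G(s)\big(\S_H(t)u\big)$ is non-decreasing. By the commutation hypothesis \eqref{commute}, $\S_G(s)\big(\S_H(t)u\big)=\S_H(t)\big(\S_G(s)u\big)$; since $u\in\sol\sol_G$, the map $s\mapsto\S_G(s)u$ is non-decreasing, and $\S_H(t)$ is monotone by Proposition \ref{prop S(t)}, so the composition $s\mapsto\S_H(t)\big(\S_G(s)u\big)$ is non-decreasing, which gives the claim.

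For the second statement, let $u\in\sol_G$, so that $\S_G(s)u=u$ for every $s>0$. Applying $\S_H(t)$ and invoking \eqref{commute} once more,
\[
\S_G(s)\big(\S_H(t)u\big)=\S_H(t)\big(\S_G(s)u\big)=\S_H(t)u\qquad\hbox{for every $s>0$,}
\]
so $\S_H(t)u$ is a common fixed point of the family $\big(\S_G(s)\big)_{s>0}$ and therefore, by Proposition \ref{prop critical sol}, a critical solution of $G$. There is no genuine obstacle in this argument: all the content sits in the commutation identity \eqref{commute} and in the previously established semigroup characterizations of sub/solutions; the only minor point requiring attention is the admissibility of the functions inserted into \eqref{commute}, which is guaranteed by the growth condition \eqref{admissible u} together with Proposition \ref{prop S(t)}. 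Accordingly this is the easy half of the analysis of commuting critical equations, and no compactness of $M$ is used.
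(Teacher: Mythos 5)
Your proof is correct and follows exactly the route the paper has in mind (the paper omits the argument, citing \cite{Za}, but describes it as a direct consequence of the monotonicity of the semigroups together with the characterizations in Proposition \ref{prop critical sol}). Both steps — commuting the semigroups and then using monotonicity of $\S_H(t)$ for the subsolution case, and the fixed-point characterization for the solution case — are sound, and your attention to admissibility of the iterated data is the right (and only) technical point to check.
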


\vspace{3ex}

\begin{prop}\label{prop common sol}
Let $H$ and $G$ be a pair of commuting convex Hamiltonians. Then
there exists $u_0\in\sol_H\cap\sol_G$. In particular,
\[
H\big(x,Du_0(x)\big)=G\big(x,Du_0(x)\big)=0
\]
at any differentiability point $x$ of $u_0$.
\end{prop}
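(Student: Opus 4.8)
The plan is to obtain the common solution by invoking DeMarr's theorem on common fixed points for commuting nonexpansive maps, exactly as announced in the text preceding the statement. First I would fix an arbitrary admissible (hence continuous, since $M=\T^N$) initial datum, say $u\equiv 0$, and look at the family of functions obtained by iterating the two semigroups starting from $u$. The key structural facts to put in place are: (i) for each $t>0$, the map $\S_H(t)$ sends the Banach space $C(\T^N)$ (with the sup norm) into itself and is nonexpansive, by Proposition \ref{prop S(t)}(iii); likewise for $\S_G$; (ii) the two semigroups commute in the sense of Definition \ref{def commutation}, so in particular $\S_H(1)$ and $\S_G(1)$ commute as maps on $C(\T^N)$; (iii) the relevant orbit stays in a compact subset of $C(\T^N)$. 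For (iii), the point is that because $c_H=c_G=0$, the critical equations admit subsolutions, so for any subsolution $w$ of $H=0$ the functions $\S_H(t)w$ are nondecreasing in $t$ and equi-Lipschitz (Proposition \ref{prop critical sol}(i) and Proposition \ref{prop S(t)}); on the compact manifold $\T^N$ this forces $\sup_{t>0}\S_H(t)w$ to be finite and Lipschitz, and similarly for $G$. Thus the closed convex hull of the $\S_H(t),\S_G(s)$-orbit of such a $w$ is an equibounded, equi-Lipschitz, hence (by Ascoli--Arzelà) compact convex subset of $C(\T^N)$, invariant under both $\S_H(1)$ and $\S_G(1)$.

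Next I would apply DeMarr's theorem \cite{DeM} to the two commuting nonexpansive maps $\S_H(1)$ and $\S_G(1)$ acting on this compact convex set, obtaining a common fixed point $u_0$ with $\S_H(1)u_0=u_0$ and $\S_G(1)u_0=u_0$. The remaining work is to upgrade ``fixed point of the time-one maps'' to ``fixed point of the whole semigroups'', and then to identify fixed points of the Lax--Oleinik semigroup with critical solutions. For the first upgrade, note that $\S_H(1)u_0=u_0$ together with the semigroup property gives $\S_H(n)u_0=u_0$ for all $n\in\N$; since $t\mapsto\S_H(t)u_0$ is continuous and, once we know $u_0$ is a critical subsolution, monotone in $t$ (Proposition \ref{prop critical sol}(i)), monotonicity squeezed between consecutive integers forces $\S_H(t)u_0=u_0$ for all $t>0$. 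To see $u_0$ is a critical subsolution of $H=0$: $\S_H(1)u_0=u_0$ means $t\mapsto\S_H(t)u_0$ returns to its starting value at $t=1$, and by the semigroup and monotonicity-characterization one checks it cannot be non-monotone — more directly, $\S_H(t)u_0\le\S_H(1)u_0=u_0$ for $t\le 1$ by the ordering $\S_H(t)u_0\le \S_H(1)u_0$ is not automatic, so instead I would argue as in the proof of Proposition \ref{prop critical sol}: $u_0=\S_H(1)u_0\le u_0+a\cdot 1$ trivially, but the clean route is that $u_0$ being a fixed point of $\S_H(1)$ makes $u_0(x)-u_0(y)\le h^1_H(y,x)$, and iterating $u_0(x)-u_0(y)\le h^t_H(y,x)$ for every $t=n\in\N$, whence $u_0(x)-u_0(y)\le\inf_{t>0}h^t_H(y,x)=S_H(y,x)$ by Lemma \ref{lemma 4}, i.e. $u_0\in\sol\sol_H$. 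Then Proposition \ref{prop critical sol}(ii) gives $u_0\in\sol_H$; the same argument with $G$ gives $u_0\in\sol_G$.

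Finally, the pointwise equality $H(x,Du_0(x))=G(x,Du_0(x))=0$ at differentiability points of $u_0$ is immediate: since $u_0$ is simultaneously a viscosity solution of $H=0$ and of $G=0$, at any point $x$ where $u_0$ is differentiable one has $Du_0(x)\in D^+u_0(x)\cap D^-u_0(x)$, so both the subsolution and supersolution inequalities apply to $p=Du_0(x)$, forcing $H(x,Du_0(x))=c_H=0$ and $G(x,Du_0(x))=c_G=0$.

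The main obstacle I anticipate is the compactness/invariance bookkeeping in step one: one must choose the invariant set carefully so that it is at once convex, compact in the sup norm, and preserved by \emph{both} time-one maps. The monotonicity of $t\mapsto\S_H(t)w$ for $w$ a subsolution, together with the uniform Lipschitz bound coming from coercivity of $H$ (and of $G$), is what makes this work on $\T^N$; on a noncompact $M$ the argument would fail, which is precisely why the statement is confined to the compact case. Everything else is a routine assembly of results already recorded in Sections \ref{sez HJ} and \ref{sez weak KAM}.
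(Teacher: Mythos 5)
The paper itself omits this proof (deferring to \cite{Za}), so I am judging your reconstruction on its own terms. Your outline — DeMarr's theorem applied to the commuting, weakly contracting Lax--Oleinik semigroups, with compactness of $\T^N$ used to control the orbits — is indeed the intended strategy, but the step on which everything hinges does not go through as you wrote it. DeMarr's theorem requires a compact \emph{convex} set mapped \emph{into itself} by every member of the family, and the closed convex hull of the $\{\S_H(t),\S_G(s)\}$--orbit of $w$ is not invariant: the Lax--Oleinik operators are inf--convolutions, hence monotone and commuting with constants but \emph{not affine} (one only has $\S_H(1)\big(\lambda u+(1-\lambda)v\big)\geqslant\lambda\,\S_H(1)u+(1-\lambda)\,\S_H(1)v$, with strict inequality in general), so the image of a convex combination of orbit points need not lie back in the convex hull of the orbit. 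Nor can one use a sup-norm ball around a weak KAM solution $v_H$ of $H$: it is preserved by $\S_H(t)$ but drifts under $\S_G(s)$, since $v_H$ is not known to be fixed by $\S_G$. The device that makes DeMarr applicable is to pass to the quotient $C(\T^N)/\R$ by constants, where the induced maps are still nonexpansive and preserve the compact convex set of classes of $\kappa$--Lipschitz functions for $\kappa$ large enough (uniformly in $t,s$ by the Lipschitz bounds on $h^t_H,h^s_G$). This yields $\S_H(t)u_0=u_0+c_H(t)$ and $\S_G(s)u_0=u_0+c_G(s)$, and one must then argue separately --- using $c_H=c_G=0$ and precisely the orbit boundedness you identified --- that the additive constants vanish. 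Your plan is missing this normalization and the accompanying extra step.

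There is a second, smaller gap in the upgrade from ``fixed point of the time--one maps'' to ``critical solution''. From $\S_H(1)u_0=u_0$ you get $u_0(x)-u_0(y)\leqslant h^n_H(y,x)$ only for integer $n$, hence $u_0(x)-u_0(y)\leqslant\inf_{n\in\N}h^n_H(y,x)$; but $\inf_{n\in\N}h^n_H\geqslant\inf_{t>0}h^t_H=S_H$, so Lemma \ref{lemma 4} gives you an inequality in the wrong direction and does not show $u_0\in\sol\sol_H$. The clean repair is to apply DeMarr to the entire commuting family $\{\S_H(t)\}_{t>0}\cup\{\S_G(s)\}_{s>0}$ at once (the theorem handles arbitrary commuting families), so that the common fixed point satisfies $\S_H(t)u_0=u_0$ and $\S_G(s)u_0=u_0$ for \emph{every} $t,s>0$, and Proposition \ref{prop critical sol}--(ii) applies directly. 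Your concluding observation --- that a simultaneous viscosity solution of $H=0$ and $G=0$ satisfies $H\big(x,Du_0(x)\big)=G\big(x,Du_0(x)\big)=0$ at every differentiability point --- is correct as stated.
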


\vspace{1ex} We now assume strict convexity of the Hamiltonians
and we exploit the differentiability properties of critical
subsolutions established in Section \ref{sez differentiability} to
prove the following

\begin{teorema}\label{teo same sol}
Let $H$ and $G$ be a pair of strictly convex Hamiltonians. If $H$
and $G$ commute, then $\sol_H=\sol_G$.
\end{teorema}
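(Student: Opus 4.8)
The plan is to prove the inclusion $\sol_H\subseteq\sol_G$; the reverse one follows by symmetry, since the commutation relation is symmetric in $H$ and $G$. So fix $u\in\sol_H$. By Proposition~\ref{prop critical sol}~(ii) it suffices to show that $\S_G(s)u=u$ for every $s>0$. The starting remark — a direct consequence of the commutation property, and also a particular case of Proposition~\ref{stabilite} — is that $\S_G(s)u\in\sol_H$ for every $s>0$: indeed $\S_H(t)\big(\S_G(s)u\big)=\S_G(s)\big(\S_H(t)u\big)=\S_G(s)u$ for all $t>0$, so $\S_G(s)u$ is a fixed point of $\big(\S_H(t)\big)_{t>0}$, hence a critical solution of $H$ by Proposition~\ref{prop critical sol}~(ii). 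In particular, $\S_G(s)u$ is a critical subsolution of $H$ for every $s\geqslant 0$.

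Next I invoke the differentiability results of Section~\ref{sez differentiability}. Let $\DD$ be the set \eqref{def D} associated with $H$, and for $x_0\in\DD$ write $p(x_0):=DS_{x_0}(x_0)$ for the common gradient at $x_0$ of all critical subsolutions of $H$ (Remark~\ref{oss D}). Fix a common critical solution $u_0\in\sol_H\cap\sol_G$, which exists by Proposition~\ref{prop common sol}. Being a critical subsolution of $H$, $u_0$ is differentiable at $x_0$ with $Du_0(x_0)=p(x_0)$; since $u_0\in\sol_H\cap\sol_G$, Proposition~\ref{prop common sol} also gives $G\big(x_0,Du_0(x_0)\big)=0$ at that differentiability point. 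Hence
\[
G\big(x_0,p(x_0)\big)=0\qquad\hbox{for every $x_0\in\DD$.}
\]
Likewise, for each $s>0$ the function $\S_G(s)u$ is a critical subsolution of $H$, so it is differentiable at each $x_0\in\DD$ with gradient $p(x_0)$.

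Now set $w(s,x):=\big(\S_G(s)u\big)(x)$, which is a locally Lipschitz viscosity solution of $\partial_s w+G(x,D_x w)=0$ on $(0,+\infty)\times\T^N$ (by the representation formula for the Cauchy problem and Proposition~\ref{prop S(t)}; recall $u$ is Lipschitz). Fix $x_0\in\DD$. By Lemma~\ref{lemma Maxime} applied to $G$, the map $s\mapsto w(s,x_0)$ is locally semiconcave on $(0,+\infty)$, hence differentiable at a.e.\ $s_0>0$; and at every $s_0>0$ the slice $w(s_0,\cdot)=\S_G(s_0)u$ is differentiable at $x_0$ because $x_0\in\DD$. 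Thus, for a.e.\ $s_0>0$, $w$ is differentiable in $s$ and in $x$ separately at $(s_0,x_0)$. Using enough joint semiconcavity of $w$ — Lemma~\ref{lemma Maxime} provides it in the $s$-variable with a modulus locally uniform in $x$, and the companion estimate in the $x$-variable follows by a reparametrization argument of the type used in the proof of Proposition~\ref{prop Fathi+} (or is among the auxiliary lemmas) — one concludes that the superdifferential $D^+w(s_0,x_0)$ is nonempty, while its projections onto the $s$-line and onto the $x$-space are the (singleton) superdifferentials of the two slices; hence $D^+w(s_0,x_0)$ is a singleton and $w$ is differentiable at $(s_0,x_0)$ in the full sense. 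At a differentiability point a locally Lipschitz viscosity solution satisfies its equation, so
\[
\frac{\dd}{\dd s}w(s_0,x_0)=-G\big(x_0,D_xw(s_0,x_0)\big)=-G\big(x_0,p(x_0)\big)=0 .
\]
Since this holds for a.e.\ $s_0>0$ and $s\mapsto w(s,x_0)$ is locally Lipschitz, $w(\cdot,x_0)$ is constant on $(0,+\infty)$; letting $s\to 0^+$ and using Proposition~\ref{prop S(t)}~(iv) we get $w(s,x_0)=u(x_0)$ for every $s>0$. Therefore $\S_G(s)u$ and $u$ agree on $\DD$, and since $\DD$ is a uniqueness set for the critical equation of $H$ (Proposition~\ref{prop D}), $\S_G(s)u=u$ on all of $\T^N$. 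Hence $u\in\sol_G$, which completes the proof.

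The only delicate point, and the sole place where the strict convexity really matters, is the passage from separate to joint differentiability of $(s,x)\mapsto\S_G(s)u(x)$ in the third paragraph: this is exactly what makes the semiconcavity estimates, and thus the differentiability theory of Section~\ref{sez differentiability}, indispensable. An alternative route avoiding joint semiconcavity is to read off the one-sided $s$-derivative of $\S_G(s)u(x_0)$ directly from the quantity $p_t$ appearing in the proof of Lemma~\ref{lemma Maxime}, identifying $p_t=-G\big(x_0,p(x_0)\big)$ by means of Proposition~\ref{prop Fathi+} (which forces $D_qL_G(\gamma(0),\dot\gamma(0))=p(x_0)$ at the relevant minimizer $\gamma$) together with the constancy of the energy along Lagrangian minimizers; either way the conclusion is the same. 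Everything else — the reduction to $\S_G(s)u=u$, the invariance $\S_G(s)u\in\sol_H$, and the use of $\DD$ as a uniqueness set — is soft.
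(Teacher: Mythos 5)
Your overall architecture coincides with the paper's (up to the symmetric choice of which inclusion to prove): reduce to showing that $\S_G(s)u=u$ on the set $\DD$, use the common solution $u_0$ of Proposition~\ref{prop common sol} to get $G\big(x_0,p(x_0)\big)=0$ on $\DD$, and exploit that $\DD$ is a uniqueness set. The first two paragraphs are correct. The gap is in the third paragraph, and it is not a minor technicality: it is precisely the point the whole two-case structure of the paper's proof (and Appendix~\ref{appendix technical lemmas}) is designed to handle. You assert that a ``companion semiconcavity estimate in the $x$-variable follows by a reparametrization argument of the type used in the proof of Proposition~\ref{prop Fathi+}''. It does not. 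The time-reparametrization trick of Lemma~\ref{lemma Maxime} works because rescaling a minimizer in $s$ only perturbs the velocity, and $L$ is $C^1$ in $q$ by strict convexity of $H$; the analogous spatial perturbation produces an error governed by the modulus of continuity of $L(\cdot,q)$ in $x$, which for a merely continuous Hamiltonian is only $\omega(|h|)$, not $o(|h|)$ --- this is exactly why Proposition~\ref{prop Fathi+} needs the delicate $\delta(h)=h/\omega(h)$ construction and yields only a one-sided superdifferential estimate, not semiconcavity in $x$. Consequently $w(s,\cdot)$ need not be semiconcave, $w$ need not be jointly semiconcave, and separate differentiability at $(s_0,x_0)$ cannot be upgraded to joint differentiability by the route you indicate. (Lemma~\ref{lemma semiconcave} requires \emph{joint} local semiconcavity; the paper invokes it only in the ``First Case'', where $H$ is locally Lipschitz in $x$.)

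The paper's general-case argument replaces your joint-differentiability step by two weaker but provable facts: Lemma~\ref{lemma t-semiconcave} shows that $t$-equi-semiconcavity plus existence of the separate partial derivatives forces $\big(\partial_t w,D_xw\big)\in\partial_c w$, which combined with the Clarke-gradient characterization of convex subsolutions yields $\partial_t w(t,y)+H\big(y,D_xw(t,y)\big)\leqslant 0$ a.e.; the reverse inequality is obtained by contradiction via Lemma~\ref{lemma pain} (a sup-convolution argument), using that $\S_H(t_0)u$ is differentiable at $y$ with $H\big(y,D(\S_H(t_0)u)(y)\big)=0$. Your proposed ``alternative route'' via $p_t$ in Lemma~\ref{lemma Maxime} has the same defect: identifying $p_t$ with $-G\big(x_0,p(x_0)\big)$ requires conservation of energy along Lagrangian minimizers, which is unavailable without an Euler--Lagrange flow, i.e.\ without regularity of $G$ in $x$. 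So as written the proof is complete only under the additional hypothesis that $G$ (resp.\ $H$) is locally Lipschitz in $x$; in the purely continuous setting the third paragraph must be replaced by the Clarke-gradient and sup-convolution arguments of the appendix.
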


\begin{dimo}
It is enough to show that $\sol_G\subseteq\sol_H$, since the
opposite inclusion follows by interchanging the roles of $H$ and
$G$.

Take $u\in\sol_G$. To prove that $u\in\sol_H$, it suffices to
show, in view of Proposition \ref{prop critical sol}--{\em (ii)},
that
\[
\S_H(t)u=u\quad\hbox{on $\T^N$\qquad for every $t>0$.}
\]
Since $\S_H(t)u\in\sol_G$, according to Proposition \ref{prop D}
it suffices to prove that
\begin{equation*}
\S_H(t)u=u\quad\hbox{on $\DD_G$\qquad for every $t>0$.}
\end{equation*}
Let $u_0\in\sol\sol_H\cap\sol\sol_G$, and pick a point
$y\in\DD_G$. By definition of $\DD_G$, the function $u_0$ is
differentiable at $y$. Moreover, see Remark \ref{oss D}, for every
$v\in\sol\sol_G$
\[
\hbox{$v$ is differentiable at $y$\qquad and \qquad
$Dv(y)=Du_0(y)$}.\smallskip
\]
Then the function
$w(t,x):=\big(\S_H(t)u\big)(x)$ is differentiable at $y$ for every
$t>0$ and
\begin{equation}\label{eq importante}
H\big(y,D_x w(t,y)\big)=H\big(y,Du_0(y)\big)=0\qquad\hbox{for every $t>0$}
\end{equation}
by Proposition \ref{prop common sol}. 

Now we use the fact that $w$ is a solution
of the evolutive equation
\[
{\partial_t w}+H(x,D_x w)=0\qquad\hbox{in
$(0,+\infty)\times\T^N$.}
\]
The underlining idea is very simple. To focus this point, we will first establish the result by adding a mild regularity assumption on $w$. Then we will deal with the general case, where some technicalities arise.\medskip\\
{\bf First Case: $w$ is locally semiconcave in $(0,+\infty)\times\ \T^N$.}\smallskip\\
\indent This condition is always fulfilled if, for instance, $H$ is locally Lipschitz continuous in $x$, see \cite{CanSon}. Since the map $t\mapsto w(t,y)$ is Lipschitz continuous, it is differentiable for a.e. $t>0$. In view of Lemma \ref{lemma semiconcave} and of \eqref{eq importante}, we infer 
\[
{\partial_t w}(t,y)={\partial_t w}(t,y)+H\big(y,D_x w(t,y)\big)=0\qquad\hbox{for a.e. $t>0$,}
\]
yielding that $w(\cdot,y)$ is constant in $\R_+$. Hence
\[
\big(\S_H(t)u\big)(y)=u(y)\qquad\hbox{for every $t>0$,}
\]
as it was to be shown.\medskip\\
{\bf The general case.}\smallskip\\ 
\indent We only need to prove that $w(\cdot,y)$ is constant, i.e. that 
\[
{\partial_t w}(t,y)=0\qquad\hbox{for a.e. $t>0$.}
\]
First we recall that, by convexity of the Hamiltonian, the fact that $w$ is a subsolution of the evolutive equation is  equivalent to requiring 
\[
p_t+H(x,p_x)\leq 0\qquad\hbox{for every $(p_t,p_x)\in\partial_c w(t,x)$}
\]
for every $(t,x)\in (0,+\infty)\times\T^N$. 
Now the functions $\{ w(\cdot,x)\,:\,x\in\T^N\}$ are locally equi--semiconcave in $(0,+\infty)$, see Lemma \ref{lemma Maxime}. Moreover $w$ has partial derivatives at $(t,y)$ for a.e. $t>0$, so in view of Lemma \ref{lemma t-semiconcave} we get
\begin{equation}\label{ineq1 pain}
{\partial_t w}(t,y)={\partial_t w}(t,y)+H\big(y,D_x w(t,y)\big)\leq 0\qquad\hbox{for a.e. $t>0$.}
\end{equation}
Let us prove the opposite inequality, i.e. 
\[
\partial_t w(t,y)\geq 0\quad\hbox{at any differentiability point $t>0$ of the function $w(\cdot,y)$.} 
\]
In fact, if this were not the case, there would exist $t_0>0$ such that 
$w(\cdot,y)$ is differentiable at $t_0$ and 
\[
 \partial_t w(t_0,y)<-\eps\qquad\hbox{for some $\eps>0$.}
\]
Since $w$ is locally semiconcave in $t$, uniformly with respect to $x$, we infer that there exist $r>0$ such that
\begin{equation}
 \partial_t w(t,x)<-\eps\qquad\hbox{for a.e. $(t,x)\in B_{r}(t_0)\times B_{r}(y)$.}
\end{equation}
This follows from \cite[Theorem 3.3.3]{CaSi00}, which implies here the continuity of $\partial_t w$ with respect to $(t,x)$ on its domain of definition (via an argument analogous to the one used in the proof of Lemma \ref{lemma semiconcave}).

By Lemma \ref{lemma pain}, we infer that
\[
 H\Big(x,D\big( \S_H(t_0)u\big)\Big)\geq \eps\qquad\hbox{in $B_r(y)$}
\]
in the viscosity sense. 
On the other hand, $\S_H(t_0)u\in\sol_G$, it is hence differentiable at $y$ and 
\[
 H\Big(y,D\big(\S_H(t_0)u\big)(y)\Big)=0,
\]
yielding a contradiction. 
\end{dimo}

\medskip
Theorem \ref{teo same sol} has very strong consequences from the
weak KAM theoretic viewpoint. Indeed, we have

\begin{teorema}\label{aubryeq}
Let $H$ and $G$ be a pair of commuting, strictly convex
Hamiltonians. Then
\begin{itemize}
    \item[{\em (i)}]\quad $h_H= h_G$ on $\T^N\times\T^N$;\smallskip
    \item[{\em (ii)}]\quad $\A_H=\A_G$;\smallskip
    \item[{\em (iii)}]\quad $S_H(x,y)=S_G(x,y)$ if either $x$ or
    $y$ belong to $\A_H=\A_G$.
\end{itemize}
\end{teorema}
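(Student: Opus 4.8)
The plan is to derive Theorem~\ref{aubryeq} as a direct consequence of the equality $\sol_H=\sol_G$ established in Theorem~\ref{teo same sol}, using the characterizations of the Peierls barrier, the Aubry set and the critical semidistance recorded in Section~\ref{sez weak KAM}. The key observation is that all three objects in the statement can be recovered from the cone of critical subsolutions (or solutions) alone, so once we know $H$ and $G$ have the same critical solutions, it suffices to check they also have the same critical \emph{subsolutions}, and then read off (i)--(iii).

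\textbf{Step 1: $\sol\sol_H=\sol\sol_G$.} First I would upgrade the equality of solutions to equality of subsolutions. Given $u\in\sol\sol_G$, Proposition~\ref{prop critical sol}--(i) says $t\mapsto \S_G(t)u$ is nondecreasing, so $v:=\sup_{t>0}\S_G(t)u=\lim_{t\to+\infty}\S_G(t)u$ is a critical solution of $G=0$ (a fixed point of $\S_G$; finiteness follows from equi-Lipschitzness as in the proof of Theorem~\ref{teo A}), and $v\geqslant u$. By Theorem~\ref{teo same sol}, $v\in\sol_G=\sol_H$, hence $v\in\sol\sol_H$. Next, $u\leqslant v$ and $u-v$ being a difference of critical $G$-subsolution and $G$-solution... here I would instead argue more directly: since $v\in\sol\sol_H$ and $v\geqslant u$ with $u(y)=v(y)$ failing in general, I need another route. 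The clean argument: fix any $u_0\in\sol_H\cap\sol_G$ (Proposition~\ref{prop common sol}); then for $u\in\sol\sol_G$ the functions $\min\{u+k,u_0\}$ or rather the value $\inf_t\S_G(t)(\text{something})$\dots Actually the simplest path is: $u\in\sol\sol_G \iff u(x)-u(y)\leqslant S_G(y,x)$ for all $x,y$ (Proposition~\ref{prop S}--(i)), so it is enough to show $S_H=S_G$ on all of $\T^N\times\T^N$, and symmetrically this would give $\sol\sol_H=\sol\sol_G$. But $S_H=S_G$ everywhere is strictly stronger than (iii), so I do not expect it to hold; I must be more careful.

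\textbf{Revised Step 1--3.} The right order is: (i) first. By Proposition~\ref{prop h}--(v), $h_H(y,\cdot)\in\sol_H$ and $h_G(y,\cdot)\in\sol_G$ for every fixed $y$; by Theorem~\ref{teo same sol} these cones coincide. By Proposition~\ref{prop h}--(ii), $h_H(y,\cdot)$ is the maximal critical $H$-subsolution vanishing at $y$ and $\leqslant$ all of them shifted; combined with Proposition~\ref{prop h}--(iv) and the fact that $h_H(y,y)=0=h_G(y,y)$ on the common Aubry set, I would pin down $h_H=h_G$ by a maximality/minimality comparison: for $y\in\A_H$, $h_H(y,\cdot)=S_H(y,\cdot)$ is a critical solution (vanishing at $y$), hence a critical $G$-solution, hence $\geqslant S_G(y,\cdot)$ by Proposition~\ref{prop S}--(iii); the reverse inequality is symmetric provided $y\in\A_G$ too. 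So the crux is really (ii): $\A_H=\A_G$. I would prove $\A_G\subseteq\A_H$ as follows. Let $y\in\A_G$; then $S_G(y,\cdot)\in\sol_G=\sol_H$, so $S_G(y,\cdot)$ is a critical $H$-solution; it vanishes at $y$, so by maximality (Proposition~\ref{prop S}--(iii) for $H$) $S_G(y,\cdot)\leqslant S_H(y,\cdot)$. Conversely $S_H(y,\cdot)\in\sol\sol_H=\,$? — I need $S_H(y,\cdot)$ to be a critical $H$-\emph{solution}, i.e.\ $y\in\A_H$, which is what I want to prove; so instead I use: $S_H(y,\cdot)$ is a critical $H$-subsolution vanishing at $y$, and if I can show it equals $S_G(y,\cdot)$ I am done. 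Use Theorem~\ref{teo diff}: pick a static curve $\eta\in\K_G$ for $G$ through $y$; every critical $H$-subsolution is also a critical $G$-subsolution (once Step~1 is done) and is differentiable along $\eta$ with $Du=D_qL_G(\eta,\dot\eta)$, forcing all $H$-subsolutions to agree up to constants along $\eta$, in particular on $\DD_G$, which by Proposition~\ref{prop D} is a uniqueness set — this yields $S_H(y,\cdot)=S_G(y,\cdot)$ and hence $y\in\A_H$. The symmetric argument gives equality, proving (ii); then (iii) follows from Proposition~\ref{prop h}--(iv) applied to both Hamiltonians, and (i) follows from (ii), (iii) and Theorem~\ref{teo A} via $h(y,y)=0\iff y\in\A$ together with Proposition~\ref{prop h}--(iv) off the diagonal extended by the triangle inequality in Proposition~\ref{prop h}--(iii).

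\textbf{Main obstacle.} The delicate point is circularity in Step~1 / (ii): showing $\sol\sol_H\subseteq\sol\sol_G$ seems to require knowing more than $\sol_H=\sol_G$, since a subsolution need not be dominated by a solution vanishing at a prescribed point. The way around it, which I expect to be the technical heart, is to bootstrap through the differentiability theory of Section~\ref{sez differentiability}: once we know $\sol_H=\sol_G$, the common solutions have the same gradients on $\DD_H$ and on $\DD_G$ (Proposition~\ref{prop common sol} and Remark~\ref{oss D}), and since $\DD_G$ is a uniqueness set for the $G$-critical equation while every $H$-subsolution, restricted appropriately, propagates along static curves, one forces subsolutions to match. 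I would present this carefully, perhaps isolating the step ``$\sol_H=\sol_G\Rightarrow \sol\sol_H=\sol\sol_G$'' as the key lemma, after which (i), (ii), (iii) are short deductions from Propositions~\ref{prop S}, \ref{prop h} and Theorem~\ref{teo A}.
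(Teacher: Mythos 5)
Your proposal does not close. You correctly sense the circularity yourself: the deduction of (ii) requires knowing that $H$-subsolutions are $G$-subsolutions (or that $S_H(y,\cdot)$ is a critical $G$-\emph{solution}, so that the uniqueness set $\DD_G$ applies), and you never actually produce the bootstrap you announce as ``the technical heart.'' Note also that Theorem \ref{teo diff} only forces two critical $G$-subsolutions to differ by a constant \emph{along a single static curve} $\eta$; it does not make them agree on all of $\DD_G$, and Proposition \ref{prop D} is a uniqueness statement for \emph{solutions}, not subsolutions. A second, independent gap is your claim that (i) follows from (ii) and (iii): knowing $\A_H=\A_G$ and $S_H=S_G$ when one argument lies in the Aubry set only gives, via Proposition \ref{prop h}--(iii) and (iv), the inequality $h_H(x,y)\leqslant \inf_{z\in\A}\bigl(S(x,z)+S(z,y)\bigr)$; recovering $h$ off the Aubry set requires the reverse representation formula $h(x,y)=\min_{z\in\A}\bigl(h(x,z)+h(z,y)\bigr)$, which is neither stated in the paper nor proved by you. (Minor: in Proposition \ref{prop S}--(iii) the semidistance is the \emph{supremum} of subsolutions vanishing at $y$, so your inequality ``hence $\geqslant S_G(y,\cdot)$'' is reversed.)

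The paper proceeds in the opposite order and avoids all of this: it proves (i) \emph{first}, directly from the commutation of the semigroups, and then (ii), (iii) drop out via Theorem \ref{teo A} and Proposition \ref{prop h}--(iv). The mechanism is: $h_H(y,\cdot)$ and $h_G(y,\cdot)$ lie in $\sol_H=\sol_G$ (Theorem \ref{teo same sol}), hence are fixed points of \emph{both} semigroups; writing $h^t_H(y,\cdot)=\S_H(t)u$ with $u$ the datum equal to $0$ at $y$ and $+\infty$ elsewhere, the commutation relation and the uniform convergence $h^t\to h$ (last part of Proposition \ref{prop h}) give $h_H(y,\cdot)=\lim_{t\to+\infty}\S_H(t)\,h^s_G(y,\cdot)$ for every $s>0$, while $h_G(y,\cdot)=\S_H(t)\,h_G(y,\cdot)$; the weak contraction property of $\S_H(t)$ then yields $\|h_H(y,\cdot)-h_G(y,\cdot)\|_\infty\leqslant\|h^s_G(y,\cdot)-h_G(y,\cdot)\|_\infty\to 0$ as $s\to+\infty$. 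This single estimate is the idea missing from your proposal; with it, no statement about subsolutions and no uniqueness-set argument is needed.
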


\begin{dimo}\quad
{\em (i)} Let us arbitrarily fix $y\in\T^N$. By Proposition
\ref{prop h}, $h_H(y,\cdot)$ and $h_G(y,\cdot)$ both belong to
$\sol_H=\sol_G$, so
\[
\S_G(s)\,h_H(y,\cdot)=h_H(y,\cdot),\qquad\qquad \S_H(t)\,h_G(y,\cdot)=h_G(y,\cdot)
\]
for every $s,t>0$. Moreover
\[
h_H^t\ \underset{t\to +\infty}\ucv \ h_H\qquad\hbox{and}\qquad
h_G^s\ \underset{s\to +\infty}\ucv \ h_G\qquad\qquad\hbox{in
$\T^N\times \T^N$.}
\]
Let us denote by $u$ the function equal to $0$ at $y$ and
$+\infty$ elsewhere. For every $s>0$ we have
\begin{eqnarray*}
    &&\hspace{-5ex}h_H(y,\cdot)
    =
    \S_G(s)\,h_H(y,\cdot)
    =
    \lim_{t\to +\infty} \S_G(s)\,h^t_H(y,\cdot)\\
    &&=
    \lim_{t\to +\infty} \S_G(s)\,\S_H(t)u
    =
    \lim_{t\to +\infty} \S_H(t)\,\S_G(s)u
    =
    \lim_{t\to +\infty} \S_H(t)\,h^s_G(y,\cdot).
\end{eqnarray*}
We derive
\begin{eqnarray*}
&&\|h_H(y,\cdot)-h_G(y,\cdot)\|_\infty
    =
    \lim_{t\to +\infty}
    \left\|\S_H(t)\big(\,h^s_G(y,\cdot)\big)-\S_H(t)\big(h_G(y,\cdot)\big)\right\|_\infty\qquad\qquad\qquad\qquad\\
   &&\qquad\qquad\qquad\qquad\qquad\qquad\qquad\qquad\qquad\qquad\qquad\quad  \leqslant
    \|h^s_G(y,\cdot)-h_G(y,\cdot)\|_\infty,
\end{eqnarray*}
and the assertion follows sending $s\to +\infty$.

Assertions {\em (ii)} and {\em (iii)} are a direct consequence of
{\em (i)} in view of Theorem \ref{teo A} and of Proposition
\ref{prop h}, respectively.
\end{dimo}

\vspace{3ex} Next, we show that $H$ and $G$ admit a common strict
subsolution.

\begin{teorema}\label{teo common strict}
Let $H$ and $G$ be a pair of commuting, strictly convex
Hamiltonians, and let $\A$ denote $\A_H=\A_G$. Then there exists
$v\in\sol\sol_H\cap\sol\sol_G$ which is $C^\infty$ and strict in
$\T^N\setminus\A$  both for $H$ and for $G$, i.e.
\begin{equation}\label{eq common strict}
H\big(x,Dv(x)\big)<0\quad\hbox{and}\quad G\big(x,Dv(x)\big)<0\qquad\hbox{for every
$x\in\T^N\setminus\A$.}
\end{equation}
If   $H$ and  $G$ are locally Lipschitz continuous in
$\T^N\times\R^N$, then $v$ can be additionally chosen in
$C^1(\T^N)$.

Finally, if $H$ and $G$ are Tonelli, then $v$ can be chosen in
$C^{1,1}(\T^N)$.
\end{teorema}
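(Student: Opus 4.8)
The plan is to first produce a common subsolution that is strict outside the Aubry set $\A$, without any regularity, and then to regularize. For the first step, I would use the machinery already at hand: by Theorem \ref{teo same sol} and Theorem \ref{aubryeq} the two Hamiltonians share the same critical solutions, Aubry set and Peierls barrier. The natural candidate for a common critical subsolution that is strict off $\A$ comes from the known one-Hamiltonian construction (Fathi--Siconolfi \cite{FaSic03}), but applied to the ``max'' Hamiltonian. Concretely, set $F(x,p):=\max\{H(x,p),G(x,p)\}$. By Proposition \ref{prop sup commutation}, $F$ commutes with both $H$ and $G$; moreover $F$ is continuous and strictly convex (the max of two strictly convex functions is strictly convex), and clearly $c_F=0$ since any common critical subsolution of $H$ and $G$ (which exists by Proposition \ref{prop common sol}) is a critical subsolution of $F$, and conversely a critical subsolution of $F$ is one for both $H$ and $G$. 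Hence $\sol\sol_F=\sol\sol_H\cap\sol\sol_G$ and $\A_F=\A_H=\A_G=\A$. Now apply to $F$ the classical existence result for a critical subsolution strict outside the Aubry set: since we are in the purely continuous case, the cleanest route is to invoke the results for $F$ obtained by the Fathi--Siconolfi type arguments valid under (H1), (H2)$'$, together with the differentiability properties of Section \ref{sez differentiability} — but in fact the simplest self-contained argument is a Baire-category / convex-combination argument: the set $\sol\sol_F$ is convex, and for each $x\in\T^N\setminus\A$ there is a subsolution strict at $x$ (using Proposition \ref{prop FatSic}: off the Aubry set $Z_0(x)$ has nonempty interior, so one can perturb any fixed subsolution locally to be strict near $x$). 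A countable convex combination over a dense sequence $(x_n)$ in $\T^N\setminus\A$, with weights chosen so the sum converges in the Lipschitz norm, yields a $u\in\sol\sol_F$ which is strict on a dense subset of $\T^N\setminus\A$, and then an open-ness argument (strictness is an open condition in the viscosity sense on the complement of $\A$) upgrades ``dense'' to ``all of $\T^N\setminus\A$''. This $u$ is the desired common subsolution, strict outside $\A$ both for $H$ and for $G$.

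For the $C^\infty$ (resp. $C^1$, resp. $C^{1,1}$) regularization, I would mollify. Take the subsolution $u$ just constructed, which is Lipschitz and satisfies $H(x,Du(x))<0$, $G(x,Du(x))<0$ a.e. on the open set $\T^N\setminus\A$, with a uniform strict gap on compact subsets of $\T^N\setminus\A$. Standard mollification $u_\eps=u*\rho_\eps$ gives a smooth function with $Du_\eps\to Du$ a.e. and in $L^\infty$ on compacts away from $\A$; by the strict inequality and the continuity of $H$ and $G$, for $\eps$ small $u_\eps$ is still a strict subsolution of both on any prescribed compact subset of $\T^N\setminus\A$. The difficulty is to patch these local mollifications into a single global function that is a subsolution everywhere, including across $\A$. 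Here one uses a partition of unity argument on $\T^N\setminus\A$ combined with the fact that $u$ itself is already a subsolution on all of $\T^N$: one glues $u_\eps$-type mollifications (with $\eps$ depending on the distance to $\A$) to $u$ via a careful interpolation, exploiting convexity of $H,G$ in $p$ to control the gradient of convex combinations. This is exactly the technique of \cite{FaSic03}; since $F$ is only continuous we only get $C^\infty$ strict subsolution off $\A$, which on $\A$ matches $u$ and hence is merely Lipschitz there — but being a subsolution everywhere and smooth and strict off $\A$ is what is claimed. When $H$ and $G$ are locally Lipschitz in $(x,p)$, the same construction can be carried out globally to produce a $C^1$ function: the point is that Lipschitz regularity of the Hamiltonian gives, via the results recalled before Theorem \ref{prop 1}, that critical subsolutions are strictly differentiable on $\A$, so the glued function is $C^1$ across $\A$ too.

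For the Tonelli case, I would argue as follows. Apply the classical Bernard regularization theorem \cite{BeSy} (or the Fathi--Siconolfi $C^{1,1}$ result) to the Hamiltonian $F=\max\{H,G\}$: there exists a $C^{1,1}$ subsolution of $F(x,Dv)=0$ which is strict outside $\A_F=\A$. The one subtlety is that $F$ itself is generally \emph{not} Tonelli (the max of two smooth functions need not be $C^2$, and the Hessian condition \eqref{eq Tonelli} may fail at the crossing locus $\{H=G\}$), so Bernard's theorem cannot be applied to $F$ verbatim. To get around this I would instead apply Bernard's theorem to $H$ alone to obtain a $C^{1,1}$ subsolution $v$ of $H(x,Dv)=0$ strict outside $\A_H$; the content to be checked is that this same $v$ is automatically a strict subsolution of $G$ outside $\A$. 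This is where the commutation hypothesis enters in an essential way: one uses the equivalence with the null Poisson bracket $\{G,H\}=0$ (Appendix \ref{appendix commutation}) to show that along the calibrated curves/at the relevant gradients the values $H(x,Dv(x))$ and $G(x,Dv(x))$ are linked — more precisely, since $v$ is calibrated for $H$, its graph is Lipschitz Lagrangian and invariant under the $H$-flow, and the commutation of the flows forces $G$ to be constant ($=c_G=0$) on the Aubry part and $\le 0$ elsewhere by the subsolution property transported by $\S_G(t)$. The main obstacle, and the delicate point of the whole argument, is precisely this transfer of strictness from $H$ to $G$ under only the $C^{1,1}$ regularity of $v$: one must show $G(x,Dv(x))\le 0$ everywhere and $<0$ off $\A$, which I would do by writing $v=\S_G(t)v$ fails a priori, so instead one shows $\S_G(t)v\ge v$ with equality characterizing $\A$, using that $\S_G$ preserves $H$-subsolutions (Proposition \ref{stabilite} with the roles of $H$ and $G$ interchanged, legitimate since $F$ commutes with both hence $H$ and $G$ commute) together with Theorem \ref{aubryeq}(iii). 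Once $G(x,Dv(x))\le 0$ is established with the strict gap off $\A$ inherited from that of $H(x,Dv(x))<0$ via the commutation-forced relation between the two, the proof is complete.
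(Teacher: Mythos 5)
Your treatment of the first two assertions follows the paper's route exactly: introduce $F=\max\{H,G\}$, observe that it satisfies (H1), (H2)$'$, (H3), that $c_F=0$, that $\sol\sol_F=\sol\sol_H\cap\sol\sol_G$, and that $\A_F=\A$ by Proposition \ref{prop sup commutation} and Theorem \ref{aubryeq}, and then produce a subsolution of $F$, smooth and strict off $\A_F$. At that point the paper simply invokes Theorems 6.2 and 8.1 of \cite{FaSic03} applied to $F$; your attempt to re-derive those results by a countable convex combination followed by mollification and patching is unnecessary and, as written, too sketchy to stand on its own (the gluing of the local mollifications across $\A$ so that the result remains a global subsolution is precisely the delicate content of \cite{FaSic03}, not a routine partition-of-unity step). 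Still, the strategy for these two cases is correct.

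The genuine gap is in the Tonelli case. You propose to apply Bernard's theorem to $H$ alone, obtaining a $C^{1,1}$ subsolution $v$ of $H$ strict outside $\A_H$, and then to show that this $v$ is automatically a strict subsolution of $G$. This cannot work: Bernard's construction gives no control over \emph{which} element of $\sol\sol_H$ you obtain, and in general $\sol\sol_H\cap\sol\sol_G$ is a proper subset of $\sol\sol_H$ (already for $x$--independent commuting Hamiltonians the critical sublevels $Z^H_0$ and $Z^G_0$ differ, so a Lipschitz function whose gradient lies in $Z^H_0\setminus Z^G_0$ on a set of positive measure is an $H$--subsolution but not a $G$--subsolution). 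Your proposed repair --- ``show $\S_G(t)v\geqslant v$'' --- is, by Proposition \ref{prop critical sol}(i), exactly the assertion that $v\in\sol\sol_G$, i.e.\ it assumes what has to be proved. The paper's argument is structurally different: it starts from a \emph{common} strict subsolution $v$ (already produced in the first part of the theorem) and regularizes it by the Lasry--Lions procedure $v_0=\S_H(t)\big(\S_H^+(s)\,v\big)$. The two key points are that the semigroups of $H$ preserve membership in $\sol\sol_G$ (Remark \ref{oss subsol} and Proposition \ref{stabilite}), and that they preserve strictness outside $\A$; this last fact is the content of Lemma \ref{lemma common strict}, whose proof uses the equivalence of commutation with $\{G,H\}=0$ (Proposition \ref{justification}) and the resulting constancy of $G$ along the Hamiltonian flow of $H$, together with a Clarke-gradient argument to pass from differentiability points to all points. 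The $C^{1,1}$ regularity of $v_0$ for small $s,t$ is then Bernard's result \cite{Be}. Without an argument of this type, transporting strictness for $G$ through the regularization, your Tonelli step does not go through.
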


\noindent{\bf Proof.} Let us set
\[
F(x,p):=\max\{H(x,p),\,G(x,p)\}\qquad\hbox{for every
$(x,p)\in\T^N\times\R^N$.}
\]
This new Hamiltonian still satisfies (H1), (H2)$'$ and (H3).
Moreover any $u\in\sol_H=\sol_G$ solves the equation
\[
F(x,Du)=0\qquad\hbox{in $\T^N$}
\]
in the viscosity sense, as it is easily seen by definition of $F$.
This yields $c_F=0$ and, according to Proposition \ref{prop sup
commutation} and Theorem \ref{aubryeq}, $\A_F=\A$.

We now invoke the results proved \cite{FaSic03}: by Theorem 6.2,
there exists a critical subsolution $v$ for $F$ which is strict
and smooth in $\T^N\setminus\A$. If $H$ and $G$ are locally
Lipschitz, the same holds for $F$, so $v$ can be additionally
chosen of class $C^1$ on the whole $\T^N$ in view of Theorem 8.1.
The inequalities \eqref{eq common strict} follow since $F\geqslant
H,\,G$.

If now $H$ and $G$ are Tonelli Hamiltonians,  the commutation
property is equivalent to the fact that the Poisson bracket
$\{H,G\}=0$ everywhere, as explained in Appendix \ref{appendix
commutation}. Starting with a $C^1$ (or in fact any) common strict
subsolution $v$, it is possible to realize, as in \cite{Be}, a
Lasry--Lions regularization $v_0$ of $v$, using alternatively the
positive and negative semigroups of $H$. More precisely,
$$v_0=\S_H(t)\left(\S^+_H(s)\, v \right)$$
for $s$ and $t$ suitably chosen, where the positive Lax--Oleinik
semigroup is defined as follows:
\[
\S^+_H(s)v=-\big(\S_{\check H}(-v)\big).
\]
%
Note that $v_0$ is still a subsolution both for $G$ and for $H$, see Remark \ref{oss subsol} and Proposition \ref{stabilite}. The fact that it is strict in
$\mathbb{T}^N\setminus \A$ is proved in the next lemma. The fact
that $v_0$ is $C^{1,1}$ for $t$ and $s$ small enough is proved in
\cite{Be}. \qed
\medskip

We recall that  a Lipschitz subsolution $v\in \sol\sol_G$ is said
to be {\em strict} in an open set $U\subset \T^N$ if for any
$x_0\in U$ there is a neighborhood $V$ of $x_0$ and a constant
$\varepsilon>0$ such that $G\big(x,D v(x)\big)< -\varepsilon$ almost
everywhere in $V$.

Note that if $v$ is $C^1$, it is strict on $U$ if and only if $G\big(x,D v(x)\big)<0$ for any $x\in U$.

\begin{lemma}\label{lemma common strict}
Let $G$ and $H$ be two commuting Tonelli Hamiltonians. Assume $v$
is a critical subsolution for $G$ which is strict outside $\A$.
Then, for all $t>0$, both $\S_G(t)\,v$ and $\S_H(t)\, v$ are
critical subsolutions for $G$, strict outside $\A$.
\end{lemma}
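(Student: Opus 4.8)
The membership $\S_G(t)v,\,\S_H(t)v\in\sol\sol_G$ is the easy half: since $v\in\sol\sol_G$ and $c_G=0$, the map $\tau\mapsto\S_G(\tau)v$ is nondecreasing by Proposition \ref{prop critical sol}, so $\S_G(t)v\geqslant v$ and $\tau\mapsto\S_G(\tau)\big(\S_G(t)v\big)=\S_G(\tau+t)v$ is nondecreasing, i.e. $\S_G(t)v\in\sol\sol_G$; while $\S_H(t)v\in\sol\sol_G$ is precisely Proposition \ref{stabilite}. The real point is strictness outside $\A$, and I would carry out the argument for $w:=\S_H(t)v$; the one for $\S_G(t)v$ is identical, and it does not even use the commutation hypothesis beyond $\A_H=\A_G$. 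The key is that $H$ and $G$, being Tonelli and commuting, satisfy $\{H,G\}\equiv0$ on $\T^N\times\R^N$ (Appendix \ref{appendix commutation}); equivalently, $G$ is a first integral of the Hamiltonian flow of $H$.

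First I would reduce to a pointwise statement. As $H$ is Tonelli, $w=\S_H(t)v$ is locally semiconcave in $x$, hence $\partial_c w=D^+w$ everywhere and $Dw$ exists a.e. It suffices to prove that $G\big(x,Dw(x)\big)<0$ at every differentiability point $x\in\T^N\setminus\A$. Indeed, at any differentiability point $Dw(x)$ is the initial momentum $D_qL_H\big(\gamma(0),\dot\gamma(0)\big)$ of any $L_H$--minimizer $\gamma:[-t,0]\to\T^N$ defining $w$ at $x$ (by Proposition \ref{prop Fathi+}(i), using $D^+w(x)=\{Dw(x)\}$); and then a reachable gradient of $w$ at a point $x_*\notin\A$ is again such a minimizer momentum, thanks to the equi--Lipschitz bound of Proposition \ref{prop Lagrangian minimizer}, the $C^1$--convergence of Tonelli minimizers and the continuity of $D_qL_H$. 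Hence the inequality persists along $\partial^* w$; since $G(x,\cdot)$ is convex it then holds on $\partial_c w(x)$, and upper semicontinuity of $x\mapsto\max\{G(x,p):p\in\partial_c w(x)\}$ together with compactness produces, for each fixed $x_0\notin\A$, a ball $V$ with $\overline V\cap\A=\emptyset$ and an $\eps>0$ with $G\big(x,Dw(x)\big)\leqslant-\eps$ a.e.\ on $V$ — which is exactly strictness of $w$ outside $\A$.

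For the pointwise statement, fix a differentiability point $x\in\T^N\setminus\A$ and an $L_H$--minimizer $\gamma:[-t,0]\to\T^N$ with $\gamma(0)=x$ and $w(x)=v\big(\gamma(-t)\big)+\int_{-t}^0 L_H(\gamma,\dot\gamma)\,\dd s$; as recalled above $Dw(x)=D_qL_H\big(\gamma(0),\dot\gamma(0)\big)$ (Tonelli minimizers are $C^1$, so $\Sigma_\gamma=\emptyset$). Now $s\mapsto\big(\gamma(s),p(s)\big)$, $p(s):=D_qL_H\big(\gamma(s),\dot\gamma(s)\big)$, is an orbit of the Hamiltonian flow of $H$, and $\{H,G\}\equiv0$ forces $G$ to be constant along it, whence
\[
G\big(x,Dw(x)\big)=G\big(\gamma(0),p(0)\big)=G\big(\gamma(-t),p(-t)\big).
\]
Write $z:=\gamma(-t)$; by Proposition \ref{prop Fathi+}(ii), $p(-t)\in D^-v(z)\subseteq\partial_c v(z)$. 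If $z\notin\A$, strictness of $v$ for $G$ gives a neighborhood of $z$ and $\delta>0$ with $G\big(\cdot,Dv(\cdot)\big)<-\delta$ a.e.\ there; passing to the limit along differentiability points yields $\partial^* v(z)\subseteq\{p:G(z,p)\leqslant-\delta\}$, and as this set is closed and convex it contains $\partial_c v(z)\ni p(-t)$, so $G\big(x,Dw(x)\big)=G\big(z,p(-t)\big)\leqslant-\delta<0$. If instead $z\in\A$, I would derive a contradiction: a $C^2$ static curve of $G$ through $z$ (Theorem \ref{DT1}) together with Theorem \ref{teo diff} shows that $v$ is differentiable at $z$, so $p(-t)=Dv(z)$ and $\big(z,Dv(z)\big)$ lies in the Aubry set of $G$ in the cotangent bundle; by Theorem \ref{teo same sol} (and the fact that all Tonelli critical subsolutions are differentiable on the Aubry set with one and the same gradient) this set coincides with the cotangent Aubry set of $H$, which is invariant under the Hamiltonian flow of $H$. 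Hence the whole orbit $\big(\gamma(s),p(s)\big)$ stays in it and $x=\gamma(0)\in\A$, impossible. This yields $G\big(x,Dw(x)\big)<0$ and completes the proof.

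I expect the case $z\in\A$ to be the main obstacle: one has to exclude that a minimizer defining $\S_H(t)v$ (or $\S_G(t)v$) at a point outside $\A$ issues from the Aubry set, and the estimate at the free endpoint $z$ only gives $G\big(z,p(-t)\big)\leqslant0$ there; it is precisely the flow--invariance of the cotangent Aubry set, together with its coincidence for the two members of a commuting pair (which here we get from Theorem \ref{teo same sol} rather than from \cite{BeSy}), that closes the gap. Everything else — conservation of energy along minimizers, local semiconcavity of $\S_H(t)v$, $C^1$ regularity and compactness of Tonelli minimizers — is classical and already invoked elsewhere in the paper.
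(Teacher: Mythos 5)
Your proof is correct and follows essentially the same route as the paper's: conservation of $G$ along the Hamiltonian orbit of $H$ underlying the minimizer (null Poisson bracket), strictness of $v$ applied at the initial momentum $p(-t)\in D^-v(\gamma(-t))$, exclusion of the Aubry set via flow--invariance of the lifted/cotangent Aubry set, and the upgrade from differentiability points to reachable gradients, then to the Clarke gradient by convexity and upper semicontinuity. You are in fact slightly more careful than the paper in two spots it leaves implicit — deducing the pointwise inequality $G(z,p)\leqslant-\delta$ on $D^-v(z)$ from the a.e.\ definition of strictness, and justifying via the common differential of critical subsolutions why a minimizer emanating from $\A$ would have its whole lifted orbit in the (common, invariant) cotangent Aubry set.
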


\noindent{\bf Proof.} We will only prove the result for $\S_H(t)\,
v$. The result for $\S_G(t)\, v$ is then a consequence for $G=H$.
We already know by Proposition \ref{stabilite} that $\S_H(t)\, v$
is a critical subsolution of $G$. It is only left to prove the
strict part. This is done in two steps: in a first one, we prove a point wise strictness at differentiability points of $\S_H(t)\, v$. In a second one, we extend this result using Clarke's gradient to any point before concluding.

Let $x\in \T^N\setminus \A$. Consider a curve $\gamma$ verifying
that $\gamma(0)=x$ and
$$\big(\S_H(t)\, v\big)(x)=v\big(\gamma(-t)\big)+\int_{-t}^0 L_H\big(\gamma(s),\dot\gamma (s)\big)\dd s.$$
The curve $(\gamma, \dot \gamma)$ is then a piece of trajectory of
the Euler--Lagrange flow of $H$. It is also known (see
\cite{Fathi} or Proposition \ref{prop Fathi+}) that $D_q L_H
\big(\gamma(-t),\dot\gamma(-t)\big)\in D^- v\big(\gamma(-t)\big)$ and
$$ D_q L_H
\big(\gamma(s),\dot\gamma (s)\big)\in D^+\big(\S_H(t+s)\, v\big)\big(\gamma(s)\big) \qquad\hbox{for every $s\in (-t,0]$.}$$
 Moreover, the
curve $\gamma$ does not intersect $\A$. Indeed, if this were not
the case, the curve $(\gamma, \dot \gamma)$ would be included in
the lifted Aubry set, which is invariant by the Euler--Lagrange
flow of $H$, see \cite{Fathi}, while $x=\gamma(0)\notin \A$. We
therefore deduce that $\gamma(-t)\notin \A$ and, since $v$ is
strict,
$$G\Big(\gamma(-t),D_q L_H \big(\gamma(-t),\dot\gamma(-t)\big)\Big)<0.$$
Now $G$ and $H$ commute; since they are Tonelli, this means their
Poisson bracket is null, see Proposition \ref{justification}.
Otherwise stated, $G$ is constant on the integral curves of the
Hamiltonian flow of $H$, in particular on $s\mapsto\Big(\gamma(s) ,
D_q L_H \big(\gamma(s),\dot\gamma (s)\big)\Big)$. Thus
\[
G\Big(x,D_q L_H \big(x,\dot\gamma (0)\big)\Big)<0,
\]
from which we infer that $G\left(x, D\big(\S_H(t)\,
v\big)(x)\right)<0$ whenever $\S_H(t)\, v$ is differentiable at
$x$. But this is not sufficient to conclude since the function
$\S_H(t)\, v$ is  Lipschitz continuous in $\T^N$, hence
differentiable almost everywhere only. We will prove the
following:\medskip

\noindent{\bf Claim.} Let $x\notin \A$. Then
\[
G(x,p)<0\qquad\hbox{for every $p\in \partial^* \big( \S_H(t)\, v
\big) (x)$,}
\]
where $\partial^* \big( \S_H(t)\, v \big) (x)$ denotes the set
of {reachable gradients} of $\S_H(t)\, v$ at $x$. 
Note that since $\S_H(t)\, v$ is Lipschitz, this set is compact.

Let $p\in \partial^* \big( \S_H(t)\, v \big) (x)$ and consider $x_n\to x$  a sequence of differentiability points for
$\S_H(t)\, v$   such that $D \big(\S_H(t)\, v\big)(x_n)\to p$. For each $n$,
choose a curve $\gamma_n:[-t,0]\to \T^N$ (which is in fact unique)
such that
\[
\big(\S_H(t)\, v\big)(x_n)=v\big(\gamma_n(-t)\big)+\int^0_{-t}
L_H\big(\gamma_n(s),\dot\gamma_n (s)\big)\,\dd s.
\]
For each $n$, the curve $(\gamma_n,\dot \gamma_n)$ is the (only) trajectory of the Euler--Lagrange flow with initial condition verifying $D_q L_H
\big(x_n,\dot\gamma_n (0)\big)= D\big(\S_H(t)\, v\big)(x_n)$. By continuity of this flow,
they uniformly converge, along with their derivatives, to a curve
$\gamma$. By continuity, we obtain
\[
\big(\S_H(t)\, v\big)(x)=v\big(\gamma(-t)\big)+\int_{-t}^0
L_H\big(\gamma(s),\dot\gamma (s)\big)\dd s.
\]
Moreover, by passing to the limit in the equalities $D_q L_H
\big(x_n,\dot\gamma_n (0)\big)= D\big(\S_H(t)\, v\big)(x_n)$, we obtain
\[
D_q L_H \big(x,\dot\gamma (0)\big)=p.
\]
By arguing as above and by exploiting the fact that $x\notin \A$,
we obtain $G(x,p)<0$. Since $G$ is convex, we infer
\[
G(x,p)<0\qquad\hbox{for every $p\in\partial_c \big(\S_H(t)\, v\big)(x)$},
\]
where $\partial_c\big(\S_H(t)\, v\big)(x)$ denotes the {Clarke
differential} of $\S_H(t)\, v$ at $x$, defined as the convex hull
of $\partial^*\big(\S_H(t)\, v\big)(x)$. We now exploit the fact
that the Clarke differential is upper semi--continuous with
respect to the inclusion and point wise compact, see \cite{Cl}. Let $x_0\notin \A$ and
choose $\varepsilon>0$ in such a way that
\[
G(x_0,p)<-2\eps\qquad\hbox{for every $p\in
\partial_c\big(\S_H(t)\, v\big)(x_0)$.}
\]
Then there exists a neighborhood $V$ of $x_0$ such that
\[
G(x,p)<-{\eps}\qquad\hbox{for every $p\in
\partial\big(\S_H(t)\, v\big)(x)$ and $x\in V$.}
\]
In particular, \ $G\big(x,Dv(x)\big)<-\eps$ \ for almost every $x\in V$.
The proof is complete.\qed

\vspace{3ex}

\end{section}

\begin{appendix}
\begin{section}{}\label{appendix b}

The purpose of this Section is to give a self--contained proof of
Theorem \ref{DT1}. We prove two lemmas first. Recall that we are
assuming that the critical value $c$ is equal to 0.

\begin{lemma}\label{lemma utile}
Let $\gamma:[a,b]\to M$ such that
\begin{equation}\label{eq utile}
S\big(\gamma(b),\gamma(a)\big)+\int_a^b  L(\gamma,\dot\gamma)\,\dd s= 0.
\end{equation}
Then $\gamma$ is a static curve.
\end{lemma}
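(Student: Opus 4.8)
The plan is to squeeze everything out of the single global identity \eqref{eq utile} using only the elementary properties of the semidistance $S=S_0$: the triangle inequality, the normalization $S(x,x)=0$ (which follows from the triangle inequality together with $S(x,x)\le\kappa_0\,|x-x|=0$), the pointwise bound $L(x,q)\ge\sigma(x,q)$ furnished by \eqref{eq L} with $a=c=0$, and the inequality $\int_\alpha^\beta\sigma\big(\gamma(s),\dot\gamma(s)\big)\,\dd s\ge S\big(\gamma(\alpha),\gamma(\beta)\big)$, valid along any curve, which comes from the reparametrization invariance of the $\sigma$--integral and the definition of $S$.

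First I would show that the action of $\gamma$ on $[a,b]$ is as small as it can be. From $L\ge\sigma$ and the displayed inequality, $\int_a^bL(\gamma,\dot\gamma)\,\dd s\ge\int_a^b\sigma(\gamma,\dot\gamma)\,\dd s\ge S\big(\gamma(a),\gamma(b)\big)$, while the triangle inequality and $S\big(\gamma(a),\gamma(a)\big)=0$ give $S\big(\gamma(a),\gamma(b)\big)+S\big(\gamma(b),\gamma(a)\big)\ge0$. Since \eqref{eq utile} reads $\int_a^bL(\gamma,\dot\gamma)\,\dd s=-S\big(\gamma(b),\gamma(a)\big)$, all these inequalities must be equalities, so
\[
\int_a^bL(\gamma,\dot\gamma)\,\dd s=\int_a^b\sigma(\gamma,\dot\gamma)\,\dd s=S\big(\gamma(a),\gamma(b)\big)=-S\big(\gamma(b),\gamma(a)\big).
\]
In particular $L\big(\gamma(s),\dot\gamma(s)\big)=\sigma\big(\gamma(s),\dot\gamma(s)\big)$ for a.e.\ $s\in[a,b]$, hence the $L$-- and $\sigma$--integrals of $\gamma$ agree over every subinterval. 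Next I would propagate the ``forward'' identity to all subintervals: fixing $a\le t_1\le t_2\le b$ and concatenating,
\[
S\big(\gamma(a),\gamma(b)\big)\le S\big(\gamma(a),\gamma(t_1)\big)+S\big(\gamma(t_1),\gamma(t_2)\big)+S\big(\gamma(t_2),\gamma(b)\big)\le\int_a^b\sigma(\gamma,\dot\gamma)\,\dd s=S\big(\gamma(a),\gamma(b)\big),
\]
so each inequality is again an equality and $S\big(\gamma(t_1),\gamma(t_2)\big)=\int_{t_1}^{t_2}L(\gamma,\dot\gamma)\,\dd s$ for all $t_1\le t_2$ in $[a,b]$.

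It then remains to establish the ``backward'' identity $S\big(\gamma(t_2),\gamma(t_1)\big)=-\int_{t_1}^{t_2}L(\gamma,\dot\gamma)\,\dd s$, and this is the only delicate point. The lower bound is free, since $0=S\big(\gamma(t_1),\gamma(t_1)\big)\le S\big(\gamma(t_1),\gamma(t_2)\big)+S\big(\gamma(t_2),\gamma(t_1)\big)$ together with the forward identity gives $S\big(\gamma(t_2),\gamma(t_1)\big)\ge-\int_{t_1}^{t_2}L(\gamma,\dot\gamma)\,\dd s$. For the matching upper bound I would route through the two endpoints: applying the triangle inequality twice,
\[
S\big(\gamma(t_2),\gamma(t_1)\big)\le S\big(\gamma(t_2),\gamma(b)\big)+S\big(\gamma(b),\gamma(a)\big)+S\big(\gamma(a),\gamma(t_1)\big),
\]
and substituting the forward identities on $[t_2,b]$ and $[a,t_1]$ together with $S\big(\gamma(b),\gamma(a)\big)=-\int_a^bL(\gamma,\dot\gamma)\,\dd s$, the right-hand side telescopes exactly to $-\int_{t_1}^{t_2}L(\gamma,\dot\gamma)\,\dd s$. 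Combining the two bounds, and recalling the forward identity, yields precisely the definition of a static curve.

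The step I expect to be the obstacle is exactly this last one. A naive attempt to bound $S\big(\gamma(t_2),\gamma(t_1)\big)$ from above by simply reversing the sub-curve would force $\sigma(x,q)+\sigma(x,-q)=0$ along $\gamma$, which is false in general (it would mean $Z_0(\gamma(s))$ lies in a hyperplane). The point of the argument above is that \eqref{eq utile} itself supplies the only genuinely ``backward'' piece of information needed, namely the value of $S\big(\gamma(b),\gamma(a)\big)$, and everything else is then produced by the triangle inequality and the forward identities, with no further input on the geometry of the sublevels.
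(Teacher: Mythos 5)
Your argument is correct and is essentially the paper's own proof: both extract all the needed equalities by closing a chain of inequalities built from the triangle inequality for $S$, the normalization $S(x,x)=0$, and the lower bound $\int L\geqslant\int\sigma\geqslant S$ along subarcs, with \eqref{eq utile} forcing every intermediate inequality to be an equality. The only (cosmetic) difference is bookkeeping: the paper phrases the squeeze in terms of the critical subsolution $S\big(\gamma(b),\cdot\big)$ and routes the backward triangle inequality through the single point $\gamma(b)$, whereas you work with $S$ and $\sigma$ directly and route through both endpoints.
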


\begin{dimo}
Let $s,t$ be points of $[a,b]$  with $s<t$. We want to prove that
\begin{equation}\label{eq critical}
-S\big(\gamma(t),\gamma(s)\big)=\int_s^t L(\gamma,\dot\gamma)\,\dd \tau=
S\big(\gamma(s),\gamma(t)\big).
\end{equation}
We set $y:=\gamma(b)$ and observe that equality \eqref{eq utile}
can be equivalently written as
\[
S\big(y,\gamma(b)\big)-S\big(y,\gamma(a)\big)=\int_a^b  L(\gamma,\dot\gamma)\,\dd
s.
\]
Since $S(y,\cdot)$ is a critical subsolution, the following hold:
\[
S\big(y,\gamma(b)\big)-S\big(y,\gamma(t)\big)\leqslant \int_t^b
L(\gamma,\dot\gamma)\,\dd s.
\]
and
\[
S\big(y,\gamma(t)\big)-S\big(y,\gamma(a)\big)\leqslant \int_a^t
L(\gamma,\dot\gamma)\,\dd s.
\]
Both inequalities are in fact equalities (summing them up gives an equality) and we obtain
\[
    -S\big(y,\gamma(t)\big)
    =
    S\big(y,\gamma(b)\big)-S\big(y,\gamma(t)\big)
    =
    \int_t^b L(\gamma,\dot\gamma)\,\dd s
\]
for any $t\in [a,b]$. We infer
\[
    0 = S\big(y,\gamma(t)\big)+\int_t^b  L(\gamma,\dot\gamma)\,\dd \tau
    \geqslant
    S\big(y,\gamma(t)\big)+S\big(\gamma(t),y\big)
    \geqslant
    0,
\]
so
\[
S\big(\gamma(t),y\big)=\int_t^b L(\gamma,\dot\gamma)\,\dd \tau =
-S\big(y,\gamma(t)\big).
\]
In particular for every $a\leqslant s<t\leqslant b$
\[
S\big(\gamma(s),y\big)-S\big(\gamma(t),y\big)=\int_s^t  L(\gamma,\dot\gamma)\,\dd
\tau.
\]
The second equality in \eqref{eq critical} then follows since
\[
   S\big(\gamma(s),y\big)-S\big(\gamma(t),y\big)
    \leqslant
    S\big(\gamma(s),\gamma(t)\big)
    \leqslant
    \int_s^t L(\gamma,\dot\gamma)\,\dd \tau.
\]
Let us now prove the other equality in \eqref{eq critical}. By
making use of what was just proved, we have
\begin{eqnarray*}
\int_s^t L(\gamma,\dot\gamma)\,\dd \tau
    &=&
    S\big(\gamma(s),y\big)-S\big(\gamma(t),y\big)\\
    &=&
    -\Big(S\big(y,\gamma(s)\big)+S\big(\gamma(t),y\big)\Big)
    \leqslant
    -S\big(\gamma(t),\gamma(s)\big),
\end{eqnarray*}
and the assertion follows for
\[
    \int_s^t L(\gamma,\dot\gamma)\,\dd
    \tau+S\big(\gamma(t),\gamma(s)\big)
    \geqslant
    S\big(\gamma(s),\gamma(t)\big)+S\big(\gamma(t),\gamma(s)\big)
    \geqslant
    0.
\]
\end{dimo}


%
\medskip

\begin{lemma}\label{lemma 1}
There exists a real number $R>0$ such that
\[
\bigcup_{x\in M}\{q\in\R^N\,:\, L(x,q)=\sigma(x,q)\,\}\subseteq
B_R.
\]
\end{lemma}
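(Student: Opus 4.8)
The plan is to combine the coercivity of $H$ — which yields a bound on the sublevels $Z_0(x)$ that is \emph{uniform} in $x$, hence a linear bound on $\sigma$ — with the superlinearity of the associated Lagrangian.

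First I would note that, since we have assumed $c=0$, the first inequality in (H3) gives, for every $x\in M$,
\[
Z_0(x)=\{p\in\R^N:H(x,p)\leqslant 0\}\subseteq\{p\in\R^N:\alpha(|p|)\leqslant 0\}.
\]
As $\alpha$ is superlinear, $\alpha(h)\to+\infty$ when $h\to+\infty$, so the set $\{h\geqslant 0:\alpha(h)\leqslant 0\}$ is bounded; let $\rho_0\geqslant 0$ denote its supremum (with $\rho_0:=0$ if this set is empty). Then $Z_0(x)\subseteq B_{\rho_0}$ for every $x\in M$, and therefore, straight from the definition \eqref{sigma} of the support function,
\[
\sigma(x,q)=\sup_{p\in Z_0(x)}\langle p,q\rangle\leqslant\rho_0\,|q|\qquad\hbox{for all $(x,q)\in M\times\R^N$.}
\]

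Next I would invoke the left inequality in (L3): at any point $(x,q)$ with $L(x,q)=\sigma(x,q)$ we obtain
\[
\alpha_*(|q|)\leqslant L(x,q)=\sigma(x,q)\leqslant\rho_0\,|q|,
\]
so that $\alpha_*(|q|)/|q|\leqslant\rho_0$ whenever $q\neq 0$. Since $\alpha_*$ is superlinear, there is $R>0$ such that $\alpha_*(r)/r>\rho_0$ for every $r>R$, which forces $|q|\leqslant R$. Because $R$ depends only on $\alpha_*$ and $\rho_0$, and not on $x$, this establishes the claimed inclusion.

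The argument is short and I do not expect any genuine obstacle; the one point to keep in mind is that the bound on $Z_0(x)$ extracted from coercivity must be uniform in $x$ — this is exactly why (H3) is formulated with an $x$-independent lower envelope $\alpha$ — so that the resulting estimate on $\sigma$, and hence the final radius $R$, is uniform as well.
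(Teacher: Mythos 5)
Your proof is correct and follows essentially the same route as the paper's: both extract from (H3) a uniform (in $x$) bound on the sublevel $Z_0(x)$, deduce the linear estimate $\sigma(x,q)\leqslant\rho_0|q|$, and then use the superlinearity of $\alpha_*$ from (L3) to bound $|q|$ at points where $L=\sigma$. The only cosmetic difference is that the paper packages the last step as an affine lower bound $(\kappa+1)|q|-\alpha_0\leqslant\alpha_*(|q|)$ rather than the ratio condition $\alpha_*(r)/r>\rho_0$; the content is identical.
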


\begin{dimo}
By assumption (H3) there exists a constant $\kappa$ such that
$Z_0(x)\subseteq B_{\kappa}$ for every $x\in M$, so
$\sigma(x,q)\leqslant \kappa|q|$ for every $(x,q)\in M\times\R^N$.
By (L3) and by the superlinear and continuous character of
$\alpha_*$, see Remark \ref{oss H3}, there exists a constant
$\alpha_0>0$ such that
\[
(\kappa+1)|q|-\alpha_0\leqslant\alpha_*(|q|)\leqslant
L(x,q)\qquad\hbox{for every $(x,q)\in M\times\R^N$.}
\]
The assertion follows by choosing $R:=\alpha_0$.
\end{dimo}
\bigskip

\noindent{\bf Proof of Theorem \ref{DT1}.} Fix $y\in\A$ and set
$u(\cdot)=S(y,\cdot)$. The function $w(x,t)=u(x)$ is a solution of
the equation
\begin{equation}\label{eq evolutiva}
\partial _t w(x,t)+H\big(x,{D}_x w(x,t)\big)=0
,
\end{equation}
 hence $\S(t)u=u$ for every
$t>0$. In particular, for each $n\in\N$ there exists a curve
$\gamma_n:[-n,0]\to M$ with $\gamma_n(0)=y$ such that
\[
u(y)=u\big(\gamma_n(-n)\big)+\int_{-n}^0 L(\gamma_n,\dot\gamma_n)\,\dd s.
\]
Now $u(y)=0$ and $u\big(\gamma_n(-n)\big)=S\big(y,\gamma_n(-n)\big)$, so by Lemma
\ref{lemma utile} we derive that $\gamma_n$ is a static curve.
Lemma \ref{lemma 1} guarantees that the curves $\gamma_n$ are
equi--Lipschitz continuous, in particular there exists a Lipschtiz
curve $\gamma:\R_-\to M$ such that, up to subsequences,
\[
\gamma_n\ucv\gamma\quad\hbox{in $\R_-$}\qquad\hbox{and}\qquad
\dot\gamma_n\weakcv\dot\gamma\quad\hbox{in}\quad
L^1_{loc}\big(\R_-;\R^N\big).
\]
By a classical semi--continuity result of the Calculus of
Variations \cite{BuGiHi98} we have
\[
 \liminf_{n\to +\infty}\int_a^b L(\gamma_n,\dot\gamma_n)\,\dd s
 \geqslant
 \int_a^b L(\gamma,\dot\gamma)\,\dd s
\]
for every $a<b\leqslant 0$, yielding in particular that $\gamma$
is static too.

We now consider the Hamiltonian $\check H(x,p)=H(x,-p)$. By
Proposition \ref{prop H check}, we know that the critical value
and the Aubry set of $\check H$ agree with $0$ (i.e. the critical
value of $H$) and $\A$. We can apply the previous argument with
$\check S$ and $\check L$ in place of $S$ and $L$ to obtain a
curve $\xi:\R_-\to M$ which is static for $\check H$. We define a
curve $\eta:\R\to M$ by setting
\begin{eqnarray*}
\eta(s):=
\begin{cases}
\xi(-s) & \hbox{if $s\geqslant 0$}\\
\gamma(s) & \hbox{if $s\leqslant 0$}.
\end{cases}
\end{eqnarray*}
We claim that $\eta$ is the  static curve we were looking for. To
prove this, it will be enough, in view of Lemma \ref{lemma utile}, to
show
\begin{equation}\label{eq claim}
 S\big(\eta(b),\eta(a)\big)
    +
    \int_a^b L(\eta,\dot \eta)\,\dd s
    =
    0
\end{equation}
for any fixed $a<0<b$. Indeed, by noticing that $\check
L(x,q)=L(x,-q)$ and $\check S(x,y)=S(y,x)$, we obtain
\begin{eqnarray*}
    \int_0^b L(\eta,\dot \eta)\,\dd s
    =
    \int_{-b}^0 \check L(\xi,\dot \xi)\,\dd s
    =
    -\check S\big(\xi(0),\xi(-b)\big)=-S\big(\eta(b),\eta(0)\big).
\end{eqnarray*}
Hence
\begin{eqnarray*}
    \int_a^b L(\eta,\dot \eta)\,\dd s
    &=&
    \int_a^0 L(\eta,\dot \eta)\,\dd s
    +
    \int_0^b L(\eta,\dot \eta)\,\dd s\\
    &=&
    -S\big(\eta(b),\eta(0)\big)+S\big(\eta(0),\eta(a)\big)
    \leqslant
    -S\big(\eta(b),\eta(a)\big)
\end{eqnarray*}
and \eqref{eq claim} follows since the opposite inequality is
always true. \qed\\
\end{section}

\section{}\label{appendix technical lemmas}

In this appendix we prove three auxiliary lemmas that are needed in the proof of Proposition \ref{prop common sol}.

\begin{lemma}\label{lemma semiconcave}
Let $w(t,x)$ be a locally semiconcave function in $(0,+\infty)\times \T^N$. Then $w$ has partial derivatives  at a point $(t_0,x_0)$ if and only if it is (strictly) differentiable at that point.
\end{lemma}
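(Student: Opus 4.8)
The plan is to reduce the statement to a description of the superdifferential $D^+w(t_0,x_0)$ and then to invoke the characterization of (strict) differentiability for semiconcave functions recalled in Section~\ref{sez notation}. The implication ``$w$ strictly differentiable at $(t_0,x_0)$ $\Rightarrow$ $w$ has partial derivatives there'' is trivial, so all the content is in the converse.

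First I would assume that the partial derivatives $\partial_t w(t_0,x_0)$ and $\partial_{x_i}w(t_0,x_0)$, $i=1,\dots,N$, exist, and set $\bar p:=\big(\partial_t w(t_0,x_0),\,\partial_{x_1}w(t_0,x_0),\dots,\partial_{x_N}w(t_0,x_0)\big)\in\R\times\R^N$. Since $w$ is semiconcave in a neighbourhood of $(t_0,x_0)$, the set $D^+w(t_0,x_0)$ is nonempty; pick any $p=(p_t,p_x)$ in it. By definition of superdifferential there is a modulus $\omega$ such that
\[
w(t_0+\tau,x_0+\xi)-w(t_0,x_0)\leqslant p_t\,\tau+\langle p_x,\xi\rangle+\big(|\tau|+|\xi|\big)\,\omega\big(|\tau|+|\xi|\big)
\]
for all $(\tau,\xi)$ near the origin. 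Taking $\xi=0$, dividing by $\tau$ and letting first $\tau\to 0^+$ and then $\tau\to 0^-$ yields $\partial_t w(t_0,x_0)\leqslant p_t$ and $\partial_t w(t_0,x_0)\geqslant p_t$, hence $p_t=\partial_t w(t_0,x_0)$; taking $\tau=0$ and $\xi=\pm r\,e_i$ and repeating the argument gives $p_{x,i}=\partial_{x_i}w(t_0,x_0)$ for every $i$. Thus every element of $D^+w(t_0,x_0)$ equals $\bar p$, i.e.\ $D^+w(t_0,x_0)=\{\bar p\}$.

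To conclude I would use that for a semiconcave function $\partial_c w(t_0,x_0)=D^+w(t_0,x_0)$ (see Section~\ref{sez notation}), so the Clarke generalized gradient of $w$ at $(t_0,x_0)$ reduces to the singleton $\{\bar p\}$; by the property recalled in Section~\ref{sez notation}, $w$ is then strictly differentiable at $(t_0,x_0)$, in particular differentiable there with $Dw(t_0,x_0)=\bar p$. The only point requiring a little care is the first step, namely extracting $\bar p$ from the one-sided semiconcavity estimates, but this is elementary once the inequality is restricted to the coordinate directions; everything else is a direct appeal to the results quoted in Section~\ref{sez notation}.
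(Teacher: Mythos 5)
Your proof is correct, but it takes a different route from the paper's. You pin down the superdifferential directly: since every $p\in D^+w(t_0,x_0)$ satisfies the one-sided expansion defining semiconcavity, restricting to the $N+1$ coordinate directions and using the assumed two-sided existence of the partial derivatives forces $p=\bar p$, so $D^+w(t_0,x_0)=\{\bar p\}$; you then invoke the identity $\partial_c w=D^+w$ for semiconcave functions and the fact that a singleton Clarke gradient means strict differentiability (both recalled in Section~\ref{sez notation}). The paper instead works with the set of \emph{reachable} gradients $\partial^*w(t_0,x_0)$: it takes a sequence of differentiability points $(t_n,x_n)\to(t_0,x_0)$ with gradients converging to an arbitrary $(p_t,p_x)\in\partial^*w(t_0,x_0)$, restricts $w$ to translated time-lines and space-slices through these points, and applies the convergence theorem for gradients of equi-semiconcave functions (Theorem 3.3.3 of \cite{CaSi00}) to conclude $(p_t,p_x)=\bar p$; since $\partial_c w$ is the convex hull of $\partial^*w$, the same conclusion follows. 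Your argument is more elementary --- it needs only the defining inequality of the superdifferential and the static identification $\partial_c=D^+$, with no limiting argument along sequences of differentiability points --- while the paper's version rehearses exactly the compactness-and-convergence mechanism that it reuses in Lemma~\ref{lemma t-semiconcave}(i), where only semiconcavity in $t$ is available and the superdifferential of the joint function need not reduce to a point; that is presumably why the authors chose the $\partial^*$ route here. Both proofs are complete and correct.
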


\begin{dimo}
Let us assume that $w$ has partial derivative at a point $(t_0,x_0)$. It will be enough to show that the set $\partial^* w(t_0,x_0)$ of reachable gradients of $w$ at $(t_0,x_0)$ reduces to the singleton $\big(\partial_t w(t_0,x_0),D_x w(t_0,x_0)\big)$. 
Indeed, let $(p_t,p_x)\in \partial^* w(t_0,x_0)$ and take a sequence $(t_n,x_n)$ of differentiability points of $w$ converging to $(t_0,x_0)$ such that
\[
\partial_t w(t_n,x_n) =:p_{t_n}\to p_t,\qquad D_x w(t_n,x_n)=:p_{x_n}\to p_x
\]
as $n\to +\infty$. The functions 
\[
 \phi_n(t):=w(t-t_0+t_n,x_n),\qquad \psi_n(x)=w(t_n,x-x_0+x_n)
\]
are locally equi--semiconcave in $(0,+\infty)$ and $\T^N$ and differentiable at the points $t=t_0$ and $x=x_0$, respectively.  Moreover
\[
\phi_n \ucv w(\cdot,x_0) \quad \hbox{in $(0,+\infty)$,} \qquad \psi_n \ucv w(t_0,\cdot) \quad \hbox{in $\T^N$}.
\]
By a well known fact about semiconcave functions, see Theorem 3.3.3 in \cite{CaSi00}, we get
\[
 p_{t_n}=\phi_n'(t_0)\to \partial_t w(t_0,x_0),\qquad p_{x_n}=D_x \psi(x_0)\to D_x w(t_0,x_0),
\]
that is $(p_{t_0},p_{x_0})=\big(\partial_t w(t_0,x_0),D_x w (t_0,x_0)\big)$.  
\end{dimo}

\bigskip
In the subsequent lemma, by $\pi_1$ we will denote the projection of $\R\times\R^N$ onto the first variable, i.e.\quad  $\pi_1(p_t,p_x)=p_t$ \quad for every \ $(p_t,p_x)\in\R\times\R^N$.\\
\begin{lemma}\label{lemma t-semiconcave}
Let $w(t,x)$ be a locally Lipschitz function on $(0,+\infty)\times \T^N$. Let us assume that the family of functions  $\{w(\cdot,x)\,:\,x\in\T^N\}$ are locally equi--semiconcave in $(0,+\infty)$. If $w$ has partial derivatives at a point $(t_0,x_0)$, then
\begin{itemize}
 \item[\em (i)] \quad $\pi_1\big(\partial_c w(t_0,x_0)\big)=\{\partial_t w(t_0,x_0)\}$;\medskip
 \item[\em (ii)] \quad $\big(\partial_t w(t_0,x_0), D_x w(t_0,x_0)\big)\in\partial_c w (t_0,x_0)$.\\
\end{itemize}
\end{lemma}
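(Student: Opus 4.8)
The plan is to establish (i) first --- by essentially the same mechanism as in the proof of Lemma~\ref{lemma semiconcave} --- and then to deduce (ii) from (i) by a hyperplane separation argument. Throughout I use that, $w$ being locally Lipschitz, $\partial^* w(t_0,x_0)$ is a nonempty compact set and $\partial_c w(t_0,x_0)$, its convex hull, coincides with Clarke's generalized gradient, see \cite{Cl}.

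\emph{Step 1: proof of (i).} Since $\partial_c w(t_0,x_0)$ is the convex hull of the compact set $\partial^* w(t_0,x_0)$ and $\pi_1$ is linear and continuous, one has $\pi_1\big(\partial_c w(t_0,x_0)\big)=\mathrm{co}\,\pi_1\big(\partial^* w(t_0,x_0)\big)$, so it suffices to show that every reachable gradient of $w$ at $(t_0,x_0)$ has first coordinate equal to $\partial_t w(t_0,x_0)$. Given $(p_t,p_x)\in\partial^* w(t_0,x_0)$, I would choose differentiability points $(t_n,x_n)\to(t_0,x_0)$ of $w$ with $Dw(t_n,x_n)\to(p_t,p_x)$, in particular $\partial_t w(t_n,x_n)\to p_t$, and set $\phi_n(\tau):=w(\tau+t_n-t_0,x_n)$, which is defined on a fixed neighborhood $I$ of $t_0$ for $n$ large. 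The $\phi_n$ are translates of the $w(\cdot,x_n)$, hence locally equi--semiconcave on $I$ with a uniform modulus; they are differentiable at $\tau=t_0$ with $\phi_n'(t_0)=\partial_t w(t_n,x_n)$; and $\phi_n\ucv w(\cdot,x_0)$ on $I$ by the local Lipschitz character of $w$, the limit being differentiable at $t_0$ with derivative $\partial_t w(t_0,x_0)$. Since $\phi_n'(t_0)\in D^+\phi_n(t_0)$ and the sequence $\phi_n'(t_0)$ is bounded, the stability of super--differentials under uniform convergence of equi--semiconcave functions (Theorem~3.3.3 in \cite{CaSi00}) forces each of its limit points into $D^+\big(w(\cdot,x_0)\big)(t_0)=\{\partial_t w(t_0,x_0)\}$; therefore $p_t=\partial_t w(t_0,x_0)$, which is (i).

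\emph{Step 2: proof of (ii).} Write $a:=\partial_t w(t_0,x_0)$ and $b:=D_x w(t_0,x_0)$. By (i), $\partial_c w(t_0,x_0)\subseteq\{a\}\times\R^N$, say $\partial_c w(t_0,x_0)=\{a\}\times K$ with $K\subset\R^N$ compact convex. Arguing by contradiction, if $b\notin K$ there is $e\in\R^N$ with $\langle b,e\rangle>\max_{p\in K}\langle p,e\rangle$. Since every element of $\partial_c w(t_0,x_0)$ has the form $(a,p)$ with $p\in K$ and $\langle(a,p),(0,e)\rangle=\langle p,e\rangle$, the right--hand side equals the support function of $\partial_c w(t_0,x_0)$ at $(0,e)$, i.e.\ the Clarke generalized directional derivative $w^{\circ}\big((t_0,x_0);(0,e)\big)$. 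On the other hand, freezing the base point at $(t_0,x_0)$ in the limsup defining $w^{\circ}$,
\[
w^{\circ}\big((t_0,x_0);(0,e)\big)\ \geqslant\ \lim_{\lambda\to0^+}\frac{w(t_0,x_0+\lambda e)-w(t_0,x_0)}{\lambda}=\langle b,e\rangle,
\]
the equality holding because $x\mapsto w(t_0,x)$ is differentiable at $x_0$. This contradicts the strict inequality, so $b\in K$ and $(a,b)\in\partial_c w(t_0,x_0)$, which is (ii).

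The only genuinely delicate point is Step~1, namely that the equi--semiconcavity of $\{w(\cdot,x)\}$ survives, with a single modulus on a fixed interval, both the small shifts $\tau\mapsto\tau+t_n-t_0$ and the uniform limit --- but this is exactly the situation already settled in Lemma~\ref{lemma semiconcave}, whose argument I would simply reproduce. Step~2 is then purely formal once (i) is available: the role of (i) is precisely to make the separating direction of the form $(0,e)$, so that the generalized directional derivative can be estimated from below along the trivial path in which only the $x$--variable moves, where it reduces to an ordinary one--sided derivative computable from the $x$--differentiability of $w$ at $(t_0,x_0)$.
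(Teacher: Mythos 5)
Your proof is correct, and the overall architecture is the same as the paper's: (i) is obtained exactly as in Lemma \ref{lemma semiconcave}, by propagating the superdifferentials of the equi--semiconcave one--variable functions $w(\cdot,x_n)$ through the uniform limit (Theorem 3.3.3 in \cite{CaSi00}); (ii) is obtained by contradiction, separating $D_x w(t_0,x_0)$ from the compact convex set $\pi_2\big(\partial_c w(t_0,x_0)\big)$ and then contradicting the behaviour of the difference quotients of $w(t_0,\cdot)$ in the separating direction. The only divergence is the mechanism used to bound those difference quotients from below: the paper invokes the nonsmooth mean value theorem along the segment $[x_0,x_0+hq]$ together with the upper semicontinuity of $(t,x)\mapsto\partial_c w(t,x)$ on a neighbourhood of $(t_0,x_0)$, whereas you work entirely at the single point $(t_0,x_0)$ via the identity between Clarke's generalized directional derivative $w^{\circ}\big((t_0,x_0);(0,e)\big)$ and the support function of $\partial_c w(t_0,x_0)$, combined with the trivial lower bound of $w^{\circ}$ by the ordinary one--sided derivative. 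Your version is slightly more economical (no neighbourhood, no mean value theorem), and your closing remark correctly identifies why (i) is needed in Step 2: it forces the separating direction to be of the form $(0,e)$, so that the directional derivative can be computed from the $x$--differentiability alone.
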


\begin{dimo}
Assertion {\em (i)} follows arguing as in the proof of Lemma \ref{lemma semiconcave} above and exploiting the semiconcavity of $w$ in $t$. To prove item {\em (ii)} we argue as follows. Assume by contradiction that $D_x w(t_0,x_0)$ does not belong to $\pi_2\big(\partial_c w(t_0,x_0)\big)$. Here $\pi_2$ denotes the projection of $\R\times\R^N$ in the second variable, i.e.\quad  $\pi_1(p_t,p_x)=p_x$ \quad for every \ $(p_t,p_x)\in\R\times\R^N$. The set $\pi_2\big(\partial_c w(t_0,x_0)\big)$ is closed and convex, so by Hahn--Banach Theorem there exist a vector $q$ and a constant $a\in\R$ such that
\[
  \langle D_x w(t_0,x_0),\,q\rangle 
  <
  a
  <
  \langle p,q\rangle
  \qquad
  \hbox{for every $p\in\partial_c w(t_0,x_0)$.} 
\]
By upper semicontinuity of Clarke's generalized gradient, the above inequality keeps holding in a neighborhood 
$V$ of $(t_0,x_0)$, i.e. for every $(t,x)\in V$
\[
  \langle D_x w(t_0,x_0),\,q\rangle 
  <
  a
  <
  \langle p,q\rangle
  \qquad
  \hbox{for every $p\in\partial_c w(t,x)$.} 
\]
For $h\not=0$, let us consider the ratio 
\[
 r(h)=\frac{w(t_0,x_0+hq)-w(t_0,x_0)}{h}.
\]
By the Nonsmooth Mean Value Theorem, see Theorem 2.3.7 in \cite{Cl}, there exists a point $x_h$ on the segment joining $x_0$ to $x_0+hq$ and a vector $(\alpha_h,p_{x_h})\in\partial_c w(t_0,x_h)$ such that
\[
 r(h)=\langle (\alpha_h,p_{x_h}),\, (0,q)\rangle= \langle p_{x_h},\,q\rangle.
\]
For $h$ small enough, we infer that $r(h)>a$. On the other hand
\[
 \lim_{h\to 0} r(h)=\langle D_x w(t_0,x_0) ,\,q\rangle<a, 
\]
yielding a contradiction.
\end{dimo}

\medskip
We conclude this appendix by proving the following 

\begin{lemma}\label{lemma pain}
Let $H:\T^N\times\R^N\to\R$ be a continuous function and $w(t,x)$ a Lipschitz function on $\R_+\times\T^N$ satisfying 
\[
 \partial_t w + H(x,Dw)\geqslant 0\qquad\hbox{in $(0,+\infty)\times\T^N$}
\]
in the viscosity sense. Let us assume that 
\begin{itemize}
 \item[\em (i)] the functions $\{\,w(\cdot,x)\,:\,x\in\T^N\,\}$ are locally equi--semiconcave in $(0,+\infty)$;\smallskip
 \item[\em (ii)] there exist a constant $a\in\R$ and two open sets $I\subseteq (0,+\infty)$ and $U\subseteq\T^N$ such that     
\[
 \partial_t w(t,x)<a\qquad\hbox{for a.e. $t\in I$ and for a.e. $x\in U$.}
\]
\end{itemize}
Then, for every $t_0\in I$, the function $u^{t_0}:=w(t_0,\cdot)$ satisfies 
\begin{equation}\label{claim pain} 
 H(x,Du^{t_0})\geqslant -a\qquad\hbox{in $U$}
\end{equation}
in the viscosity sense.
\end{lemma}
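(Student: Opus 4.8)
The plan is to verify directly the defining viscosity inequality for the supersolution $u^{t_0}:=w(t_0,\cdot)$ in $U$. So fix $t_0\in I$, a point $x_0\in U$, and a $C^1$ subtangent $\phi$ to $u^{t_0}$ at $x_0$ (thus $\phi\leqslant w(t_0,\cdot)$ on $\T^N$, with equality at $x_0$); the goal is to show $H\big(x_0,D\phi(x_0)\big)\geqslant -a$. The argument is a single estimate carried out for small $h>0$ and then passed to the limit $h\to 0^{+}$, and it rests on three ingredients. First, since $w$ is a viscosity supersolution of $\partial_t w+H(x,D_xw)=0$, comparison with the Lax--Oleinik solution of the Cauchy problem having datum $w(t_0-h,\cdot)$ at time $t_0-h$ gives $w(t_0,\cdot)\geqslant\S_H(h)\,w(t_0-h,\cdot)$ on all of $\T^N$, for $0<h<t_0$ (in the situation where $w=\S_H(\cdot)u$ this is just the semigroup property, Proposition~\ref{prop S(t)}). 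Second, hypothesis (ii), Fubini's theorem, and the fact that $t\mapsto w(t,x)$ is Lipschitz (hence absolutely continuous) for each $x$ yield, for $h$ small enough that $[t_0-h,t_0]\subseteq I$, the inequality $w(t_0,x)-w(t_0-h,x)=\int_{t_0-h}^{t_0}\partial_t w(s,x)\,\dd s\leqslant ah$ for a.e.\ $x\in U$, hence for every $x\in U$ by continuity. Third, by superlinearity of $\alpha_*$ (Remark~\ref{oss H3}), Jensen's inequality, and comparison with the constant curve, any Lagrangian minimizer of $\big(\S_H(h)\,w(t_0-h,\cdot)\big)(x_0)$ or of $\big(\S_H(h)\,\phi\big)(x_0)$ has length $\leqslant\kappa_0 h$, with $\kappa_0$ depending only on $\alpha_*,\beta_*$ and the (uniformly bounded) Lipschitz constants of $w$ and $\phi$; so for $h$ small such minimizers stay inside $U$.

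Combining the first two ingredients along a minimizer $\gamma:[-h,0]\to\T^N$ for $\big(\S_H(h)\,w(t_0-h,\cdot)\big)(x_0)$, which stays in $U$ and has $\gamma(0)=x_0$, gives
\[
\phi(x_0)=w(t_0,x_0)\geqslant w\big(t_0-h,\gamma(-h)\big)+\int_{-h}^{0}L_H(\gamma,\dot\gamma)\,\dd s\geqslant \phi\big(\gamma(-h)\big)-ah+\int_{-h}^{0}L_H(\gamma,\dot\gamma)\,\dd s\geqslant \big(\S_H(h)\,\phi\big)(x_0)-ah,
\]
using (ii) at the point $\gamma(-h)\in U$ and $\phi\leqslant w(t_0,\cdot)$; hence $\big(\S_H(h)\,\phi\big)(x_0)\leqslant\phi(x_0)+ah$. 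For the matching lower bound, take a minimizer $\gamma'$ for $\big(\S_H(h)\,\phi\big)(x_0)$ (also contained in $U$ for $h$ small) and use the Fenchel inequality $L_H(x,q)\geqslant\langle D\phi(x),q\rangle-H\big(x,D\phi(x)\big)$ together with $\int_{-h}^{0}\langle D\phi(\gamma'),\dot\gamma'\rangle\,\dd s=\phi(x_0)-\phi\big(\gamma'(-h)\big)$, which yields $\big(\S_H(h)\,\phi\big)(x_0)\geqslant\phi(x_0)-h\max_{s\in[-h,0]}H\big(\gamma'(s),D\phi(\gamma'(s))\big)$. Since $\gamma'\to x_0$ uniformly as $h\to 0^{+}$ (its length is $O(h)$) and $H,D\phi$ are continuous, the maximum converges to $H\big(x_0,D\phi(x_0)\big)$. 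Putting the two bounds together, dividing by $h>0$ and letting $h\to 0^{+}$ gives $a\geqslant -H\big(x_0,D\phi(x_0)\big)$, that is \eqref{claim pain}.

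The delicate point — the reason this lemma is more than a formality — is the localization to $U$: the supersolution inequality $w(t_0,\cdot)\geqslant\S_H(h)w(t_0-h,\cdot)$ is global, whereas the information $w(t_0-h,\cdot)\leqslant w(t_0,\cdot)-ah$ coming from (ii) is available only on $U$, so one must make sure that the optimal curves joining nearby points over the short time $h$ do not leave $U$; this is exactly where the a-priori speed estimate for Lagrangian minimizers (Proposition~\ref{prop Lagrangian minimizer} and the bounds in Remark~\ref{oss H3}) is essential. The remaining care is bookkeeping of the two limits $h\to 0^{+}$: the convergence of the minimizers to the constant $x_0$, and the absolute-continuity-in-$t$ estimate on $[t_0-h,t_0]$, the latter requiring only that $t_0$ lies in the open set $I$ where $\partial_t w<a$ almost everywhere.
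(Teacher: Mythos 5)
Your proof is correct, but it follows a genuinely different route from the paper's. The paper argues by regularization: it first proves the claim under the additional assumption that $w(t,\cdot)$ is locally semiconcave in $x$ (so that the a.e.\ inequality $H\big(x,Dw(t,x)\big)\geqslant -\partial_t w(t,x)>-a$ upgrades to a viscosity supersolution inequality on the time slices), and then treats the general case by inf--convolution in $x$, using hypothesis (i) to transfer the bound on $\partial_t w$ to the regularized functions, and concluding by stability. You instead verify the supersolution test directly: you combine the comparison inequality $w(t_0,\cdot)\geqslant \S_H(h)\,w(t_0-h,\cdot)$ with the integrated form of (ii) and the short--time expansion of the Lax--Oleinik semigroup applied to a $C^1$ subtangent, the localization to $U$ being secured by the a priori speed bound on Lagrangian minimizers. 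Two observations. First, your argument never uses hypothesis (i), so you actually prove a slightly stronger statement; this is consistent, since the paper needs (i) only to make its regularization scheme work. Second, the one ingredient you invoke that the paper does not state explicitly is the comparison principle between the Lipschitz supersolution $w$ and the Lax--Oleinik solution issued from $w(t_0-h,\cdot)$; this is legitimate here because both functions are Lipschitz (so the doubling argument only meets gradients in a compact set, where the continuous $H$ is uniformly continuous), it underlies the paper's uniqueness claim for the Cauchy problem, and in the intended application $w=\S_H(\cdot)u$ it is an identity by the semigroup property --- but it merits an explicit sentence of justification.
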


\begin{dimo}
We divide the proof in two steps.\medskip

{\bf Step 1. } Let us additionally assume that, for every $t>0$,  the function 
\[
w(t, \cdot)\qquad\hbox{is locally semiconcave in $\T^N$}.
\]
Let $\Sigma:=\{(t,x)\in (0,\infty)\times\T^N\,:\,\hbox{$w$ is not differentiable at $(t,x)$\,}\}$. Then, for a.e. $t>0$, the set 
\[
 \Sigma^t:=\{x\in\T^N\,:\,(t,x)\in\Sigma\,\}
\]
has $N$--dimensional Lebesgue measure equal to 0, so, for any such $t>0$, 
\[
 \partial_t w(t,x)+H\big(x,D w(t,x)\big)\geqslant 0\qquad\hbox{for a.e. $x\in\T^N$.}
\]
In particular, 
\begin{equation}\label{eq pain1} 
H\big(x,Dw(t,x)\big)\geqslant -\partial_t w(t,x)>-a \qquad \hbox{for a.e. $x\in U$.}
\end{equation}
Set $u^t=w(t,\cdot)$. By semiconcavity, 
the inequality \eqref{eq pain1} means that 
\begin{equation}\label{eq pain2}
H(x,Du^t)\geqslant-a\qquad\hbox{in $U$}
\end{equation}
in the viscosity sense. Here we have used the fact that $Du^t$ is continuous on its domain of definition and that the supersolution test is nonempty only at points where $u^t$ is differentiable.  
If now $t_0$ is any point of $I$, we choose a sequence of points $t_n\in I$  converging to $t_0$ for which \eqref{eq pain2} holds for every $n$. Since $u^{t_n}\ucv u^{t_0}$ in $\T^N$, by stability of the notion of viscosity supersolution we get \eqref{claim pain}.\medskip

{\bf Step 2.}  Let $(t_0,x_0)\in I\times U$. Since the functions $w(\cdot,x)$ are locally equi--semiconcave in $t$, we infer that there exists $r>0$ such that
\begin{equation}
 \partial_t w(t,x)<a\qquad\hbox{for a.e. $(t,x)\in B_{2r}(t_0)\times B_{2r}(x_0)$.}
\end{equation}
For every $n\in\N$, set 
\[
 w_n(t,x)=\min_{y\in\T^N}\{w(t,y)+n\,|x-y|^2\}.
\]
Each $w_n(t,\cdot)$ is semiconcave in $\T^N$ for every fixed $t>0$ and satisfies the following inequality in the viscosity sense
\[
{\partial_t w_n}+H(y,D_x w_n)\geqslant -\delta_n\qquad\hbox{in $(0,+\infty)\times\T^N$,}
\]
where $(\delta_n)_{n\in \N}$ is an infinitesimal sequence, see \cite{CaSi00}. Let us denote by $Y(t,x)$ the set of points $y\in\T^N$ which realize the minimum in the definition of $w_n(t,x)$. If $L$ is the Lipschitz constant of $w$ in $\R_+\times\T^N$, it is well known that 
\[
 \D{dist}\big(Y(t,x),x\big)\leqslant \frac{L}{n}\qquad\hbox{for every $n\in\N$}.
\]
Furthermore
\begin{equation}\label{claim derivative}
\partial_t w_n(t',x')=\partial_t w(t',y)\qquad\hbox{for every $y\in Y(t',x')$}
\end{equation}
at any point $(t',x')$ where $w_n$ has partial derivative with respect to $t$. Indeed, if $\varphi(t)$ is a subtangent to  $w_n(\cdot,x')$ at the point $t'$, the function $\varphi(t)-n|x'-y|^2$ is a subtangent to $w(\cdot,y)$ at the point $t'$, so \eqref{claim derivative} follows by semiconcavity of $w_n$ and $w$ with respect to $t$.

In particular, for $n$ big enough, 
\[
 \partial_t w_n(t,x) <a\qquad\hbox{for a.e. $(t,x)\in B_r(t_0)\times B_r(x_0)$.}
\]
By Step 1 we infer that the functions $u^{t_0}_n=w_n(t_0,\cdot)$ satisfy 
\[
 H(x,Du^{t_0}_n)\geqslant -a-\delta_n\qquad\hbox{in $B_r(x_0)$}
\]
in the viscosity sense. Since\  $u_n^{t_0}\ucv u^{t_0}=w(t_0,\cdot)\ \hbox{in $\T^N$}$, 
we conclude by stability that 
\[
 H(x,Du^{t_0})\geqslant -a\qquad\hbox{in $B_r(x_0)$.}
\]
The assertion follows since $t_0$ and $x_0$ were arbitrarily chosen in $I$ and $U$, respectively, together with the fact that the notion of viscosity supersolution is local.  
\end{dimo}
\ \\

\section{}\label{appendix commutation}

In this Appendix, we discuss the equivalence between the notion of commutation given in Definition \ref{def commutation} and the one given in terms of
cancellation of the Poisson bracket when the Hamiltonians are regular enough. In \cite{BT} it is proved that for two convex
$C^1$--Hamiltonians, $G$ and $H$, having null Poisson bracket,
i.e.
\begin{equation*}
\{G,\,H\}:=\langle D_x G,\, D_p H\rangle -\langle D_x H,\, D_p
G\rangle =0\qquad\hbox{in $M\times\R^N$,}
\end{equation*}
the multi--time Hamilton--Jacobi equation \eqref{multi-time HJ}
admits a (unique) viscosity solution for any Lipschitz initial
datum $u_0$. This amounts to saying that the Lax--Oleinik
semigroups commute in the sense of \eqref{commute}. In \cite{BT},
the question of the reciprocal statement is treated by a heuristic
argument. We feel natural to give a neat proof of this fact, at
least in the case of Tonelli Hamiltonians. For clarity of the
exposition, we will place ourself in the case of $M=\mathbb{T}^N$,
but the results remain true if $M=\mathbb{R}^N$.

\begin{prop}\label{justification}
Let $G$ and $H$ be two Tonelli Hamiltonians on $\mathbb{T}^N\times
\R^N$. Assume that
\begin{equation} \label{commute+}
\S_G(s)\big(\S_H(t)\,u\big)(x)=\S_H(t)\big(\S_G(s)\,u\big)(x)\qquad\hbox{for
every $s,\,t>0$ and $x\in \T^N $,}
\end{equation}
and for every admissible initial datum $u: \T^N
\to\R\cup\{+\infty\}$. Then the following relation is identically
verified:
\begin{equation*}
\langle D_x G,\, D_p H\rangle -\langle D_x H,\, D_p G\rangle
=0\qquad\hbox{in $\T^N\times\R^N$.}
\end{equation*}
\end{prop}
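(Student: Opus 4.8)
The plan is to reduce the identity to a short cross--differentiation of the two evolution equations, taking advantage of the fact that, for smooth data and small times, the solutions of Tonelli Hamilton--Jacobi equations are classical. Fix an arbitrary $u_0\in C^\infty(\T^N)$ and set
\[
w(t,s,x):=\S_H(t)\big(\S_G(s)\,u_0\big)(x)=\S_G(s)\big(\S_H(t)\,u_0\big)(x),
\]
the second equality being precisely the commutation hypothesis \eqref{commute+}. Using the first representation with $s$ frozen, $(t,x)\mapsto w(t,s,x)$ is the viscosity solution of $\partial_t w+H(x,D_x w)=0$ with datum $\S_G(s)\,u_0$; using the second one with $t$ frozen, $(s,x)\mapsto w(t,s,x)$ is the viscosity solution of $\partial_s w+G(x,D_x w)=0$ with datum $\S_H(t)\,u_0$. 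In other words, $w$ solves the multi--time system \eqref{multi-time HJ}. Since $H$ and $G$ are Tonelli and $u_0$ is smooth, the method of characteristics produces, for $s$ in a short interval $[0,\tau_0]$, a $C^\infty$ classical solution of the $G$--equation which, by uniqueness of viscosity solutions, coincides with $\S_G(s)\,u_0$ and has $C^2$--norm bounded uniformly in $s\in[0,\tau_0]$; feeding this datum into the $H$--equation and using the smooth dependence of classical solutions and characteristic flows on time and on the (smooth) initial datum, we obtain $\tau>0$ such that $w$ is of class $C^\infty$, in particular jointly $C^2$, on $[0,\tau]^2\times\T^N$.

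Granting this regularity, I would differentiate the first equation in $s$ and the second in $t$. Writing $D^2_{xx}w$ for the (symmetric) spatial Hessian of $w$ and using $D_x\partial_s w=D_x\big(-G(x,D_x w)\big)=-(D_x G)(x,D_x w)-(D^2_{xx}w)\,(D_p G)(x,D_x w)$ together with the analogous formula for $D_x\partial_t w$, one gets
\begin{align*}
\partial_s\partial_t w&=\langle D_p H,\,D_x G\rangle+\big\langle D_p H,\,(D^2_{xx}w)\,D_p G\big\rangle,\\
\partial_t\partial_s w&=\langle D_p G,\,D_x H\rangle+\big\langle D_p G,\,(D^2_{xx}w)\,D_p H\big\rangle,
\end{align*}
where all derivatives of $H$ and $G$ are evaluated at $\big(x,D_x w(t,s,x)\big)$. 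Since $w$ is $C^2$ the mixed partials agree, and since $D^2_{xx}w$ is symmetric the two quadratic terms on the right coincide; hence
\[
\langle D_x G,\,D_p H\rangle-\langle D_x H,\,D_p G\rangle=0\qquad\hbox{at $\big(x,D_x w(t,s,x)\big)$}
\]
for every $x\in\T^N$ and every $t,s\in(0,\tau)$.

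To conclude, I would let $t,s\to 0^+$: since $w$ is $C^1$ up to the corner of $[0,\tau]^2$ and $D_x w(0,0,\cdot)=Du_0$, one has $D_x w(t,s,x)\to Du_0(x)$ for every $x$, whence $\{G,H\}\big(x,Du_0(x)\big)=0$ for all $x\in\T^N$. Finally, given an arbitrary $(x_0,p_0)\in\T^N\times\R^N$, there exists $u_0\in C^\infty(\T^N)$ with $Du_0(x_0)=p_0$; applying the above to this $u_0$ yields $\{G,H\}(x_0,p_0)=0$, which is the assertion. The only delicate point in the whole argument is the one highlighted in the first paragraph, namely the joint $C^2$ regularity of $w$ near $\{t=s=0\}$ and the uniform lower bound on the existence time of the classical solutions; both follow from the standard short--time theory of Tonelli Hamilton--Jacobi equations and the smooth dependence of classical solutions on smooth initial data. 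Everything after that reduces to the one--line computation above.
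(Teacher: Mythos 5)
Your argument is correct, and it reaches the conclusion by a cleaner computation than the one in the paper, though both rest on the same regularity input. The paper invokes Proposition \ref{flow} (a reformulation of Lemma 3 of \cite{Be}) to get that $u_{s,t}=\S_G(s)\big(\S_H(t)u\big)$ and $u^{s,t}=\S_H(t)\big(\S_G(s)u\big)$ are classical $C^2$ solutions for small $s,t$, with $\Gamma(u_{s,t})=\phi_G^s\circ\phi_H^t\,\Gamma(u)$, and then performs a second--order Taylor expansion of $u_{t,t}$ and $u^{t,t}$ in $t$; after subtraction all symmetric terms cancel and the coefficient of $t^2$ is exactly $\{G,H\}\big(x,Du(x)\big)$, which must vanish. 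Your cross--differentiation of the two evolution equations and appeal to the symmetry of $D^2_{xx}w$ and to Schwarz's theorem computes the same mixed derivative $\partial_s\partial_t w$ directly, at every small $(t,s)$ rather than only in the limit; it avoids the bookkeeping of the expansion and makes transparent why the quadratic terms cancel. The limit $t,s\to 0^+$ and the arbitrariness of $\big(x_0,Du_0(x_0)\big)$ are handled identically in both proofs.

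Two small caveats. First, the joint $C^2$ regularity of $w$ near the corner, which you correctly single out as the only delicate point, is precisely what the paper's Proposition \ref{flow} supplies: the graphs of $D_xw(t,s,\cdot)$ are transported by the composed Hamiltonian flows, which depend at least $C^1$ on $(t,s,x)$ since the Hamiltonians are $C^2$; combined with the two equations this gives continuity of both mixed partials, which is what Schwarz's theorem needs. You should cite or reprove that lemma rather than wave at ``standard short--time theory''. Second, $w$ cannot be claimed to be $C^\infty$ — Tonelli Hamiltonians are only assumed $C^2$ — but joint $C^2$ regularity is all your argument uses, so this is a harmless overstatement.
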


In order to prove this, we will use some results about the
behavior of solutions of the Hamilton--Jacobi equation with smooth
initial datum. We introduce some notations. If $f:\T^N\to \R$ is
differentiable, then $\Gamma(f)\subset \T^N\times \R^N$ will
denote the graph of its differential. We will denote by $\phi_G$
(resp. $\phi_H$) the Hamiltonian flow of $G$ (resp. $H$), that is,
the flow generated by the vectorfield
$$X_G(x,p)=\big(x,p,\,D_p G(x,p),\,-D_x G (x,p)\big),\qquad (x,p)\in  \T^N\times \R^N,$$
\Big(resp. $X_H(x,p)=\big(x,p,\,D_p H(x,p),\,-D_x H
(x,p)\big)$\Big).\smallskip

The following is a reformulation of Lemma 3 in \cite{Be}:

\begin{prop}\label{flow}
For any $C^2$ function $u:\T^N\to \R$, there is an $\varepsilon
>0$ such that for any $s,t<\varepsilon$, the functions
$u_{s,t}=\S_G(s)\big(\S_H(t)\,u\big)$ and
$u^{s,t}=\S_H(t)\big(\S_G(s)\,u\big)$ are $C^2$. Moreover,
$\Gamma(u_{s,t})=\phi_G^s\circ \phi _H^t\, \Gamma (u )$ and
$\Gamma(u^{s,t})=\phi_H^t\circ \phi _G^s\, \Gamma (u )$.\smallskip

Finally, for $t<\varepsilon$ fixed (resp. $s<\varepsilon$ fixed),
the function $(s,x) \mapsto u_{s,t}(x)$ \big(resp. $(t,x) \mapsto
u^{s,t}(x)$\big) is a classical solution to the Hamilton Jacobi
equation
$$\frac{\partial u_{s,t}}{\partial s}+G(x, D_x u_{s,t})=0\qquad\hbox{in $(0,+\infty)\times \T^N$},$$
(resp. \quad $\displaystyle\frac{\partial u^{s,t}}{\partial
t}+H(x, D_x u^{s,t})=0\quad\hbox{in $(0,+\infty)\times \T^N$}$).
\end{prop}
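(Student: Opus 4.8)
The plan is to read this proposition as the result of applying, twice, the short--time regularization of the Lax--Oleinik flow associated with a single Tonelli Hamiltonian, which is precisely the content of Lemma~3 in \cite{Be}. So the first step is to record the one--semigroup statement: given $v\in C^2(\T^N)$ and a Tonelli Hamiltonian $K$ with Hamiltonian flow $\phi_K$, there is $\varepsilon(v,K)>0$ such that for every $\tau<\varepsilon(v,K)$ the function $v_\tau:=\S_K(\tau)\,v$ is $C^2$, one has $\Gamma(v_\tau)=\phi_K^\tau\,\Gamma(v)$, and $(\tau,x)\mapsto v_\tau(x)$ is a classical solution of $\partial_\tau v_\tau+K(x,D_x v_\tau)=0$. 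I would recall the mechanism behind this: the projection $\pi\colon\phi_K^\tau\,\Gamma(v)\to\T^N$, $\pi(x,p)=x$, is the identity at $\tau=0$ and depends in a $C^1$ way on $(\tau,x)$ because $\phi_K$ is a $C^1$ flow and $v$ is $C^2$; hence, by compactness of $\T^N$, it remains a $C^1$--diffeomorphism for $\tau$ small, so $\phi_K^\tau\,\Gamma(v)$ is still a Lagrangian graph, and since $\phi_K^\tau$ is an exact symplectomorphism of $T^*\T^N$ its Liouville class is unchanged, so this graph is $\Gamma(v_\tau)$ for a genuine $C^2$ function $v_\tau$. Finally, the characteristic identity $\frac{\dd}{\dd\tau}\big(v_\tau(q(\tau))\big)=L_K\big(q(\tau),\dot q(\tau)\big)$ along the base projections of the Hamiltonian orbits issued from $\Gamma(v)$, together with the uniqueness of the viscosity solution of the Cauchy problem recalled in Section~\ref{sez HJ}, identifies $v_\tau$ with $\S_K(\tau)v$ and shows that it solves the equation in the classical sense.

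Second, I would iterate this. Applying the above with $K=H$ and $v=u$ produces $\varepsilon_1>0$ such that $u^H_t:=\S_H(t)\,u$ is $C^2$ with $\Gamma(u^H_t)=\phi_H^t\,\Gamma(u)$ for $t<\varepsilon_1$. To compose with $\S_G$ I need a single threshold $\varepsilon_2>0$ that works for every datum $u^H_t$, $t<\varepsilon_1/2$, simultaneously: this is available because $t\mapsto\phi_H^t\,\Gamma(u)$ is continuous in the $C^1$ topology and each $\phi_H^t\,\Gamma(u)$ is a graph transverse to the fibres, so the interval of times on which $\phi_G^\sigma$ keeps it a graph can be bounded below uniformly in $t$. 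Then, for $s<\varepsilon_2$ and $t<\varepsilon_1/2$,
\[
u_{s,t}=\S_G(s)\big(\S_H(t)\,u\big)=\S_G(s)\,u^H_t
\]
is $C^2$, $\Gamma(u_{s,t})=\phi_G^s\,\Gamma(u^H_t)=\phi_G^s\circ\phi_H^t\,\Gamma(u)$, and for each fixed $t$ the function $(s,x)\mapsto u_{s,t}(x)$ is a classical solution of $\partial_s u_{s,t}+G(x,D_x u_{s,t})=0$. Exchanging the roles of $H$ and $G$ gives the twin statements for $u^{s,t}=\S_H(t)\big(\S_G(s)\,u\big)$, and taking $\varepsilon$ to be the smallest of the finitely many thresholds produced above completes the proof; note that the commutation hypothesis plays no role here.

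The step I expect to be the main obstacle is the uniformity of the second threshold: one must make sure that the time window over which applying $\S_G$ to the moving initial datum $u^H_t$ still yields a single--valued $C^2$ solution does not shrink to zero as $t$ varies in $[0,\varepsilon_1/2]$. This is exactly where the compactness of $\T^N$ and the $C^1$--continuity of $t\mapsto\phi_H^t\,\Gamma(u)$ are used; everything else is the classical transport of exact Lagrangian graphs by an exact symplectic flow and the method of characteristics, for which \cite{Be} may be invoked directly.
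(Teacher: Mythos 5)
Your proposal is correct and follows essentially the same route as the paper, which offers no proof of its own but simply presents the proposition as a reformulation of Lemma~3 in \cite{Be}: you invoke that one--semigroup short--time statement and iterate it. Your explicit attention to the uniformity of the second threshold (via compactness of $\T^N$ and of the time interval, and $C^1$--continuity of $t\mapsto\phi_H^t\,\Gamma(u)$) supplies precisely the detail the paper leaves implicit, and your remark that the commutation hypothesis is not used here is accurate.
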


\noindent{\bf Proof of Proposition \ref{justification}.} Let us
use Proposition \ref{flow}, differentiating various times the
Hamilton-Jacobi equation, to compute a Taylor expansion of
$u_{s,t}$ for small times and smooth initial datum:
\begin{multline*}
u_{s,t}(x)=  u_{0,t}(x) -s G\big(x, D_x
u_{0,t}(x)\big)-\frac{s^2}{2}\,\big\langle D_p  G\big(x, D_x
u_{0,t}(x)\big),\,
\frac{\partial }{\partial s} D_x u_{0,t}(x)\,\big\rangle+o(s^2)\\
=u_{0,t}(x) -s G\big(x, D_x u_{0,t}(x)\big)+\frac{s^2}{2}\,\big\langle D_p
G\big(x, D_x u_{0,t}(x)\big),\, D_x G\big(x, D_x
u_{0,t}(x)\big)\big\rangle+o(s^2).
\end{multline*}
Notice  that similarly,
$$u_{0,t}(x)=u(x)-t H\big(x, D u(x)\big)+\frac{t^2}{2}\,\big\langle D_p  H\big(x, D u(x)\big),\, D_x H\big(x, D u(x)\big)\big\rangle+o(t^2)$$
and
$$ D_x u_{0,t} (x)=D u(x)-t\left [D_x H\big(x, D u(x)\big)+ D^2 u(x)\,D_p H \big(x, D u(x)\big)\right ]+o(t).$$
By substitution, we obtain the following identity on $\T^N$:
\begin{multline*}
    u_{t,t}
    =
    u-t H(x, D u)+
    \frac{t^2}{2}\,\big\langle D_p  H(x, D u),\,D_x H(x, D u)\big\rangle\\
    -t\big( G(x, D u)
    -t\,\big\langle D_p G(x, D u),\,D_x H(x, D u)+D^2\, u\,D_p H (x, D u)\big\rangle
    \big)\\
 +\frac{t^2}{2}\,\big\langle D_p  G(x, D u),\, D_x G(x, D
 u)\big\rangle+o(t^2),
\end{multline*}
that is,
\begin{eqnarray*}
    u_{t,t}
    =
    u\ -&t&\,\big( H(x, D u)+G(x,Du)\big)+\, t^2\ \big\langle D_p G(x, D u),\,D^2\, u\,D_p H (x, D u)\big\rangle\\
    +&\displaystyle
    \frac{t^2}{2}&\,\big(\big\langle D_p  H(x, D u),\,D_x H(x, D
    u)\big\rangle
    +
    \big\langle D_p  G(x, D u),\, D_x G(x, D u)\big\rangle
    \big)\\
    +&{t^2}&\,\big\langle D_p G(x, D u),\,D_x H(x, D u)\big\rangle\ +o(t^2),
\end{eqnarray*}
We now make the symmetrical computation for $u^{t,t}$ and we
subtract to get
\begin{equation*}
u_{t,t}-u^{t,t}= t^2\big( \big\langle D_p  G(x, D u),\, D_x H(x, D
u)\big\rangle-\big\langle D_p  H(x, D u),\, D_x G(x, D
u)\big\rangle\big) +o(t^2).
\end{equation*}
The left--hand side term is 0 by the commutation hypothesis, so
the assertion follows by letting $t\to 0$ and by exploiting the
fact that $u$, and hence $Du$, is arbitrary. \qed \\
\end{appendix}

\end{document}